\newcommand{\lp}{\left(}
\newcommand{\rp}{\right)}
\DeclareMathOperator\erf{Erf}
\DeclareMathOperator\Th{th}
\newcommand\bbR{\mathbb R}
\newcommand\bbC{\mathbb C}
\newcommand\bbN{\mathbb N}
\newcommand\astar{\alpha_k}
\newcommand\bx{\boldsymbol{x}}
\newcommand\by{\boldsymbol{y}}
\newcommand\Int{\operatorname{int}}
\newcommand\oQ{\overline{Q}}
\newcommand\restrictedto{\upharpoonright}
\newtheorem{theorem}{\sffamily Theorem}
\newtheorem{remark}{\sffamily Remark}
\newtheorem{definition}{\sffamily Definition}
\newtheorem{lemma}{\sffamily Lemma}
\newtheorem{proposition}{\sffamily Proposition}
\newtheorem{corollary}{\sffamily Corollary}
\newcommand{\dd}{{\rm d}}
\newcommand{\cD}{\mathcal D}
\newcommand{\cS}{\mathcal S}
\newcommand{\cI}{\mathcal I}
\newcommand{\cC}{\mathcal C}
\newcommand{\cK}{\mathcal K}
\newcommand{\cG}{\mathcal G}
\newcommand{\cA}{\mathcal A}
\newcommand{\fB}{\mathfrak B}
\newcommand{\GammaC}{\Gamma_{\bbC}}
\newcommand{\GammaL}{\Gamma_{L}}
\newcommand{\GammaR}{\Gamma_{R}}
\newcommand{\GammaM}{\Gamma_{M}}
\newcommand{\tGamma}{\widetilde{\Gamma}}
\newcommand{\hGamma}{\widehat{\Gamma}}
\newcommand{\ttau}{\widetilde\tau}
\newcommand{\tsigma}{\widetilde\sigma}
\numberwithin{equation}{section}
\newcommand{\pa}{\partial}
\newcommand{\figref}[1]{\figurename~\hyperref[#1]{\ref{#1}}}
\newcommand{\tableref}[1]{\tablename~\hyperref[#1]{\ref{#1}}}
\newcommand{\lt}{L_{C}}
\newcommand{\Id}{\cI}
\newcommand{\qand}{\quad\text{and}\quad}
\DeclareMathOperator{\sign}{sign}
\DeclareMathOperator{\supp}{supp}
\newcommand{\eps}{\varepsilon}
\newcommand{\ximin}{\xi_{{\rm min}}}
\newcommand{\xiset}{\Xi}
\newcommand{\ls}{\rho}
\newcommand{\causal}{\mathcal B}
\begin{document}

\begin{titlepage}

  \hrulefill\hskip 10mm \today 
  
  \raggedright
 \begin{textblock*}{\linewidth}(1.25in,2in) 
   {\LARGE \sffamily \bfseries Complex scaling for open waveguides}
  \end{textblock*}

\normalsize
\vspace{2in}
\vspace{\baselineskip}

Charles L. Epstein\\
\emph{\small Center for Computational Mathematics\\
Flatiron Institute\\
New York, NY 10010}\\
\texttt{cepstein@flatironinstitute.org}

\vspace{\baselineskip}

Tristan Goodwill\\
\emph{\small Department of Statistics and CCAM\\
University of Chicago\\
Chicago, IL 60637}\\
\texttt{tgoodwill@uchicago.edu}

\vspace{\baselineskip}

Jeremy Hoskins\\
\emph{\small Department of Statistics and CCAM\\
University of Chicago\\
Chicago, IL 60637}\\
\texttt{jeremyhoskins@uchicago.edu}

\vspace{\baselineskip}

Solomon Quinn\\
\emph{\small Center for Computational Mathematics\\
Flatiron Institute\\
New York, NY 10010}\\
\texttt{squinn@flatironinstitute.org}

\vspace{\baselineskip}

Manas Rachh\\
\emph{\small Center for Computational Mathematics\\
Flatiron Institute\\
New York, NY 10010}\\
\texttt{mrachh@flatironinstitute.org}


  
\end{titlepage}

\begin{abstract}
  \noindent In this work we analyze the complex scaling method applied to the
  problem of time-harmonic scalar wave propagation in junctions between
   `leaky,' or open dielectric waveguides. In
  \cite{epstein2023solvinga,epstein2023solvingb,epstein2024solving,goodwill2024numerical},
  it was shown that under suitable assumptions the problem can be reduced to a
  system of Fredholm second-kind integral equations on an infinite interface,
  transverse to the waveguides. Here, we show that the kernels appearing in the
  integral equation admit a rapidly decaying analytic continuation on certain
  natural totally real submanifolds of $\mathbb{C}^2.$ We then show that for
  suitable, physically-meaningful, boundary data the resulting solutions to the
  integral equations themselves admit analytic continuation and satisfy related
  asymptotic estimates. By deforming the integral equation to a suitable
  contour, the decay in the kernels, density, and data enable
  straightforward discretization and truncation, with an error that decays
  exponentially in the truncation length. We illustrate our results with several
  representative numerical examples. \\
  
  \noindent {\bfseries Keywords}: \\
  Open waveguides, scattering, Fredholm integral equations, complexification, infinite boundary, outgoing solution
  
  \noindent {\bfseries AMS subject classifications}: 65N80, 35Q60, 35C15, 35P25,
  45B05, 31A10, 30B40, 65R20

\end{abstract}
\tableofcontents

\normalsize

\section{Introduction}
Waveguides are important and ubiquitous tools for modeling acoustic,
electromagnetic, and fluidic problems. Their role in long distance transmission
of signals makes them ideal for use in a wide variety of applications, \textit{inter alia},
photonic circuits
\cite{kim2011graphene,bogaerts2020programmable,thylen2004photonic,meng2021optical,yang2018bridging,li2008harnessing,roeloffzen2013silicon,kopp2010silicon,ma2002polymer},
medical devices
\cite{shabahang2018light,wang2020optical,yan2019advanced,zhang2019multifunctional,nicholson1992waveguides,guo2019soft,chen2019double},
fiber optics \cite{argyros2013microstructures,eggleton2001microstructured,
  russell2003photonic,selvaraja2018review,kao1966dielectric,pal1992fundamentals},
and loudspeakers
\cite{makivirta2017acoustic,kuroda2020parametric,bezzola2018numerical,zangeneh2018acoustic,iuganson2023development}. In
many contexts the waveguides are modeled as closed systems --- not coupled in
any way to the surrounding material. Still, in a number of situations,
particularly in photonic devices, coupling with the surrounding medium is non-trivial
and an important source of signal degradation. This coupling poses significant
challenges both analytically and numerically. Essentially, the radiated field in
the surrounding medium decays slowly in space, which, when combined with the
non-compact nature of the waveguides, renders approaches based on truncation
difficult or impossible.

In two dimensions the basic mathematical model is the following partial differential equation (PDE)
\begin{equation}\label{eqn:the_pde}
   [ \Delta  + k^2(1 + \chi_{(-\infty,0)}(x_1) q_l(x_2) +\chi_{[0,\infty)}(x_1) q_r(x_2))] u(\bx) = f(\bx),  
\end{equation}
where $\bx = (x_1,x_2),$ $k\in \mathbb{R}$ denotes the wavenumber of the ambient
material, and the functions $q_{l},q_r$ are compactly supported piecewise smooth
functions that encode the material properties of the left and right waveguide
respectively. For uniqueness,
the above PDE must be supplemented with suitable conditions at infinity, with
corresponding constraints placed on the source term $f$. Though fairly
intuitive, the full description of these radiation conditions is rather
involved, and we refer the interested reader to~\cite{epstein2024solving}.

In~\cite{epstein2023solvinga,epstein2023solvingb,epstein2024solving} the
equation \eqref{eqn:the_pde} is analyzed by reducing it to a system of boundary
integral equations (BIEs). More precisely, the plane is divided into two halves,
with their interface chosen so as to cut the waveguides orthogonally along their
intersection, i.e., the line $\{x_1 = 0\}$. By introducing suitable layer
potentials generated by densities on the interface, it was shown that the
resulting boundary integral equation (BIE) for these densities has unique
solutions in suitable function spaces. In principle one can use this formulation
directly to solve \eqref{eqn:the_pde}. In practice the decay of the densities on
this interface, and the decay of the kernels of the integral operators, is too
slow to make such an approach practical. In the subsequent
paper~\cite{goodwill2024numerical}, building on the method of
\cite{lu2018perfectly}, it was shown empirically that by introducing {\it
  complex scaling} along the interface, i.e., replacing the line $\{x_1 = 0\}$
by any curve in a family of totally real submanifolds of
$\{0\}\times\bbC\subset\bbR\times\bbC,$ this boundary integral equation can be
solved accurately and efficiently on a bounded domain. Here, extending the
method of~\cite{epstein2024coordinate}, and the analytic approach of
\cite{epstein2024coordinate}, we give a rigorous proof of the validity of this
approach. In particular, we show that on suitably chosen deformed contours, the
analytic continuation of the BIEs
of~\cite{epstein2023solvinga,epstein2023solvingb,epstein2024solving} admit
unique, and rapidly decaying solutions. Additionally, when these contours are
truncated, as is necessary for numerical computations, the resulting
approximations to the solutions of \eqref{eqn:the_pde} converge rapidly to the
true solution. We further extend the method of~\cite{goodwill2024numerical} to
account for waveguides with material parameters that are piecewise smooth in
the transverse direction.

One proposed approach for solving problems of this class is via domain
decomposition, discretizing the problem partially in the Fourier
domain~\cite{dhia2024complex}. For simple problems one can compute an operator
which takes `right going' to `left going' waves for each half-space. The solution
to the full problem can then be obtained by enforcing continuity across the
interface. Though interesting analytically, these methods have yet to see
practical numerical realization, and are relatively inflexible in the presence
of perturbations and non-perpendicular waveguides. An alternate philosophy is
instead to discretize only the boundary of the waveguide. Though the analytic
structure of the equation \eqref{eqn:the_pde} prevents straightforward
truncation of the resulting BIEs, several methods have been developed to produce
accurate solutions on suitably truncated problems. One approach, referred to as
{\it windowed Green's functions}, uses partitions of unity to discretize the
waveguides near the boundary in physical space, and the infinite flat part of
the geometry in Fourier space~\cite{bruno2016windowed,lai2018new}. Though these
methods are empirically effective in an array of contexts, they do not directly
address the solvability of the integral equation on the untruncated domain, nor
its truncated counterpart.

A related set of methods involve introduction of perfectly matched layers (PMLs), which frequently arise in the restriction of PDEs defined on infinite domains to compact regions. These methods use the outgoing nature of the solutions to extend them to a two dimensional totally real submanifold of $\mathbb{C}^{2}$ where the solution is exponentially decaying and can be effectively truncated~\cite{berenger1994perfectly,chew19943d,dyatlov2019mathematical}. Recently there has also been work on integral equation based PML methods \cite{lu2018perfectly}, and our method is of this general character. As yet, it is still an open problem to apply these techniques to BIE formulations of non-compact waveguide problems such as \eqref{eqn:the_pde}.

In this paper, as in~\cite{goodwill2024numerical}, we consider a different approach, solving on the fictitious interface as in~\cite{epstein2023solvinga,epstein2023solvingb,epstein2024solving} rather than on the boundary of the waveguide itself. By `complexifying' the coordinates of this artificial interface, we prove that one obtains an invertible Fredholm  integral equation of second kind. The solutions of this new integral equation can then be used to recover the solution of the original equation \eqref{eqn:the_pde}. Significantly, the solutions of this `complexified' BIE decay exponentially, enabling straightforward truncation to be applied in numerical implementations. For a given choice of complexification, which depends on the choice of a positive number $\lt,$ the solution of the BIE can then be used to calculate the solution $u(\bx)$ for $\{\bx: |x_2|\leq \lt\}.$

The remainder of the paper is as follows. In Section \ref{sec:real_IE}, we review the construction of the BIEs in ~\cite{epstein2023solvinga,epstein2023solvingb,epstein2024solving}. Following this, in Section \ref{sec:main_res} we state our main analytic result. Sections \ref{sec:op_prop}, and \ref{sec:IE_prop} contain the proofs of these main results. Finally, in Section \ref{sec:numerics} we briefly describe a numerical method for approximately solving the complexified BIEs, extending the work in~\cite{goodwill2024numerical}, to allow for piecewise smooth (in the transverse coordinate) waveguide parameters.

\section{Boundary integral equation for interface matching}\label{sec:real_IE}

Following~\cite{epstein2023solvinga,epstein2023solvingb,epstein2024solving}, we solve the PDE \eqref{eqn:the_pde} by first replacing  it with a transmission problem, which is then reduced to a system of boundary integral equations on a fictitious interface separating the two semi-infinite waveguides.  We assume that~$q_{l,r}$ are piecewise
continuous and supported on the interval~$[-d,d]$. For $\Im k>0$ we let
$G^{l,r}_k$ denote the kernels of the inverses  of the self adjoint partial differential operators
\begin{align}
    \Delta_x G^{l,r}_k(\bx;\by) + k^2(1+q_{l,r}(x_2)) G^{l,r}_k(\bx;\by) = \delta(\bx-\by).
\end{align}
 Self adjointness implies that for $k$ in the closed upper half plane  the fundamental solution satisfies the relation
$$G^{l,r}_{k}(\bx;\by)=\overline{G^{l,r}_{-\overline{k}}(\by;\bx)}.$$
Because the potential is real, these kernels are real for $k=it,$ which implies that they also satisfy
$$G^{l,r}_{k}(\bx;\by)=G^{l,r}_k(\by;\bx).$$ 
Henceforth, we let $k$ be a fixed positive number and set
\begin{equation}
    G^{l,r}(\bx;\by)=\lim_{\epsilon\to 0^+}G_{k+i\epsilon}^{l,r}(\bx;\by).
\end{equation}
That is, $G^{l,r}$ is the kernel of limiting absorption resolvent 
\begin{equation}
    \lim_{\epsilon\to 0^+}[\Delta_x + (k+i\epsilon)^2(1+q_{l,r}(x_2))]^{-1}.
\end{equation}
By continuity this kernel also satisfies the relation $G^{l,r}(\bx;\by)=G^{l,r}(\by;\bx).$

Using these Green's functions we can remove the volumetric source $f$, reducing the problem to an auxiliary transmission problem on the line $\{(0,x_2)\,|\,x_2 \in \mathbb{R}\}.$ In particular, if $\Omega_{l,r}$ denotes the left and right half-spaces then, setting
\begin{equation}
    u_i^{l,r}(\bx) = \int_{\bbR^2} G^{l,r}(\bx,\by)\,f(\by) \,{\rm d}\by \, ,\quad \textrm{for } \bx \in \Omega_{l,r} \,,
    \end{equation}
and $u^{l,r} = u - u_i^{l,r}$ on $\Omega_{l,r},$ then the functions $u^{l,r}$ satisfy
\begin{equation}
    \begin{split}
\label{eqn:the_reduced_pde}
    &\Delta_x u^{l,r}(\bx) + k^2(1+q_{l,r}(x_2)) u^{l,r}(\bx) = 0,\quad {\rm on}\, \Omega^{l,r},\\
    &\lim_{\epsilon \to 0^+} \left[u^{r}(\epsilon,x_2) - u^{l}(-\epsilon,x_2)\right] = g(x_2), \quad x_2 \in \mathbb{R},\\
        &\lim_{\epsilon \to 0^+} \left[\left.\frac{\partial u^{r}}{\partial x_1}\right|_{x_1 = \epsilon}(x_1,x_2) - \left.\frac{\partial u^{l}}{\partial x_1}\right|_{x_1 = - \epsilon}(x_1,x_2)\right] = h(x_2), \quad x_2 \in \mathbb{R},
\end{split}
\end{equation}
where 
\begin{align}
g(x_2) &:=-\lim_{\epsilon \to 0^+} \left[u_{i}^{r}(\epsilon,x_2) - u_{i}^{l}(-\epsilon,x_2)\right]\\
h(x_2) &:= -\lim_{\epsilon \to 0^+} \left[\frac{\partial u_i^{r}}{\partial x_1}(\epsilon,x_2) - \frac{\partial u_i^{l}}{\partial x_1}(-\epsilon,x_2)\right].
\end{align}
In the absence of either waveguide, the fundamental solution is given by
$$G_0(\bx;\by) = \frac{i}{4} H_0(k\|\bx-\by\|)\in H^{1-\epsilon}_{\rm loc}(\bbR^2),$$
for any $\epsilon>0,$ where $H_0$ is the zeroth order Hankel function. Defining
\begin{align}
    w^{l,r}(\bx;\by) = G^{l,r}(\bx;\by)- G_0(\bx;\by),
\end{align}
it follows immediately from standard elliptic regularity theory that, for fixed
$\by,$ $w^{l,r}(\cdot;\by)\in H^{3-\epsilon}_{\rm loc}(\bbR^2).$

We next introduce the following ansatz for the left and right solutions:
\begin{align}\label{eqn:int_rep}
    u^{l,r}(\bx) = \int_{-\infty}^\infty G^{l,r}(\bx;0,y_2) \tau(y_2) \,{\rm d}y_2 - \int_{-\infty}^\infty \left.\frac{\partial}{\partial y_1}\right|_{y_1=0}G^{l,r}(\bx;y_1,y_2) \sigma(y_2) \,{\rm d}y_2, 
\end{align}
where $\tau$ and $\sigma$ are two unknown densities. Restricting this representation to $\{x_1=0\}$, and applying the transmission boundary conditions in~\eqref{eqn:the_reduced_pde}, leads to the integral equations for $[\sigma,\tau],$
\begin{align}\label{eqn:first_int}
\sigma(x_2) - \int_{-\infty}^{\infty} &\frac{\partial}{\partial y_1}\left[w^{r}(0,x_2;0,y_2)-w^{l}(0,x_2;0,y_2)\right] \sigma(y_2)\,{\rm d}y_2+ \\
&\int_{-\infty}^{\infty} [w^{r}(0,x_2;0,y_2)-w^{l}(0,x_2;0,y_2)] \tau(y_2)\,{\rm d}y_2 = g(x_2) \\
\tau(x_2) +\int_{-\infty}^{\infty} &\frac{\partial}{\partial x_1}\left[w^{r}(0,x_2;0,y_2)-w^{l}(0,x_2;0,y_2)\right] \tau(y_2)\,{\rm d}y_2- \\
\int_{-\infty}^{\infty}&\frac{\partial^2}{\partial x_1\partial y_1}\left[w^{r}(0,x_2;0,y_2)-w^{l}(0,x_2;0,y_2)\right] \sigma(y_2)\,{\rm d}y_2 = h(x_2). 
\end{align}
Here we have used the decomposition of $G^{l,r}$ into $w^{l,r}$ and $G_0$, together with the standard jump relations of $G_0$~\cite{colton2013integral}. We can make a further simplification by observing that $w^{l,r} (x_1,x_2;0,y_2)$ and $w^{l,r} (0,x_2;y_1,y_2)$ are even functions of $x_1$ and $y_1$, respectively. In particular, the terms involving the first derivatives of $w^{l,r}$ in \eqref{eqn:first_int} vanish. 

For ease of notation, we define the operators $C$ and $D$ by
\begin{align}
    D f(x_2) &:= \int_{-\infty}^{\infty} k_{D}(x_{2},y_{2})f(y_{2}) {\rm d}y_2 =  \int_{-\infty}^\infty  [w^{r}(0,x_2;0,y_2)-w^{l}(0,x_2;0,y_2)] f(y_2)\,{\rm d}y_2 \label{eqn:Ddef}\\
        C f(x_2) &= -\int_{-\infty}^{\infty} k_{C}(x_{2},y_{2})f(y_{2}) {\rm d}y_2 = -\int_{-\infty}^\infty  \frac{\partial^2}{\partial x_1\partial y_1}\left[w^{r}(0,x_2;0,y_2)-w^{l}(0,x_2;0,y_2)\right] f(y_2)\,{\rm d}y_2.\label{eqn:Cdef}
\end{align}
Using these definitions, we arrive at the following system of integral equations for the densities $\sigma$ and $\tau,$
\begin{align}
    \begin{bmatrix} \Id & D\\ C & \Id\end{bmatrix} \begin{bmatrix}
        \sigma \\ \tau
    \end{bmatrix}(x_2)  = \begin{bmatrix}
        g\\ h
    \end{bmatrix}(x_2), \quad {\rm for}\,\,{\rm all}\,\, x_2 \in \mathbb{R}.\label{eq:real_IE}
\end{align}
 Here $\Id$ denotes the identity operator.

\section{Main results}\label{sec:main_res}

In this section we present the central analytical results of this paper. Before stating the main theorems, we first introduce some necessary notation and results related to layer potentials on complex contours. We begin by defining a certain subset of the complex plane, in which the complexification of the integral equation~\eqref{eq:real_IE} is defined. Following that, we define complex extensions of the integral operators $D$ and $C$ from \eqref{eqn:Ddef} and \eqref{eqn:Cdef}, respectively, together with suitable function spaces on which they act. We conclude the section with Theorem \ref{thm:IE_solvable}, which gives existence and uniqueness of the solution to the complexification of~\eqref{eq:real_IE}, Theorem \ref{thm:IE_outgoing}, which connects the solutions of this complexified integral equation to solutions of \eqref{eqn:the_pde} and Theorem~\ref{thm:trunc_sol}, which bounds the error in computing the solutions of the complexified integral equation on truncated contours.

In our method and analysis we consider the following subsets of $\mathbb{C}$, which are convenient for defining suitable function spaces of `outgoing' data. 
\begin{definition}\label{def:subsets}
    Fix~$\lt>d$. Define the sets~$\Gamma_L,\Gamma_M, \Gamma_R\subset\bbC$,  given by
    \begin{equation*}
        \begin{aligned}
            \Gamma_L&:=\left\{z :\, \Re z<-\lt,\;\Im z\leq 0 \right\},\\
            \Gamma_M&:=\left\{z :\,|\Re z|\leq \lt,\;\Im z= 0 \right\}, \quad{\rm and}\\
            \Gamma_R&:=\left\{z :\, \Re z>\lt,\;\Im z\geq 0 \right\}.
        \end{aligned}
    \end{equation*}
    We define~$\GammaC$ by
    \begin{equation*}
        \GammaC:= \Gamma_L\cup\Gamma_M\cup\Gamma_R\,,
    \end{equation*}
    see Figure~\ref{fig:GammaC} for an illustration.
\end{definition}

\begin{remark}
  Below we show that the kernel functions $k_C(x_2,y_2),\, k_D(x_2,y_2)$ have
  analytic continuations to $\Gamma_{\bbC}\times\Gamma_{\bbC}.$ 
\end{remark}

\begin{figure}
    \centering
    \includegraphics[width = 0.6\textwidth]{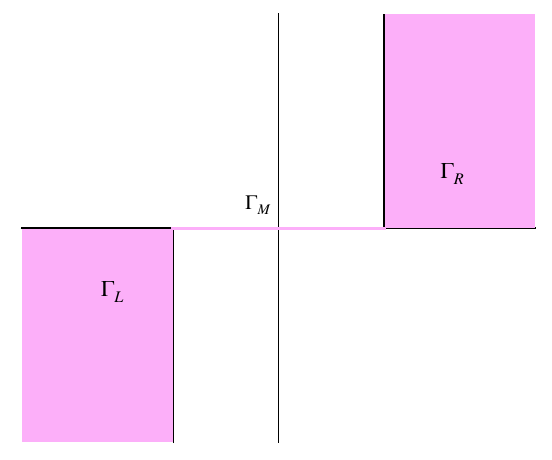}
    \caption{Plot showing $\GammaC$ in pink.}
    \label{fig:GammaC}
\end{figure}
Ultimately, we wish to formulate complex extensions of~\eqref{eq:real_IE} to admissible contours lying in $\GammaC$. The following definition provides sufficient conditions for a contour in $\GammaC$ to be compatible with our method. 
\begin{definition}
    Let~$\cG$ denote the set of all smooth curves~$\tGamma$ in~$\GammaC$ that satisfy the following two conditions.
    \begin{enumerate}[label=(\roman*)]
        \item The curve~$\tGamma$ is monotonic in the sense that if~$x_{2},y_{2}\in \tGamma$, then
        \begin{equation}\label{eqn3.1.30}
            \Im(x_2-y_2) \geq 0 \quad\text{implies}\quad \Re(x_2-y_2) \geq 0.
        \end{equation}
        \item For all~$t\in\bbR$ there is an~$x_2\in \tGamma$ with~$\Re(x_2) = t$. 
    \end{enumerate}
\end{definition}
We typically consider curves given by a parameterization $x_{2}(t)$ with ~$x_2(t) =
t+i\psi(t)$, where~$\psi$ is a smooth monotonically increasing function, which
is zero for $t\in [-\lt,\lt]$. We now define extensions of our integral operators
over curves in~$\cG$. In the following definitions we assume that
$(\sigma(y_2),\tau(y_2))$ are defined for $y_2\in\tGamma,$ and have sufficient
decay for the integrals to be defined. Precise hypotheses are given below.

\begin{definition}[Complex difference potentials]
\label{def:complex-diff}
Let~$\tGamma\in \cG$. For every~$x_{2}\in \GammaC$, the complexified operators on~$\tGamma$ are given by the complex contour integrals
    \begin{equation}
    C_{\tGamma}[\sigma](x_{2}) = \int_{\tGamma} k_{C}(x_{2}, y_{2})
    \sigma(y_2){\rm d}y_2
    \end{equation}
    and
    \begin{equation}
    D_{\tGamma}[\tau](x_{2}) = \int_{\tGamma}k_{D}(x_{2},y_{2}) \tau(y_2){\rm d}y_2
    \end{equation}
    where~$k_{C,D}$ are the analytic continuations of the kernels defined in~\eqref{eqn:Cdef} and~\eqref{eqn:Ddef}, respectively. In the following it will also be convenient to define the system matrix $\mathcal{K}_{\tilde{\Gamma}}$
     by
    \begin{equation}
    \cK_{\tGamma} = \begin{bmatrix}
        0 & D_{\tGamma}\\ C_{\tGamma}&0
    \end{bmatrix}.
\end{equation}
\end{definition}

We analyze the behavior of these operators on the following family of Banach spaces of piecewise analytic functions.
\begin{definition}
    For any real numbers~$\alpha$ and~$\beta$, we define the norm
    \begin{equation}
        \|f\|_{\alpha,\beta}:=\sup_{x\in \overline{\GammaC}} e^{\beta|\Im x |}\lp 1+|x|\rp^\alpha |f(x)|.
    \end{equation}
    The space~$\cC^{\alpha,\beta}$ is the set of all continuous functions~$f$ on~$\GammaC$, which are analytic in  ${\rm int}\Gamma_{L} \cup {\rm int} \Gamma_{R}$ and satisfy~$\|f\|_{\alpha,\beta}<\infty$.
\end{definition}

\begin{remark}
    Throughout the paper, unless stated otherwise, we  assume that $0<\alpha <\frac 12$ and $\beta > -k.$ 
\end{remark}

It is useful to define analogous  spaces for functions defined on individual curves $\tGamma\in\cG.$
\begin{definition} If $\tGamma\in\cG,$ then for any real numbers~$\alpha$, and~$\beta$, we define the norm
    \begin{equation}
        \|f\|_{\alpha,\beta,\tGamma}:=\sup_{x\in\tGamma} e^{\beta|\Im x |}\lp 1+|x|\rp^\alpha |f(x)|.
    \end{equation}
    The space~$\cC^{\alpha,\beta}_{\tGamma}$ is the set of all continuous functions~$f$ on~$\tGamma$, which  satisfy~$\|f\|_{\alpha,\beta,\tGamma}<\infty$.
\end{definition}
\noindent
The fact that the~$\cC^{\alpha,\beta}$ and $\cC^{\alpha,\beta}_{\tGamma}$  are Banach spaces is essentially proved in~\cite{epstein2024coordinate}.  Note that
\begin{equation}
    \cC^{\alpha,\beta}\restrictedto_{\tGamma\,}\subset \,\cC^{\alpha,\beta}_{\tGamma},
\end{equation}
and the spaces $\cC^{\alpha,\beta}_{\bbR}$ coincide with the spaces $\cC^{\alpha}(\bbR),$ which are defined in~\cite{epstein2023solvinga}.

In Section~\ref{sec:op_prop}, we prove that if the densities lie in~$\cC^{\alpha,\beta}$ then the value of the layer potential is independent of the path~$\tGamma$ used in their definition. 
In Section~\ref{sec:IE_prop}, we then use this path independence to extend the existence and uniqueness theorems in~\cite{epstein2023solvingb,epstein2024solving} to prove that the analytic continuation of~\eqref{eq:real_IE} is well-posed. This result is summarized in the following theorem. 

\begin{theorem}\label{thm:IE_solvable} Let $0<\alpha<\frac 12,$ and $-k<\beta\leq k.$
    If~$[f_D,f_N]\in \cC^{\alpha,\beta}\oplus \cC^{\alpha+\frac12,\beta},$ then there exists a unique pair of functions~$[\sigma,\tau]\in \cC^{\alpha,\beta}\oplus \cC^{\alpha+\frac12,\beta},$ such that for any $\tGamma\in\cG,$ 
    \begin{equation}
        \lp\Id + \cK_{\tGamma} \rp\begin{bmatrix}
            \sigma\\\tau
        \end{bmatrix}(x_2)
        =
        \begin{bmatrix}
            f_D\\f_N
        \end{bmatrix}(x_2) \quad\forall x_2\in\tGamma.\label{eq:comp_IE}
    \end{equation}
    Indeed, this equation actually holds for every~$x_2\in \GammaC$.
\end{theorem}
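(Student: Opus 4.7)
The approach is to reduce the complexified equation to the real-line case, which is already handled by the existence and uniqueness results of~\cite{epstein2023solvingb,epstein2024solving}, and then use the path independence of the layer potentials (established in Section~\ref{sec:op_prop}) to lift both the solution and the equation from $\bbR$ to all of $\GammaC$. The key enabler is that $\bbR\in\cG$, so one can begin on the real line, construct the candidate solution there, and then extend analytically to the complex contours.

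First, I would observe that restriction to $\bbR$ maps $\cC^{\alpha,\beta}$ into $\cC^{\alpha,\beta}_{\bbR}=\cC^{\alpha}(\bbR)$, so the restricted data $[f_D|_{\bbR},f_N|_{\bbR}]$ lies in $\cC^{\alpha}(\bbR)\oplus \cC^{\alpha+\frac12}(\bbR)$. The existence and uniqueness theorem for~\eqref{eq:real_IE} then yields a unique pair $[\sigma_0,\tau_0]$ in these spaces solving $(\Id+\cK_{\bbR})[\sigma_0,\tau_0]=[f_D|_{\bbR},f_N|_{\bbR}]$. To extend to $\GammaC$, I would simply use the equation itself as a definition: for every $x_2\in\GammaC$, set
\begin{equation*}
\sigma(x_2):=f_D(x_2)-D_{\bbR}[\tau_0](x_2),\qquad \tau(x_2):=f_N(x_2)-C_{\bbR}[\sigma_0](x_2).
\end{equation*}
Because the kernels $k_C(\,\cdot\,,y_2)$ and $k_D(\,\cdot\,,y_2)$ admit analytic continuations in their first argument throughout $\GammaC$, the functions $\sigma,\tau$ are continuous on $\GammaC$ and analytic on ${\rm int}\,\Gamma_L\cup {\rm int}\,\Gamma_R$, and they agree with $[\sigma_0,\tau_0]$ on $\bbR$. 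Membership in $\cC^{\alpha,\beta}\oplus \cC^{\alpha+\frac12,\beta}$ follows once one knows that $D_{\bbR}$ and $C_{\bbR}$, applied to $\cC^{\alpha}(\bbR)$ densities and evaluated at $x_2\in\GammaC$, produce outputs with the required polynomial decay in $|x_2|$ together with exponential decay $e^{-\beta|\Im x_2|}$, which follows from the kernel estimates of Section~\ref{sec:op_prop}.

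To verify that the equation holds on an arbitrary $\tGamma\in\cG$ (and in fact on all of $\GammaC$), I would invoke path independence: for every $x_2\in\GammaC$,
\begin{equation*}
\cK_{\tGamma}\begin{bmatrix}\sigma\\ \tau\end{bmatrix}(x_2)=\cK_{\bbR}\begin{bmatrix}\sigma\\ \tau\end{bmatrix}(x_2)=\cK_{\bbR}\begin{bmatrix}\sigma_0\\ \tau_0\end{bmatrix}(x_2),
\end{equation*}
since $\cK_{\bbR}$ only sees the restriction of the density to $\bbR$. Combined with the defining formula for $[\sigma,\tau]$, this immediately gives $(\Id+\cK_{\tGamma})[\sigma,\tau](x_2)=[f_D,f_N](x_2)$ everywhere on $\GammaC$. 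For uniqueness, any other solution $[\sigma',\tau']\in \cC^{\alpha,\beta}\oplus \cC^{\alpha+\frac12,\beta}$ restricts to an element of $\cC^{\alpha}(\bbR)\oplus \cC^{\alpha+\frac12}(\bbR)$ solving the real-line equation, by the same path-independence argument; uniqueness on $\bbR$ forces $[\sigma'|_{\bbR},\tau'|_{\bbR}]=[\sigma_0,\tau_0]$, and the defining formula then forces $[\sigma',\tau']=[\sigma,\tau]$ throughout $\GammaC$.

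The main obstacle is the mapping property at complex points: proving that applying the real-line operators $D_{\bbR},C_{\bbR}$ to densities in $\cC^{\alpha}(\bbR),\cC^{\alpha+\frac12}(\bbR)$ and evaluating at $x_2\in\Gamma_L\cup\Gamma_R$ produces outputs with the correct exponential decay in $|\Im x_2|$. This is precisely where the condition $-k<\beta\leq k$ enters, reflecting the maximal rate at which the analytically continued kernels decay as $|\Im x_2|\to\infty$. Once these kernel estimates, together with the path-independence lemma, are in hand from Section~\ref{sec:op_prop}, the structure of the proof is purely a matter of restricting to $\bbR$, applying the known real-line theorem, and then extending via the defining integral formulas.
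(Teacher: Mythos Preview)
Your argument is correct and takes a genuinely different route from the paper. The paper obtains existence by redoing the Fredholm analysis on the complexified spaces: it shows that $DC$ is compact on $\cC^{\alpha,\beta}$ (Proposition~\ref{prop:compact}, Lemmas~\ref{lem:n_compact} and~\ref{lem:f_compact}), deduces via the factorization~\eqref{eq:precond_IE} that $\Id+\cK$ is Fredholm of index~$0$, and then combines this with the uniqueness Lemma~\ref{lem:uniqueness} to invoke the Fredholm alternative. You instead bypass the compactness argument entirely by pulling existence directly from the real-line theory of~\cite{epstein2023solvinga,epstein2024solving} and lifting the real solution to $\GammaC$ via the integral equation itself, using the mapping estimate of Lemma~\ref{lem:op_bd} (cf.\ Remark~\ref{rmk4.103}) to land in $\cC^{\alpha,\beta}\oplus\cC^{\alpha+\frac12,\beta}$. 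For Theorem~\ref{thm:IE_solvable} in isolation your route is shorter and more transparent; the paper's Fredholm machinery, however, is not wasted work, as it is reused in the truncation analysis (Theorem~\ref{thm:trunc_IE}) and in Remark~\ref{rmk10.11}, where one wants operator-level invertibility and perturbation statements on the $\cC^{\alpha,\beta}$ and $\cC^{\alpha,\beta}_{\tGamma}$ spaces that the bootstrapping approach does not directly supply.
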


When solving equation~\eqref{eq:comp_IE} numerically, the densities $\sigma, \tau$
are obtained by solving the integral equation for $x_{2} \in \tGamma$ for some
fixed contour $\tGamma$. Given these densities defined on $\tGamma$, using the
analyticity of $f_{D}, f_{N}$, and the analytic properties of the complex
difference potentials discussed in Definition~\ref{def:complex-diff}, we see
that  $\sigma,\tau$ extend analytically to $\Gamma_{\bbC}$ and for $x_{2} \in
\GammaC$ can be computed via the formula:
\begin{equation}\label{eqn:eval_sigtau}
\begin{bmatrix}
\sigma\\
\tau
\end{bmatrix} (x_{2}) 
= \begin{bmatrix}
f_{D} \\
f_{N}
\end{bmatrix}(x_{2}) - \cK_{\tGamma} \begin{bmatrix}
\sigma\\
\tau
\end{bmatrix}(x_{2}).
\end{equation}
The integral operator in the last term in the equation above only depends on the
values of $\sigma$ and $\tau$ on $\tGamma$.

Surprisingly, data that is incoming with specified exponential growth at
$\infty$ in $\GammaC$ is admissible for the solution of
equation~\eqref{eq:comp_IE}. This allowed growth is related to the maximum
frequency of oscillations of `incoming' fields in the Isozaki
sense~\cite{epstein2024solving,isozaki1994generalization}, and `incoming' waves
oscillating with frequency greater than $-k$ are permissible. It should be noted
that while the integral equation is uniquely solvable for such incoming data,
additional restrictions may be necessary to show that the corresponding solutions to the PDE are outgoing.

In Section~\ref{sec:IE_prop}, we show that the solution of~\eqref{eq:comp_IE} can be used to construct a solution of the transmission problem~\eqref{eq:real_IE} and thus leads to a solution of the original PDE~\eqref{eqn:the_pde}. Before doing so, we define analytic continuations of the layer potential operators used to define $u^{l,r}.$
The decay properties of the analytic continuation of $G^{l,r}$ for evaluation on $x_{1} \neq 0$ are quite different from those of the kernels $k_{C}$ and $k_{D}$ on $x_{1} = 0$. This introduces the further restriction of $\beta > 0$. In conjunction with the restriction $|x_{2}| < \lt$, the analytic continuation of $G^{l,r}(\bx;0,y_{2})$ decays algebraically as a function of $|y_{2}|$ with a uniform rate of algebraic decay in $x_{1}$. This decay combined with the decay of the density implies that the layer potentials on $\tGamma$ are absolutely convergent and thus allows us to define the analytic extensions of the layer potentials. 
\begin{definition}[Complex layer potentials]    Let~$\tGamma\in\textit{} \cG$.  For every~$\bx \in \mathbb{R}^{2} \cap \{|x_{2}|< \lt\}$, the complexified operators with densities supported on~$\tGamma,$ are given by the complex contour integrals
    \begin{equation}
    \cS^{l,r}_{\tGamma}[\tau](\bx) = \int_{\tGamma}G^{l,r}(\bx;0,y_2) \tau(y_2){\rm d}y_2
    \end{equation}
    and
    \begin{equation}
    \cD^{l,r}_{\tGamma}[\sigma](\bx) = \int_{\tGamma} \partial_{y_1}G^{l,r}(\bx;0,y_2) \sigma(y_2){\rm d}y_2
    \end{equation}
    where~$G^{l,r}$ are the analytic continuations of the Green's functions of the left and right waveguide operators.  
\end{definition}

It turns out that physically relevant scattering data arising from volumetric or
point sources, and, after appropriate modifications, waveguide
modes (see~\eqref{eqn5.35.30}) result in boundary data in
$\cC^{\alpha,\beta}\oplus \cC^{\alpha+\frac12,\beta}$ with $\beta > 0$. Furthermore, for data arising from the waveguide modes, the contours
$\tGamma$ must also satisfy the following slope condition in $\GammaC$. $\tGamma
\in \cG$ is admissible if there exist $c,C>0$ and $R_{0}$ such that for all
$y_{2} \in \tGamma$ with $|y_{2}|>R_{0}>\lt$, we have
\begin{equation}
\label{eq:slop-cond}
c|\Re{y_{2}}| \leq |\Im{y_{2}}| \leq C |\Re{y_{2}}| \,.
\end{equation}
From a numerical efficiency perspective, the contours $\tGamma$ are anyway chosen to satisfy such criterion, and thus these conditions are effectively redundant.

The following result establishes the connection between the solution of the complexified integral equations and the solutions of the original PDE for certain choices of data.
\begin{theorem}\label{thm:IE_outgoing}
    Suppose $\beta\geq 0$ and the pair of functions $[f_D,f_N]\in \cC^{\frac12,\beta}\oplus \cC^{\frac32,\beta}$ satisfies
    \begin{equation}
       \begin{aligned}
        f_D(x_{2}) &\sim \frac{e^{ik|x_2|}}{|x_2|^{\frac 12}}\sum_{l=0}^{\infty}\frac{a^{\pm}_l}{|x_2|^l} \quad \text{and}\\
        f_N(x_{2}) &\sim \frac{e^{ik|x_2|}}{|x_2|^{\frac 32}}\sum_{l=0}^{\infty}\frac{b^{\pm}_l}{|x_2|^l} ,\label{eq:outgoing}
        \end{aligned}
    \end{equation}
    as~$x_2\to \pm \infty$ with~$x_2\in \bbR$.
    If the pair~$[\sigma_{\tGamma},\tau_{\tGamma}]\in \cC^{\alpha,\beta}_{\tGamma}\oplus \cC^{\alpha+\frac 12,\beta}_{\tGamma}$ solves equation~\eqref{eq:comp_IE} on some~$\tGamma \in \cG$, then this pair has an analytic continuation to an element of $[\sigma,\tau]\in\cC^{\frac 12,\beta}\oplus \cC^{\frac32,\beta}.$
    
    The pair of functions~$u^{l,r}=\cS^{l,r}_{\bbR}[\tau] + \cD^{l,r}_{\bbR}[\sigma]$, solves the the transmission problem in~\eqref{eqn:the_reduced_pde}, and satisfies  the outgoing radiation conditions given in~\cite{epstein2024solving}. Moreover, for any curve $\tGamma \in \mathcal{G}$ satisfying the slope condition in equation~\eqref{eq:slop-cond}, the solution constructed using the representation $u^{l,r}(\bx)=\cS^{l,r}_{\tGamma}[\tau](\bx)+\cD^{l,r}_{\tGamma}[\sigma](\bx)$ agrees with this solution for $\bx \in \mathbb{R}^{2} \cap \{|x_{2}|\leq \lt\}$. 
\end{theorem}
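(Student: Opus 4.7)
The proof splits into three parts: (i) showing that the given solution $[\sigma_\tGamma,\tau_\tGamma]$ on $\tGamma$ analytically continues to $\GammaC$ with the refined outgoing decay of $\cC^{\frac12,\beta}\oplus\cC^{\frac32,\beta}$; (ii) verifying that $u^{l,r}:=\cS^{l,r}_\bbR[\tau]+\cD^{l,r}_\bbR[\sigma]$ solves the transmission problem~\eqref{eqn:the_reduced_pde} and satisfies the outgoing condition of~\cite{epstein2024solving}; and (iii) establishing path independence for $\bx$ with $|x_2|\leq\lt$.

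For (i), the plan is to invoke Theorem~\ref{thm:IE_solvable} with some $0<\alpha<\frac12$ to obtain a solution in the weaker spaces $\cC^{\alpha,\beta}\oplus\cC^{\alpha+\frac12,\beta}$, then analytically continue to $\GammaC$ via the identity~\eqref{eqn:eval_sigtau}. The right-hand side of that identity depends only on the data $f_D,f_N$ and on the complex potentials $C_\tGamma,D_\tGamma$ applied to the density on $\tGamma$; since both pieces are holomorphic in $x_2$ off $\tGamma$, this gives the continuation. To upgrade the decay I would bootstrap: write $\sigma=f_D-D_\tGamma\tau$ and $\tau=f_N-C_\tGamma\sigma$, and combine the sharp outgoing expansions~\eqref{eq:outgoing} with the mapping properties of $C_\tGamma,D_\tGamma$ established in Section~\ref{sec:op_prop}. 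A stationary-phase-type argument should show that these operators transfer outgoing-oscillatory behavior to image functions with the same outgoing phase and an additional half-power of algebraic decay, transferring the $|x_2|^{-1/2}$ and $|x_2|^{-3/2}$ decay rates from $f_D,f_N$ to $\sigma$ and $\tau$.

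For (ii), with $[\sigma,\tau]\in\cC^{\frac12,\beta}\oplus\cC^{\frac32,\beta}$ restricted to $\bbR$, the integrals $\cS^{l,r}_\bbR[\tau]$ and $\cD^{l,r}_\bbR[\sigma]$ converge as oscillatory integrals since the kernels and densities carry matching outgoing oscillation with joint algebraic decay of order at least $|y_2|^{-1}$. Differentiating under the integral sign and using that $G^{l,r}$ is a Green's function shows $u^{l,r}$ satisfies the waveguide PDE on each half-plane. Restricting to $x_1=0$, using the decomposition $G^{l,r}=w^{l,r}+G_0$, the standard jump relations for $G_0$, and the evenness of $w^{l,r}$ in $x_1$ and $y_1$ reduces the transmission conditions in~\eqref{eqn:the_reduced_pde} to exactly the real-contour version of~\eqref{eq:comp_IE} with data $[f_D,f_N]$; this is satisfied by construction. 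The outgoing radiation condition then follows from the outgoing asymptotics of $\sigma,\tau$ together with the generalized-eigenfunction expansion of $G^{l,r}$ developed in~\cite{epstein2024solving}.

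For (iii) --- the most delicate step --- fix $\bx$ with $|x_2|\leq\lt$. The analyticity results of Section~\ref{sec:op_prop} imply that $y_2\mapsto G^{l,r}(\bx;0,y_2)$ extends holomorphically to $\GammaC$, with algebraic decay uniform in $x_1$ and an exponential decay rate compatible with $\beta\geq0$; combined with the analyticity and decay of $\sigma,\tau$, this lets me apply Cauchy's theorem on closed contours built from $[-R,R]\subset\bbR$, the corresponding arc of $\tGamma$, and two short segments connecting them at $\pm R$, and then let $R\to\infty$. The main obstacle is showing that these connecting arcs contribute nothing in the limit: the single-layer integrand decays only marginally on $\bbR$, so the argument relies essentially on the slope condition~\eqref{eq:slop-cond}, which forces $|\Im y_2|$ to grow linearly with $|\Re y_2|$ along $\tGamma$ and thereby activates an exponential factor $e^{-\beta|\Im y_2|}$ (for $\beta>0$; the $\beta=0$ case requires a separate bootstrap from the outgoing oscillation) that dominates the slow algebraic decay. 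Once this vanishing is verified for both the single- and double-layer integrands, the two representations agree on $\{|x_2|\leq\lt\}$, completing the proof.
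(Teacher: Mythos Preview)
Your three-part structure matches the paper's, and parts (i) and (ii) are essentially what the paper does---the analytic continuation is exactly Lemma~\ref{lem11.102}, and the verification that $u^{l,r}$ solves the transmission problem with outgoing asymptotics is deferred to~\cite{epstein2023solvinga} and Theorem~3 of~\cite{epstein2024solving} rather than reproved. Your ``stationary-phase-type'' bootstrap to $\cC^{\frac12,\beta}\oplus\cC^{\frac32,\beta}$ is vague, but the paper does not carry this out explicitly either; it leans on the cited works for the sharp asymptotics on $\bbR$.

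Part (iii) contains a substantive misattribution. You claim the vanishing of the connecting arcs ``relies essentially on the slope condition,'' which you say activates the density factor $e^{-\beta|\Im y_2|}$. The paper's Lemma~\ref{lem:layer_contour} establishes path independence for \emph{every} $\tGamma\in\cG$, with no slope condition, and already for $\beta\geq 0$. The mechanism is decay of the \emph{kernels}, not of the density: for the waveguide corrections $w^{l,r}$ one uses the same $e^{-k|\Im y_2|}$ behavior that drives Proposition~\ref{prop:path_indep}, and for the free-space part one uses~\eqref{eqn5.22.107} and the observation that $\Im\sqrt{x_1^2+(y_2-x_2)^2}/|\Im y_2|\to 1$ as $|\Im y_2|\to\infty$ for $\bx$ in a compact set, so the Hankel factor $e^{ik\sqrt{\cdots}}$ supplies the needed exponential damping on the vertical arcs. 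The slope condition appears in the theorem statement because it is what is actually needed for the uniform-in-$x_1$ estimates on $w^{l,r}$ (Lemma~\ref{lemma:w_decay}) and for the waveguide-mode data in Corollary~\ref{cor:mode_soln}; it is not the ingredient that makes the contour deformation go through here. Your $\beta=0$ caveat and proposed ``separate bootstrap from the outgoing oscillation'' are therefore unnecessary.
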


Practical numerical implementation requires truncation. In the following theorem we bound the error in the computed solution, when the contour used to find the density and compute the solution is truncated.
\begin{theorem}\label{thm:trunc_sol}
    Let  $0<\alpha<\frac 12,\, 0<\beta\leq k$ and~$\tGamma$ be a contour
    in~$\cG$ satisfying the slope condition in equation~\eqref{eq:slop-cond}. Let~$\tGamma_{\epsilon}$
    be its truncation given by $\tGamma_{\epsilon} = \tGamma \cap \{ x_{2} \in \GammaC: \, e^{-\beta |\Im x_{2}|} \geq \epsilon \}$.
    Suppose that~$f_D$ and~$f_N$ satisfy the conditions of the previous
    theorem. For sufficiently small $\epsilon>0$ there is a unique pair $[\sigma_{\epsilon},\tau_{\epsilon}]\in\cC^{\alpha,\beta}\oplus
    \cC^{\alpha+\frac12,\beta}$ that solves
    \begin{equation}
        \lp\Id + \cK_{\tGamma_{\epsilon}}\rp\begin{bmatrix}
            \sigma_{\epsilon}\\ \tau_{\epsilon}
        \end{bmatrix}(x_2)=\begin{bmatrix}
            f_D \\ f_N
        \end{bmatrix}(x_2)\quad \forall x_2\in \tGamma_{\epsilon}.
    \end{equation}
    Further, there exists a~$C>0,$
    independent of $\epsilon$ and the data, such that
    \begin{equation}\label{eqn3.15.10}
        \left|\lp \cS^{l,r}_{\tGamma_{\epsilon}}[\tau_{\epsilon}](\bx) {-} \cD^{l,r}_{\tGamma_{\epsilon}}[\sigma_{\epsilon}](\bx)\rp - u^{l,r}(\bx) \right| <C \epsilon \left( \|f_{D}\|_{\cC^{\alpha,\beta}} + \|f_{N}\|_{\cC^{\alpha+\frac 12,\beta}} \right)
    \end{equation}
    for all~$\bx\in \bbR^{2} \cap \{|x_{2}|\leq \lt \}$, 
    where~$u^{l,r}$ is the solution of~\eqref{eqn:the_reduced_pde} which satisfies the outgoing radiation conditions.
\end{theorem}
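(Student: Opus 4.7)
The plan is to treat $\cK_{\tGamma_{\epsilon}}$ as a small perturbation of $\cK_{\tGamma}$, whose invertibility on $\cC^{\alpha,\beta}\oplus \cC^{\alpha+\frac12,\beta}$ is supplied by Theorem~\ref{thm:IE_solvable}. The key observation is that a density in $\cC^{\alpha+\frac12,\beta}$ is, on the removed tail $\tGamma\setminus\tGamma_{\epsilon}$, pointwise bounded by $\epsilon(1+|y_{2}|)^{-\alpha-\frac12}$ times its norm, since $e^{-\beta|\Im y_{2}|}\le\epsilon$ there. Combined with the kernel decay estimates for $k_{C}$ and $k_{D}$ proved in Section~\ref{sec:op_prop}, this yields an operator-norm bound
\begin{equation*}
    \|\cK_{\tGamma}-\cK_{\tGamma_{\epsilon}}\|_{\cC^{\alpha,\beta}\oplus\cC^{\alpha+\frac12,\beta}\to\cC^{\alpha,\beta}\oplus\cC^{\alpha+\frac12,\beta}}\le C\epsilon.
\end{equation*}
A Neumann series argument then produces a uniformly bounded inverse $(\Id+\cK_{\tGamma_{\epsilon}})^{-1}$ for all sufficiently small $\epsilon$, giving existence and uniqueness of $[\sigma_{\epsilon},\tau_{\epsilon}]$ on $\tGamma_{\epsilon}$; the extension to all of $\GammaC$ is obtained by using the integral equation itself as a defining formula, exactly as in~\eqref{eqn:eval_sigtau}, whose right-hand side is analytic throughout $\GammaC$ because $\tGamma_{\epsilon}$ is compact.

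For the density error, let $[\sigma,\tau]\in\cC^{\frac12,\beta}\oplus\cC^{\frac32,\beta}$ denote the analytic continuation of the true solution, whose existence is ensured by Theorem~\ref{thm:IE_outgoing}. Writing $e_{\sigma}:=\sigma_{\epsilon}-\sigma$ and $e_{\tau}:=\tau_{\epsilon}-\tau$ and subtracting the two equations yields, on $\tGamma_{\epsilon}$,
\begin{equation*}
    (\Id+\cK_{\tGamma_{\epsilon}})\begin{bmatrix}e_{\sigma}\\e_{\tau}\end{bmatrix}=(\cK_{\tGamma}-\cK_{\tGamma_{\epsilon}})\begin{bmatrix}\sigma\\\tau\end{bmatrix}.
\end{equation*}
The right-hand side is of size $C\epsilon(\|f_{D}\|_{\cC^{\alpha,\beta}}+\|f_{N}\|_{\cC^{\alpha+\frac12,\beta}})$ by the operator bound above, together with the a~priori estimate $\|\sigma\|_{\alpha,\beta}+\|\tau\|_{\alpha+\frac12,\beta}\le C(\|f_{D}\|_{\cC^{\alpha,\beta}}+\|f_{N}\|_{\cC^{\alpha+\frac12,\beta}})$ inherited from Theorem~\ref{thm:IE_solvable}. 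Applying the uniform inverse produces the same bound for the density error $[e_{\sigma},e_{\tau}]$.

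For the layer-potential error, Theorem~\ref{thm:IE_outgoing} identifies $u^{l,r}(\bx)$ with $\cS^{l,r}_{\tGamma}[\tau](\bx)-\cD^{l,r}_{\tGamma}[\sigma](\bx)$ for $|x_{2}|\le\lt$, which is permitted since $\tGamma$ satisfies the slope condition. Decomposing
\begin{equation*}
    \bigl(\cS^{l,r}_{\tGamma_{\epsilon}}[\tau_{\epsilon}]-\cD^{l,r}_{\tGamma_{\epsilon}}[\sigma_{\epsilon}]\bigr)-u^{l,r}=\bigl(\cS^{l,r}_{\tGamma_{\epsilon}}[e_{\tau}]-\cD^{l,r}_{\tGamma_{\epsilon}}[e_{\sigma}]\bigr)-\bigl(\cS^{l,r}_{\tGamma\setminus\tGamma_{\epsilon}}[\tau]-\cD^{l,r}_{\tGamma\setminus\tGamma_{\epsilon}}[\sigma]\bigr),
\end{equation*}
the first bracket is controlled pointwise, for $|x_{2}|\le\lt$, by the density error times the uniform-in-$\bx$ norm of $G^{l,r}(\bx;0,\cdot)$ and $\partial_{y_{1}}G^{l,r}(\bx;0,\cdot)$ over $\tGamma_{\epsilon}$, which is finite by the algebraic transverse decay of $G^{l,r}$ noted in the paragraph preceding the definition of the complex layer potentials. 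The second bracket is a tail integral that gains a factor $\epsilon$ directly from $e^{-\beta|\Im y_{2}|}\le\epsilon$ on $\tGamma\setminus\tGamma_{\epsilon}$, combined with the same kernel decay tested against densities in $\cC^{\frac12,\beta}\oplus\cC^{\frac32,\beta}$. Summing these two contributions yields~\eqref{eqn3.15.10}.

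The main obstacle is the operator bound $\|\cK_{\tGamma}-\cK_{\tGamma_{\epsilon}}\|\le C\epsilon$ and the resulting uniform-in-$\epsilon$ invertibility of $\Id+\cK_{\tGamma_{\epsilon}}$. Making it precise requires the kernel decay of $k_{C}$ and $k_{D}$ established in Section~\ref{sec:op_prop} to dominate the one weight factor $e^{-\beta|\Im y_{2}|}$ that is consumed when extracting $\epsilon$ from the tail; a convenient device is to run the estimate on the slightly smaller space $\cC^{\alpha,\beta/2}$ (still admissible since $0<\beta\le k$), so that a factor $\epsilon^{1/2}$ may be split off from the tail while the remaining integral remains uniformly bounded in $x_{2}$. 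The remainder of the argument is standard Banach-space perturbation theory together with careful accounting of the algebraic-versus-exponential decay under the constraint $|x_{2}|\le\lt$.
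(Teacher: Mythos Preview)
Your overall strategy is the same as the paper's: bound $\|\cK_{\tGamma}-\cK_{\tGamma_{\epsilon}}\|$ by $C\epsilon$ (this is Lemma~\ref{lem:trunc_op}), invert by Neumann series (Theorem~\ref{thm:trunc_IE}), extend to $\GammaC$ via the equation itself (Lemma~\ref{lem:solve_compact}), and then split the layer-potential error into a density-error piece and a tail piece. Your decomposition
\[
\cS^{l,r}_{\tGamma_{\epsilon}}[\tau_{\epsilon}]-\cS^{l,r}_{\tGamma}[\tau]=\cS^{l,r}_{\tGamma_{\epsilon}}[e_{\tau}]-\cS^{l,r}_{\tGamma\setminus\tGamma_{\epsilon}}[\tau]
\]
is algebraically equivalent to the paper's $\cS^{l}_{\tGamma}[\tau_{\epsilon}-\tau]+(\cS^{l}_{\tGamma_{\epsilon}}-\cS^{l}_{\tGamma})[\tau_{\epsilon}]$.

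The one misstep is your final paragraph. You worry that extracting $e^{-\beta|\Im y_{2}|}\le\epsilon$ from the density ``consumes'' the exponential weight and leaves an integral you cannot control, and you propose to salvage $\epsilon^{1/2}$ via the space $\cC^{\alpha,\beta/2}$. This is unnecessary and would not match the theorem's $O(\epsilon)$ bound. The point you are missing is that the kernels $k_{C}\in\cA_{3/2}$ and $k_{D}\in\cA_{1/2}$ carry their \emph{own} factor $e^{-k|\Im y_{2}|}$ (see Theorem~\ref{thm:kern_decay} and the definition of $\cA_{\alpha}$). Hence on $\tGamma\setminus\tGamma_{\epsilon}$ the integrand is bounded by
\[
\frac{Ce^{-k|\Im x_{2}|}}{(1+|x_{2}|+|y_{2}|)^{3/2}}\cdot\frac{e^{-\beta|\Im y_{2}|}\,e^{-k|\Im y_{2}|}}{(1+|y_{2}|)^{\alpha}}\,\|\sigma\|_{\alpha,\beta}
\le \epsilon\cdot\frac{Ce^{-k|\Im x_{2}|}e^{-k|\Im y_{2}|}}{(1+|x_{2}|+|y_{2}|)^{3/2}(1+|y_{2}|)^{\alpha}}\,\|\sigma\|_{\alpha,\beta},
\]
and the remaining integral is finite (indeed, already the algebraic decay suffices). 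This is exactly how the paper obtains the full factor of $\epsilon$ in Lemma~\ref{lem:trunc_op}; no splitting of $\beta$ is needed. The same mechanism, with Lemma~\ref{lemma:w_decay} and the Hankel asymptotics~\eqref{eq:Hank_asymp}--\eqref{eq:Hank1_asymp} replacing the $\cA_{\alpha}$ estimates, handles the layer-potential tails uniformly for $|x_{2}|\le\lt$.
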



\begin{remark}
The restrictions that $\beta>0$, and $\tGamma$ satisfies the slope condition
in Theorems~\ref{thm:IE_outgoing} and~\ref{thm:trunc_sol} can be relaxed if one
restricts $x_1$ to also lie in a fixed interval $[-L_0,L_0],$ for any
$L_{0}\in (0,\infty)$. In this case we only
need to require that $\beta>-k$ in order for the solution constructed via the
layer potentials to agree with the solution of the transmission problem in the
bounded region $(x_{1}, x_{2}) \in [-L_{0}, L_{0}]\times[-\lt,\lt].$  This also leads to estimates like those
in~\eqref{eqn3.15.10}, provided $|x_1|<L_0.$
\end{remark}

The analytic continuation of a waveguide mode satisfies an estimate of the form $|v(x_2)|\leq Ce^{-c|\Re x_2|},$ for $c, C$ positive constants. Because the exponential decay is in the $\Re x_2$ it is  not in  $\cC^{\alpha,0}$ space for any $\alpha>0$. Nonetheless  restricted to a curve, $\tGamma,$ satisfying the slope condition this data does belong to $\cC^{\alpha,\beta}_{\tGamma}$ for some positive $\alpha$ and $\beta.$
\begin{corollary}\label{cor:mode_soln}
    Suppose~$u(x_1,x_2) = v(x_2)e^{\pm i \xi x_1}$ is a waveguide mode for either~$q^l$ or~$q^r$ and the boundary data~$[f_D,f_N] = [v, \pm i\xi v]$. If~$\tGamma\in\cG$ satisfies the slope conditions in \eqref{eq:slop-cond}, then there is a unique $[\tsigma,\ttau]\in \cC^{1/2,k}\oplus \cC^{1,k}$ such that
    \begin{equation}\label{eq:mode_comp_IE}
        \lp\Id + \cK_{\tGamma} \rp\begin{bmatrix}
            \tsigma+f_D\\\ttau+f_N
        \end{bmatrix}(x_2)
        =
        \begin{bmatrix}
            f_D\\f_N
        \end{bmatrix}(x_2) \quad\forall x_2\in\tGamma.
    \end{equation} 
    Further, the pair of functions 
    \begin{equation}
        u^{l,r}(\bx) = \cS_\bbR^{l,r}[\ttau +f_N](\bx)+\cD_{\bbR}^{l,r}[\tsigma +f_N](\bx)\label{eq:mode_real_rep}
    \end{equation}
    solve~\eqref{eqn:the_reduced_pde};  moreover, for $\bx\in \mathbb{R}^{2} \cap \{|x_{2}|\leq \lt\},$ we can represent the solution by integrating along $\tGamma$
    \begin{equation}
        u^{l,r}(\bx) = \cS_{\tGamma}^{l,r}[\ttau +f_N](\bx)+\cD_{\tGamma}^{l,r}[\tsigma +f_N](\bx).\label{eq:mode_comp_rep}
    \end{equation}

    For any sufficiently small~$\epsilon>0$, there exists an $0<\eta\leq \epsilon$, and a constant $C$ independent of $\epsilon$ such that the following holds. There is a unique $[\tsigma_\eps,\ttau_\epsilon]\in \cC^{1/2,k}\oplus \cC^{1,k}$ such that
        \begin{equation}
        \lp\Id + \cK_{\tGamma_\eta} \rp\begin{bmatrix}
            \tsigma_\epsilon+f_D\\\ttau_\epsilon+f_N
        \end{bmatrix}(x_2)
        =
        \begin{bmatrix}
            f_D\\f_N
        \end{bmatrix}(x_2) \quad\forall x_2\in\tGamma_\eta.
    \end{equation} 
Furthermore,
    \begin{equation}\label{eq:mode_rep_err}
        \left|\lp \cS^{l,r}_{\tGamma_{\eta}}[\ttau_\epsilon+f_N](\bx) {-} \cD^{l,r}_{\tGamma_{\eta}}[\tsigma_\epsilon+f_D](\bx)\rp - u^{l,r}(\bx) \right| <C \epsilon \left( \|f_{D}\|_{\cC^{\frac12,0}_{\bbR}} + \|f_{N}\|_{\cC^{1,0}_{\bbR}} \right)
    \end{equation}
    for all~$\bx\in \bbR^{2} \cap \{|x_{2}|\leq \lt \}$.
\end{corollary}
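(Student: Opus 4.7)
The plan is to reduce Corollary~\ref{cor:mode_soln} to a direct application of Theorems~\ref{thm:IE_solvable},~\ref{thm:IE_outgoing}, and~\ref{thm:trunc_sol} via the substitution $[\sigma,\tau] = [\tsigma + f_D,\ttau + f_N]$. Under this substitution, equation~\eqref{eq:mode_comp_IE} becomes the standard fixed-point problem
\begin{equation}
    \lp \Id + \cK_{\tGamma}\rp \begin{bmatrix} \tsigma \\ \ttau \end{bmatrix} = -\cK_{\tGamma}\begin{bmatrix} f_D \\ f_N \end{bmatrix}.
\end{equation}
The conceptual subtlety, highlighted by the remark preceding the corollary, is that $[f_D,f_N] = [v,\pm i\xi v]$ does not belong to $\cC^{\frac12,\beta}\oplus\cC^{\frac32,\beta}$, since the mode profile $v$ decays only in $|\Re x_2|$ rather than $|\Im x_2|$. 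However, on any curve $\tGamma$ satisfying the slope condition~\eqref{eq:slop-cond}, the exponential bound $|v(x_2)|\leq C e^{-\alpha|\Re x_2|}$ (with $\alpha = \sqrt{\xi^2 - k^2}$) combined with $|\Re x_2| \geq c|\Im x_2|$ translates into exponential decay in $|\Im x_2|$ along $\tGamma$, so $[f_D,f_N]\restrictedto_{\tGamma} \in \cC^{\alpha',\beta_v}_{\tGamma}\oplus\cC^{\alpha'+\frac12,\beta_v}_{\tGamma}$ for any $\alpha'>0$ and some $\beta_v\in(0,k]$ depending on $\alpha$ and the slope constants of $\tGamma$.

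The main technical step is to show that the forcing $\cK_{\tGamma}[f_D,f_N]$ is an element of $\cC^{\frac12,k}\oplus\cC^{1,k}$ as a function on all of $\GammaC$, not merely on $\tGamma$. This is where I expect the hardest work to sit. One would combine the decay/oscillation estimates on the analytically continued kernels $k_C,k_D$ proved in Section~\ref{sec:op_prop} with the exponential decay of $v$ along $\tGamma$ established above. Crucially, $v$ has the entire analytic continuation $v(x_2) = A_\pm e^{\mp\alpha x_2}$ for $|\Re x_2|>d$, so the integrals defining $C_{\tGamma} f_D(x_2)$ and $D_{\tGamma} f_N(x_2)$ are absolutely convergent on $\tGamma$ for every $x_2\in\GammaC$. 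The resulting functions then inherit decay in the output variable $x_2$ at the full rate $k$ from the kernel, even though the input only decays at the slower rate $\beta_v$. Once this mapping property is in hand, existence and uniqueness of $[\tsigma,\ttau]\in \cC^{\frac12,k}\oplus \cC^{1,k}$ follow from Theorem~\ref{thm:IE_solvable} (applied at any $\alpha<\frac12$ and bootstrapped to $\alpha=\frac12$ using the extra decay of the right-hand side).

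Next, to justify the real-line representation~\eqref{eq:mode_real_rep} and the path independence~\eqref{eq:mode_comp_rep}, I would deform contours. Because $\ttau+f_N$ and $\tsigma+f_D$ decay exponentially on both $\bbR$ (by the decay of $v$) and $\tGamma$ (by the combined decay of $v$ and $\tsigma,\ttau$), while $G^{l,r}(\bx;0,y_2)$ and $\partial_{y_1}G^{l,r}(\bx;y_1,y_2)|_{y_1=0}$ are analytic and algebraically bounded in $y_2$ on $\GammaC$ for $|x_2|\leq \lt$, an application of Cauchy's theorem to a closing contour in $\GammaC$ yields equality of $\cS^{l,r}_{\bbR}+\cD^{l,r}_{\bbR}$ and $\cS^{l,r}_{\tGamma}+\cD^{l,r}_{\tGamma}$ on $\{|x_2|\leq \lt\}$. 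That the real-line representation then satisfies the transmission problem~\eqref{eqn:the_reduced_pde} follows from the standard jump relations and the integral equation satisfied by $[\sigma,\tau]$ on $\bbR$, which is itself obtained by contour deformation from the $\tGamma$ equation.

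Finally, the truncation estimate~\eqref{eq:mode_rep_err} parallels the proof of Theorem~\ref{thm:trunc_sol}, but with $\eta$ chosen strictly smaller than $\epsilon$ to account for the slower decay rate $\beta_v\leq k$ of the mode data on $\tGamma$. Heuristically, taking $\eta\sim\epsilon^{k/\beta_v}$ ensures that the truncation $\tGamma_{\eta}$, which is the set where $e^{-k|\Im x_2|}\geq\eta$, reaches sufficiently far out that the tail of the full density $[\tsigma_\epsilon+f_D,\ttau_\epsilon+f_N]$ is of size at most $\epsilon(\|f_D\|_{\cC^{\frac12,0}_\bbR}+\|f_N\|_{\cC^{1,0}_\bbR})$. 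Once the uniform Fredholm theory from Theorem~\ref{thm:trunc_sol} produces the truncated density $[\tsigma_\epsilon,\ttau_\epsilon]$ with controlled norm, the pointwise error bound on $|x_2|\leq \lt$ is a straightforward tail estimate for the layer potentials, using the absolute convergence and exponential decay of the integrands along $\tGamma\setminus\tGamma_\eta$.
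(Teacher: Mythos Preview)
Your proposal is correct and follows essentially the same route as the paper's own proof. The paper likewise rearranges~\eqref{eq:mode_comp_IE} to $(\Id+\cK_{\tGamma})[\tsigma,\ttau]=-\cK_{\tGamma}[f_D,f_N]$, invokes Remark~\ref{rmk4.103} (your ``mapping property'' step) to place the right-hand side in a $\cC^{\alpha,k}\oplus\cC^{\alpha+\frac12,k}$ space, applies Theorem~\ref{thm:IE_solvable} for existence and uniqueness, uses Lemma~\ref{lem:layer_contour} and analyticity of the mode profiles for the contour-independence~\eqref{eq:mode_comp_rep}, and splits the truncation error exactly as you describe---Theorem~\ref{thm:trunc_sol} for the $[\tsigma,\ttau]$ part and a direct tail estimate on $\tGamma\setminus\tGamma_\eta$ for the $[f_D,f_N]$ part, with $\eta=\min(\eta_0,\epsilon)$ chosen so the latter tail is below $\epsilon$. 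Your explicit mention of the bootstrap to reach the endpoint $\alpha=\tfrac12$ is a point the paper glosses over.
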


\section{Mapping Properties of operators}\label{sec:op_prop}\label{sec:path-indep}
In this section we establish several properties of the operators $C_{\tGamma}$ and $D_{\tGamma}.$ In particular, we show that  the operators are independent of the choice of $\tGamma \in \cG$ and are bounded between $\cC^{\alpha,\beta}$-spaces for appropriate choices of $\alpha$ and $\beta$. 

We begin by defining the following convenient class of kernels.
\begin{definition}
    Let~$\alpha>0$ and~$\cA_\alpha$ denote the class of kernels~$\tilde k$ that have the following properties:
    \begin{itemize}
        \item The kernel $\tilde k$ is continuous on~$(x_{2},y_{2}) \in (\GammaC \times \GammaC)\setminus \{x_2=y_2, |x_2|\leq d, |y_2|\leq d \}$.
        \item The kernel $\tilde k$ is weakly singular on the diagonal in~$\Gamma_M \times \Gamma_M,$ that is
          \begin{equation}
            \int_{\GammaM}|k(x_2,y_2)|\dd x_2+ \int_{\GammaM}|k(x_2,y_2)|\dd y_2<\infty.
          \end{equation}
        \item The kernel $\tilde k$ is separately analytic in each variable, $x_2, y_2,$ in the interior of~$\GammaC$.
        \item There exist constants~$C_{\pm,\pm}$ and~$K_{1}$ such that     \begin{equation}
        \label{eq:cc-est}
        \left|\tilde k(x_{2},y_{2}) - C_{\operatorname{sign}(\Re x_2),\operatorname{sign}(\Re y_2)} \frac{e^{ik(\operatorname{sign}(\Re x_2) x_2+ \operatorname{sign}(\Re y_2) y_2)}}{(\operatorname{sign}(\Re x_2) x_2+ \operatorname{sign}(\Re y_2) y_2)^{\alpha}} \right| 
        \leq \frac{K_{1}e^{-k\lp|\Im(x_2)|+ |\Im(y_2)|\rp}}{(1+|x_2|+|y_2|)^{\alpha+\frac 12}}
    \end{equation}
    if~$x_{2},y_{2}\in \Gamma_L\cup\Gamma_R$.
    \item There exists a constant~$K_{2}$ and bounded continuous functions~$f_{\pm}$ such that
    \begin{equation}
    \label{eq:cm-est}
        \left|\tilde k(x_{2},y_{2}) - f_{\operatorname{sign}(\Re x_2)}(y_2) \frac{e^{ik\operatorname{sign}(\Re x_2) x_2}}{(\operatorname{sign}(\Re x_2) x_2)^{\alpha}} \right| 
        \leq \frac{K_{2}e^{-k|\Im(x_2)|}}{(1+|x_2|)^{\alpha+\frac 12}}
    \end{equation}
    if~$x_{2}\in \Gamma_L\cup\Gamma_R$ and~$y_{2}\in\Gamma_M$ and a similar statement holds with~$x_{2}$ and~$y_{2}$ swapped.
    \end{itemize}
\end{definition}
\begin{remark}
    Note that both $\operatorname{sign}(\Re x_2) x_2+ \operatorname{sign}(\Re
    y_2) y_2$ and $\operatorname{sign}(\Re x_2) x_2$ lie in the first quadrant,
    so their $\alpha^{\Th}$-power is unambiguously defined by setting
    $z^{\alpha}=e^{\alpha\log z},$ where $\log z$ is defined in $\bbC\setminus
    (-\infty,0]$ by setting $\log 1=0.$
\end{remark}
The following theorem shows that the kernels~$k_C$ and~$k_D$ of the operators $C$ and $D$ can be analytically continued to~$\GammaC\times \GammaC$ and lie in these kernel classes.
\begin{theorem}\label{thm:kern_decay}
    The kernels~$k_C$ and~$k_D$ can be analytically continued to~$\GammaC\times \GammaC$ and 
    are in~$\cA_{\frac 32}$ and~$\cA_{\frac 12}$, respectively. Further, the kernel $k_{C}(x_2,y_2)$ has a $\log|x_2-y_2|$-singularity on the diagonal inside $[-d,d]\times[-d,d],$ and is continuously differentiable on the rest of~$\GammaC\times\GammaC.$ The kernel $k_D(x_2,y_2)$ has a $|x_2-y_2|^2\log|x_2-y_2|$-singularity in $[-d,d]\times[-d,d],$ and is continuously differentiable on the rest of~$\GammaC\times\GammaC$.
\end{theorem}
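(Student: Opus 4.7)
The plan is to establish the three claims---analytic continuation to $\GammaC\times\GammaC$, membership of $k_C$ in $\cA_{3/2}$ and $k_D$ in $\cA_{1/2}$, and the diagonal singularity structure---by Fourier-representing $G^{l,r}$ in the longitudinal variable $x_1$ and applying one-dimensional stationary-phase asymptotics in the transverse variables.

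First I would Fourier transform in $x_1$ to write
\begin{equation*}
G^{l,r}(\bx;\by)=\frac{1}{2\pi}\int_{\bbR}R^{l,r}(\xi;x_2,y_2)\,e^{i\xi(x_1-y_1)}\,d\xi,
\end{equation*}
where $R^{l,r}(\xi;\cdot,\cdot)$ is the Schwartz kernel of the limiting-absorption resolvent of $-\partial_{x_2}^2 - k^2q_{l,r}(x_2)$ at spectral parameter $k^2-\xi^2$. Since $q_{l,r}$ is compactly supported and piecewise smooth, this 1D operator has finitely many bound states $v_j^{l,r}$ decaying like $e^{-\mu_j|x_2|}$ for $|x_2|>d$, together with a continuous spectrum whose generalized eigenfunctions reduce, outside $[-d,d]$, to plane waves $e^{\pm i\kappa(\xi)x_2}$ with $\kappa(\xi)=\sqrt{k^2-\xi^2}$. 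Deforming the $\xi$-contour so as to pick up the bound-state residues at $\xi=\pm\xi_j^{l,r}\in(0,k)$ and then collapsing onto the branch cut $[-k,k]$ expresses $G^{l,r}(0,x_2;0,y_2)$ as a finite separable sum $\sum_j v_j^{l,r}(x_2) v_j^{l,r}(y_2)/(2i\xi_j^{l,r})$ plus a branch-cut integral $I^{l,r}(x_2,y_2)$; the analogous representation for $\partial_{x_1}\partial_{y_1}G^{l,r}|_{x_1=y_1=0}$ carries an extra factor of $\xi^2$ in the integrand and $-\xi_j^2$ in each residue. The discrete modes satisfy an ODE whose coefficients are constant outside $[-d,d]$, so they extend analytically to $\GammaC$ as pure exponentials dominated by $e^{-\mu_j|\Re x_2|}$, and these contributions lie trivially in $\cA_\alpha$ for every $\alpha$.

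Next I would analyze the branch-cut integrals by stationary phase in $\xi$ on $[-k,k]$, with phase $\kappa(\xi)(\sigma x_2+\sigma' y_2)$, where $\sigma,\sigma'$ are the signs of $\Re x_2,\Re y_2$. This phase arises from the product of outgoing generalized eigenfunctions in the regions $|x_2|,|y_2|>d$; it has a unique stationary point at $\xi=0$ with $\kappa''(0)=-1/k$. After accounting for the integrable $1/\kappa$ threshold factor and the branch points at $\pm k$, stationary phase delivers an asymptotic expansion
\begin{equation*}
I^{l,r}(x_2,y_2) \sim \sum_{j\geq 0} C^{l,r;j}_{\sigma,\sigma'}\,\frac{e^{ik(\sigma x_2+\sigma' y_2)}}{(\sigma x_2+\sigma' y_2)^{1/2+j}},
\end{equation*}
with coefficients depending on the Jost transmission/reflection data at $\xi=0$. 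The leading $\alpha=1/2$ term survives the potential-dependent difference $G^r-G^l$ and the subtraction of $G_0$, placing $k_D$ in $\cA_{1/2}$ with explicit leading coefficients $C_{\sigma,\sigma'}$. For $k_C$, the additional $\xi^2$ vanishes at $\xi=0$, killing the leading Gaussian contribution and promoting the leading decay one power to $\alpha=3/2$, which agrees with the Hankel-function asymptotic $\partial_{x_1}\partial_{y_1}G_0|_{x_1=y_1=0}\sim C\,e^{ik|x_2-y_2|}/|x_2-y_2|^{3/2}$. Analyticity of both kernels throughout $\GammaC\times\GammaC$ follows from analyticity of the generalized eigenfunctions in $x_2,y_2$ together with Cauchy's theorem applied to the $\xi$-integral: for $\Im x_2>0$, the factor $e^{i\kappa(\xi)x_2}$ supplies the exponential dominance $e^{-k|\Im x_2|}$ appearing in \eqref{eq:cc-est}. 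The mixed estimate \eqref{eq:cm-est} follows from the same analysis with the variable $y_2$ held in the compact range $[-\lt,\lt]$.

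Finally, for the on-diagonal singularities I would use a standard parametrix for $\Delta+k^2(1+q_{l,r})$ near a diagonal point in $[-d,d]\times[-d,d]$. The leading parametrix term coincides with $G_0$ and therefore cancels in $w^{l,r}=G^{l,r}-G_0$; the next term is of the form $-\tfrac{k^2}{8\pi}q_{l,r}(x_2)|\bx-\by|^2\log|\bx-\by|+\cdots$, and restricting to $x_1=y_1=0$ yields the claimed $|x_2-y_2|^2\log|x_2-y_2|$ singularity for $k_D$. Applying $\partial_{x_1}\partial_{y_1}$ reduces the order of this singularity by two to produce the $\log|x_2-y_2|$ singularity for $k_C$. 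Off $[-d,d]\times[-d,d]$, at least one of $x_2,y_2$ has $|\cdot|>d$, so at least one evaluation point lies outside the support of $q_{l,r}$; elliptic regularity for $\Delta+k^2(1+q_{l,r})$ away from its diagonal then yields the claimed continuous differentiability. The main obstacle will be the careful threshold analysis at $\xi=\pm k$: one must show that the free and perturbed branch-cut integrands match at the endpoints to the order needed so that interior stationary phase (rather than endpoint-dominated asymptotics) produces both the advertised leading term and the half-order-improved remainder required for $\cA_\alpha$-membership. This matching rests on identities for the Jost solutions at the spectral threshold, inherited from constancy of their Wronskian.
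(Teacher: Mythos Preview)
Your overall architecture---Fourier transform in $x_1$, one-dimensional resolvent analysis, and stationary-phase/steepest-descent asymptotics---matches the paper's, and your observation that the extra $\xi^2$ in $k_C$ kills the leading stationary-phase term (promoting decay from $|z|^{-1/2}$ to $|z|^{-3/2}$) is exactly what the paper uses. But two points in your decomposition do not survive scrutiny.

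First, a factual error: the waveguide-mode poles lie at $\xi_j^{l,r}\in\bigl(k,\,k\sqrt{1+\|q_{l,r}\|_\infty}\bigr)$, not in $(0,k)$. Waveguide modes are $L^2$ eigenfunctions of $-\partial_{x_2}^2-k^2q$; these must have eigenvalue $\xi^2-k^2>0$ so that the free solutions $e^{\mp\sqrt{\xi^2-k^2}\,x_2}$ decay outside $[-d,d]$. The paper records this in its Wronskian analysis (the zeros of $W(\xi)$ lie in $(-\ximin,-k]\cup[k,\ximin)$).

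Second, and more seriously, your claim that the residue contributions $v_j(x_2)v_j(y_2)$ ``lie trivially in $\cA_\alpha$ for every $\alpha$'' is false. These products behave like $e^{-\mu_j(\Re x_2+\Re y_2)}$ for $x_2,y_2\in\Gamma_R$, with no decay whatsoever in $|\Im x_2|$; but membership in $\cA_\alpha$ demands an $e^{-k(|\Im x_2|+|\Im y_2|)}$ envelope. Since $\Gamma_R$ contains rays with $\Re x_2$ fixed and $\Im x_2\to\infty$, the separated residue terms violate \eqref{eq:cc-est}. The paper itself makes exactly this point when discussing waveguide-mode data: such functions ``decay exponentially as $|\Re x_2|\to\infty$, but not as $|\Im x_2|\to\infty$, so do not belong to any $\cC^{\alpha,\beta}$-space.'' Consequently your residue-plus-branch-cut split cannot produce the $\cA_\alpha$ bounds term by term; any cancellation would have to be tracked explicitly, which defeats the purpose of the decomposition.

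The paper sidesteps both issues by never separating residues. It keeps the full contour integral over $\gamma_F=\{\Im\xi=-\tanh\Re\xi\}\subset\oQ_2\cup\oQ_4$ and deforms \emph{within} $\oQ_2\cup\oQ_4$ to an explicit steepest-descent path $w(t)$ along which $\astar(w(t))=ik-kt^2e^{-i\pi/4}$. This deformation crosses no poles (they sit on the real axis with $|\xi_j|>k$), and the resulting Gaussian integral directly yields the asymptotic $C\,e^{ikz}/z^{1/2}+O(e^{-k\Im z}/|z|^{3/2})$ with $z=\sigma x_2+\sigma' y_2\in\oQ_1$, which carries the correct $e^{-k|\Im x_2|}$ envelope automatically. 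Your approach is salvageable only if you abandon the residue extraction and instead deform to a steepest-descent contour that stays off the real axis.
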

\begin{proof}
The weak singularities of $k_{C}$, and $k_{D}$ along the diagonal in~$\GammaM\times\GammaM$ are derived in~\cite{epstein2023solvinga}. Proving that the kernels are analytic in $\GammaC$ and that they satisfy~\eqref{eq:cc-est} and~\eqref{eq:cm-est} is straightforward, but tedious; and is left to Corollary~\ref{cor:analytic-kcd} and Lemma~\ref{lem:asymp-est-kcd} in Appendix~\ref{sec:kern_est}. 
\end{proof}

\begin{remark}
    The remainder of the results in this section generalize to any pair of kernels belonging to~$\mathcal{A}_{\frac 12}$ and $\mathcal{A}_{\frac 32}.$ To obtain analogs of the main results, all that is required is a uniqueness result for~\eqref{eq:real_IE} on the real line.
\end{remark}

We begin by verifying that the complexified layer potentials exist when~$\sigma$ and~$\tau$ are in the correct spaces.
\begin{lemma}\label{lem1.102}
    For $0<\alpha<\frac 12$ and $\beta>-k$, if~$\sigma\in \cC^{\alpha,\beta}$ and~$\tau\in \cC^{\alpha+\frac 12,\beta}$, then for any~$x_2 \in \GammaC$ and any~$\tGamma\in \cG$, the values of~$C_{\tGamma}[\sigma](x_2)$ and~$D_{\tGamma}[\tau](x_2)$ are given by convergent integrals.    
\end{lemma}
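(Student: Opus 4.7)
The plan is to decompose $\tGamma=(\tGamma\cap\GammaM)\cup\tGamma_+\cup\tGamma_-$ with $\tGamma_\pm:=\tGamma\cap\Gamma_{R,L}$, and bound the integrand separately on each piece. On the compact middle $[-\lt,\lt]\subset\bbR$, both $\sigma$ and $\tau$ are continuous on a compact set hence bounded, while Theorem~\ref{thm:kern_decay} says $k_C$ has at worst a logarithmic singularity on the diagonal inside $[-d,d]^2$ and $k_D$ an even milder one; in every case, whether or not $x_2$ itself lies in $[-d,d]$, the product is integrable over a bounded interval. It therefore remains only to bound the contributions from the two arms.

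For $\tGamma_+$ I would parameterize $y_2(t)=t+i\psi(t)$ with $t\ge\lt$ and $\psi\ge 0$ smooth and nondecreasing, so that $|dy_2|\le(1+\psi'(t))\,dt$. Since $\Re y_2>0$ and $\Im y_2\ge 0$ on $\Gamma_R$, one has $\mathrm{sign}(\Re y_2)\,y_2=y_2$ lying in the closed first quadrant, and in particular $|e^{ik\,\mathrm{sign}(\Re y_2)\,y_2}|=e^{-k|\Im y_2|}$; the analogous computation works on $\tGamma_-$ because the sign flip and the opposite half-plane cancel. Combining the leading asymptotic in~\eqref{eq:cc-est} (for $x_2\in\Gamma_L\cup\Gamma_R$) or the symmetric version of~\eqref{eq:cm-est} (for $x_2\in\GammaM$) with its stated error bound gives
\begin{equation*}
|k_C(x_2,y_2)|\le\frac{C_{x_2}\,e^{-k|\Im y_2|}}{(1+|y_2|)^{3/2}},\qquad |k_D(x_2,y_2)|\le\frac{C_{x_2}\,e^{-k|\Im y_2|}}{(1+|y_2|)^{1/2}},
\end{equation*}
where $C_{x_2}$ depends only on the fixed argument $x_2$. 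Multiplying by the pointwise bounds $|\sigma(y_2)|\le\|\sigma\|_{\alpha,\beta}\,e^{-\beta|\Im y_2|}(1+|y_2|)^{-\alpha}$ and $|\tau(y_2)|\le\|\tau\|_{\alpha+\frac12,\beta}\,e^{-\beta|\Im y_2|}(1+|y_2|)^{-\alpha-\frac12}$ yields integrands that decay like $e^{-(k+\beta)|\Im y_2|}(1+|y_2|)^{-\alpha-\frac32}$ and $e^{-(k+\beta)|\Im y_2|}(1+|y_2|)^{-\alpha-1}$, respectively.

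To close the argument I would split $|dy_2|\le dt+\psi'(t)\,dt$. The $dt$ contribution is dominated by $\int_\lt^\infty(1+t)^{-\alpha-1}\,dt$, which converges because $\alpha>0$. The $\psi'(t)\,dt$ contribution, after the change of variable $u=\psi(t)$, is dominated by $\int_0^\infty e^{-(k+\beta)u}(1+\psi^{-1}(u))^{-\alpha-1}\,du$, which converges because $k+\beta>0$ by hypothesis. The arm $\tGamma_-$ is handled identically. The only delicate step is bookkeeping: verifying that~\eqref{eq:cc-est} and~\eqref{eq:cm-est} together cover every combination of $x_2\in\GammaC$ and $y_2\in\tGamma_\pm$ with a uniform $e^{-k|\Im y_2|}$ factor; once that is in place the estimates are direct. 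The hypothesis $\beta>-k$ is used exactly once, to absorb the potentially unbounded arc-length growth $\psi'(t)\,dt$ through the exponential factor, while $\alpha>0$ is needed for the purely real algebraic portion.
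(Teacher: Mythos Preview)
Your proof is correct and follows essentially the same approach as the paper: combine the $\cA_{3/2}$ and $\cA_{1/2}$ estimates on $k_C,k_D$ with the $\cC^{\alpha,\beta}$ bounds on $\sigma,\tau$ to get integrands decaying like $e^{-(k+\beta)|\Im y_2|}(1+|y_2|)^{-\alpha-3/2}$ and $e^{-(k+\beta)|\Im y_2|}(1+|y_2|)^{-\alpha-1}$, then observe these are integrable along $\tGamma$. The paper's proof is two sentences and leaves the arc-length bookkeeping implicit; your splitting $|dy_2|\le dt+\psi'(t)\,dt$ makes this explicit, though strictly speaking the parameterization $y_2(t)=t+i\psi(t)$ with smooth $\psi$ need not cover every $\tGamma\in\cG$ (vertical tangents are not excluded by the definition)---an arc-length parameterization together with the observation $|y_2(s)|\ge |s|/\sqrt{2}$, used later in the paper, would handle the general case.
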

\begin{proof}
    All of the kernels are locally integrable, so it is enough to verify that the integrand decays sufficiently rapidly at infinity.
    Since~$k_C\in \cA_{\frac 32}$ and~$k_D\in \cA_{\frac 12}$, the assumptions on~$\sigma$ and~$\tau$ imply that the integrands of~$C[\sigma](x_2)$ 
    and~$D[\tau](x_2)$ decay as~$O\lp|y_2|^{-\frac32-\alpha} e^{-(\beta+k)|\Im y_2|}\rp$, and~$O\lp|y_2|^{-1-\alpha}e^{-(\beta+k)|\Im y_2|}\rp$ respectively. Since~$\beta>-k$, these are both absolutely and uniformly integrable.
\end{proof}

We now prove that $C_{\tGamma}$ and $D_{\tGamma}$ do not depend on the curve
used to define them, which justifies denoting them simply by $C$ and $D.$

\begin{proposition}\label{prop:path_indep}
    Let~$\sigma\in \cC^{\alpha,\beta}$ and~$\tau\in \cC^{\alpha+\frac 12,\beta}$. For any~$x_2 \in \GammaC$, $C_{\tGamma}[\sigma](x_2)$ and~$ D_{\tGamma}[\tau](x_2)$ are independent of the choice~$\tGamma\in \cG$.
\end{proposition}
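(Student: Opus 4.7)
The plan is to apply Cauchy's theorem to the region bounded by the two contours, after truncating at large $|y_2|$, and then send the truncation length to infinity. First, I would observe that every $\tGamma\in\cG$ must contain all of $\Gamma_M$: condition (ii) forces every real $t\in[-\lt,\lt]$ to appear as the real part of some point on the contour, and the only $z\in\GammaC$ with $|\Re z|\leq \lt$ lie on $\Gamma_M$. Thus $\tGamma_1$ and $\tGamma_2$ coincide on $\Gamma_M$, and their differences lie entirely in $\Int\Gamma_L\cup\Int\Gamma_R$. By the left--right symmetry of the setup, it suffices to treat the right-hand portions $\tGamma_i^R:=\tGamma_i\cap\Gamma_R$.

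Next, for $R>\lt$, I would form the closed contour consisting of $\tGamma_{1,R}^R:=\tGamma_1^R\cap\{\Re y_2\leq R\}$, a vertical segment $V_R$ at $\Re y_2=R$ joining the endpoint of $\tGamma_{1,R}^R$ to that of $\tGamma_{2,R}^R$, and $-\tGamma_{2,R}^R$. The monotonicity condition~\eqref{eqn3.1.30} ensures that this is a simple closed curve enclosing a bounded region $\Omega_R\subset\Int\Gamma_R$. For fixed $x_2\in\GammaC$, Theorem~\ref{thm:kern_decay} yields that $k_C(x_2,\,\cdot\,)$ and $k_D(x_2,\,\cdot\,)$ are analytic in $y_2$ on $\Int\GammaC$ (the only singularities lie inside $[-d,d]\times[-d,d]\subset\Gamma_M\times\Gamma_M$, which does not meet $\Omega_R$), and the definition of $\cC^{\alpha,\beta}$ gives analyticity of $\sigma,\tau$ on $\Int\Gamma_R$. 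Cauchy's theorem therefore reduces the proposition to showing the integral along $V_R$ vanishes as $R\to\infty$.

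For the final step I would parameterize $V_R$ as $y_2=R+it$ and combine the $\cA_\alpha$ kernel decay with the pointwise bound $|\sigma(y_2)|\leq\|\sigma\|_{\alpha,\beta}(1+|y_2|)^{-\alpha}e^{-\beta|\Im y_2|}$ (and its analog for $\tau$). For $k_C\in\cA_{3/2}$, the estimate~\eqref{eq:cc-est} gives $|k_C(x_2,y_2)|\leq C_{x_2}\,R^{-3/2}e^{-kt}$ for $R$ large (both leading term and remainder decay at least this fast), so the integrand is bounded by $CR^{-\alpha-3/2}e^{-(k+\beta)t}$; integrating in $t$ and using $k+\beta>0$ yields a contribution of size $O(R^{-\alpha-3/2})\to 0$. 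The $D$-case is identical in spirit, with $k_D\in\cA_{1/2}$ and $\tau\in\cC^{\alpha+\frac12,\beta}$ producing $O(R^{-\alpha-1})$.

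The main obstacle is simply the careful matching of decay rates on $V_R$: the hypothesis $\beta>-k$ is essential, since it is precisely this condition that makes the $t$-integral on the closing segment uniformly bounded in $R$. I would expect writing out the pointwise decay estimate cleanly to be the most delicate part of the argument, but Theorem~\ref{thm:kern_decay} packages the needed information into the kernel classes $\cA_{3/2}$ and $\cA_{1/2}$, so once the geometric setup above is established the estimates are essentially automatic.
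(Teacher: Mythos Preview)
Your proposal is correct and follows essentially the same approach as the paper's proof: truncate the two contours at a large real-part cutoff, close up with vertical segments, apply Cauchy's theorem using the analyticity of the kernel and density in $\Int\Gamma_L\cup\Int\Gamma_R$, and then show the closing-segment contribution vanishes by combining the $e^{-k|\Im y_2|}$ decay of the kernel with the $e^{-\beta|\Im y_2|}$ decay of the density (using $\beta+k>0$). Your explicit observation that every $\tGamma\in\cG$ contains all of $\Gamma_M$ is a tidy preliminary that the paper leaves implicit, and your closing-segment bound $O(R^{-\alpha-3/2})$ is slightly sharper than the paper's $O(\rho^{-\alpha})$ because you also track the algebraic kernel decay, but the argument is otherwise the same.
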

\begin{proof}
Let~$\tGamma$ and~$\hGamma$ be two curves in~$\cG$ and~$x_2$ be any fixed point in~$\GammaC$. Let~$\tGamma_\rho$ and~$\hGamma_\rho$ be the truncations of the contours~$\tGamma$ and~$\hGamma$ at~$|\Re y_2|=\rho$. Let~$\gamma^{\pm}_\rho$ be the vertical lines connecting these truncations (see~\figref{fig:path_indep}). From the estimates in the proof of Lemma~\ref{lem1.102}, it follows that
\begin{equation}
    C_{\tGamma}[\sigma](x_2) = \lim_{\rho\to\infty}C_{\tGamma_\rho}[\sigma](x_2),
\end{equation}
with a similar statement for~$\hGamma$. It is therefore enough to study the difference between~$C_{\tGamma_\rho}$ and~$C_{\hGamma_\rho}$ and show that it vanishes as~$\rho\to\infty$.

By Lemma~\ref{lem:kern_analytic} and the assumptions on $(\sigma,\tau)$,  both integrands are analytic in the region between~$\tGamma$ and~$\hGamma$. We therefore can apply Cauchy's theorem to conclude that
\begin{equation}
    C_{\tGamma_\rho}[\sigma](x_2) - C_{\hGamma_\rho}[\sigma](x_2)=C_{\gamma_\rho}[\sigma](x_2)=\int_{\gamma^+_\rho\cup\gamma^-_\rho} k_C(x_2,y_2)\sigma(y_2) {\rm d}y_2,
\end{equation}
where~$\gamma^\pm_\rho$ are oriented so that they start at~$\hGamma_\rho$ and end at~$\tGamma_\rho$. We first consider $\gamma^+_\rho;$ since $-k<\beta,$ the integral can be bounded by
\begin{equation}
    |C_{\gamma^+_\rho}[\sigma](x_2)|\leq K \frac{\|\sigma\|_{\alpha,\beta}}{(\min_{\gamma^+_\rho}|y_2|)^\alpha} \int_{0}^{\infty}e^{-(\beta+k)t}{\rm d}t \leq \frac{K}{\beta+k} \frac{\|\sigma\|_{\alpha,\beta}}{\rho^\alpha}.
\end{equation}
As~$\alpha>0$, this bound implies that~$|C_{\gamma^+_\rho}[\sigma](x_2)|\to 0$ as~$\rho\to \infty$. A similar argument shows that ~$|C_{\gamma^-_\rho}[\sigma](x_2)|\to 0$ as~$\rho\to \infty$ and thus have shown that~$C_{\tGamma}[\sigma](x_2) = C_{\hGamma}[\sigma](x_2)$ for any~$x_2\in \GammaC$ and any pair of curves~$\tGamma,\hGamma\in\cG$.

An essentially identical proof shows that~$D_{\tGamma}[\tau](x_2)$ does not depend on~$\tGamma$.
\end{proof}
\begin{figure}
    \centering
    \includegraphics[width = 0.6\textwidth]{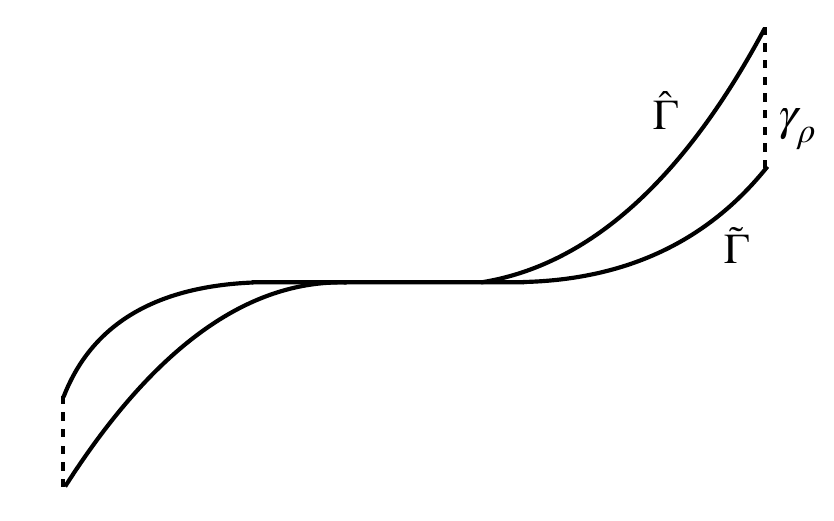}
    \caption{Contours for Proposition~\ref{prop:path_indep}}
    \label{fig:path_indep}
\end{figure}

Since $C$ and $D$ are independent of the choice of~$\tGamma$,  in the sequel we suppress the dependence on the contour of integration $\tGamma$, except to emphasize quantities that explicitly depend on it. We now turn to proving that~$C$ and~$D$ are bounded on~$\cC^{\alpha,\beta}$. To do this, we need the following integral inequality; it is essentially Lemma 1 in~\cite{epstein2023solvinga}.
\begin{lemma}\label{lem:basic_integral}
    If~$0\leq a< 1$ and~$a+b>1$, then 
    \begin{equation}
        I=\int_0^\infty \frac{1}{\lp |x_2|+t\rp^b}\frac{1}{(1+t)^{a}} {\rm d}t \leq  K |x_2|^{1-a-b} \,,
    \end{equation}
    where $K$ depends on $a$ and $b$ but not on $|x_{2}|$.
\end{lemma}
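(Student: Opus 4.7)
The plan is to rescale by substituting $t = |x_2| s$, which factors out the expected power and reduces the lemma to a uniform bound on a parameter-dependent integral. After the substitution,
\begin{equation*}
    I = |x_2|^{1-b}\int_0^\infty \frac{ds}{(1+s)^b\,(1+|x_2|s)^a},
\end{equation*}
so it suffices to show that the remaining integral is $O(|x_2|^{-a})$ as $|x_2|\to\infty$ and $O(|x_2|^{a+b-1})$ as $|x_2|\to 0$, uniformly on those regimes.

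The main case is $|x_2|\geq 1$. I would split the $s$-integral at $s=1/|x_2|$. On $[0,1/|x_2|]$ both factors $(1+s)^b$ and $(1+|x_2|s)^a$ are at least $1$, contributing at most $1/|x_2|$. On $[1/|x_2|,\infty)$ use the bound $(1+|x_2|s)^a \geq (|x_2|s)^a$ to pull out $|x_2|^{-a}$, leaving
\begin{equation*}
    \int_{1/|x_2|}^\infty \frac{ds}{(1+s)^b\,s^a},
\end{equation*}
which is bounded by $\int_0^\infty (1+s)^{-b} s^{-a}\,ds$, a finite beta-function since $0\leq a<1$ ensures integrability at $0$ and $a+b>1$ ensures it at $\infty$. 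Combining and using $a\leq 1$ (so that $|x_2|^{-a}$ dominates $|x_2|^{-1}$), we obtain $I\leq K|x_2|^{1-a-b}$ for $|x_2|\geq 1$.

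For $|x_2|\leq 1$ the target bound $|x_2|^{1-a-b}$ is $\geq 1$ because $a+b>1$, so only a crude bound on the original integral is needed. I would split at $t=1$: the tail $\int_1^\infty$ is dominated by $\int_1^\infty t^{-a-b}\,dt<\infty$, and the bulk $\int_0^1 (|x_2|+t)^{-b}\,dt$ is computed in closed form, yielding $O(|x_2|^{1-b})$ when $b>1$, $O(\log(1/|x_2|))$ when $b=1$, and $O(1)$ when $b<1$; in each sub-case the bound is absorbed into $C|x_2|^{1-a-b}$ using $|x_2|\leq 1$ and $a\geq 0$. The only delicate point is the logarithmic threshold $b=1$, handled by the elementary inequality $\log(1/|x_2|)\leq C_\alpha |x_2|^{-\alpha}$ for any $\alpha>0$; otherwise the argument is routine and the result is a minor variant of Lemma~1 in~\cite{epstein2023solvinga}.
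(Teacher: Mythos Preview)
Your argument is correct, but it works harder than necessary and takes a longer route than the paper. After the substitution $t=|x_2|s$, the paper simply observes that $(1+|x_2|s)^a\geq(|x_2|s)^a$ holds for \emph{all} $s>0$ (since $a\geq 0$), not just for $s\geq 1/|x_2|$. Applying this globally gives
\[
I\leq |x_2|^{1-a-b}\int_0^\infty\frac{ds}{(1+s)^b s^a},
\]
and the remaining integral is a finite beta-type integral because $0\leq a<1$ and $a+b>1$. That is the entire proof: one substitution and one pointwise inequality, with no case split on $|x_2|$, no splitting of the $s$-integral, and no separate treatment of $b<1$, $b=1$, $b>1$. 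Your approach recovers the same bound, but the splitting at $s=1/|x_2|$ and the small-$|x_2|$ analysis are detours around an inequality you already had in hand.
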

\begin{proof}
 We make the substitution~$t=|x_2|s$ to get
    \begin{equation}
        I = \int_0^\infty \frac{1}{|x_2|^{b}(1+s)^{b}}\frac{1}{(1+|x_2|s)^a}
        |x_2|ds\leq \frac{1}{|x_2|^{a+b-1}}\int_0^\infty \frac{1}{(1+s)^{b}}\frac{1}{s^a}ds.
    \end{equation}
    Since  $0<a<1,$ and $1<a+b$ it is clear that this integral is finite.
\end{proof} 

In order to establish the decay of $C[\sigma]$ at infinity, we need to handle the contribution from inside the channel
carefully owing to the logarithmic singularity of the kernel there~\cite{epstein2023solvinga}.
In what follows it is helpful to introduce a partition of unity to isolate this singularity. Recall that $\lt>d.$
\begin{definition}\label{def7.102}
    For $\epsilon = (\lt-d)/3$, let~$\psi \in C^\infty$ be compactly supported on~$[-\lt+\epsilon,\lt-\epsilon]$, identically one on~$[-d-\epsilon,d+\epsilon]$, and $0\leq \psi(y_{2}) \leq 1$ for all $y_{2} \in \bbR$; set
    \begin{equation}
        k_{C_{0}}(x_2,y_2) = \psi(y_2) k_{C}(x_2,y_2) \quad\text{and}\quad k_{C_{1}}= k_{C}-k_{C_{0}}.
    \end{equation}
    Let~$C_0$ and~$C_1$ be the integral operators with kernels~$k_{C_{0}}$ and~$k_{C_{1}}$ respectively.
\end{definition}

The following lemma follows from Theorem~\ref{thm:kern_decay}.
\begin{lemma}
    The kernel~$k_{C_{0}}$ is compactly supported in~$y_2$ with a logarithmic singularity on the diagonal. 
    \begin{equation}
        |k_{C_{0}}(x_2,y_2)| \leq K
        \psi(y_2)\frac{[1+\psi(x_2)|\log|x_2-y_2||]e^{-k|\Im
            x_2|}}{(1+|x_2|)^{\frac 32}}.
    \end{equation}
\end{lemma}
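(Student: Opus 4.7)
The plan is to derive the pointwise bound by packaging together three ingredients already available: (a) the compact support of $\psi$ in $y_2$, (b) the logarithmic singularity of $k_C$ on the diagonal inside $[-d,d]\times[-d,d]$ from Theorem~\ref{thm:kern_decay}, and (c) the asymptotic decay estimates encoded in $k_C \in \cA_{\frac 32}$. The factor $\psi(y_2)$ in the bound is immediate from the definition $k_{C_0}(x_2,y_2)=\psi(y_2)k_C(x_2,y_2)$, so I restrict attention to $y_2 \in \operatorname{supp}\psi \subset [-\lt+\epsilon,\lt-\epsilon] \subset \Gamma_M$ and split according to whether $x_2$ lies in $\Gamma_L\cup\Gamma_R$ or in $\Gamma_M$.

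For $x_2 \in \Gamma_L\cup\Gamma_R$ the cutoff $\psi(x_2)$ vanishes, so there is no log term to produce. Using \eqref{eq:cm-est} with $\alpha=\frac 32$, it suffices to bound the leading asymptotic term $f_{\operatorname{sign}(\Re x_2)}(y_2)\,e^{ik\operatorname{sign}(\Re x_2)x_2}/(\operatorname{sign}(\Re x_2)x_2)^{3/2}$ by $Ke^{-k|\Im x_2|}/(1+|x_2|)^{3/2}$. This reduces to observing that $|f_{\pm}|$ is bounded, that $|(\operatorname{sign}(\Re x_2)x_2)^{3/2}| \asymp (1+|x_2|)^{3/2}$ on $\Gamma_L\cup\Gamma_R$ (since $|\Re x_2|>\lt>0$), and that the sign condition built into Definition~\ref{def:subsets} makes $\operatorname{sign}(\Re x_2)\cdot\Im x_2 \geq 0$, yielding $|e^{ik\operatorname{sign}(\Re x_2)x_2}| = e^{-k|\Im x_2|}$. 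The remainder in \eqref{eq:cm-est} already has the required form, with a stronger algebraic rate, so the two pieces combine to the claimed estimate.

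For $x_2 \in \Gamma_M$ we have $\Im x_2=0$ and $|x_2|\leq\lt$, so the factors $e^{-k|\Im x_2|}$ and $(1+|x_2|)^{-3/2}$ are both bounded above and below by positive constants. The desired estimate therefore reduces to $|k_C(x_2,y_2)|\leq K(1+|\log|x_2-y_2||)$ uniformly for $x_2 \in \Gamma_M$ and $y_2 \in \operatorname{supp}\psi$. When both arguments lie in $[-d,d]$ this is precisely the logarithmic singularity asserted in Theorem~\ref{thm:kern_decay}. Otherwise at least one of the arguments lies in $[-\lt,-d]\cup[d,\lt]$, so the pair is confined to a compact subset of $\GammaC\times\GammaC$ away from the singular corner $[-d,d]^2$, and the continuous differentiability statement of Theorem~\ref{thm:kern_decay} provides a uniform pointwise bound, which is certainly dominated by $1+|\log|x_2-y_2||$.

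I do not expect a serious obstacle: the entire content is repackaging estimates that already exist in the surrounding material ($k_C \in \cA_{\frac 32}$ from Theorem~\ref{thm:kern_decay} together with its local diagonal structure) into one compact inequality. The only mild point that merits attention is verifying that the leading asymptotic term in \eqref{eq:cm-est} genuinely carries the stated $e^{-k|\Im x_2|}$ factor, and this depends on the sign of $\Im x_2$ matching $\operatorname{sign}(\Re x_2)$ on $\Gamma_L$ and $\Gamma_R$. That alignment is exactly what makes these the natural contours for outgoing data, and it is built into Definition~\ref{def:subsets}.
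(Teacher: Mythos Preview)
Your proposal is correct and takes the same route as the paper, which simply states that the lemma ``follows from Theorem~\ref{thm:kern_decay}'' without further argument; you have supplied the details the paper leaves implicit. One small imprecision: when $x_2\in\Gamma_M$ you write that the estimate ``reduces to $|k_C(x_2,y_2)|\leq K(1+|\log|x_2-y_2||)$,'' but the lemma's bound carries a $\psi(x_2)$ in front of the log, so this reduction is not literally correct. However, your subsequent case split recovers the right statement: when both arguments lie in $[-d,d]$ (or more precisely in $[-d-\epsilon,d+\epsilon]$, where $\psi(x_2)=1$) the log bound suffices, and when $|x_2|>d+\epsilon$ the pair $(x_2,y_2)$ stays at distance at least $\epsilon$ from the singular set $\{(x,x):|x|\le d\}$, so continuity on a compact set gives a uniform constant bound, which is dominated by $K[1+\psi(x_2)|\log|x_2-y_2||]$ regardless of the value of $\psi(x_2)\in[0,1]$.
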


Using this split of the kernel of $C$, and Lemma~\ref{lem:basic_integral}, we establish the boundedness of the operators~$\sigma\mapsto C[\sigma]$ and~$\tau\mapsto D[\tau]$ between appropriate $\cC^{\alpha,\beta}$ spaces in the following three lemmas.
We begin by showing that~$C[\sigma]$ and~$D[\tau]$ have the required continuity and analyticity.
\begin{lemma}
    If~$\sigma\in \cC^{\alpha,\beta}$ and~$\tau\in \cC^{\alpha+\frac 12,\beta}$, then~$C[\sigma]$ and~$D[\tau]$ are continuous on~$\GammaC$.
\end{lemma}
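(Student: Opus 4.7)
The plan is to establish continuity separately on the open legs $\Int \Gamma_L$, $\Int \Gamma_R$ and on the middle segment $\GammaM$, and then check continuity at the two junction points $\pm \lt$. On the open legs, the decay estimate \eqref{eq:cc-est} combined with the hypothesized decay of $\sigma$ and $\tau$ produces a locally uniform integrable majorant, and Theorem~\ref{thm:kern_decay} gives joint analyticity of the kernels off the diagonal region $\GammaM \times \GammaM$. Dominated convergence then lets one pass the $x_2$-derivative under the integral, so $C[\sigma]$ and $D[\tau]$ are in fact analytic on $\Int \Gamma_L \cup \Int \Gamma_R$, hence certainly continuous there.

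For $x_2 \in \GammaM$, I would invoke path independence (Proposition~\ref{prop:path_indep}) to take $\tGamma = \bbR$ in the definition of the integrals. The case of $D[\tau]$ is the easier one: by Theorem~\ref{thm:kern_decay}, $k_D$ has only a $|x_2-y_2|^2 \log|x_2-y_2|$ singularity on the diagonal in $[-d,d]^2$, which is bounded, and the $\cA_{1/2}$ decay combined with $\tau \in \cC^{\alpha+\frac 12,\beta}$ gives an integrable majorant uniform in a $\GammaM$-neighborhood of any $x_2$. Dominated convergence then yields continuity of $D[\tau]$ on $\GammaM$.

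For $C[\sigma]$ on $\GammaM$, I would use the partition $C = C_0 + C_1$ of Definition~\ref{def7.102}. The kernel of $C_1$ satisfies the $\cA_{3/2}$ decay and is continuous on the relevant domain, so the same dominated-convergence argument used for $D$ yields continuity of $C_1[\sigma]$. The term $C_0[\sigma]$ carries the logarithmic singularity on $[-d,d]^2$ but is compactly supported in $y_2$. For $x_2, x_2' \in \GammaM$ close together, I would fix a small $\delta > 0$ and split
\begin{equation*}
C_0[\sigma](x_2) - C_0[\sigma](x_2') = \int_{|y_2-x_2| < \delta} + \int_{|y_2-x_2'| < \delta} + \int_{\text{rest}},
\end{equation*}
bounding the first two terms by the uniform log-integrability of $k_{C_0}$, which tends to zero as $\delta \to 0$ independently of $x_2$, and the third by uniform continuity of $k_{C_0}$ on the remaining compact region together with the boundedness of $\sigma$ on $\supp \psi$.

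The main obstacle will be continuity at the junction points $\pm\lt$, where $\GammaM$ meets the complex legs. Here $x_2$ can approach $\pm\lt$ either from within $\GammaM$ along the real axis or from within a complex leg. Appealing again to Proposition~\ref{prop:path_indep}, one can deform the integration contour $\tGamma$ near the junction so that it varies continuously with $x_2$ and remains uniformly bounded away from a fixed reference contour. Since $\pm\lt$ lies outside the diagonal singularity region $[-d,d]^2$, both kernels are jointly continuous in a neighborhood of the junction, and standard dominated-convergence arguments on the far part, combined with Cauchy's theorem across the local deformation, give the desired continuity.
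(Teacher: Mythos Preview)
Your proposal is correct, but it works considerably harder than necessary. The paper's proof is a two-line argument: since the $\cA_{1/2}$ and $\cA_{3/2}$ bounds \eqref{eq:cc-est}--\eqref{eq:cm-est} imply $|k_D(x_2,y_2)|\le K(1+|y_2|)^{-1/2}e^{-k|\Im y_2|}$ and $|k_{C_1}(x_2,y_2)|\le K(1+|y_2|)^{-3/2}e^{-k|\Im y_2|}$ \emph{uniformly in $x_2\in\GammaC$}, a single dominated-convergence step gives continuity of $D[\tau]$ and $C_1[\sigma]$ on all of $\GammaC$ at once---no separate treatment of legs, middle segment, or junctions is needed. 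The weakly singular $C_0$ piece is compactly supported in $y_2$ and handled by the standard argument for weakly singular operators on compact domains (your explicit $\delta$-splitting is exactly that argument).

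Your regional decomposition and junction-point analysis are therefore redundant, and the phrasing at the junctions is slightly confused: there is no reason to ``deform the integration contour $\tGamma$ near the junction so that it varies continuously with $x_2$.'' The integration variable is $y_2$; by Proposition~\ref{prop:path_indep} you may fix $\tGamma=\bbR$ once and for all, and since $\pm\lt\notin[-d,d]$ the kernels are jointly continuous there, so dominated convergence applies directly. What your approach \emph{does} buy is that you establish analyticity on $\Int\Gamma_L\cup\Int\Gamma_R$ as a byproduct---but the paper proves that separately in the next lemma anyway.
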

\begin{proof}
    Since~$k_{C_{1}}(x_2,y_2)\sigma(y_2)$ and~$k_D(x_2,y_2)\tau(y_2)$ are continuous and can be bounded independent of~$x_2$, the functions~$C_1[\sigma]$ and~$D[\tau]$ are clearly continuous by the dominated convergence theorem. 
    The kernel~$k_{C_{0}}$ is weakly singular and supported on~$|y_2|<\lt$. Standard arguments for compact domains (e.g. Theorem 1.11 in~\cite{colton2013integral}) therefore give that~$C_0[\sigma]$ is continuous.
\end{proof}
\begin{lemma}
    If~$\sigma\in \cC^{\alpha,\beta}$ and~$\tau\in \cC^{\alpha+\frac 12,\beta}$, then~$C[\sigma](x_2)$ and~$D[\tau](x_2)$ are analytic in the interior of~$\GammaC$.
\end{lemma}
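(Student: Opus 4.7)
The interior of $\GammaC$ consists of the two open quadrants
$\Int\Gamma_L = \{z:\Re z<-\lt,\ \Im z<0\}$ and $\Int\Gamma_R = \{z:\Re z>\lt,\ \Im z>0\}$,
since $\Gamma_M$ lies in $\bbR$ and has empty interior in $\bbC$. Fix $x_2^{0}$ in one of these quadrants; by symmetry assume $x_2^{0}\in \Int\Gamma_R$, and let $U\subset \Int\Gamma_R$ be a closed disk around $x_2^{0}$ whose real parts are bounded below by some $L>\lt>d$. The point of this choice is that for every $x_2\in U$ and every $y_2\in\GammaC$, the pair $(x_2,y_2)$ avoids the diagonal singularity set $\{x_2=y_2,\ |x_2|,|y_2|\leq d\}$ of the kernels in Theorem~\ref{thm:kern_decay}, so the integrands are smooth in $y_2$ and, by the same theorem (and Lemma~\ref{lem:kern_analytic}), analytic in $x_2\in U$ for every fixed $y_2\in\bbR$.

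The plan is to apply Morera's theorem. By Proposition~\ref{prop:path_indep} we may take $\tGamma=\bbR$ to compute $C[\sigma](x_2)$ and $D[\tau](x_2)$ for $x_2\in U$. Let $T\subset U$ be any closed triangle. I want to show
\begin{equation*}
\oint_T C[\sigma](x_2)\,dx_2 \;=\; \oint_T\!\!\int_{\bbR} k_C(x_2,y_2)\sigma(y_2)\,dy_2\,dx_2 \;=\; 0,
\end{equation*}
and likewise for $D[\tau]$. To swap the order of integration via Fubini, I need a uniform-in-$x_2\in U$ bound on $\int_{\bbR}|k_C(x_2,y_2)\sigma(y_2)|\,dy_2$. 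Such a bound is obtained exactly as in the proof of Lemma~\ref{lem1.102}: on $U$, the asymptotic estimate \eqref{eq:cc-est} (with the correct quadrant signs built in since $\Re x_2$ is bounded away from zero) gives $|k_C(x_2,y_2)|=O((1+|y_2|)^{-3/2})$ as $|y_2|\to\infty$, and the density estimate $|\sigma(y_2)|\leq \|\sigma\|_{\alpha,\beta}(1+|y_2|)^{-\alpha}$ with $\alpha>0$ combines to produce an integrable majorant independent of $x_2\in U$. Multiplying by the finite arc length of $T$ makes the double integral absolutely convergent, so Fubini applies. For each fixed $y_2\in\bbR$ the function $x_2\mapsto k_C(x_2,y_2)$ is analytic on $U$, hence $\oint_T k_C(x_2,y_2)\,dx_2 = 0$ by Cauchy's theorem, and the double integral vanishes. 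Morera's theorem then yields analyticity of $C[\sigma]$ on $\Int\Gamma_R$. The same argument applied on $\Int\Gamma_L$ and to the pair $(k_D,\tau)$ using $|k_D|=O((1+|y_2|)^{-1/2})$ and $|\tau|=O((1+|y_2|)^{-\alpha-1/2})$ completes the proof.

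The only potentially delicate point is verifying the Fubini hypothesis uniformly on $U$, but because $U$ is a fixed compact neighborhood whose real parts are bounded away from $[-d,d]$, every constant appearing in \eqref{eq:cc-est}--\eqref{eq:cm-est} can be absorbed into a single majorant depending only on $U$. Everything else is bookkeeping.
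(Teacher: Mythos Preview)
Your proposal is correct and follows essentially the same route as the paper: apply Morera's theorem, justify the interchange of integrals via Fubini using the $\cA_{3/2}$ and $\cA_{1/2}$ decay of the kernels together with the decay of $\sigma,\tau$, and conclude from the analyticity of $k_C(\cdot,y_2)$ and $k_D(\cdot,y_2)$ in the interior of $\GammaC$. Your version is in fact more careful than the paper's, spelling out why the diagonal singularity is avoided on $U$ and why the majorant is uniform over the compact contour.
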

\begin{proof}
    The proof follows from Morera's theorem. Let~$\gamma$ be a closed contour in the first quadrant. Since,~$k_C\in \cA_{\frac 32},\sigma\in \cC^{\alpha,\beta},$ and~$\gamma$ is bounded we can use Fubini's theorem to see that
    \begin{equation}
        \int_\gamma C[\sigma](x_2){\rm d}x_2 = \int_{\tGamma} \int_\gamma k_C(x_2,y_2) \sigma(y_2)  {\rm d}x_2 {\rm d}y_2.
    \end{equation}
    The inner integral is zero because~$k_C(x_2,y_2)$ is an analytic function of~$x_2$ and so we have that~$C[\sigma]$ is analytic. An identical proof can be used to show that~$D[\tau]$ is also analytic.
\end{proof}
\begin{remark}\label{rmk3.102}
    From the proof of this theorem it is clear that it is not
    necessary for $\sigma$ or $\tau$ to be analytic. Given a curve
    $\tGamma\in\cG,$ all that is required for
    $C_{\tGamma}[\sigma](x_2),$ $D_{\tGamma}[\tau](x_2)$ to have
    analytic continuations to $\GammaL\cup\GammaR$ is for the
    integrals defining $C_{\tGamma}[\sigma](x_2)$
    and $D_{\tGamma}[\tau](x_2)$ to be absolutely convergent,
    uniformly for $x_2$ in compact subsets of $\Int
    \GammaL\cup\GammaR.$ This property holds for $\sigma\in
    \cC^{\alpha,\beta}_{\tGamma}$ and $\tau\in \cC^{\alpha+\frac
      12,\beta}_{\tGamma}$ respectively, but one cannot generally
    change the contour of integration in the $y_2$-integral, unless
    the data have analytic continuations from $\tGamma.$
\end{remark}
Finally, we  prove the norm estimates for the operators $C$ and $D$ in the lemma below.
\begin{lemma}\label{lem:op_bd}
    Suppose that~$\sigma\in \cC^{\alpha,\beta}$ and $\tau\in \cC^{\alpha+\frac 12,\beta},$ with $0<\alpha<1/2$ and $\beta>-k.$ Then there exists a~$K>0$ such that
    \begin{equation}
        \|C[\sigma]\|_{\alpha+\frac 12,k} \leq K\|\sigma\|_{\alpha,\beta}\label{eq:C_bound}
    \end{equation}
    and
    \begin{equation}
        \|D[\tau]\|_{\alpha,k} \leq K\|\tau\|_{\alpha+\frac 12,\beta}.\label{eq:D_bound}
    \end{equation}
\end{lemma}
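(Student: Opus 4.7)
The plan is to prove both bounds by controlling each integrand pointwise and then reducing to Lemma~\ref{lem:basic_integral}. Since $C[\sigma](x_2)$ and $D[\tau](x_2)$ are independent of $\tGamma\in\cG$ by Proposition~\ref{prop:path_indep}, the plan is to fix a convenient parameterized contour, say $y_2(t)=t+i\psi(t)$ with $\psi$ smooth, odd, monotone non-decreasing, vanishing on $[-\lt,\lt]$, and $|\psi'|$ bounded, so that $|y_2(t)|\geq|t|$ and $|dy_2|\leq C\,dt$. The first task is to convert the defining estimates of $\cA_{3/2}$ and $\cA_{1/2}$ into usable pointwise decay bounds. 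For $x_2,y_2\in\Gamma_L\cup\Gamma_R$, both $\operatorname{sign}(\Re x_2)\,x_2$ and $\operatorname{sign}(\Re y_2)\,y_2$ lie in the closed first quadrant, so their sum has modulus at least $c(|x_2|+|y_2|)$, and
\[
\bigl|e^{ik[\operatorname{sign}(\Re x_2)x_2+\operatorname{sign}(\Re y_2)y_2]}\bigr|=e^{-k(|\Im x_2|+|\Im y_2|)}.
\]
Combining with \eqref{eq:cc-est} and \eqref{eq:cm-est} yields the unified bounds
\[
|k_C(x_2,y_2)|\leq K\,\frac{e^{-k|\Im x_2|}e^{-k|\Im y_2|}}{(1+|x_2|+|y_2|)^{3/2}},\qquad
|k_D(x_2,y_2)|\leq K\,\frac{e^{-k|\Im x_2|}e^{-k|\Im y_2|}}{(1+|x_2|+|y_2|)^{1/2}},
\]
valid on all of $\GammaC\times\GammaC$ away from the $\Gamma_M\times\Gamma_M$ diagonal.

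With these bounds in hand, \eqref{eq:D_bound} follows without any splitting, since the diagonal singularity of $k_D$ is only of type $|x_2-y_2|^2\log|x_2-y_2|$, hence continuous. Using $|\tau(y_2)|\leq\|\tau\|_{\alpha+1/2,\beta}\,e^{-\beta|\Im y_2|}(1+|y_2|)^{-\alpha-1/2}$, the integrand in $D[\tau](x_2)$ is dominated by
\[
K\|\tau\|_{\alpha+1/2,\beta}\,\frac{e^{-k|\Im x_2|}\,e^{-(k+\beta)|\Im y_2|}}{(1+|x_2|+|y_2|)^{1/2}(1+|y_2|)^{\alpha+1/2}}.
\]
The factor $e^{-(k+\beta)|\Im y_2|}$ is bounded by $1$ because $\beta>-k$, and parameterizing by $t$ with $|y_2|\geq|t|$ reduces the remaining integral to the form of Lemma~\ref{lem:basic_integral} with $a=\alpha+\tfrac12$ and $b=\tfrac12$. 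The hypotheses $a<1$ and $a+b>1$ translate exactly to the constraints $\alpha<\tfrac12$ and $\alpha>0$, and the lemma then delivers $K(1+|x_2|)^{-\alpha}$, proving \eqref{eq:D_bound}.

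For \eqref{eq:C_bound} the plan is to split $C=C_0+C_1$ as in Definition~\ref{def7.102} to isolate the logarithmic diagonal singularity of $k_C$. The operator $C_0$ has a compactly supported kernel in $y_2$: for $x_2\in\Gamma_M$, classical estimates for weakly singular kernels on compact sets give $|C_0[\sigma](x_2)|\leq K\|\sigma\|_{\alpha,\beta}$, while for $x_2\in\Gamma_L\cup\Gamma_R$ the mixed estimate \eqref{eq:cm-est} (with $y_2\in\Gamma_M$ bounded) gives the stronger bound $|C_0[\sigma](x_2)|\leq K\|\sigma\|_{\alpha,\beta}\,e^{-k|\Im x_2|}(1+|x_2|)^{-3/2}$, which is easily absorbed. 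For $C_1$ no diagonal singularity is encountered and the pointwise estimate on $k_C$ above applies directly; the integrand is then dominated by
\[
K\|\sigma\|_{\alpha,\beta}\,\frac{e^{-k|\Im x_2|}\,e^{-(k+\beta)|\Im y_2|}}{(1+|x_2|+|y_2|)^{3/2}(1+|y_2|)^{\alpha}},
\]
and Lemma~\ref{lem:basic_integral} with $a=\alpha$, $b=\tfrac32$ (for which $a<1$ and $a+b>1$ hold automatically since $\alpha>0$) yields $K(1+|x_2|)^{-\alpha-1/2}$, completing \eqref{eq:C_bound}.

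The main obstacle in this program is the bookkeeping across the various configurations of $(x_2,y_2)$ relative to $\Gamma_M$ and $\Gamma_L\cup\Gamma_R$: the estimates \eqref{eq:cc-est} and \eqref{eq:cm-est} take slightly different forms and must be glued so that the unified pointwise bounds on $|k_C|$ and $|k_D|$ displayed above hold uniformly over all four regions, while the logarithmic singularity of $k_C$ on the diagonal in $\Gamma_M\times\Gamma_M$ is properly absorbed into $C_0$ and handled as a compact weakly singular operator. Once this is cleanly organized, the bounds reduce to a one-line application of Lemma~\ref{lem:basic_integral}.
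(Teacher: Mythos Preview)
Your proposal is correct and follows essentially the same approach as the paper: split $C=C_0+C_1$ via Definition~\ref{def7.102}, use the $\cA_{3/2}$ and $\cA_{1/2}$ kernel bounds to reduce both $C_1[\sigma]$ and $D[\tau]$ to Lemma~\ref{lem:basic_integral} with the same parameter choices $(a,b)=(\alpha,\tfrac32)$ and $(\alpha+\tfrac12,\tfrac12)$, and treat $C_0$ as a compactly-supported weakly singular piece. The only difference is that the paper simply takes $\tGamma=\bbR$, which makes $\Im y_2=0$ and removes the need to track the $e^{-(k+\beta)|\Im y_2|}$ factor and the contour parameterization; your slightly more general route works but the paper's choice is cleaner.
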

\begin{proof}
    We  prove this bound using the choice~$\tGamma=\bbR$. We begin by proving the bound for
    \begin{equation}
        C_1[\sigma](x_2)=\int_{-\infty}^\infty k_{C_{1}}(x_2,t)\sigma(t) {\rm d}t.
    \end{equation}
    For ~$t\in\bbR$,
        $|\sigma(t)| \leq \|\sigma\|_{\alpha,\beta}/(1+|t|)^\alpha$.
    Using this bound, and Theorem~\ref{thm:kern_decay}, we have that
    \begin{equation}
        |C_1[\sigma](x_2)|\leq  \int_0^\infty \frac{Ke^{-k|\Im x_2|}}{(1+|x_2|+t)^{\frac 32}} \frac{ \|\sigma\|_{\alpha,\beta}}{(1+t)^\alpha} {\rm d}t.\label{eq:c1_bd}
    \end{equation}
    If we factor out the terms independent of~$t,$ then Lemma~\ref{lem:basic_integral} with~$a=\alpha$ and~$b=\frac32$ gives
    \begin{equation}
        \left|C_1[\sigma](x_2)\right| \leq \frac{K'e^{-k \Im x_2}}{(1+|x_2|)^{\frac12+\alpha}}\|\sigma\|_{\alpha,\beta}.\label{eq:c1_decay}
    \end{equation}
    The  bound on~$|C_1[\sigma](x_2)|$ for $|x_2|<1$ is obvious, and therefore we see that there is a constant $C$ so that
    $\|C_1[\sigma]\|_{\alpha+\frac 12,k}\leq C\|\sigma\|_{\alpha,\beta}$.
    
    To bound the contribution of~$C_0$, we note that
    \begin{equation}
        |C_0[\sigma](x_2)|\leq \int_{-{L_C}}^{{L_C}}   \frac{2K|\log|x_2-t||e^{-k|\Im x_2|}}{(1+|x_2|)^{\frac 32}} \|\sigma\|_{\alpha,\beta} {\rm d}t.
    \end{equation}
    Since the integral is over a finite domain, we can bound the integral of the logarithm by~$C\sqrt{|x_2|}$ to see
    \begin{equation}
        |C_0[\sigma](x_2)|\leq \frac{K'e^{-k|\Im x_2|}}{1+|x_2|}\|\sigma\|_{\alpha,{\beta}},
    \end{equation}
 which gives~\eqref{eq:C_bound}.

    A similar argument to the one above gives
    \begin{equation}
        \left|D[\tau](x_2) \right|\leq \int_0^\infty \frac{Ke^{-k|\Im x_2|}}{(1+|x_2|+t)^{\frac 12}} \frac{ \|\tau\|_{\alpha+\frac12,\beta}}{(1+t)^{\alpha+\frac12}} {\rm d}t.
    \end{equation}
    Applying Lemma~\ref{lem:basic_integral} with~$a=\alpha+\frac12$ and~$b=\frac12$ gives~\eqref{eq:D_bound}.
\end{proof}

\begin{remark}\label{rmk4.103}
    By carrying out the estimate on another choice of contour $\tGamma\in\cG$, it is clear that the conclusion of Lemma~\ref{lem:op_bd} actually holds for any such curve and functions $\sigma_{\tGamma}\in \cC^{\alpha,\beta}_{\tGamma}$ and $\tau_{\tGamma}\in \cC^{\alpha+\frac 12,\beta}_{\tGamma},$ in that there exists a~$K>0$ such that,
    \begin{equation}
        \|C_{\tGamma}[\sigma_{\tGamma}]\|_{\alpha+\frac 12,k} \leq K\|\sigma_{\tGamma}\|_{\alpha,\beta,\tGamma}\label{eq:C_bound_restr}
    \end{equation}
    and
    \begin{equation}
        \|D_{\tGamma}[\tau_{\tGamma}]\|_{\alpha,k} \leq K\|\tau_{\tGamma}\|_{\alpha+\frac 12,\beta,\tGamma}.\label{eq:D_bound_restr}
    \end{equation}
    Here $C_{\tGamma}[\sigma_{\tGamma}],\, D_{\tGamma}[\tau_{\tGamma}]$ are the analytic continuations described in Remark~\ref{rmk3.102}.
\end{remark}
Combining the above lemmas gives the following theorem which proves the boundedness of $C$ and $D$ between
appropriate $\cC^{\alpha,\beta}$ spaces.
\begin{theorem}\label{thm:op_maps}
    For any~$0<\alpha<\frac12$ and~$-k<\beta$,  the following maps are bounded
    \begin{equation}
        \begin{matrix}
            C: \cC^{\alpha,\beta} \to \cC^{\alpha+\frac12,k} & \text{and}& D: \cC^{\alpha+\frac 12,\beta} \to \cC^{\alpha,k}.
     \end{matrix}
    \end{equation}
\end{theorem}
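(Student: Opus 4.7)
The plan is essentially to assemble the three preceding lemmas in Section~\ref{sec:op_prop} into a single packaging statement. Recall that an element of $\cC^{\gamma,k}$ (for the relevant exponents $\gamma$) is characterized by three properties: continuity on $\GammaC$, analyticity in the interior of $\Gamma_L \cup \Gamma_R$, and finiteness of the $\|\cdot\|_{\gamma,k}$ norm. Thus, to prove the theorem it suffices to verify these three properties for $C[\sigma]$ (with $\gamma = \alpha + \tfrac12$) and for $D[\tau]$ (with $\gamma = \alpha$), together with the corresponding operator norm bound.

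First, I would invoke the continuity lemma established above, which shows that if $\sigma \in \cC^{\alpha,\beta}$ and $\tau \in \cC^{\alpha+\frac12,\beta}$, then $C[\sigma]$ and $D[\tau]$ are continuous on $\GammaC$. The argument splits $C$ as $C_0 + C_1$ per Definition~\ref{def7.102}, using dominated convergence on the smooth, rapidly decaying piece $C_1$ (and on $D$), while handling the weakly singular compactly supported piece $C_0$ via a standard compact-domain argument such as Theorem~1.11 of~\cite{colton2013integral}.

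Next, I would invoke the analyticity lemma, which via Morera's theorem and Fubini (justified by the absolute integrability estimates from Lemma~\ref{lem1.102}) shows that $C[\sigma](x_2)$ and $D[\tau](x_2)$ are analytic in the interiors of $\Gamma_L$ and $\Gamma_R$. Path-independence (Proposition~\ref{prop:path_indep}) ensures that it is irrelevant which contour $\tGamma \in \cG$ is used to express the operators when verifying either property.

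Finally, Lemma~\ref{lem:op_bd} supplies the two norm estimates $\|C[\sigma]\|_{\alpha+\frac12,k} \leq K\|\sigma\|_{\alpha,\beta}$ and $\|D[\tau]\|_{\alpha,k} \leq K\|\tau\|_{\alpha+\frac12,\beta}$, both proved by choosing $\tGamma = \bbR$ and combining the kernel decay estimates from Theorem~\ref{thm:kern_decay} with Lemma~\ref{lem:basic_integral}. Concatenating these three ingredients yields that $C$ maps $\cC^{\alpha,\beta}$ boundedly into $\cC^{\alpha+\frac12,k}$ and that $D$ maps $\cC^{\alpha+\frac12,\beta}$ boundedly into $\cC^{\alpha,k}$. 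No further obstacle is present: the theorem is a direct corollary of the preceding lemmas, with the gain in exponential decay rate from $\beta$ to $k$ reflecting the smoothing effect of the exponentially decaying kernels $k_C,k_D$ guaranteed by Theorem~\ref{thm:kern_decay}.
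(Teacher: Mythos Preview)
Your proposal is correct and matches the paper's approach exactly: the paper simply states that ``Combining the above lemmas gives the following theorem,'' and those lemmas are precisely the continuity lemma, the analyticity lemma (via Morera and Fubini), and Lemma~\ref{lem:op_bd} that you cite. There is nothing to add.
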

Since~$\cC^{\alpha',k}$ can be continuously embedded into~$\cC^{\alpha',\beta}$ for any~$\alpha'>0$ and~$\beta\leq k$, this theorem also implies the weaker statement that 
    \begin{equation}
        \begin{matrix}
            C: \cC^{\alpha,\beta} \to \cC^{\alpha+\frac12,\beta} & \text{and}& D: \cC^{\alpha+\frac 12,\beta} \to \cC^{\alpha,\beta}
     \end{matrix}
    \end{equation}
    are bounded operators for~$0<\alpha<\frac12$ and~$-k<\beta\leq k$.
 

\section{Analysis of the integral equation system}\label{sec:IE_prop}
We now turn our attention to the analysis of~\eqref{eq:comp_IE}. We begin by proving uniqueness, and then show that the operator $\Id+\cK$ is a Fredholm operator of index zero. This combined with uniqueness implies that~\eqref{eq:comp_IE} is solvable for arbitrary data in $\cC^{\alpha,\beta}\oplus\cC^{\alpha+\frac 12,\beta},$ with $0<\alpha<\frac 12, -k<\beta\leq k.$ 

\subsection{Uniqueness}
The uniqueness of the solution to~\eqref{eq:comp_IE} is essentially immediate from the uniqueness result for $(\Id+\cK_{\bbR})$ proved in~\cite{epstein2024solving}, and analyticity. 
\begin{lemma}[Uniqueness]
\label{lem:uniqueness} For $0<\alpha<\frac 12,\,-k<\beta\leq k,$
    if~$[f_D,f_N]\in \cC^{\alpha,\beta}\oplus \cC^{\alpha+\frac12,\beta}$, then there is at most one solution~$[\sigma,\tau]\in \cC^{\alpha,\beta}\oplus \cC^{\alpha+\frac12,\beta}$ of~\eqref{eq:comp_IE}. 
\end{lemma}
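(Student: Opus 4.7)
The plan is to reduce uniqueness of \eqref{eq:comp_IE} to the already-established uniqueness of the real integral equation $\Id+\cK_{\bbR}$ in \cite{epstein2024solving}, connecting the two via path independence and analytic continuation. By linearity it suffices to show that any homogeneous solution $[\sigma,\tau]\in\cC^{\alpha,\beta}\oplus\cC^{\alpha+\frac12,\beta}$ must vanish identically.

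First I would introduce $F:=\sigma+D_{\tGamma}[\tau]$ and $G:=\tau+C_{\tGamma}[\sigma]$. By the definition of $\cC^{\alpha,\beta}$ and the continuity/analyticity lemmas from Section~\ref{sec:op_prop}, both $F$ and $G$ are continuous on $\GammaC$ and analytic on $\Int(\GammaL\cup\GammaR)$, and by hypothesis they vanish on $\tGamma$. Any $\tGamma\in\cG$ must coincide with the real axis on $[-\lt,\lt]$, since condition (ii) forces $\tGamma$ to cover every $t\in(-\lt,\lt)$ and the only points of $\GammaC$ with real part in this interval lie on $\GammaM$. On each unbounded branch, either $\tGamma$ genuinely dips into the open quarter-plane $\Int\GammaL$ or $\Int\GammaR$---in which case the identity theorem propagates $F\equiv G\equiv 0$ to the whole component and, by continuity, to its real boundary---or $\tGamma$ lies along the real axis there, making the vanishing of $F$ and $G$ on that branch automatic. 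Either way, $F$ and $G$ vanish on all of $\bbR$.

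Next, I would apply path independence (Proposition~\ref{prop:path_indep}) to move the contour of integration from $\tGamma$ to $\bbR$, so that $D_{\tGamma}[\tau](x)=D_{\bbR}[\tau|_{\bbR}](x)$ and $C_{\tGamma}[\sigma](x)=C_{\bbR}[\sigma|_{\bbR}](x)$ for $x\in\bbR$. Since $\sigma|_{\bbR}\in\cC^{\alpha}(\bbR)$ and $\tau|_{\bbR}\in\cC^{\alpha+\frac12}(\bbR)$, the pair of restrictions satisfies the homogeneous real-line equation $(\Id+\cK_{\bbR})[\sigma|_{\bbR},\tau|_{\bbR}]=0$, and the uniqueness result of \cite{epstein2024solving} for $\Id+\cK_{\bbR}$ on these spaces forces $\sigma|_{\bbR}\equiv\tau|_{\bbR}\equiv 0$.

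Finally, I would propagate the vanishing back to $\GammaC$: a second application of path independence yields $D_{\tGamma}[\tau]=D_{\bbR}[\tau|_{\bbR}]\equiv 0$ and $C_{\tGamma}[\sigma]=C_{\bbR}[\sigma|_{\bbR}]\equiv 0$ on $\GammaC$, and combining this with the identities $F\equiv G\equiv 0$ from the first step gives $\sigma\equiv-D_{\tGamma}[\tau]\equiv 0$ and $\tau\equiv-C_{\tGamma}[\sigma]\equiv 0$. The only nontrivial step is the analytic-continuation argument at the start, where one has to marry the piecewise analyticity built into $\cC^{\alpha,\beta}$ with the geometric rigidity of admissible $\cG$-curves in $\GammaC$; once the homogeneous identity has been extended from $\tGamma$ to $\bbR$, everything else is a direct combination of path independence with the known real-line uniqueness theorem.
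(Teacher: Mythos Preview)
Your proof is correct and uses the same essential strategy as the paper: restrict to $\bbR$, invoke the real-line uniqueness from \cite{epstein2024solving}, then propagate the vanishing back to $\GammaC$ via path independence and the equation itself.

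The only difference is one of interpretation. In the paper, \eqref{eq:comp_IE} is understood to hold for \emph{every} $\tGamma\in\cG$; since $\bbR\in\cG$, the equation already holds on $\bbR$ and no analytic-continuation step is needed to get there. Your first paragraph, which carefully pushes the identity $F\equiv G\equiv 0$ from a single $\tGamma$ out to $\bbR$ via the identity theorem, is therefore extra work for the lemma as stated---but it is exactly the argument the paper carries out separately in Corollary~\ref{cor5.1.102} (there phrased via the representation formula \eqref{eqn:eval_sigtau} rather than the identity theorem). So you have effectively merged the lemma and its corollary into one proof, which is fine and arguably cleaner.
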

\begin{proof}
    Since the equation is linear, we need to show that the only solution $[\sigma,\tau]\in \cC^{\alpha,\beta}\oplus \cC^{\alpha+\frac12,\beta}$ to 
    \begin{equation}\label{eqn.null_sp}
    (\Id+\cK)\left[\begin{matrix}\sigma\\ \tau\end{matrix}\right]=\left[\begin{matrix}0\\ 0\end{matrix}\right]\end{equation}
    is $\sigma=\tau=0.$ From Proposition~\ref{prop:path_indep} it follows that if $[\sigma,\tau]$ solves~\eqref{eqn.null_sp}, then $(\Id+\cK_{\bbR})\left[\begin{matrix}\sigma\\ \tau\end{matrix}\right](x_2)=\left[\begin{matrix}0\\ 0\end{matrix}\right],$ for $x_2\in\bbR.$ 
    
    The restriction $[\sigma,\tau]_{\bbR}\in \cC^{\alpha}(\bbR)\oplus \cC^{\alpha+\frac12}(\bbR)$ and therefore Theorem 3 in~\cite{epstein2024solving} shows that $[\sigma,\tau]_{\bbR}=[0,0].$ The proof then follows from~\eqref{eqn:eval_sigtau} with $\tGamma = \bbR$. 
\end{proof}

This result has the following useful corollary
\begin{corollary}\label{cor5.1.102}
     If, for any~$\tGamma\in \cG,$ there is a
     pair~$[\sigma_{\tGamma},\tau_{\tGamma}]\in
     \cC^{\alpha,\beta}_{\tGamma}\oplus \cC^{\alpha+\frac12,\beta}_{\tGamma}$
     such that
     \begin{equation}\label{eqn5.2.111}
       (\Id+\cK_{\tGamma})\begin{bmatrix} \sigma_{\tGamma}
       \\ \tau_{\tGamma}\end{bmatrix}(x_2)=0\text{ for
       }x_2\in\tGamma,
       \end{equation}
       then
        $[\sigma_{\tGamma},\tau_{\tGamma}]$ has an
       analytic continuation to an element $[\sigma,\tau]\in
       \cC^{\alpha,\beta}\oplus \cC^{\alpha+\frac12,\beta}$ and
       $[\sigma,\tau]\equiv [0,0].$
\end{corollary}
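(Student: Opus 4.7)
The plan is a short bootstrap: upgrade the locally defined pair $[\sigma_{\tGamma},\tau_{\tGamma}]$ on $\tGamma$ to a global pair $[\sigma,\tau]\in\cC^{\alpha,\beta}\oplus\cC^{\alpha+\frac12,\beta}$ on $\GammaC,$ deform the contour back to $\bbR,$ and invoke Lemma~\ref{lem:uniqueness}. To manufacture the extensions, I would first rewrite~\eqref{eqn5.2.111} as the pointwise identities $\sigma_{\tGamma} = -D_{\tGamma}[\tau_{\tGamma}]$ and $\tau_{\tGamma} = -C_{\tGamma}[\sigma_{\tGamma}]$ on $\tGamma.$ Since $[\sigma_{\tGamma},\tau_{\tGamma}]\in\cC^{\alpha,\beta}_{\tGamma}\oplus\cC^{\alpha+\frac12,\beta}_{\tGamma},$ the integrands have the decay required in the proof of Lemma~\ref{lem1.102}, so Remark~\ref{rmk3.102} guarantees that the right-hand sides, viewed as functions of $x_2\in\GammaC,$ are continuous on $\GammaC$ and analytic in $\Int\GammaL\cup\Int\GammaR.$ I would then simply \emph{define}
\[ \sigma(x_2) := -D_{\tGamma}[\tau_{\tGamma}](x_2),\qquad \tau(x_2) := -C_{\tGamma}[\sigma_{\tGamma}](x_2),\qquad x_2\in\GammaC, \]
and note that by construction $\sigma|_{\tGamma}=\sigma_{\tGamma}$ and $\tau|_{\tGamma}=\tau_{\tGamma}.$

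The norm estimates of Remark~\ref{rmk4.103} give $\|\sigma\|_{\alpha,k}\leq K\|\tau_{\tGamma}\|_{\alpha+\frac12,\beta,\tGamma}<\infty$ and $\|\tau\|_{\alpha+\frac12,k}\leq K\|\sigma_{\tGamma}\|_{\alpha,\beta,\tGamma}<\infty,$ and because $\beta\leq k$ the continuous inclusions $\cC^{\alpha,k}\hookrightarrow\cC^{\alpha,\beta}$ and $\cC^{\alpha+\frac12,k}\hookrightarrow\cC^{\alpha+\frac12,\beta}$ place $[\sigma,\tau]$ in the global Banach spaces. With this global regularity in hand, Proposition~\ref{prop:path_indep} applies and I can deform the contour of integration from $\tGamma$ to $\bbR$: for every $x_2\in\GammaC,$
\[ D_{\bbR}[\tau](x_2)=D_{\tGamma}[\tau](x_2)=D_{\tGamma}[\tau_{\tGamma}](x_2)=-\sigma(x_2), \]
where the middle equality uses $\tau|_{\tGamma}=\tau_{\tGamma};$ symmetrically, $C_{\bbR}[\sigma]=-\tau$ on all of $\GammaC.$ Restricting both identities to $\bbR$ shows that $[\sigma|_{\bbR},\tau|_{\bbR}]\in\cC^{\alpha}(\bbR)\oplus\cC^{\alpha+\frac12}(\bbR)$ lies in the null space of $\Id+\cK_{\bbR},$ so Lemma~\ref{lem:uniqueness} forces $\sigma|_{\bbR}=\tau|_{\bbR}=0.$ Plugging this back into the globally valid relations $\sigma=-D_{\bbR}[\tau]$ and $\tau=-C_{\bbR}[\sigma]$ extends the vanishing to all of $\GammaC,$ and in particular $\sigma_{\tGamma}=\tau_{\tGamma}=0.$

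The only subtle point is the contour deformation: a priori $\sigma_{\tGamma}$ is defined only on $\tGamma$ and need not be the restriction of any analytic function on $\GammaC,$ so Proposition~\ref{prop:path_indep} cannot be applied to it directly. What rescues the argument is that the homogeneous equation itself identifies $\sigma_{\tGamma}$ on $\tGamma$ with the object $-D_{\tGamma}[\tau_{\tGamma}],$ which Remark~\ref{rmk3.102} automatically endows with a global analytic extension and Remark~\ref{rmk4.103} automatically places in the correct global Banach space. Once this bootstrap is in place, path independence and the real-line uniqueness from~\cite{epstein2024solving} close the argument mechanically.
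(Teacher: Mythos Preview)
Your proposal is correct and follows essentially the same approach as the paper: both use the homogeneous equation to rewrite $[\sigma_{\tGamma},\tau_{\tGamma}]$ as $-\cK_{\tGamma}[\sigma_{\tGamma},\tau_{\tGamma}]$, invoke Remarks~\ref{rmk3.102} and~\ref{rmk4.103} to obtain a global analytic extension in $\cC^{\alpha,\beta}\oplus\cC^{\alpha+\frac12,\beta}$, and then apply Lemma~\ref{lem:uniqueness}. Your final step---restricting to $\bbR$, concluding vanishing there, and then using $\sigma=-D_{\bbR}[\tau]$, $\tau=-C_{\bbR}[\sigma]$ to propagate the vanishing to all of $\GammaC$---is already contained in the proof of Lemma~\ref{lem:uniqueness}, so the paper simply invokes that lemma directly once the global extension is in hand.
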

\begin{proof} Let $[\sigma_{\tGamma},\tau_{\tGamma}]$ be a solution to~\eqref{eqn5.2.111}.
    As noted in Remark~\ref{rmk4.103}, the components of
    $\cK_{\tGamma}\begin{bmatrix} \sigma_{\tGamma}
      \\ \tau_{\tGamma}\end{bmatrix}$ extend analytically from $\tGamma$ to
    $\GammaL\cup\GammaR,$ to define an element of $\cC^{\alpha,\beta}\oplus
    \cC^{\alpha+\frac12,\beta}.$ Since
    \begin{equation}
    \begin{bmatrix} \sigma_{\tGamma}
       \\ \tau_{\tGamma}\end{bmatrix}(x_2)=-  \cK_{\tGamma}\begin{bmatrix} \sigma_{\tGamma}
       \\ \tau_{\tGamma}\end{bmatrix}(x_2),
    \end{equation}
for $x_2\in\tGamma,$ it follows that $[\sigma_{\tGamma},\tau_{\tGamma}]$ also has an
analytic continuation belonging to $\cC^{\alpha,\beta}\oplus
    \cC^{\alpha+\frac12,\beta}.$ This pair satisfies
\begin{equation}
  (\Id+\cK)\begin{bmatrix} \sigma
       \\ \tau\end{bmatrix}(x_2)=0\text{ for }x_2\in\GammaC.
\end{equation}
    Lemma~\ref{lem:uniqueness} implies that $[\sigma,\tau]\equiv [0,0].$
\end{proof}

\subsection{The Fredholm Property and Existence of Solutions}
We now show that the operator defining  the integral equation~\eqref{eq:comp_IE} is Fredholm of index 0. The argument is nearly identical to the discussion of the Fredholm property of~\eqref{eq:real_IE} in~\cite{epstein2023solvinga}, but is given here for completeness.

For integral equations on bounded domains, compactness is usually proved by noting that the operators are smoothing and using the Arzela-Ascoli theorem. Since the Arzela-Ascoli theorem is only applicable on bounded domains, this proof cannot be directly generalized to our situation. Instead, we must prove that the integral operator of interest, in addition to smoothing, maps~$\cC^{\alpha,\beta}$ into a space of functions with faster decay. 

\begin{proposition}\label{prop:compact}
    Suppose~$0 < \alpha < \alpha'$ and $\beta \in \mathbb{R}$. Let~$A:\cC^{\alpha,\beta}
    \to \cC^{\alpha',\beta}$ be a bounded linear operator such that~$\partial_{x_2}A:\cC^{\alpha,\beta}
    \to \cC^{0,\beta}$ boundedly. Then $A:\cC^{\alpha,\beta}
    \to\cC^{\alpha,\beta}$ is a compact operator.
\end{proposition}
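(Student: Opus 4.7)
The strategy is a standard exhaustion/diagonal argument adapted to the weighted Banach spaces $\cC^{\alpha,\beta}$, exploiting both hypotheses on $A$: the gap $\alpha' > \alpha$ will control the tail of $Af_n$ at infinity in the $\|\cdot\|_{\alpha,\beta}$-norm, while the derivative bound yields equicontinuity of $\{Af_n\}$ on compact subsets of $\overline{\GammaC}$. The main obstacle is technical rather than conceptual, namely checking that on the geometrically awkward region $\GammaC$---a union of two quadrant-like pieces and a real segment---equicontinuity really does follow from the $\cC^{0,\beta}$-bound on the derivative, via analyticity in the interiors of $\Gamma_L,\Gamma_R$ combined with continuity on the closure.

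Concretely, let $\{f_n\} \subset \cC^{\alpha,\beta}$ satisfy $\|f_n\|_{\alpha,\beta} \leq M$ and set $g_n = A f_n$. By hypothesis there is a constant $C$ such that $\|g_n\|_{\alpha',\beta} \leq CM$ and $\|\partial_{x_2} g_n\|_{0,\beta} \leq CM$. For each integer $R > \lt$, the set $K_R := \overline{\GammaC} \cap \{|z| \leq R\}$ is compact in $\bbC$. On $K_R$ both of the weights $e^{\beta|\Im z|}(1+|z|)^{\alpha'}$ and $e^{\beta|\Im z|}$ are bounded by constants depending only on $R$, so $|g_n|$ and $|\partial_{x_2} g_n|$ are uniformly bounded on $K_R$. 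Because $g_n$ is analytic in $\operatorname{int}\Gamma_L \cup \operatorname{int}\Gamma_R$ with continuous extension to $\overline{\GammaC}$, the complex derivative bound controls every directional derivative in the interiors, and continuity extends the resulting Lipschitz bound along polygonal paths in $K_R$ up to its boundary. Hence $\{g_n\}$ is equicontinuous on $K_R$, and Arzela--Ascoli extracts a subsequence converging uniformly on $K_R$. Applying this with $R = R_0, R_0 + 1, \ldots$ and passing to a diagonal subsequence produces $\{g_{n_k}\}$ that converges uniformly on every $K_R$.

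The gap $\alpha' > \alpha$ now does its work. For any $k, l$ and any $z \in \overline{\GammaC}$ with $|z| > R$,
\begin{equation*}
e^{\beta|\Im z|}(1+|z|)^{\alpha}\,|g_{n_k}(z) - g_{n_l}(z)| \leq \frac{\|g_{n_k} - g_{n_l}\|_{\alpha',\beta}}{(1+R)^{\alpha' - \alpha}} \leq \frac{2CM}{(1+R)^{\alpha' - \alpha}}.
\end{equation*}
Given $\epsilon > 0$, first choose $R$ so large that the right-hand side is $< \epsilon/2$; then, since $e^{\beta|\Im z|}(1+|z|)^{\alpha}$ is bounded on $K_R$, the uniform convergence of $\{g_{n_k}\}$ on $K_R$ supplies $N$ so that $\sup_{z \in K_R} e^{\beta|\Im z|}(1+|z|)^{\alpha}\,|g_{n_k}(z) - g_{n_l}(z)| < \epsilon/2$ whenever $k, l \geq N$. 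Therefore $\{g_{n_k}\}$ is Cauchy in $\cC^{\alpha,\beta}$, and since this space is Banach it converges, establishing the compactness of $A: \cC^{\alpha,\beta} \to \cC^{\alpha,\beta}$.
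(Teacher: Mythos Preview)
Your proof is correct and follows essentially the same approach as the paper's: use the gap $\alpha'>\alpha$ to make the weighted tail of $Af_n$ small outside a compact set, use the uniform $\cC^{0,\beta}$-bound on $\partial_{x_2}(Af_n)$ together with Arzel\`a--Ascoli to extract uniformly convergent subsequences on compacta, and diagonalize. The only cosmetic difference is that the paper indexes the diagonalization by the tail tolerance $\epsilon\in\{1/m\}$ whereas you index by the exhaustion radius $R$; these are equivalent.
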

\begin{proof}
    Let $\{f_n\}$ be a bounded sequence in $\cC^{\alpha, \beta}$. Since $\{A f_n\}$ is bounded in $\cC^{\alpha', \beta}$ and $\alpha < \alpha'$, for any $\eps > 0$ there exists a bounded subset~$B_\eps$ of~$\GammaC$ such that 
    \begin{equation}\label{eqn5.2.110}
    \|A f_n\|_{\cC^{\alpha,\beta} (\Gamma_\mathbb{C} \setminus B_\eps)} < \eps
    \end{equation}
    for all $n$. Since~$Af_n$ and~$(\partial_{x_2} A f_n)$ are uniformly bounded
    in~$\cC^{0,\beta}$, the set~$\{Af_n\}$ is uniformly bounded and uniformly
    equi-continuous on~$B_\epsilon\cap \GammaC$. By the Arzela-Ascoli theorem,
    there exists a Cauchy subsequence of~$Af_n$ in~$\cC^0(B_\epsilon\cap
    \GammaC)$, and so also in~$\cC^{\alpha,\beta} (\Gamma_\mathbb{C} \cap
    B_\eps)$. By repeating this for $\epsilon\in\{\frac 1m:\:m\in\bbN\}$ we can
    use~\eqref{eqn5.2.110} in a standard diagonalization argument to construct a
    subsequence of~$\{Af_n\}$ that converges in $\cC^{\alpha,\beta}.$ As this
    holds for all bounded sequences, it follows that $A:\cC^{\alpha,\beta}
    \to\cC^{\alpha,\beta}$ is a compact operator.
\end{proof}

Unfortunately, this proposition cannot be applied directly to~$\cK$, since $D$ does not increase the algebraic rate of decay. Instead, we follow~\cite{epstein2023solvinga} and observe that
\begin{equation}
   \begin{bmatrix}
     \Id  & -D \\ 0 & \Id
    \end{bmatrix}   \begin{bmatrix}  \Id  & D \\ C & \Id
    \end{bmatrix}   \begin{bmatrix} \Id  & 0 \\ -C & \Id
    \end{bmatrix}=  \begin{bmatrix}
        \Id -DC & 0 \\ 0 & \Id
    \end{bmatrix}.\label{eq:precond_IE}
\end{equation}
Since the left and right factors on the left-hand side~of~\eqref{eq:precond_IE} are bounded and invertible,
equation~\eqref{eq:comp_IE} is Fredholm of index 0 if and only
if  the right hand side of~\eqref{eq:precond_IE} is. In what follows, we prove that~$DC$ is a compact
map from~$\cC^{\alpha,\beta}$ to itself. The advantage of using the composition
is that the oscillations of the kernels cause the kernel of the composite
map~$DC$ to decay faster than is predicted by Theorem~\ref{thm:op_maps} and
allow us to use Proposition~\ref{prop:compact}.

To aid in the proof, we split the kernels into far field and near field components. Specifically, let~$\psi$ be the bump function specified in Definition~\ref{def7.102} and set
\begin{equation}
    k_{C_{f}}(x_2,y_2) = (1-\psi(x_2))(1-\psi(y_2))k_C(x_2,y_2) \qand k_{C_{n}}(x_2,y_2)=k_{C}(x_2,y_2)-k_{C_{f}}(x_2,y_2).
\end{equation}
we define~$k_{D_{n}}$ and~$k_{D_{f}}$ similarly. This splitting is ensures that~$k_{C_{f}}$ and~$k_{D_{f}}$ are zero if either~$x_2$ or~$y_2$ is in the waveguide, which gives them a simple asymptotic form. The operators with kernels~$k_{C_{n}},k_{C_{f}},k_{D_{n}},$ and~$k_{D_{f}}$ are denoted~$C_n,C_f,D_n$ and~$D_f$, respectively. Using these definitions, we can split the product~$DC$:
\begin{equation}
    DC = D_nC_n + D_fC_n+D_nC_f+D_fC_f\label{eq:split_DC}
\end{equation}
The proof of Theorem~\ref{thm:op_maps} shows that all of these are bounded from $\cC^{\alpha,\beta}$ to itself.

We now prove that $C_n$ and~$D_n$ are compact.
\begin{lemma}\label{lem:n_compact}
    The operator $\begin{bmatrix}
        0&D_n\\C_n&0
    \end{bmatrix}$ is compact from~$\cC^{\alpha,\beta}\oplus\cC^{\alpha+\frac12,\beta}$ to $\cC^{\alpha,\beta}\oplus\cC^{\alpha+\frac 12,\beta}$
\end{lemma}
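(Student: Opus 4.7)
The plan is to apply a variant of Proposition~\ref{prop:compact} to each off-diagonal entry separately; compactness of a block-off-diagonal matrix operator on a direct sum is equivalent to compactness of its nonzero blocks between the relevant summands, so it suffices to prove that $C_n : \cC^{\alpha,\beta} \to \cC^{\alpha+\frac12,\beta}$ and $D_n : \cC^{\alpha+\frac12,\beta} \to \cC^{\alpha,\beta}$ are each compact. The required variant of Proposition~\ref{prop:compact} is the routine one: if $A : \cC^{\alpha_0,\beta} \to \cC^{\alpha',\beta}$ is bounded with $\alpha' > \alpha_1$, and the image of the unit ball is equi-continuous on each compact subset of $\GammaC$, then $A$ is compact into $\cC^{\alpha_1,\beta}$; the proof is the same truncation-plus-Arzel\`a--Ascoli-plus-diagonalization argument as in Proposition~\ref{prop:compact}, with the tail estimate rewritten for the index $\alpha_1$ in place of $\alpha_0$.

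The decay-gain step is the heart of the argument and exploits the support structure of the near-field kernels. By construction, $k_{C_n}$ vanishes unless at least one of $x_2, y_2$ lies in $\supp\psi$; splitting
\begin{equation*}
k_{C_n}(x_2, y_2) = \psi(x_2)\, k_C(x_2, y_2) + (1 - \psi(x_2))\,\psi(y_2)\, k_C(x_2, y_2),
\end{equation*}
the first summand produces a function with compact $x_2$-support, hence with arbitrary algebraic decay at infinity, while the second is an integral over the compact set $\supp\psi$ in $y_2$ with $x_2$ bounded away from it. For the second piece, the $\cA_{\frac32}$ estimate from Theorem~\ref{thm:kern_decay} gives $|C_n[\sigma](x_2)| \leq K\|\sigma\|_{\alpha,\beta}(1+|x_2|)^{-3/2}e^{-k|\Im x_2|}$. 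Hence $C_n : \cC^{\alpha,\beta} \to \cC^{3/2,k} \subset \cC^{3/2,\beta}$ is bounded and $\tfrac32 > \alpha + \tfrac12$. The identical analysis using $k_D \in \cA_{\frac12}$ yields $D_n : \cC^{\alpha+\frac12,\beta} \to \cC^{1/2,k}$ bounded, with $\tfrac12 > \alpha$ since $\alpha < \tfrac12$.

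For equi-continuity on compact subsets I would split along the same decomposition. The piece with kernel $(1-\psi(x_2))\psi(y_2)k_C$ keeps $x_2$ and $y_2$ separated from the waveguide diagonal, so one can differentiate under the integral using the continuous differentiability of $k_{C,D}$ off the diagonal asserted in Theorem~\ref{thm:kern_decay}; the same decay arguments applied to the differentiated kernel show the derivative is uniformly bounded, which forces equi-continuity. The piece with kernel $\psi(x_2)k_C$ is supported in the compact set $\supp\psi\times\supp\psi$ and is weakly singular there (logarithmic for $k_C$ and the even milder $|x_2-y_2|^2\log|x_2-y_2|$ for $k_D$); classical weakly-singular-kernel results such as Theorem~1.11 of~\cite{colton2013integral} then imply that the corresponding compact-domain integral operators carry uniformly bounded densities into equi-continuous families. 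With both hypotheses verified, the variant of Proposition~\ref{prop:compact} delivers compactness of each block and hence of the full $2\times 2$ operator on $\cC^{\alpha,\beta}\oplus\cC^{\alpha+\frac12,\beta}$. The main obstacle I anticipate is the equi-continuity estimate for the weakly singular part on contours $\tGamma\in\cG$ other than $\bbR$: the Colton--Kress estimate is stated for a real interval, and its transfer to the portion of $\tGamma$ lying in $\GammaM=[-\lt,\lt]\subset\bbR$ is immediate, but one should also verify that no new difficulty appears across the joins at $\pm\lt$ using analyticity of $k_{C,D}$ and the path-independence of Proposition~\ref{prop:path_indep}.
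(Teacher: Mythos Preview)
Your approach is correct and follows essentially the same strategy as the paper: split $k_{C_n}$ according to the support of $\psi$, handle the piece with compact $x_2$-support trivially, and use the $\cA_{3/2}$ asymptotics for the piece compactly supported in $y_2$. The one substantive difference is in treating the term with kernel $\psi(y_2)k_C(x_2,y_2)$: rather than invoking $\tfrac32>\alpha+\tfrac12$ directly as you do, the paper peels off the leading asymptotic
\[
\tilde k_{C_n}(x_2,y_2)=(1-\psi(x_2))\,\frac{e^{ik\operatorname{sign}(\Re x_2)x_2}}{(\operatorname{sign}(\Re x_2)x_2)^{3/2}}\,f_{C,\operatorname{sign}(\Re x_2)}(y_2)\,\psi(y_2)
\]
as an explicitly rank-one (hence compact) operator, and then applies Proposition~\ref{prop:compact} to the remainder, which by~\eqref{eq:cm-est} decays like $|x_2|^{-2}$. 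Your route is more direct here, since the full $\cA_{3/2}$ decay already exceeds $\alpha+\tfrac12$; the paper's rank-one split is not strictly needed for $C_n$, though it is the template that gets reused in Lemma~\ref{lem:f_compact}, where no such decay margin is available and the leading terms must be isolated.

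Your closing worry is a misreading of the setup: the lemma concerns compactness on $\cC^{\alpha,\beta}\oplus\cC^{\alpha+\frac12,\beta}$, whose elements are functions on the two-dimensional region $\GammaC$, not on a particular one-dimensional contour $\tGamma\in\cG$. By Proposition~\ref{prop:path_indep} you may compute $C_n[\sigma]$ and $D_n[\tau]$ using $\tGamma=\bbR$, and the weak singularity of $k_C$ lives only in $[-d,d]^2\subset\Gamma_M^2\subset\bbR^2$, so the Colton--Kress estimate applies on the real interval without modification; no separate analysis at $\pm\lt$ or along complex portions of other contours is needed.
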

\begin{proof}
We expand the definition of~$k_{C_{n}}:$
\begin{equation}
    k_{C_{n}}(x_2,y_2) = \psi(x_2)k_{C}(x_2,y_2) + \psi(y_2)k_{C}(x_2,y_2) - \psi(x_2)\psi(y_2)k_{C}(x_2,y_2).
\end{equation}
The first and third terms define smoothing integral operators, mapping into functions which are compactly supported on~$\Gamma_M.$ They therefore trivially satisfy the assumptions of Proposition~\ref{prop:compact} and so are compact from~$\cC^{\alpha,\beta}$ to~$\cC^{\alpha+\frac12,\beta}$. 

For the second term, we let
\begin{equation}
    \tilde{k}_{C_n}(x_2,y_2) = (1-\psi(x_2))\frac{e^{ik \operatorname{sign}(\Re x_2) x_2}}{(\operatorname{sign}(\Re x_2) x_2)^{3/2}}f_{C,\operatorname{sign}(\Re x_2)}(y_2)\psi(y_2)
\end{equation}
be the asymptotic form of~$k_{C}(x_2,y_2)\psi(y_2)$ identified by Theorem~\ref{thm:kern_decay}. Theorem~\ref{thm:kern_decay} guarantees that
\begin{equation}
    \left| k_{C}(x_2,y_2)\psi(y_2) -\tilde{k}_{C_n}(x_2,y_2)\right|\leq \frac{Ke^{-k|\Im x_2|}}{|x_2|^{2}}\psi(y_2).
\end{equation}
Since~$k_{C}(x_2,y_2)\psi(y_2)-\tilde k_{C_n}(x_2,y_2)$ decays more rapidly than~$k_C$ and has decaying derivatives, it defines a compact operator from~$\cC^{\alpha,\beta}$ to~$\cC^{\alpha+1/2,\beta}$ by Proposition~\ref{prop:compact} and a proof similar to that of Lemma~\ref{lem:op_bd}. The operator with kernel~$\tilde k_{C_n}(x_2,y_2)$, being rank one and bounded, is also a compact operator. Moreover, it trivially maps functions in $\cC^{\alpha, \beta}$ to $\cC^{\alpha+1/2, \beta}$, for any $\alpha \leq1$, and $\beta \leq k$. The operator defined by~$k_C(x_2,y_2)\psi(y_2)$ is thus the sum of a compact operator and a rank one operator and so compact from~$\cC^{\alpha,\beta}$ to~$\cC^{\alpha+1/2,\beta}$.

The operator~$D_n$ can be seen to be compact using a nearly identical proof.
\end{proof}

Since the  operators $C_f,C_n,D_f,D_n$ are bounded, these lemmas show that all terms in~\eqref{eq:split_DC} are compact, but for $D_fC_f,$ which we now consider.
\begin{lemma} \label{lem:f_compact}
    The operator $D_fC_f$ is compact from~$\cC^{\alpha,\beta}$ to itself.
\end{lemma}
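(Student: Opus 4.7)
We verify the hypotheses of Proposition~\ref{prop:compact}: we must show (i) $D_fC_f:\cC^{\alpha,\beta}\to \cC^{\alpha',\beta}$ for some $\alpha'>\alpha$, and (ii) $\partial_{x_2}(D_fC_f):\cC^{\alpha,\beta}\to \cC^{0,\beta}$ is bounded. Boundedness of $D_fC_f$ into $\cC^{\alpha,\beta}$ is automatic from Theorem~\ref{thm:op_maps} and the embedding $\cC^{\alpha,k}\hookrightarrow \cC^{\alpha,\beta}$, and continuity and interior analyticity of the image follow as in the proofs preceding Theorem~\ref{thm:op_maps}. A Fubini argument, justified by the absolute convergence from Lemma~\ref{lem:op_bd}, gives
\[
D_f C_f[f](x_2) = \int K(x_2,y_2)\,f(y_2)\,dy_2, \qquad K(x_2,y_2) = \int k_{D_f}(x_2,z_2)\,k_{C_f}(z_2,y_2)\,dz_2.
\]

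The central point is that, since $k_{D_f}\in\cA_{1/2}$ and $k_{C_f}\in\cA_{3/2}$, their leading far-field terms (see~\eqref{eq:cc-est}) share a phase factor $e^{iks_zz_2}$ with $s_z=\sign(\Re z_2)$, so the $z_2$-integrand in the composition carries a rapidly oscillating factor $e^{2iks_zz_2}$. By path independence (Proposition~\ref{prop:path_indep}) we deform the $z_2$-contour into $\GammaL\cup\GammaR$ along some $\tGamma\in\cG$ satisfying the slope condition~\eqref{eq:slop-cond}, on which $|e^{2iks_zz_2}|=e^{-2k|\Im z_2|}$ decays exponentially. Splitting each kernel as leading-plus-remainder, the three products involving a remainder have strictly faster algebraic decay by~\eqref{eq:cc-est} and are handled directly by Lemma~\ref{lem:basic_integral}, while the leading-leading product is controlled on the deformed contour. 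The resulting exponential decay in $|\Im z_2|$, combined with the algebraic factors $(\sign(\Re x_2)x_2+s_zz_2)^{-1/2}$ and $(s_zz_2+\sign(\Re y_2)y_2)^{-3/2}$, yields
\[
|K(x_2,y_2)| \leq C\,\frac{e^{-k(|\Im x_2|+|\Im y_2|)}}{(1+|x_2|)^{1/2}(1+|y_2|)^{3/2}}.
\]
Pairing this with $|f(y_2)|\leq \|f\|_{\alpha,\beta}e^{-\beta|\Im y_2|}/(1+|y_2|)^{\alpha}$ and applying Lemma~\ref{lem:basic_integral} with $a=\alpha$, $b=3/2$ gives $|D_fC_f[f](x_2)|\leq C\|f\|_{\alpha,\beta}e^{-k|\Im x_2|}/(1+|x_2|)^{1/2}$, so $D_fC_f[f]\in\cC^{1/2,\beta}$; since $\alpha<1/2$ we may take $\alpha'=1/2$, establishing (i).

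For (ii), differentiating under the integral shows that $\partial_{x_2}k_{D_f}$ remains in $\cA_{1/2}$ modulo faster-decaying terms (the derivative of $e^{iks_xx_2}$ pulls down $iks_x$, preserving the asymptotic order, while differentiating the algebraic factor $(\sign(\Re x_2)x_2+s_zz_2)^{-1/2}$ produces strictly more decay); repeating the composition estimate with $\partial_{x_2}k_{D_f}$ in place of $k_{D_f}$ gives $\partial_{x_2}(D_fC_f):\cC^{\alpha,\beta}\to\cC^{0,\beta}$ boundedly. Proposition~\ref{prop:compact} then delivers compactness. The principal technical obstacle is the rigorous contour deformation for the leading-leading term: one must verify that the fractional powers $(\cdot)^{1/2}$ and $(\cdot)^{3/2}$ admit consistent analytic continuations as $z_2$ moves off $\bbR$. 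This is possible because each summand $\sign(\Re x_2)x_2$, $s_zz_2$, and $\sign(\Re y_2)y_2$ lies in the closed first quadrant along admissible contours (per the remark following the definition of $\cA_\alpha$), so their sums never cross the branch cut $(-\infty,0]$ used to define $z^\alpha$.
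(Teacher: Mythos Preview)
Your approach is correct but genuinely different from the paper's. Both split each far-field kernel into its leading asymptotic plus remainder and dispose of the three cross products via the extra half-power of decay; the divergence is in how the leading--leading term $\tilde D_f\tilde C_f$ is handled. The paper stays on the real axis and integrates by parts \emph{twice} against $e^{2ikz_2}$ in
\[
I=\int_d^\infty \frac{\psi^c(z_2)\,e^{2ikz_2}}{(x_2+z_2)^{1/2}(z_2+y_2)^{3/2}}\,\dd z_2,
\]
which gains two additional inverse powers of $(x_2+z_2)$ or $(z_2+y_2)$ and yields $|\tilde k(x_2,y_2)|\le Ce^{-k(|\Im x_2|+|\Im y_2|)}\big/(|x_2+d|^{1/2}|y_2+d|^{3/2})$. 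Your contour-deformation argument reaches the same bound by pushing $z_2$ into the first quadrant so that $|e^{2ikz_2}|=e^{-2k\Im z_2}$ supplies the missing decay. Two points to tighten: (a) Proposition~\ref{prop:path_indep} concerns $C_{\tGamma}[\sigma]$ and $D_{\tGamma}[\tau]$ with analytic densities, not the explicit leading--leading integrand; invoke Cauchy's theorem directly instead. (b) The cutoff $(1-\psi(z_2))^2$ is not analytic, so you cannot deform the full integrand as written; observe that $\psi^c\equiv 1$ for $|z_2|\geq \lt-\epsilon$, so on $[\lt,\infty)$ the integrand agrees with the pure analytic formula and can be deformed, while the bounded piece $[d+\epsilon,\lt]$ is estimated directly by $C|x_2|^{-1/2}|y_2|^{-3/2}$. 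The paper's integration-by-parts route sidesteps both issues automatically (boundary terms vanish because $\psi^c(d)=0$ and the integrand decays at infinity), at the cost of a longer explicit calculation.
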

\begin{proof}
We showed previously  that, for $|x_2|>d$ and $|y_2|>d$ the kernels~$k_{D}$ and~$k_{C}$ have leading order asymptotics:
\begin{equation}
    \tilde{k}_{D}(x_2,y_2)=\frac{C_{\pm,\pm}e^{ ik( \pm x_2\pm y_2)}}{(\pm x_2+\pm y_2)^{\frac 12}}\qand \tilde{k}_{C}(x_2,y_2)=\frac{C'_{\pm,\pm}e^{ik( \pm x_2\pm y_2)}}{(\pm x_2+\pm y_2)^{\frac 32}}.
\end{equation}
where the choice of $\pm$ depends on the signs of~$\Re x_2$ and~$\Re y_2$. In analogy with above, we can define the far field components of $\tilde{k}_{D}$ and $\tilde{k}_{C},$ which we denote by $\tilde{k}_{D_f}$ and $\tilde{k}_{C_f},$ respectively.
By Theorem~\ref{thm:kern_decay}, 
    \begin{equation}
        \left|k_{D_{f}}(x_2,y_2) - \tilde{k}_{D_{f}}(x_2,y_2) \right| \leq \frac{Ke^{-k(|\Im x_2|+ |\Im y_2|)}}{|x_2|+|y_2|}. \label{eq:fast-decay}
    \end{equation}
 We have a similar expression for~$k_{C_{f}}$, except that the denominator is~$(|x_2|+|y_2|)^2$. Since~$k_{D_{f}}-\tilde{k}_{D_{f}}$ and~$k_{C_{f}}-\tilde{k}_{C_{f}}$ decay more rapidly and have decaying derivatives, a proof similar to the proofs of Lemma~\ref{lem:op_bd} and Proposition~\ref{prop:compact} shows that they define compact operators.

We now study the behavior of~$\tilde{D}_f\tilde{C}_f$, the product of the operators defined by the asymptotic kernels. We denote the kernel of this operator by~$\tilde k$. For notational convenience, in the following we set $\psi^c(z) = 1-\psi(z).$ To bound $\tilde{k}$, we suppose that~$x_2,y_2\in\Gamma_R$ and integrate over the real line
    \begin{equation}
        \tilde k(x_2,y_2) = \int_{-\infty}^\infty \tilde{k}_{D_{f}}(x_2,z_2)\tilde{k}_{C_{f}}(z_2,y_2) \dd z_2.
    \end{equation}
 We split this integral into the regions~$\{z_2>d\}$ and~$\{z_2<-d\},$ and first consider the integral over $[d,\infty),$
    \begin{equation}
        \tilde k_+(x_2,y_2) = C \int_d^\infty \frac{e^{ik (x_2+ z_2)}}{(x_2+z_2)^{\frac 12}}\frac{\psi^c(z_2)e^{ik (z_2+ y_2)}}{(z_2+y_2)^{\frac 32}}\dd z_2 = C e^{ik(x_2+y_2)}\int_{d}^\infty \frac{\psi^c(z_2)e^{2ik z_2}}{(x_2+z_2)^{\frac 12}(z_2+y_2)^{\frac 32}}\dd z_2.
    \end{equation}
  We need to estimate the integral
    \begin{equation}
        I=\int_{d}^\infty \frac{\psi^c(z_2)e^{2ik z_2}}{(x_2+z_2)^{\frac 12}(z_2+y_2)^{\frac 32}}dz_2.
    \end{equation}
    If we integrate by parts twice, we find that
    \begin{multline}
        I = -\frac{1}{4k^2}\int_{d}^\infty \psi^c(z_2)e^{2ik z_2}\left[\frac{15}{4(x_2+z_2)^{\frac 12}(z_2+y_2)^{7/2}} \right.\\
        +\left.\frac{3}{2(x_2+z_2)^{\frac 32}(z_2+y_2)^{\frac 52}}+\frac{3}{4(x_2+z_2)^{\frac 52}(z_2+y_2)^{\frac 32}} \right]\dd z_2 \\
        +\frac{1}{4k^2} \int_d^\infty {\psi^c}'(z_2) e^{2ik z_2}\left[\frac{3x_2 + y_2 + 4z_2}{2(x_2+z_2)^{\frac{3}{2}}(z_2+y_2)^{\frac{5}{2}}}\right]\dd z_2\\
        -\frac{1}{4k^2}\int_d^\infty {\psi^c}''(z_2) e^{2ik z_2}\frac{1}{{(x_2+z_2)^{\frac 12}(z_2+y_2)^{\frac 32}}}\dd z_2
    \end{multline}
   Observing that ${\psi^c}'$ and ${\psi^c}''$ are compactly supported in $[-\lt,-d]\cup [d,\lt],$ each of these terms is easily  seen to be bounded in absolute value by~$\frac{C}{(|x_2+d|)^{\frac 12}(|y_2+d|)^{\frac 32}},$ for some appropriately chosen constant $C.$
    Plugging this into our expression for~$\tilde k$ gives
    \begin{equation}
        \left|\tilde k_+(x_2,y_2)\right| \leq\frac{Ce^{-k\Im (x_2+y_2)}}{(|x_2+d|)^{\frac 12}(|y_2+d|)^{\frac 32}}.
    \end{equation}
     A similar proof can be applied to bound the integral over the region~$\{z_2<-d\}$. 
     
     Repeating the derivation for~$x_2$ and~$y_2$ in different regions of $\GammaL\cup\GammaR$ then gives
    \begin{equation}
        \left|\tilde k(x_2,y_2)\right| \leq\frac{Ce^{-k(|\Im x_2|+|\Im y_2|)}}{(|\pm x_2+d|)^{\frac 12}(|\pm y_2+d|)^{\frac 32}}.
    \end{equation}
    A similar calculation shows that, 
        \begin{equation}
        \left|\partial_{x_2}\tilde k(x_2,y_2)\right| \leq\frac{Ce^{-k(|\Im x_2|+|\Im y_2|)}}{(|\pm x_2+d|)^{\frac 32}(|\pm y_2+d|)^{\frac 32}}.
    \end{equation}    
    If we repeat the proof from Section~\ref{sec:path-indep}, then these estimates are enough to prove that
    \begin{equation}
        \tilde D_{f}\tilde C_f :\cC^{\alpha,\beta} \to \cC^{\frac 12,\beta}  \qand \partial_{x_2}\tilde D_{f}\tilde C_f :\cC^{\alpha,\beta} \to \cC^{\frac 32,\beta} .
    \end{equation}
    By Proposition~\ref{prop:compact} we thus have that~$\tilde D_{f}\tilde C_f$ is a compact map from~$\cC^{\alpha,\beta}$ to itself. Thus
    \begin{equation}
        D_fC_f = \tilde D_{f}\tilde C_f + (D_f-\tilde D_{f})\tilde C_f+\tilde D_{f}(C_f-\tilde C_f)+(D_f-\tilde D_{f})(C_f-\tilde C_f)
    \end{equation}
    is compact.
\end{proof}
Assembling the above results gives the following theorem.
\begin{theorem}\label{thm:fred} For $0<\alpha<\frac 12,\,-k<\beta\leq k,$
    the operator~$DC$ is a compact operator from~$\cC^{\alpha,\beta}$ to itself. Equation~\eqref{eq:precond_IE} is a second kind Fredholm integral equation,  and therefore of index 0. Equation~\eqref{eq:comp_IE} is therefore also Fredholm with index zero.
\end{theorem}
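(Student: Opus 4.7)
}
The plan is to first establish compactness of $DC$ on $\cC^{\alpha,\beta}$ using the splitting~\eqref{eq:split_DC}, and then transfer the Fredholm property with index zero from $\Id-DC$ to $\Id+\cK$ via the block factorization~\eqref{eq:precond_IE}. Most of the heavy lifting has already been done in Lemmas~\ref{lem:n_compact} and~\ref{lem:f_compact}, so the proof should amount to an assembly of these results together with the mapping properties from Theorem~\ref{thm:op_maps}.

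For the compactness of $DC$, I would write $DC = D_nC_n + D_fC_n+D_nC_f+D_fC_f$ and argue term-by-term. By Theorem~\ref{thm:op_maps} (together with Remark~\ref{rmk4.103}) each of the four pieces $C_n, C_f, D_n, D_f$ is bounded between the relevant $\cC^{\alpha,\beta}$-spaces, since they are restrictions of $C$ and $D$ by multiplication by bounded cutoff factors. Lemma~\ref{lem:n_compact} says $C_n$ and $D_n$ are compact; hence any composition that contains one of them, namely $D_nC_n$, $D_fC_n$, and $D_nC_f$, is the composition of a bounded operator with a compact one and therefore compact on $\cC^{\alpha,\beta}$. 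The remaining term $D_fC_f$ is handled directly by Lemma~\ref{lem:f_compact}. Summing four compact operators gives that $DC:\cC^{\alpha,\beta}\to\cC^{\alpha,\beta}$ is compact.

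Having this, the classical Riesz--Schauder theory implies that $\Id-DC$ is a Fredholm operator of index zero on $\cC^{\alpha,\beta}$. Consequently the block diagonal operator
\begin{equation*}
\begin{bmatrix} \Id-DC & 0 \\ 0 & \Id \end{bmatrix}
\end{equation*}
is Fredholm of index zero on $\cC^{\alpha,\beta}\oplus \cC^{\alpha+\frac12,\beta}$. The factorization~\eqref{eq:precond_IE} expresses this block diagonal operator as
\begin{equation*}
\begin{bmatrix} \Id & -D \\ 0 & \Id \end{bmatrix}(\Id+\cK)\begin{bmatrix} \Id & 0 \\ -C & \Id \end{bmatrix}.
\end{equation*}
Using Theorem~\ref{thm:op_maps} once more, the outer unipotent factors are bounded on $\cC^{\alpha,\beta}\oplus \cC^{\alpha+\frac12,\beta}$ and their obvious inverses $\begin{bmatrix} \Id & D \\ 0 & \Id\end{bmatrix}$ and $\begin{bmatrix} \Id & 0 \\ C & \Id\end{bmatrix}$ are also bounded; thus both outer factors are topological isomorphisms. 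Since Fredholmness and the index are preserved under composition with invertible bounded operators, $\Id+\cK$ is itself Fredholm of index zero, which is~\eqref{eq:comp_IE}.

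The only step that requires any genuine thought is verifying that the outer factors in~\eqref{eq:precond_IE} really are bounded invertible on the product space, which requires checking that $C$ and $D$ each preserve the relevant $\cC^{\alpha,\beta}$-scales; this is immediate from Theorem~\ref{thm:op_maps} combined with the continuous embedding $\cC^{\alpha',k}\hookrightarrow \cC^{\alpha',\beta}$ for $\beta\leq k$ noted after that theorem. Apart from this purely formal point, there is no new analytic obstacle: the true difficulty was localized in the asymptotic estimate~\eqref{eq:fast-decay} and the stationary-phase--type integration by parts used inside Lemma~\ref{lem:f_compact} to produce the extra decay of the composed kernel, and in the rank-one splitting used inside Lemma~\ref{lem:n_compact}.
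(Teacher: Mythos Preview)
Your proposal is correct and follows exactly the paper's approach: assemble the splitting~\eqref{eq:split_DC}, use Lemmas~\ref{lem:n_compact} and~\ref{lem:f_compact} together with boundedness of the pieces to conclude compactness of $DC$, and then transfer the index-zero Fredholm property from $\Id-DC$ to $\Id+\cK$ via the factorization~\eqref{eq:precond_IE} and the bounded invertibility of the unipotent outer factors. The paper's proof is literally the one-line ``Assembling the above results'' after stating these lemmas, so there is nothing to add.
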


Coupled with the uniqueness result, Lemma~\ref{lem:uniqueness}, we have the following basic existence result.
\begin{corollary}[Existence]
     For $0<\alpha<\frac 12,\,-k<\beta\leq k,$ given $[f_D,f_N]\in \cC^{\alpha,\beta}\oplus \cC^{\alpha+\frac 12,\beta}$ the equation $(\Id+\cK)\begin{bmatrix}\sigma\\\tau\end{bmatrix}=\begin{bmatrix}f_D\\\ f_N\end{bmatrix}$  has a unique solution in $\cC^{\alpha,\beta}\oplus \cC^{\alpha+\frac 12,\beta}.$
\end{corollary}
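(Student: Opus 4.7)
The plan is to combine the Fredholm property of $\Id+\cK$ established in Theorem~\ref{thm:fred} with the uniqueness result of Lemma~\ref{lem:uniqueness} via the Fredholm alternative. Since the existence of a unique solution for arbitrary right-hand side is exactly the statement that a Fredholm operator of index zero with trivial kernel is bijective, the corollary is essentially an immediate consequence of the two main results already proven. The work is therefore in assembling these ingredients carefully and confirming that ``Fredholm of index zero'' in the preceding theorem is formulated at the level of the full system acting on $\cC^{\alpha,\beta}\oplus \cC^{\alpha+\frac 12,\beta}$.

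More concretely, I would proceed as follows. First, I would recall from Theorem~\ref{thm:fred} that the factorization~\eqref{eq:precond_IE} reduces the analysis of $\Id+\cK$ to that of $\Id-DC$ on $\cC^{\alpha,\beta}$, and that $DC$ is compact on $\cC^{\alpha,\beta}$. Thus $\Id-DC$ is a compact perturbation of the identity, hence Fredholm of index zero on $\cC^{\alpha,\beta}$; since the outer factors in~\eqref{eq:precond_IE} are bounded and invertible on $\cC^{\alpha,\beta}\oplus \cC^{\alpha+\frac 12,\beta}$ (their inverses being the obvious matrices with $\pm D$ and $\pm C$ entries, which are bounded between the appropriate spaces by Theorem~\ref{thm:op_maps}), conjugation preserves the Fredholm index and we conclude that $\Id+\cK$ is Fredholm of index zero on $\cC^{\alpha,\beta}\oplus\cC^{\alpha+\frac 12,\beta}$.

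Next, I would invoke Lemma~\ref{lem:uniqueness}, which asserts that the kernel of $\Id+\cK$ on $\cC^{\alpha,\beta}\oplus\cC^{\alpha+\frac 12,\beta}$ is trivial. By the Fredholm alternative, a Fredholm operator of index zero with trivial kernel is a bounded bijection with bounded inverse. Consequently, for every $[f_D,f_N]\in \cC^{\alpha,\beta}\oplus \cC^{\alpha+\frac 12,\beta}$ there exists a unique $[\sigma,\tau]\in \cC^{\alpha,\beta}\oplus \cC^{\alpha+\frac 12,\beta}$ solving $(\Id+\cK)[\sigma,\tau]^T=[f_D,f_N]^T$, with a norm bound $\|[\sigma,\tau]\|\leq C\|[f_D,f_N]\|$ for some constant $C$ depending only on $\alpha,\beta,k.$

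There is no real obstacle beyond bookkeeping: the only subtlety is checking that the factorization identity~\eqref{eq:precond_IE} really does transfer the Fredholm property between the scalar operator $\Id-DC$ on $\cC^{\alpha,\beta}$ and the system $\Id+\cK$ on the product space. This is handled by observing that the triangular matrices on either side of~\eqref{eq:precond_IE} are bounded isomorphisms of $\cC^{\alpha,\beta}\oplus \cC^{\alpha+\frac 12,\beta}$ onto itself (their inverses being explicit, and bounded by Theorem~\ref{thm:op_maps} combined with the continuous embedding $\cC^{\alpha',k}\hookrightarrow \cC^{\alpha',\beta}$ for $\beta\leq k$), so Fredholmness and index are preserved under this similarity, and the desired existence and uniqueness statement follows at once.
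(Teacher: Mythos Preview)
Your proposal is correct and follows exactly the paper's approach: the corollary is stated immediately after Theorem~\ref{thm:fred} with the single sentence ``Coupled with the uniqueness result, Lemma~\ref{lem:uniqueness}, we have the following basic existence result,'' which is precisely the Fredholm-alternative argument you spell out. Your additional bookkeeping on the triangular factors in~\eqref{eq:precond_IE} is already absorbed into the statement of Theorem~\ref{thm:fred}, so you could trim that, but nothing is wrong.
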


By restriction this implies an existence result on any~$\tGamma\in\cG$.
\begin{lemma}
\label{lem:existence}
    Let~$\tGamma$ be any curve in~$\cG$.
    If~$[f_D,f_N]\in \cC^{\alpha,\beta}\oplus \cC^{\alpha+\frac12,\beta}$, then there exists a solution of~$[\sigma,\tau]\in\cC^{\alpha,\beta}\oplus \cC^{\alpha+\frac12,\beta}$ of $(\Id+\cK_{\tGamma})\begin{bmatrix}\sigma\\\tau\end{bmatrix}=\begin{bmatrix}f_D\\\ f_N\end{bmatrix}_{\tGamma}$
\end{lemma}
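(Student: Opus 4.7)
The plan is to deduce this directly from the previous corollary (existence on $\GammaC$) combined with the path-independence result of Proposition~\ref{prop:path_indep}, so essentially no new work is needed.

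First, since $[f_D,f_N]\in \cC^{\alpha,\beta}\oplus \cC^{\alpha+\frac12,\beta}$ and $0<\alpha<\frac12$, $-k<\beta\leq k$, the existence corollary immediately provides a unique pair
\begin{equation*}
[\sigma,\tau]\in \cC^{\alpha,\beta}\oplus \cC^{\alpha+\frac12,\beta}
\end{equation*}
satisfying $(\Id+\cK)\begin{bmatrix}\sigma\\\tau\end{bmatrix}(x_2)=\begin{bmatrix}f_D\\f_N\end{bmatrix}(x_2)$ for every $x_2\in\GammaC$.

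Next, because $\sigma\in\cC^{\alpha,\beta}$ and $\tau\in\cC^{\alpha+\frac12,\beta}$ satisfy the global hypotheses of Proposition~\ref{prop:path_indep}, the values of $C[\sigma](x_2)$ and $D[\tau](x_2)$ are independent of the integration contour; in particular for every $x_2\in\GammaC$ (hence for every $x_2\in\tGamma$) we have
\begin{equation*}
C[\sigma](x_2)=C_{\tGamma}[\sigma](x_2)\qand D[\tau](x_2)=D_{\tGamma}[\tau](x_2),
\end{equation*}
so that $\cK_{\tGamma}\begin{bmatrix}\sigma\\\tau\end{bmatrix}(x_2)=\cK\begin{bmatrix}\sigma\\\tau\end{bmatrix}(x_2)$. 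Restricting the identity from the existence corollary to $x_2\in\tGamma$ then yields
\begin{equation*}
(\Id+\cK_{\tGamma})\begin{bmatrix}\sigma\\\tau\end{bmatrix}(x_2)=\begin{bmatrix}f_D\\f_N\end{bmatrix}(x_2)\quad\forall x_2\in\tGamma,
\end{equation*}
which is exactly the required equation once $[f_D,f_N]$ is read as its restriction to $\tGamma$.

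There is essentially no obstacle: the only point to verify is that the hypotheses of Proposition~\ref{prop:path_indep} apply to the pair produced by the existence corollary, which they do since that pair lives in the global spaces $\cC^{\alpha,\beta}\oplus\cC^{\alpha+\frac12,\beta}$ (rather than merely in the contour-restricted spaces $\cC^{\alpha,\beta}_{\tGamma}\oplus\cC^{\alpha+\frac12,\beta}_{\tGamma}$). Note that uniqueness on $\tGamma$ is \emph{not} asserted by this lemma, and indeed requires the additional analytic-continuation argument given in Corollary~\ref{cor5.1.102}; the present statement only claims existence, which is a one-line consequence of the two results cited.
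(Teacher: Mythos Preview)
Your proof is correct and follows exactly the same approach as the paper: take the global solution furnished by the existence corollary and restrict it to $\tGamma$, using path-independence (Proposition~\ref{prop:path_indep}) to identify $\cK$ with $\cK_{\tGamma}$ on that restriction. The paper's own proof is the one-sentence version of what you have written out in detail.
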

\begin{proof}
The solution of the equation on $\tGamma$ is simply the solution given in the corollary restricted to $\tGamma.$
\end{proof}

Theorem~\ref{thm:IE_solvable} then follows from the corollary and
Lemma~\ref{lem:existence}. We also observe the following fact about the
relationship between any solution of equation~\eqref{eq:comp_IE}  on a curve
$\tGamma\in\cG,$ and the analytic extension of the solution $[\sigma,\tau],$
given data $[f_D,f_N]\in\cC^{\alpha,\beta}\oplus\cC^{\alpha+\frac 12,\beta}.$
This is an extension of the argument used to prove Corollary~\ref{cor5.1.102}.
\begin{lemma}\label{lem11.102} If $[f_D,f_N]\in\cC^{\alpha,\beta}\oplus\cC^{\alpha+\frac 12,\beta},$ and $[\sigma_{\tGamma},\tau_{\tGamma}]\in\cC^{\alpha,\beta}_{\tGamma}\oplus\cC^{\alpha+\frac 12,\beta}_{\tGamma}$ solves 
\begin{equation}\label{eqn5.18.102}
(\Id+\cK_{\tGamma})\begin{bmatrix}\sigma_{\tGamma}\\\tau_{\tGamma}\end{bmatrix}=\begin{bmatrix}f_D\\\ f_N\end{bmatrix}_{\tGamma},
\end{equation}
then $[\sigma_{\tGamma},\tau_{\tGamma}]$ extends to $\GammaC,$ where it defines an element of $\cC^{\alpha,\beta}\oplus\cC^{\alpha+\frac 12,\beta}$ that solves~\eqref{eq:comp_IE}, and hence agrees with the solution found in Lemma~\ref{lem:existence}.
\end{lemma}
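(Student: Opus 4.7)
The plan is to reverse-engineer $[\sigma,\tau]$ from the equation on $\tGamma$ by using the right-hand side as the defining formula, and then to verify that this extension lands in the correct global Banach space and solves the global equation. Starting from
\[
    \begin{bmatrix}\sigma_{\tGamma}\\\tau_{\tGamma}\end{bmatrix}(x_2)
    = \begin{bmatrix}f_D\\ f_N\end{bmatrix}(x_2)
    - \cK_{\tGamma}\begin{bmatrix}\sigma_{\tGamma}\\\tau_{\tGamma}\end{bmatrix}(x_2),\qquad x_2\in\tGamma,
\]
I would define, for every $x_2\in\GammaC,$
\[
    \begin{bmatrix}\sigma\\\tau\end{bmatrix}(x_2)
    := \begin{bmatrix}f_D\\ f_N\end{bmatrix}(x_2)
    - \cK_{\tGamma}\begin{bmatrix}\sigma_{\tGamma}\\\tau_{\tGamma}\end{bmatrix}(x_2).
\]
The data term is already defined on $\GammaC$ with the requisite continuity and interior analyticity by hypothesis; the second term is defined on $\GammaC$ by the contour integrals along $\tGamma,$ which converge absolutely by the estimates underlying Lemma~\ref{lem1.102}. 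By Remark~\ref{rmk3.102}, these integrals depend analytically on $x_2$ in $\Int\GammaL\cup\Int\GammaR$ and are continuous on $\GammaC,$ so $[\sigma,\tau]$ has the pointwise regularity required by the space $\cC^{\alpha,\beta}\oplus\cC^{\alpha+\frac12,\beta}.$

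Next I would establish the norm bounds. By Remark~\ref{rmk4.103},
\[
    \|C_{\tGamma}[\sigma_{\tGamma}]\|_{\alpha+\frac12,k}\leq K\|\sigma_{\tGamma}\|_{\alpha,\beta,\tGamma},\qquad
    \|D_{\tGamma}[\tau_{\tGamma}]\|_{\alpha,k}\leq K\|\tau_{\tGamma}\|_{\alpha+\frac12,\beta,\tGamma}.
\]
Since $\beta\leq k,$ the embedding $\cC^{\alpha',k}\hookrightarrow\cC^{\alpha',\beta}$ shows that $\cK_{\tGamma}[\sigma_{\tGamma},\tau_{\tGamma}]$ lies in $\cC^{\alpha,\beta}\oplus\cC^{\alpha+\frac12,\beta}.$ Adding the data $[f_D,f_N],$ which lies in the same space by assumption, yields $[\sigma,\tau]\in\cC^{\alpha,\beta}\oplus\cC^{\alpha+\frac12,\beta}.$ By construction, $[\sigma,\tau]$ restricted to $\tGamma$ agrees with $[\sigma_{\tGamma},\tau_{\tGamma}].$

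Finally I would verify the global integral equation. Since $[\sigma,\tau]$ now lies in the hypotheses of Proposition~\ref{prop:path_indep}, $\cK[\sigma,\tau](x_2)$ can be computed along any contour in $\cG;$ choosing $\tGamma$ itself, and using that $[\sigma,\tau]|_{\tGamma}=[\sigma_{\tGamma},\tau_{\tGamma}],$ we obtain
\[
    \cK[\sigma,\tau](x_2)=\cK_{\tGamma}[\sigma_{\tGamma},\tau_{\tGamma}](x_2)
    \quad\text{for all }x_2\in\GammaC.
\]
Combining this with the definition of $[\sigma,\tau]$ gives $(\Id+\cK)[\sigma,\tau]=[f_D,f_N]$ on $\GammaC,$ i.e.\ equation~\eqref{eq:comp_IE}. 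Uniqueness (Lemma~\ref{lem:uniqueness}) then forces $[\sigma,\tau]$ to coincide with the solution from Lemma~\ref{lem:existence}.

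The only mildly delicate point is the third step: one must be sure that the path-independence result of Proposition~\ref{prop:path_indep} applies to the extended $[\sigma,\tau],$ which in turn requires having first confirmed that the extension globally satisfies the $\cC^{\alpha,\beta}\oplus\cC^{\alpha+\frac12,\beta}$ norm bounds. This is precisely why the order of the argument matters, but all of the ingredients are already available from Remarks~\ref{rmk3.102},~\ref{rmk4.103} and Proposition~\ref{prop:path_indep}; no new estimates are needed.
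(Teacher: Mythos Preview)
Your proposal is correct and follows essentially the same approach as the paper: both rewrite the equation as $[\sigma_{\tGamma},\tau_{\tGamma}]=[f_D,f_N]-\cK_{\tGamma}[\sigma_{\tGamma},\tau_{\tGamma}]$ on $\tGamma$, then use Remarks~\ref{rmk3.102} and~\ref{rmk4.103} to extend the right-hand side to an element of $\cC^{\alpha,\beta}\oplus\cC^{\alpha+\frac12,\beta}$ on $\GammaC$. The paper's proof is terser, stopping at ``the lemma is immediate from this observation,'' whereas you spell out the verification that the extension satisfies the global equation via Proposition~\ref{prop:path_indep} and then invoke uniqueness; these are details the paper leaves implicit but your treatment is the natural elaboration.
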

\begin{proof}
    Rewriting the equation we see that, for $x_2\in\tGamma,$
    \begin{equation}
       \begin{bmatrix}\sigma_{\tGamma}\\\tau_{\tGamma}\end{bmatrix}(x_2)=\begin{bmatrix}f_D\\\ f_N\end{bmatrix}(x_2)-\cK_{\tGamma}\begin{bmatrix}\sigma_{\tGamma}\\\tau_{\tGamma}\end{bmatrix}(x_2). 
    \end{equation}
    From the hypothesis of the lemma, and Remarks~\ref{rmk3.102} and~\ref{rmk4.103}, both terms on the right hand side have analytic extensions to $x_2\in\GammaC,$ which define an element of $[\sigma,\tau]\in\cC^{\alpha,\beta}\oplus\cC^{\alpha+\frac 12,\beta},$ analytically extending $[\sigma_{\tGamma},\tau_{\tGamma}].$ The lemma is immediate from this observation.
\end{proof}

\begin{remark}\label{rmk10.11}
Taken together Corollary~\ref{cor5.1.102}, Lemma~\ref{lem:existence}, and
Lemma~\ref{lem11.102} show that, for data in
$\cC^{\alpha,\beta}\oplus\cC^{\alpha+\frac 12,\beta},$ the problem of solving
equation~\eqref{eq:real_IE} on the real line, with appropriate estimates, can be
replaced with that of solving~\eqref{eqn5.18.102} for a convenient choice of
contour, $\tGamma$.  This is the basis for the complexification technique that we employ for
our numerical examples. Note that the solution to the latter problem restricted to
$\tGamma\cap\bbR$ agrees with the solution to~\eqref{eq:real_IE} on this
subset.

In fact, the arguments presented above can be used to show that
  $(\Id-D_{\tGamma}C_{\tGamma}) :$
  $\cC^{\alpha,\beta}_{\tGamma}\to\cC^{\alpha,\beta}_{\tGamma}$
  is Fredholm of index 0.  Corollary~\ref{cor5.1.102} then shows that this operator has a
  trivial null-space and is therefore invertible.
\end{remark}

\section[Solution to PDE 1.1]{Solution to PDE~\eqref{eqn:the_pde}}
  \label{sec:PDEsol}
We now check that the solution of~\eqref{eq:comp_IE} gives a solution of the transmission problem~\eqref{eqn:the_reduced_pde}. This is proved in~\cite{epstein2023solvinga} for data in $\cC^{\alpha}(\bbR)\oplus\cC^{\alpha+\frac 12}(\bbR),$ for $0<\alpha<\frac 12.$ If the data also satisfies the outgoing conditions in~\eqref{eq:outgoing}, then the solution to the transmission problem and PDE satisfies the outgoing radiation condition given in~\cite{epstein2024solving}. In this section we consider the extent to which the representations of $u^{l,r}(\bx),$ for $\bx\in\bbR^2,$ as an integral over the interface $\{(0,x_2)\,|\, x_2 \in \mathbb{R}\}$ can be replaced with integrals over a curve $\tGamma\in\cG,$ assuming the data belongs to $\cC^{\alpha,\beta}\oplus\cC^{\alpha+\frac 12,\beta}.$  Because the analytic continuations of the fundamental solutions of the bi-infinite waveguide problems do not necessarily decay at a uniform exponential rate along curves in $\cG,$ in order to compute $u^{l,r}$ using our representation \eqref{eqn:int_rep}, we impose the additional restriction $0\leq\beta.$

To deform the contour of integration, we consider the single and double layer
contributions separately. The Green's functions are written as
$G^{l,r}(\bx;\by)=G_0(\bx;\by)+w^{l,r}(\bx;\by);$ as the two terms have rather different analytic continuation properties, we treat them separately. Since the kernels $w^{l,r}(x_1,x_2;y_{1},y_2)$ depend only on $x_{1} -y_{1}$, it is convenient to represent them in terms of their partial Fourier transform, i.e.
\begin{equation}
     w^{l,r}(x_1,x_2;y_1,y_2) =\int_{\gamma_{F}} e^{i\xi (x_1-y_1)}\tilde w^{l,r}(\xi;x_2,y_2)\dd\xi,
    \end{equation}
    where $\gamma_{F}$ is defined to be $\{x+iy:\: y=-\tanh(x)\}.$ In every bounded subset of $\bbR^2 \times \bbR^2$, this representation is the same as the one presented in~\cite{epstein2023solvinga}, see Appendix~\ref{app:a} for a discussion 
    of the properties of $\tilde w^{l,r}(\xi; x_{2}, y_{2})$.

    As shown in Lemma~\ref{lem:kern_analytic}, the analytic continuations of the
    kernels $w^{l,r}$ have the same asymptotic properties when $x_1\neq 0$ as they do when $x_1=0.$ Hence these terms can be analytically continued to any curve in $\cG,$ and we easily show that, for any  $(x_1,x_2)\in\bbR^2,$ $\tau\in\cC^{\alpha+\frac 12,\beta},$ and $\tGamma\in\cG$
    \begin{equation}
\int_{\bbR}w^{l,r}(x_1,x_2;0,y_2)\tau(y_2)\dd y_2=\int_{\tGamma}w^{l,r}(x_1,x_2;0,y_2)\tau(y_2)\dd y_2,
    \end{equation}
    with a  similar result for the $\sigma$-integral.

What requires more care is the analytic continuation of
the free-space contributions.  This is because the free-space kernel
on $\{y_1=0\}$ is a function of $\sqrt{x_1^2+(y_2-x_2)^2},$ so care
must be taken to avoid branch cuts of this function as $y_2$ moves
into the complex plane. For stability it is also important to keep
$\Im\sqrt{x_1^2+(y_2-x_2)^2}\geq 0.$ If we require that $x_2\in[-\lt,\lt],$
then these conditions hold for $y_2\in\GammaL\cup\GammaR$ and
arbitrary $x_1.$ As $(y_2-x_2)^2$ lies in $\{z:\:\Im z\geq
0\}\setminus (-\infty,0],$ it is easy to see that
\begin{equation}\label{eqn5.22.107}
    0\leq\Im\sqrt{x_1^2+(y_2-x_2)^2}\leq |\Im y_2|.
\end{equation}
If $(x_1,x_2)\in K,$ for a compact set $K\subset\bbR^2,$ then as $|\Im y_2|\to \infty,$ we see that
\begin{equation}
  \frac{\Im\sqrt{x_1^2+(y_2-x_2)^2}}{|\Im y_2|}\to 1,
\end{equation}
but note that as $|x_1|\to\infty,$ with $y_2$ bounded, the
$\Im\sqrt{x_1^2+(y_2-x_2)^2}\to 0^+.$ In light of this we now assume that
$\beta> 0$ in order to be certain that  various integrals we consider are
\emph{uniformly}, absolutely convergent.

From the standard asymptotic expansion for the free-space kernel we see that for~ $(x_1, x_2)\in \bbR,$ with $|x_2|<\lt$ and $y_2\in\GammaL\cup\GammaR,$ we obtain the uniform estimates
\begin{equation}
    \left| G_0(x_1,x_2;0;y_2) - \frac{C_{\pm}e^{ik \sqrt{x_1^2+(x_2-y_2)^2}}}
    {[x_1^2+(x_2-y_2)^2]^{\frac 14}}\right| \leq \frac{Ke^{-k \Im\sqrt{x_1^2+(x_2-y_2)^2}}}{\sqrt{|x_1^2+(x_2-y_2)^2|}};\label{eq:Hank_asymp}
\end{equation}
with a similar estimate for the $\pa_{y_1}$-derivatives of the free-space
kernel:
\begin{equation}
   \left| \pa_{y_1}G_0(x_1,x_2;0;y_2) - \frac{C'_{\pm}x_1e^{ik \sqrt{x_1^2+(x_2-y_2)^2}}}
    {[x_1^2+(x_2-y_2)^2]^{\frac 34}}\right| \leq \frac{K(|x_1|+1)e^{-k\Im \sqrt{x_1^2+(x_2-y_2)^2}}}{\sqrt{|x_1^2+(x_2-y_2)^2|^{\frac 54}}}.\label{eq:Hank1_asymp}
\end{equation}
Using these estimates we can deform the contour of integration for the free-space terms.
\begin{lemma}\label{lem:layer_contour}
  Let~$\sigma\in\cC^{\alpha,\beta}$ and~$\tau \in \cC^{\alpha+\frac 12,\beta}$ for  $0<\alpha<\frac 12,$ $0\leq\beta\leq k$. The values~$\cS^{l,r}_{\tGamma}[\tau](\bx)$ and~$\cD^{l,r}_{\tGamma}[\sigma](\bx)$ exist for every~$\bx\in\bbR\times[-\lt,\lt]$ and are independent of~$\tGamma\in \cG$.
\end{lemma}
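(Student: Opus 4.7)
The plan is to split the Green's function $G^{l,r}(\bx;0,y_2) = G_0(\bx;0,y_2) + w^{l,r}(\bx;0,y_2)$ and handle the two pieces separately, since they have rather different analytic structure. For the $w^{l,r}$ part, the arguments from Section~\ref{sec:op_prop} go through essentially verbatim: Lemma~\ref{lem:kern_analytic} gives us that $w^{l,r}(x_1,x_2;0,y_2)$ and its $y_1$-derivative extend analytically in $y_2$ to $\GammaL\cup\GammaR$ with the same asymptotic decay rates as $k_D$ and $k_C$, \emph{uniformly} in bounded ranges of $x_1$ (this is precisely the point of working with the partial Fourier representation introduced just above the lemma). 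The existence proof mirrors Lemma~\ref{lem1.102} (combining these decay rates with the hypothesis that $\sigma\in\cC^{\alpha,\beta}$ and $\tau\in\cC^{\alpha+1/2,\beta}$ yields absolutely convergent integrals), and path independence follows by the truncate-and-connect Cauchy argument of Proposition~\ref{prop:path_indep}.

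The $G_0$ contribution is where the genuine new work lies, because the free-space kernel contains $\sqrt{x_1^2+(x_2-y_2)^2}$, whose branch points $y_2 = x_2\pm i x_1$ have real part $x_2\in[-\lt,\lt]$. The key geometric observation is that these branch points therefore lie in the strip $|\Re y_2|\leq \lt$ and are disjoint from $\GammaL\cup\GammaR,$ so any deformation through $\GammaC$ avoids them, and the principal branch of the square root extends analytically in $y_2$ throughout $\GammaL\cup\GammaR$. Combined with~\eqref{eqn5.22.107}, which gives $0\leq\Im\sqrt{x_1^2+(x_2-y_2)^2}\leq |\Im y_2|$, this means $|e^{ik\sqrt{x_1^2+(x_2-y_2)^2}}|\leq 1$ on $\GammaL\cup\GammaR$.

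Existence for the free-space piece then follows from the asymptotic estimates~\eqref{eq:Hank_asymp} and~\eqref{eq:Hank1_asymp}: along $\tGamma\in\cG$ we have $|\sqrt{x_1^2+(x_2-y_2)^2}|\sim |y_2|$ for $|y_2|$ large (uniformly in $x_1$ bounded), so the free-space single layer integrand is $O(|y_2|^{-\frac{1}{2}-\alpha-\frac{1}{2}})=O(|y_2|^{-1-\alpha})$ and the double layer integrand is $O(|y_2|^{-\frac{3}{2}-\alpha})$, both absolutely integrable since $\alpha>0$. For path independence, fix two contours $\tGamma,\hGamma\in\cG$, truncate them at $|\Re y_2|=\rho$, and close by vertical segments $\gamma_\rho^{\pm}$ as in Figure~\ref{fig:path_indep}. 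The integrand is analytic in the region between the contours (branch points avoided as noted above), so Cauchy's theorem reduces the difference to the contribution along $\gamma_\rho^{\pm}$. On these vertical segments the combined algebraic factor $(1+|y_2|)^{-1-\alpha}$ (from kernel $\cdot$ density) gives an estimate of order $\rho^{-\alpha}$ for the integral; when additionally $\beta>0$ or $\Im y_2$ is large, the exponential factor $e^{-\beta|\Im y_2|}\cdot|e^{ik\sqrt{\ldots}}|$ provides further decay. Letting $\rho\to\infty$ yields $\cS^l_{\tGamma}=\cS^l_{\hGamma}$, and similarly for $\cS^r,\cD^{l,r}$.

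The main technical subtlety is the interplay between $x_1$ (which may be arbitrarily large in $\bbR$) and $y_2\in\tGamma$ in the estimate~\eqref{eqn5.22.107}: as $|x_1|\to\infty$ with $\Im y_2$ bounded, $\Im\sqrt{x_1^2+(x_2-y_2)^2}\to 0^+$, so the exponential factor provides no help and we rely purely on the algebraic decay of the kernel combined with the density's algebraic decay. Fortunately the constants in~\eqref{eq:Hank_asymp} and~\eqref{eq:Hank1_asymp} are uniform, and we only need existence for each fixed $\bx$ (not uniformity in $\bx$), so the argument closes. The restriction $\beta\geq 0$ is essential because if $\beta<0$ the density could grow faster than the kernel's exponential decay (which vanishes along some directions), destroying even absolute convergence.
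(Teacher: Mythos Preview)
Your proposal is correct and follows essentially the same approach as the paper: split $G^{l,r}=G_0+w^{l,r}$, handle the waveguide correction by recycling the Section~\ref{sec:op_prop} arguments (Lemma~\ref{lem:kern_analytic}, Lemma~\ref{lem1.102}, Proposition~\ref{prop:path_indep}), and treat the free-space piece via the Hankel asymptotics~\eqref{eq:Hank_asymp}--\eqref{eq:Hank1_asymp} together with~\eqref{eqn5.22.107} and the branch-point geometry. Your explicit identification of the branch points $y_2=x_2\pm ix_1$ as lying in the strip $|\Re y_2|\leq\lt$ (hence outside $\GammaL\cup\GammaR$) is exactly the observation the paper makes in the paragraph preceding the lemma, and your discussion of why $\beta\geq 0$ is needed matches the paper's reasoning there as well.
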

\begin{proof}
    We use the splitting
    \begin{equation}
        G^{l,r} = G_0 + w^{l,r}
    \end{equation}
    to split the layer potentials into a free-space part and a waveguide correction:
    \begin{equation}
        \cS^{l,r}_{\tGamma}=\cS_{\tGamma} +\cS^{l,r}_{w,\tGamma}\quad\text{and}\quad\cD_{\tGamma}^{l,r}=\cD_{\tGamma} +\cD^{l,r}_{w,\tGamma}.
    \end{equation}
    The existence and path independence of the waveguide contributions~$\cS^{l,r}_w$ and~$\cD^{l,r}_w$ is proved  above. The proof of convergence and path independence of the free-space terms follows from~\eqref{eq:Hank_asymp},~\eqref{eqn5.22.107}, the analyticity properties of $H^{(1)}_0(z),$ and the decay properties of $(\sigma,\tau)$ 
\end{proof}
This lemma combined with Theorem~\ref{thm:IE_solvable}, Lemma~\ref{lem11.102}, and Theorem 3 in~\cite{epstein2024solving} completes the proof of Theorem~\ref{thm:IE_outgoing}. We conclude this section by showing that various physically relevant incoming fields give data that satisfies the conditions of Theorem~\ref{thm:IE_outgoing}.

\subsection{Admissible Data}
We consider various types of physically relevant data for our integral equations. Some of the data is shown to be admissible. For certain cases the data is not obviously admissible, but it is explained how the data can be modified to make it admissible.

\subsubsection*{Point sources}
If $\by$ is fixed  with $y_1\neq 0,$ and $|y_2|<\lt,$ then, as $x_2$ tends to infinity within $\GammaL\cup\GammaR,$ we have the estimate
\begin{equation}
    \sqrt{y_1^2+(x_2-y_2)^2}\sim (\sign\Re x_2) x_2.
\end{equation}
This observation, along with standard properties of Hankel functions show that the waveguide Green's function gives data in the desired~$\cC^{\alpha,\beta}$ spaces. The outgoing asymptotics for $w^{l,r}$ are derived in~\cite{epstein2023solvinga}. 
\begin{lemma}
    If~$\by \in (\bbR\setminus\{0\})\times[-\lt,\lt]$, then, as a function of $x_2,$
    \begin{equation}
        f_D(x_2)=G^{l,r}(0,x_2;\by)\in\cC^{\frac 12,k}\quad\text{and}\quad f_N(x_2)=\partial_{x_1}G^{l,r}(0,x_2;\by)\in\cC^{\frac 32,k}.
    \end{equation} 
    Further, these~$f_D$ and~$f_N$ have outgoing asymptotic expansions like those in~\eqref{eq:outgoing}.
\end{lemma}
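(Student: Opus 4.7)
The plan is to exploit the splitting $G^{l,r}(\bx;\by) = G_0(\bx;\by) + w^{l,r}(\bx;\by)$ used throughout the paper and handle the two pieces separately. For fixed $\by$ with $y_1\neq 0$ and $|y_2|\leq \lt$, the waveguide correction $w^{l,r}(0,x_2;\by)$ (and its $\pa_{x_1}$-derivative) are already known to satisfy the required analyticity in $\GammaL\cup\GammaR$ and the outgoing asymptotic expansions of the form in~\eqref{eq:outgoing}, by the results of~\cite{epstein2023solvinga} combined with the analytic continuation facts (recorded in Lemma~\ref{lem:kern_analytic} and used to prove Theorem~\ref{thm:kern_decay}). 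These give membership in $\cC^{\frac 12,k}$ and $\cC^{\frac 32,k}$ respectively, so everything reduces to treating the explicit free-space parts.

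For the free-space contribution, write
\begin{equation*}
G_0(0,x_2;\by)=\tfrac{i}{4}H_0^{(1)}\bigl(k\sqrt{y_1^2+(x_2-y_2)^2}\bigr),\qquad
\pa_{x_1}G_0(0,x_2;\by)=\tfrac{iky_1}{4}\frac{H_1^{(1)}\bigl(k\sqrt{y_1^2+(x_2-y_2)^2}\bigr)}{\sqrt{y_1^2+(x_2-y_2)^2}}.
\end{equation*}
The key observation is that for $x_2\in\GammaL\cup\GammaR$ (and even on the connecting segment $\GammaM$, which is compact), the expression $y_1^2+(x_2-y_2)^2$ has nonnegative imaginary part and nonvanishing real part whenever $y_1\neq 0$, so $\sqrt{\cdot}$ is well-defined with $\Im\sqrt{\cdot}\geq 0$. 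Then I would use the standard asymptotic expansions
\begin{equation*}
H_\nu^{(1)}(z)\sim\sqrt{\tfrac{2}{\pi z}}e^{i(z-\nu\pi/2-\pi/4)}\sum_{l=0}^\infty c_l^{(\nu)} z^{-l}, \quad |z|\to\infty,\ z\in\overline{\bbC_+},
\end{equation*}
together with the Taylor expansion $\sqrt{y_1^2+(x_2-y_2)^2}=\pm x_2(1+\cdots)$ as $\pm\Re x_2\to\infty$, to produce the asymptotic series~\eqref{eq:outgoing} for $f_D$ (with prefactor $|x_2|^{-1/2}$) and for $f_N$ (with prefactor $|x_2|^{-3/2}$, gaining one extra power from the $1/\sqrt{y_1^2+(x_2-y_2)^2}$ prefactor in the derivative). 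Crucially, the estimate $|e^{ik\sqrt{y_1^2+(x_2-y_2)^2}}|\leq e^{-k|\Im x_2|}$ (from $\Im\sqrt{\cdot}\geq 0$ and the asymptotic $\Im\sqrt{\cdot}\to|\Im x_2|$) gives the correct exponential decay with weight $\beta=k$.

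Combining the free-space and waveguide pieces yields the norm bounds $\|f_D\|_{\frac 12,k}<\infty$ and $\|f_N\|_{\frac 32,k}<\infty$, and the asymptotic expansions add term-by-term. The only nontrivial verification is the analyticity on the interiors of $\GammaL$ and $\GammaR$, which follows from the branch considerations sketched above: since $y_1\neq 0$ provides a uniform positive real part to $y_1^2+(x_2-y_2)^2$ on compacta, and $|x_2-y_2|^2$ has nonnegative imaginary part for $x_2\in\GammaL\cup\GammaR$ and $y_2\in[-\lt,\lt]$, the composition with $H_0^{(1)}$ and $H_1^{(1)}$ (which are analytic on $\bbC\setminus(-\infty,0]$) remains analytic. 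The main potential obstacle is making this branch argument airtight, particularly verifying that the imaginary part of $y_1^2+(x_2-y_2)^2$ never vanishes while its real part is negative, but the monotonicity constraint in the definition of $\cG$ and the location of $\by$ ensure this. Once analyticity is established, the outgoing expansions follow mechanically from the Hankel expansions.
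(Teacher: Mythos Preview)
Your approach is essentially the same as the paper's: split $G^{l,r}=G_0+w^{l,r}$, invoke Hankel function asymptotics together with the estimate $\sqrt{y_1^2+(x_2-y_2)^2}\sim(\sign\Re x_2)x_2$ for the free-space part, and cite~\cite{epstein2023solvinga} for the outgoing asymptotics of $w^{l,r}$. The paper's treatment is only a short paragraph preceding the lemma, so you have simply filled in more of the details; one small slip is your mention of ``the monotonicity constraint in the definition of~$\cG$,'' which is a condition on curves rather than on the region $\GammaC$ itself---the relevant branch observation is exactly the one recorded in~\eqref{eqn5.22.107} with the roles of source and target swapped.
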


\subsubsection*{Waveguide modes}
For any non-negative potential~$q,$ which is not identically zero, the
equation $(\Delta +k^2(q+1))u=0$  admits solutions of the form
\begin{equation}\label{eqn5.35.30}
  u(x_1,x_2)=  v(x_2)e^{\pm i\xi x_1}, \text{ with }v\in L^2(\bbR),
\end{equation}
for a finite collections of~$\xi \in (k,k\sqrt{M_q+1})$, where
$M_q=\sup\{q(x)\}.$ These solutions are known as waveguide
modes for the potential~$q$.  Let~$v(x_2)e^{\pm i\xi x_1}$ be a waveguide
mode of $q=q^l$ or $q=q^r.$ The corresponding boundary data for the
integral equation is given by $f_D(x_2)=v(x_2)$ and~$f_N(x_2)=\pm i\xi v(x_2).$
There are constants $C_{\pm}$ so that $v\in \cC^{1}(\bbR)$ is of the form
    \begin{equation}
        v(x_2) = C_{\pm}e^{\mp \sqrt{\xi^2-k^2}x_2}
    \end{equation}
    when~$\pm x_2>d,$ (see~\cite{epstein2023solvinga}). This data is clearly analytic on the interior of~$\GammaC$ and decays
    exponentially as $|\Re x_2|\to\infty,$ but not as $|\Im x_2|\to\infty,$ so does not belong to any $\cC^{\alpha,\beta}$-space. 
    This will prevent us from applying the truncation estimate in Theorem~\ref{thm:trunc_sol}. To
    overcome this defect we let $\tilde{f}_{D}$, and
    $\tilde{f}_{N}$ be given by
    \begin{equation}
    \begin{bmatrix}
    \tilde{f}_{D} \\
    \tilde{f}_{N}
    \end{bmatrix}
    = -\cK_{\bbR} \begin{bmatrix}
    f_{D} \\ 
    f_{N}
    \end{bmatrix} \,.
    \end{equation}
 Since $[f_D,f_N]\in\cC^{\alpha,\beta}_{\tGamma} \oplus \cC^{\alpha+1/2,\beta}_{\tGamma}$ for any~$\tGamma$
    which satisfy the slope condition in~\eqref{eq:slop-cond} and any~$\alpha>0$ and $\beta < \sqrt{\xi^2-k^2} C$, it follows from Remark~\ref{rmk4.103} that $[\tilde{f}_{D}, \tilde{f}_{N}] \in \cC^{\alpha,k} \oplus \cC^{\alpha+1/2,k}$, 
    for any $\alpha \leq1/2$.   Let $[\tilde{\sigma}, \tilde{\tau}]$ denote the solution to~\eqref{eq:comp_IE} with data
    $[\tilde{f}_{D}, \tilde{f}_{N}]$, then
    \begin{equation}
    \begin{bmatrix}
    \sigma \\
    \tau
    \end{bmatrix}
    = \begin{bmatrix}
    \tilde{\sigma} \\
    \tilde{\tau}
    \end{bmatrix}
    + 
    \begin{bmatrix}
    f_{D} \\
    f_{N}
    \end{bmatrix}
    \end{equation}
    satisfies~\eqref{eq:comp_IE} for all $\tGamma \in \cG$ which satisfy the slope condition in~\eqref{eq:slop-cond}. Further, it is also clear that 
    \begin{equation}
        \cS^{l,r}_{\tGamma}[f_N](\bx)+\cD^{l,r}_{\tGamma}[f_D](\bx)= \cS^{l,r}_{\bbR}[f_N](\bx)+\cD^{l,r}_{\bbR}[f_D](\bx),
    \end{equation}
    so Theorem~\ref{thm:IE_outgoing} can be extended to this case.
    
\subsection{Truncation error}
In this section, we show that the we can solve~\eqref{eq:comp_IE} on a truncated
contour with a controllable error. We fix a~$\tGamma\in
\cG$. Assuming that $\beta>0,$ we let~$\tGamma_{\epsilon}$ be the truncation of~$\tGamma$ to the
region~$\exp(-\beta|\Im x_2|)<\epsilon$. We can think of
$\cK_{\tGamma},\cK_{\tGamma_{\epsilon}}$ either as maps
\begin{equation}
  \cK_{\tGamma},\cK_{\tGamma_{\epsilon}}:
    \cC^{\alpha,\beta}_{\tGamma}\oplus \cC^{\alpha+\frac 12,\beta}_{\tGamma}\to
    \cC^{\alpha,\beta}\oplus \cC^{\alpha+\frac 12,\beta},
\end{equation}
or, after first composing with restriction to $\tGamma$ or
$\tGamma_{\epsilon},$ respectively, as  maps
\begin{equation}\label{eqn5.35.110}
  \cK_{\tGamma},\cK_{\tGamma_{\epsilon}}:
    \cC^{\alpha,\beta}\oplus \cC^{\alpha+\frac 12,\beta}\to
    \cC^{\alpha,\beta}\oplus \cC^{\alpha+\frac 12,\beta}.
\end{equation}
For the most part, the latter interpretation is more useful in our
applications.

We begin our analysis of truncation error by showing that, for
sufficiently small $0<\epsilon,$  the difference
between $\cK_{\tGamma}$ and $\cK_{\tGamma_{\epsilon}}$ can be bounded in norm by
$\epsilon$ with a prefactor depending on the potentials, $q_{l,r},$ and the choice of contour
$\tGamma$.
\begin{lemma}\label{lem:trunc_op} For $0<\alpha<\frac 12,\,  -k<\beta\leq k,$
  $0<\epsilon,$ and $\tGamma\in \cG$, let 
  $$\tGamma_{\epsilon}=\{x_2\in\tGamma:\: e^{-(\beta+k)|\Im  x_2|}<\epsilon\}. $$
      There exists a~$C>0$  such that
    \begin{equation}
        \left\| (\cK_{\tGamma} -
        \cK_{\tGamma_{\epsilon}})\begin{bmatrix}\sigma_{\tGamma}\\
          \tau_{\tGamma}\end{bmatrix}\right\|_{\cC^{\alpha,\beta}\oplus
          \cC^{\alpha+\frac 12,\beta}} < C\epsilon
        \left\|\begin{bmatrix}\sigma_{\tGamma}\\ \tau_{\tGamma}\end{bmatrix}\right\|_{\cC^{\alpha,\beta}_{\tGamma}\oplus
          \cC^{\alpha+\frac 12,\beta}_{\tGamma}}.
    \end{equation}
\end{lemma}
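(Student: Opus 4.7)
The plan is to realize $(\cK_\tGamma - \cK_{\tGamma_\epsilon})$ as the integral operator obtained by integrating over the removed tail of $\tGamma$. On this tail, both the kernel of $\cK$ and the density supply exponential decay in $|\Im y_2|$, and these combine to produce a pointwise factor of $\epsilon$ before integration. The rest is a routine algebraic-decay estimate of the form already handled in Lemma~\ref{lem:basic_integral}.

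In more detail, I would first observe that for $\epsilon$ sufficiently small the relevant tail lies entirely in $(\GammaL \cup \GammaR) \setminus [-\lt,\lt]$, so the weakly singular portions of $k_C$ and $k_D$ never appear. On this tail the $\cA_\alpha$-estimates from Theorem~\ref{thm:kern_decay} give bounds of the form
\begin{equation*}
|k_D(x_2,y_2)| \leq \frac{Ke^{-k(|\Im x_2|+|\Im y_2|)}}{(1+|x_2|+|y_2|)^{1/2}},\qquad |k_C(x_2,y_2)| \leq \frac{Ke^{-k(|\Im x_2|+|\Im y_2|)}}{(1+|x_2|+|y_2|)^{3/2}},
\end{equation*}
with analogous expressions when $x_2 \in \GammaM$ (where the ``mixed'' bound in the definition of $\cA_\alpha$ still contributes full $e^{-k|\Im y_2|}$ decay in $y_2$). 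Combining these with the density bounds $|\tau_\tGamma(y_2)|\leq\|\tau_\tGamma\|_{\alpha+\frac12,\beta,\tGamma}(1+|y_2|)^{-\alpha-\frac12}e^{-\beta|\Im y_2|}$ and $|\sigma_\tGamma(y_2)|\leq\|\sigma_\tGamma\|_{\alpha,\beta,\tGamma}(1+|y_2|)^{-\alpha}e^{-\beta|\Im y_2|}$, the integrands carry a factor $e^{-(\beta+k)|\Im y_2|}$, which by definition of the tail is pointwise at most $\epsilon$.

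Pulling this $\epsilon$ and the factor $e^{-k|\Im x_2|}$ outside the integral leaves an expression to which Lemma~\ref{lem:basic_integral} applies: for $D_{\tGamma\setminus\tGamma_\epsilon}\tau_\tGamma$ with $a=\alpha+\tfrac12$, $b=\tfrac12$, yielding algebraic decay $(1+|x_2|)^{-\alpha}$; and for $C_{\tGamma\setminus\tGamma_\epsilon}\sigma_\tGamma$ with $a=\alpha$, $b=\tfrac32$, yielding decay $(1+|x_2|)^{-\alpha-\frac12}$. The hypothesis $\beta \leq k$ gives $e^{\beta|\Im x_2|}\cdot e^{-k|\Im x_2|} \leq 1$, and reassembling the pieces produces
\begin{equation*}
\|D_{\tGamma\setminus\tGamma_\epsilon}\tau_\tGamma\|_{\alpha,\beta}\leq C\epsilon\|\tau_\tGamma\|_{\alpha+\frac12,\beta,\tGamma},\qquad \|C_{\tGamma\setminus\tGamma_\epsilon}\sigma_\tGamma\|_{\alpha+\frac12,\beta}\leq C\epsilon\|\sigma_\tGamma\|_{\alpha,\beta,\tGamma},
\end{equation*}
which is the stated bound. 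Continuity on $\GammaC$ and analyticity on $\mathrm{int}\,\GammaL\cup\mathrm{int}\,\GammaR$ of the difference operator follow as in Section~\ref{sec:op_prop} by the uniform absolute convergence of the integrals together with Morera's theorem applied in $x_2$.

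The only real subtlety is the case analysis for the location of $x_2$. The cleanest version keeps $y_2$ fixed in the tail (hence in $\GammaL\cup\GammaR$) and splits on whether $x_2\in\GammaM$ or $x_2\in\GammaL\cup\GammaR$; the two relevant $\cA_\alpha$-estimates yield structurally identical upper bounds (the prefactor $e^{-k|\Im x_2|}$ is either $1$ or genuine exponential decay), so a single application of Lemma~\ref{lem:basic_integral} handles both. No slope condition on $\tGamma$ is needed for this step; only the monotonicity and real-projection surjectivity from the definition of $\cG$ are used, to parameterize the tail by a real variable and apply the one-dimensional integral inequality.
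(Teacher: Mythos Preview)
Your proposal is correct and follows essentially the same approach as the paper: write the difference as an integral over the tail $\tGamma\setminus\tGamma_\epsilon$, combine the $e^{-k|\Im y_2|}$ from the kernel bound with the $e^{-\beta|\Im y_2|}$ from the density to extract the factor $\epsilon$, then reduce to Lemma~\ref{lem:basic_integral} via an arc-length parameterization of $\tGamma$ using the monotonicity condition (which gives $|y_2|\geq |s|/\sqrt2$). Your treatment of the $x_2\in\GammaM$ versus $x_2\in\GammaL\cup\GammaR$ split is slightly more explicit than the paper's, but the substance is identical.
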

\begin{proof}
    We begin by looking at the operator~$C$.
    Letting~$\sigma$ be any element of~$\cC^{\alpha,\beta},$ we have
    \begin{equation}
       C_{\tGamma}[\sigma](x_2) - C_{\tGamma_{\epsilon}}[\sigma](x_2) = \int_{\tGamma\setminus \tGamma_{\epsilon}} k_C(x_2,y_2) \sigma(y_2) {\rm d}y_2.
    \end{equation}
    Using our bounds on~$k_C$ and~$\sigma$ gives
    \begin{multline}
        \left|C_{\tGamma}[\sigma](x_2) -
        C_{\tGamma_{\epsilon}}[\sigma](x_2)\right| \leq \int_{\tGamma\setminus
          \tGamma_{\epsilon}} \frac{Ke^{-k(|\Im x_2|+|\Im y_2|)}}{(1+|x_2|+|y_2|)^{\frac 32}}\frac{e^{-\beta|\Im y_2|}\|\sigma\|_{\alpha,\beta,\tGamma}}{(1+|y_2|)^\alpha} |{\rm d}y_2|\\
        \leq  \int_{\tGamma\setminus \tGamma_{\epsilon}}\frac{\epsilon Ke^{-k|\Im x_2|}\|\sigma\|_{\alpha,\beta,\tGamma}}{(1+|x_2|+|y_2|)^{\frac 32}(1+|y_2|)^\alpha} |{\rm d}y_2|.
    \end{multline}
    We let~$s(y_2)$ be the signed arc-length from~$0$ to~$y_2$ along~$\tGamma;$
    the monotonicity condition in~\eqref{eqn3.1.30} implies that
    that~$|y_2|>\frac{1}{\sqrt{2}}|s|$. This allows us to bound the integral by
    \begin{equation}
        \left|C_{\tGamma}[\sigma](x_2) - C_{\tGamma_{\epsilon}}[\sigma](x_2)\right|\leq\int_{-\infty}^\infty\frac{\epsilon \sqrt{2} Ke^{-k|\Im x_2|}\|\sigma\|_{\alpha,\beta,\tGamma}}{(1+|x_2|+\frac{|s|}{\sqrt{2}})^{\frac 32}(1+\frac{|s|}{\sqrt{2}})^\alpha} ds 
    \end{equation}
    Applying Lemma~\ref{lem:basic_integral} gives the estimate
    $$\|C_{\tGamma}[\sigma]- C_{\tGamma_{\epsilon}}[\sigma]\|_{\alpha+\frac 12,\beta}<
    C\epsilon\|\sigma\|_{\alpha,\beta,\tGamma}.$$
   A similar argument can be applied to~$D$ to show that
   \begin{equation}
     \|D_{\tGamma}[\tau]-
     D_{\tGamma_{\epsilon}}[\tau]\|_{\alpha,\beta}<
     C\epsilon\|\tau\|_{\alpha+\frac 12,\beta,\tGamma}.
   \end{equation}

 Combining these estimates  gives the desired result.
\end{proof}

For $(\sigma,\tau)\in \cC^{\alpha,\beta}\oplus\cC^{\alpha+\frac 12,\beta}$ we therefore have the estimate
\begin{equation}\label{eqn5.41.110}
    \left\| (\cK_{\tGamma} -
        \cK_{\tGamma_{\epsilon}})\begin{bmatrix}\sigma_{\tGamma}\\
          \tau_{\tGamma}\end{bmatrix}\right\|_{\cC^{\alpha,\beta}\oplus
          \cC^{\alpha+\frac 12,\beta}} < C\epsilon
        \left\|\begin{bmatrix}\sigma\\ \tau\end{bmatrix}\right\|_{\cC^{\alpha,\beta}\oplus
          \cC^{\alpha+\frac 12,\beta}}.
\end{equation}
Using this estimate, we can control the error caused by solving~\eqref{eq:comp_IE} on a truncated contour.
\begin{theorem}\label{thm:trunc_IE} For each $0<\alpha<\frac 12,\,
  -k<\beta\leq k,$ and $\tGamma\in\cG,$ there is an  $\epsilon_0>0,$ such that,
  for $\epsilon<\epsilon_0,$ the operator
  \begin{equation}
  \label{eq:ckspace}
    (\Id+\cK_{\tGamma_{\epsilon}}):
    \cC^{\alpha,\beta}\oplus
    \cC^{\alpha+\frac 12,\beta}\to \cC^{\alpha,\beta}\oplus
    \cC^{\alpha+\frac 12,\beta}
  \end{equation}
  is invertible and there exists a~$C>0,$ independent of $\epsilon,$ such that
    \begin{equation}
      \left\| \left[(\cI+\cK_{\tGamma})^{-1} -(\cI+\cK_{\tGamma_{\epsilon}})^{-1}
        \right]\begin{bmatrix}
        f_D\\ f_N\end{bmatrix}\right\|_{\cC^{\alpha,\beta}\oplus
          \cC^{\alpha+\frac 12,\beta}}< C\epsilon\left\|\begin{bmatrix}f_{D}\\f_{N}\end{bmatrix}\right\|_{\cC^{\alpha,\beta}\oplus \cC^{\alpha+\frac 12,\beta}}.
    \end{equation}
\end{theorem}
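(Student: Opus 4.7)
The plan is a standard Neumann series / resolvent-identity perturbation argument, with all the analytical heavy lifting already encoded in the preceding results. The two ingredients I will combine are: (i) Theorem~\ref{thm:IE_solvable} (with the Fredholm property of Theorem~\ref{thm:fred}), which says $\Id+\cK_{\tGamma}$ is a bijection on $\cC^{\alpha,\beta}\oplus\cC^{\alpha+\frac 12,\beta}$ and therefore, by the open mapping theorem, has bounded inverse with some norm $M:=\|(\Id+\cK_{\tGamma})^{-1}\|$; and (ii) the estimate \eqref{eqn5.41.110}, extending Lemma~\ref{lem:trunc_op}, which furnishes a constant $C_1$ such that $\|\cK_{\tGamma}-\cK_{\tGamma_{\epsilon}}\|\leq C_1\epsilon$ as operators on $\cC^{\alpha,\beta}\oplus\cC^{\alpha+\frac 12,\beta}$ (in the interpretation~\eqref{eqn5.35.110}).

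First I factor
\begin{equation*}
\Id+\cK_{\tGamma_{\epsilon}} \;=\; (\Id+\cK_{\tGamma})\bigl[\Id - (\Id+\cK_{\tGamma})^{-1}(\cK_{\tGamma}-\cK_{\tGamma_{\epsilon}})\bigr].
\end{equation*}
Setting $\epsilon_{0} := 1/(2MC_1)$ and choosing any $\epsilon<\epsilon_{0}$, the perturbation in the bracket has operator norm at most $\tfrac12$, so the bracket is invertible via a geometric (Neumann) series with operator norm at most $2$. Composing with $(\Id+\cK_{\tGamma})^{-1}$ on the right gives invertibility of $\Id+\cK_{\tGamma_{\epsilon}}$ on $\cC^{\alpha,\beta}\oplus\cC^{\alpha+\frac 12,\beta}$, with $\|(\Id+\cK_{\tGamma_{\epsilon}})^{-1}\|\leq 2M$.

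Next I invoke the resolvent identity
\begin{equation*}
(\Id+\cK_{\tGamma})^{-1} - (\Id+\cK_{\tGamma_{\epsilon}})^{-1} \;=\; (\Id+\cK_{\tGamma})^{-1}\,(\cK_{\tGamma_{\epsilon}}-\cK_{\tGamma})\,(\Id+\cK_{\tGamma_{\epsilon}})^{-1},
\end{equation*}
apply it to $[f_D,f_N]^{\top}$ and take norms, using $\|(\Id+\cK_{\tGamma})^{-1}\|\leq M$, $\|(\Id+\cK_{\tGamma_{\epsilon}})^{-1}\|\leq 2M$, and the bound from~\eqref{eqn5.41.110}. This yields the claimed estimate with constant $C=2M^{2}C_{1}$, which is manifestly independent of $\epsilon$ and of the data.

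There is not really a serious obstacle here; the only subtle point worth flagging is that Lemma~\ref{lem:trunc_op} as originally stated uses the contour-restricted norm $\|\cdot\|_{\cC^{\alpha,\beta}_{\tGamma}\oplus\cC^{\alpha+\frac 12,\beta}_{\tGamma}}$ on the domain side, whereas the perturbation argument needs $\cK_{\tGamma}$ and $\cK_{\tGamma_{\epsilon}}$ as endomorphisms of the single Banach space $\cC^{\alpha,\beta}\oplus\cC^{\alpha+\frac 12,\beta}$. This is precisely what \eqref{eqn5.41.110} and the interpretation~\eqref{eqn5.35.110} supply (via restriction, using the continuous inclusion $\cC^{\alpha,\beta}\restrictedto_{\tGamma}\subset\cC^{\alpha,\beta}_{\tGamma}$), so no further work is needed.
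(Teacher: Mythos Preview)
Your proof is correct and follows essentially the same approach as the paper: factor $\Id+\cK_{\tGamma_{\epsilon}}$ through $\Id+\cK_{\tGamma}$, invert the small perturbation by a Neumann series using \eqref{eqn5.41.110}, and read off the $O(\epsilon)$ difference of inverses. The only cosmetic difference is that you phrase the final estimate via the resolvent identity, whereas the paper extracts it directly from the Neumann series expansion of the bracketed factor; these are equivalent.
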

\begin{proof}

    We first note that
    \begin{equation}\label{eqn5.44.110}
        \cI+\cK_{\tGamma_{\epsilon}} =
        \cI+\cK_{\tGamma}+\lp\cK_{\tGamma_{\epsilon}} -\cK_{\tGamma}\rp =
        \lp\cI+\cK_{\tGamma}\rp\left[ \cI + \lp\cI+\cK_{\tGamma}\rp^{-1}
          \lp\cK_{\tGamma_{\epsilon}} -\cK_{\tGamma}\rp\right],
    \end{equation}
    where we have used the fact that $\cI+\cK_{\tGamma}$ is invertible is
    invertible on $\cC^{\alpha,\beta}\oplus \cC^{\alpha+\frac 12,\beta},$ which contains the range of $\cK_{\tGamma_{\epsilon}} -\cK_{\tGamma}$. The estimate in~\eqref{eqn5.41.110} shows that there is a
    constant, $C,$ so that the norm of the composition
    \begin{equation}
      \lp\cI+\cK_{\tGamma}\rp^{-1} \lp\cK_{\tGamma_{\epsilon}}
      -\cK_{\tGamma}\rp:
      \cC^{\alpha,\beta}\oplus \cC^{\alpha+\frac 12,\beta}\to \cC^{\alpha,\beta}\oplus \cC^{\alpha+\frac 12,\beta}
    \end{equation}
    is bounded by
    $$\eta=C\epsilon\left\|\lp\cI+\cK_{\tGamma}\rp^{-1}
    \right\|_{\cC^{\alpha,\beta}\oplus \cC^{\alpha+\frac 12,\beta}}.$$ For
    sufficiently small $0<\epsilon,$ we see that $\eta<1;$  we can therefore use a
    Neumann series to invert the term in brackets on the right hand
    side of~\eqref{eqn5.44.110}  to obtain the invertibility of
    $\Id+\cK_{\tGamma_{\epsilon}},$ as well as the expression
    \begin{equation}
        \lp\cI+\cK_{\tGamma_{\epsilon}}\rp^{-1} = \left[ \cI + \lp\cI+\cK_{\tGamma}\rp^{-1} \lp\cK_{\tGamma_{\epsilon}} -\cK_{\tGamma}\rp\right]^{-1}\lp\cI+\cK_{\tGamma}\rp^{-1}.
    \end{equation}
   From this formula and the Neumann series we easily show the
   existence of a constant $C'$ for which
    \begin{equation}
     \left \| \left[ \lp\cI+\cK_{\tGamma_{\epsilon}}\rp^{-1}
        -\lp\cI+\cK_{\tGamma}\rp^{-1}\right]
      \begin{bmatrix}
            f_D \\ f_N
        \end{bmatrix}\right\|_{\cC^{\alpha,\beta}\oplus
        \cC^{\alpha+\frac 12,\beta}}
      \leq C'\epsilon\left\|\begin{bmatrix}f_D\\f_N\end{bmatrix}\right\|_{\cC^{\alpha,\beta}\oplus \cC^{\alpha+\frac 12,\beta}},
    \end{equation}
    which is equivalent to the desired result.
\end{proof}
We now observe that it is enough to enforce the integral equation on the compact contour~$\tGamma_{\epsilon}$.
\begin{lemma}\label{lem:solve_compact}
Let~$\alpha,\beta,$ and~$\epsilon_0$ satisfy the assumptions of the previous theorem. If $\tGamma\in \cG$ and $[f_D,f_N]\in \cC^{\alpha,\beta}\oplus \cC^{\alpha+\frac 12,\beta}$, then for any~$\epsilon<\epsilon_0$ there exists a unique~$[\sigma_\epsilon,\tau_\epsilon]\in \cC^{\alpha,\beta}\oplus \cC^{\alpha+\frac 12,\beta}$ satisfying
\begin{equation}\label{eqn5.60.11}
        \lp\cI+\cK_{\tGamma_{\epsilon}}\rp\begin{bmatrix}
            \sigma_\epsilon\\ \tau_\epsilon
        \end{bmatrix}(x_2)=\begin{bmatrix}
            f_D \\ f_N
        \end{bmatrix}(x_2)\quad \forall x_2\in \tGamma_{\epsilon}.
   \end{equation}
Moreover, the solution to~\eqref{eqn5.60.11} agrees with
\begin{equation}
\lp\cI+\cK_{\tGamma_{\epsilon}}\rp^{-1} 
\begin{bmatrix}
f_{D} \\
f_{N}
\end{bmatrix} \,,
\end{equation}
with $\cI + \cK_{\tGamma_{\epsilon}}$ being the operator defined on the spaces in~\eqref{eq:ckspace}. 
\end{lemma}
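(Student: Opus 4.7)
My plan is to derive Lemma~\ref{lem:solve_compact} as a direct consequence of Theorem~\ref{thm:trunc_IE} together with the analytic self-extension property encoded in Remark~\ref{rmk3.102}. The basic idea is that, because $\cK_{\tGamma_{\epsilon}}$ is an integral operator whose kernel is integrated only over the compact set $\tGamma_{\epsilon}$, applying it to a pair of densities uses only the restriction of those densities to $\tGamma_{\epsilon}$. This means the equation~\eqref{eqn5.60.11} is really a closed system for the values on $\tGamma_{\epsilon}$, but the operator $\cK_{\tGamma_{\epsilon}}$ automatically produces functions that live in $\cC^{\alpha,\beta}\oplus \cC^{\alpha+\frac12,\beta}$ on all of $\GammaC$.

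For existence, I would simply invoke Theorem~\ref{thm:trunc_IE}: since $\epsilon<\epsilon_{0}$, the operator $\cI+\cK_{\tGamma_{\epsilon}}$ is invertible on $\cC^{\alpha,\beta}\oplus \cC^{\alpha+\frac12,\beta}$, so the pair
\begin{equation*}
\begin{bmatrix}\sigma_\epsilon\\ \tau_\epsilon\end{bmatrix}
:=(\cI+\cK_{\tGamma_{\epsilon}})^{-1}\begin{bmatrix}f_{D}\\ f_{N}\end{bmatrix}
\end{equation*}
satisfies the integral equation for every $x_{2}\in\GammaC$, and hence in particular for every $x_{2}\in\tGamma_{\epsilon}$. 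This immediately also proves the ``moreover'' clause of the lemma, provided uniqueness holds.

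For uniqueness, suppose $[\sigma,\tau]\in\cC^{\alpha,\beta}\oplus \cC^{\alpha+\frac12,\beta}$ is any pair satisfying~\eqref{eqn5.60.11} on $\tGamma_{\epsilon}$. Define a candidate extension on all of $\GammaC$ by
\begin{equation*}
\begin{bmatrix}\tilde\sigma\\ \tilde\tau\end{bmatrix}(x_{2}):=\begin{bmatrix}f_{D}\\ f_{N}\end{bmatrix}(x_{2})-\cK_{\tGamma_{\epsilon}}\begin{bmatrix}\sigma\\ \tau\end{bmatrix}(x_{2}),\qquad x_{2}\in\GammaC.
\end{equation*}
By Remarks~\ref{rmk3.102} and~\ref{rmk4.103}, the function $\cK_{\tGamma_{\epsilon}}[\sigma;\tau]$ has an analytic continuation from $\tGamma_{\epsilon}$ to $\Int\GammaL\cup\Int\GammaR$ lying in $\cC^{\alpha,\beta}\oplus \cC^{\alpha+\frac12,\beta}$; hence $[\tilde\sigma,\tilde\tau]$ lies in the same space. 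Equation~\eqref{eqn5.60.11} forces $[\tilde\sigma,\tilde\tau]=[\sigma,\tau]$ on $\tGamma_{\epsilon}$.

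To conclude, I would argue that agreement on $\tGamma_{\epsilon}$ forces agreement on all of $\GammaC$. The set $\tGamma_{\epsilon}$ contains the entire middle segment $\GammaM$ (since $\Im x_{2}=0$ there, so $e^{-\beta|\Im x_{2}|}=1\geq\epsilon$), and it contains arcs of positive length inside $\Int\GammaL$ and $\Int\GammaR$ provided $\tGamma$ genuinely leaves the real axis; since both $[\sigma,\tau]$ and $[\tilde\sigma,\tilde\tau]$ are analytic in these open half-strips, the identity theorem yields $[\sigma,\tau]=[\tilde\sigma,\tilde\tau]$ on all of $\GammaC$. This identity rewrites as $(\cI+\cK_{\tGamma_{\epsilon}})[\sigma;\tau]=[f_{D};f_{N}]$ on $\GammaC$, and the invertibility from Theorem~\ref{thm:trunc_IE} then identifies $[\sigma,\tau]$ with $(\cI+\cK_{\tGamma_{\epsilon}})^{-1}[f_{D};f_{N}]$, giving uniqueness and the ``moreover'' statement simultaneously. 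The only point that needs a little care, and is the closest thing to an obstacle, is confirming that $\tGamma_{\epsilon}$ contains enough of $\Int\GammaL\cup\Int\GammaR$ to apply the identity theorem; this is automatic for any $\tGamma\in\cG$ that is not contained in $\bbR$, which is the relevant case for truncation.
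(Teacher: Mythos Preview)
Your proof is correct and follows essentially the same approach as the paper: existence comes directly from Theorem~\ref{thm:trunc_IE}, and for uniqueness you build the auxiliary pair $[\tilde\sigma,\tilde\tau]=[f_D,f_N]-\cK_{\tGamma_\epsilon}[\sigma,\tau]$, observe it agrees with $[\sigma,\tau]$ on $\tGamma_\epsilon$, and then combine the identity theorem with the invertibility of $\cI+\cK_{\tGamma_\epsilon}$ on $\GammaC$. The only cosmetic difference is the order of the last two steps: the paper first shows $(\cI+\cK_{\tGamma_\epsilon})[\tilde\sigma,\tilde\tau]=[f_D,f_N]$ on all of $\GammaC$ (using that $\cK_{\tGamma_\epsilon}$ sees only values on $\tGamma_\epsilon$), applies invertibility, and then invokes the identity theorem, while you apply the identity theorem first; both routes are valid.
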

\begin{proof}
    The existence of a solution is an immediate consequence of the previous theorem, i.e., a solution obtained by enforcing the integral equation for all $x_{2} \in \GammaC$ trivially satisfies the equation on $\tGamma_{\epsilon}$. Now let $[\sigma_{\epsilon}, \tau_{\epsilon}]$ denote any solution obtained by enforcing the integral equation for all $x_{2} \in \tGamma_{\epsilon}$. To prove uniqueness, we let
    \begin{equation}
        \begin{bmatrix}
            \tsigma_\epsilon\\ \ttau_\epsilon
        \end{bmatrix}:=\begin{bmatrix}
            f_D \\ f_N
        \end{bmatrix} - \cK_{\tGamma_{\epsilon}}\begin{bmatrix}
            \sigma_\epsilon\\ \tau_\epsilon
        \end{bmatrix},
    \end{equation}
for all $x_{2} \in \GammaC$. This function is in~$\cC^{\alpha,\beta}\oplus \cC^{\alpha+\frac 12,\beta}$ by~Lemma \ref{lem:op_bd} and is equal to $[\sigma_{\epsilon}, \tau_{\epsilon}]$ on $\tGamma_{\epsilon}$ by~\eqref{eqn5.60.11}. Moreover, it satisfies
    \begin{equation}
        \lp\cI+\cK_{\tGamma_{\epsilon}}\rp\begin{bmatrix}
            \tsigma_\epsilon\\ \ttau_\epsilon
        \end{bmatrix}=\begin{bmatrix}
            f_D \\ f_N
        \end{bmatrix}
    \end{equation}
    on~$\GammaC$. Theorem~\ref{thm:trunc_IE} gives that~$\tsigma_{\epsilon}$ and~$\ttau_{\epsilon}$ are both uniquely determined by~$f_D$ and~$f_N$. 
    The uniqueness of~$[\sigma_{\epsilon},\tau_\epsilon]$ therefore follows from the identity theorem for analytic functions.
\end{proof}


We now give a proof of Theorem~\ref{thm:trunc_sol},
which bounds the error in the potential $u_{\epsilon}$ obtained by solving for
densities $(\sigma_{\epsilon}, \tau_{\epsilon})$ on any truncated
curve $\tGamma_{\epsilon}$. Since we now include the contribution
of the free-space kernel, we again assume that $0<\beta.$
\begin{proof}[Proof of Theorem~\ref{thm:trunc_sol}]
   Recall that $0<\beta$ and $\tGamma^c_{\epsilon}=\{y_2\in\tGamma:\:
   e^{-\beta|\Im y_2|}<\epsilon\}.$ Let~$[\sigma_\epsilon,\tau_\epsilon]$ solve \eqref{eqn5.60.11} and~$[\sigma,\tau]$ solve the equation
        \begin{equation}
        \lp\cI+\cK_{\tGamma}\rp\begin{bmatrix}
            \sigma\\ \tau
        \end{bmatrix}=\begin{bmatrix}
            f_D \\ f_N
        \end{bmatrix}
    \end{equation}
    on all of~$\Gamma_C$. By Theorem~\ref{thm:trunc_IE} and Lemma~\ref{lem:solve_compact}, we have that
    \begin{equation}
        \|\sigma-\sigma_\epsilon\|_{\alpha,\beta} +\|\tau-\tau_\epsilon\|_{\alpha + \frac 12 , \beta} < C\epsilon\left(  \|f_D\|_{\alpha,\beta} + \|f_N\|_{\alpha+\frac 12,\beta}\right).
    \end{equation}
    We then write
    \begin{equation}
        \cS^{l}_{\tGamma_{\epsilon}}[\tau_\epsilon] -\cS^{l}_{\tGamma}[\tau]= \cS^{l}_{\tGamma}[\tau_{\epsilon}-\tau] + \lp\cS^{l}_{\tGamma_{\epsilon}}-\cS^{l}_{\tGamma}\rp[\tau_{\epsilon}].
    \end{equation}

    Using Lemma~\ref{lemma:w_decay} and~\eqref{eq:Hank_asymp} we can prove that
    \begin{equation}\label{eqn5.51.110}
    \begin{aligned}
        \left|\lp\cS^{l}_{\tGamma_{\epsilon}}-\cS^{l}_{\tGamma}\rp[\tau_{\epsilon}](\bx )\right|&\leq C\epsilon \|\tau_\epsilon\|_{\alpha+\frac 12,\beta} \\
        &\leq C'\epsilon \left(  \|f_D\|_{\alpha,\beta} + \|f_N\|_{\alpha+\frac 12,\beta}\right)\,,
        \end{aligned}
    \end{equation}
    with a similar proof to Lemma~\ref{lem:trunc_op}. Using the same
    steps as the proof of Lemma~\ref{lem:op_bd} and
    Theorem~\ref{thm:trunc_IE}, we can show that
    \begin{equation}
    \begin{aligned}
        |\cS^{l}_{\tGamma}[\tau_{\epsilon}-\tau] (\bx)|&\leq C \|\tau_{\epsilon}-\tau\|_{\alpha+\frac 12,\beta} \\
        &\leq C'\epsilon \left(  \|f_D\|_{\alpha,\beta} + \|f_N\|_{\alpha+\frac 12,\beta}\right)\,.
        \end{aligned}
    \end{equation}
    A similar proof shows that
    \begin{equation}
        |\cD^{l}_{\tGamma_{\epsilon}}[\sigma_\epsilon](\bx) -\cD^{l}_{\tGamma}[\sigma](\bx)|< C'\epsilon \left(  \|f_D\|_{\alpha,\beta} + \|f_N\|_{\alpha+\frac 12,\beta}\right)\,.
    \end{equation}
    Summing these two bounds and applying Theorem~\ref{thm:IE_outgoing} gives that
    \begin{equation}
    \begin{aligned}
        \left|\cS^{l}_{\tGamma_{\epsilon}}[\tau_\epsilon]-\cD^{l}_{\tGamma_{\epsilon}}[\sigma_\epsilon](\bx) -u^l(\bx)\right| &=  \left|\cS^{l}_{\tGamma_{\epsilon}}[\tau_\epsilon]-\cD^{l}_{\tGamma_{\epsilon}}[\sigma_\epsilon](\bx) -\cS^{l}_{\tGamma}[\tau]+\cD^{l}_{\tGamma}[\sigma](\bx)\right|\\
        &<2C\epsilon \left(  \|f_D\|_{\alpha,\beta} + \|f_N\|_{\alpha+\frac 12,\beta}\right).
        \end{aligned}
    \end{equation}
    Repeating the same argument for~$u^r$ gives the desired result.
\end{proof}

We conclude this section with the proof of Corollary~\ref{cor:mode_soln}, which addresses the problems associated with waveguide mode data.
\begin{proof}[Proof of Corollary~\ref{cor:mode_soln}]
    Rearranging \eqref{eq:mode_comp_IE} shows that~$[\tsigma,\ttau]$ must solve
    \begin{equation}\label{eq:mode_comp}
        \lp\Id + \cK_{\tGamma} \rp\begin{bmatrix}
            \tsigma\\\ttau
        \end{bmatrix}(x_2)
        =
        -\cK_{\tGamma}\begin{bmatrix}
            f_D\\f_N
        \end{bmatrix}(x_2) \quad\forall x_2\in\tGamma.
    \end{equation}
    We observe that the right hand side is in $\cC^{1/2,k} \oplus \cC^{3/2,k}$. The existence and uniqueness of~$[\tsigma,\ttau]$ then follows from Theorem~\ref{thm:IE_solvable}. The formula \eqref{eq:mode_comp_rep} then follows from Lemma~\ref{lem:layer_contour} and the analyticity of the waveguide modes outside the support of~$q^{l,r}$. Theorem 3 in \cite{epstein2024solving} then gives that the function~$u^{l,r}$ solves \eqref{eqn:the_reduced_pde}. 

    The error estimate \eqref{eq:mode_rep_err} cannot be derived from Theorem~\ref{thm:trunc_sol} because though $[f_{D}, f_{N}]$ are in $\cC^{0,0}\oplus \cC^{0,0}$, but they are not in any $\cC^{\alpha,0}$ for $\alpha>0$. Instead, we apply Theorem~\ref{thm:trunc_sol} with $\beta=k$ to see that 
    \begin{equation}\label{eqn6.42.40}
    \begin{aligned}
        \left|\lp \cS^{l,r}_{\tGamma_{\eta}}[\ttau_\epsilon](\bx)+\cD^{l,r}_{\tGamma_{\eta}}[\tsigma_\epsilon](\bx)\rp - \lp \cS^{l,r}_{\tGamma}[\ttau](\bx)+\cD^{l,r}_{\tGamma}[\tsigma](\bx)\rp\right| &< C\eta \left\| \cK_{\tGamma}\begin{bmatrix}
            f_D\\f_N
        \end{bmatrix}\right\|_{\cC^{1/2,k}\oplus \cC^{1,k}} \\
        &\leq C\epsilon \left\| \cK_{\tGamma}\begin{bmatrix}
            f_D\\f_N
        \end{bmatrix}\right\|_{\cC^{1/2,k}\oplus \cC^{1,k}}  \,.
        \end{aligned}
    \end{equation}

What remains is to bound the truncation error for $\cS^{l,r}_{\tGamma_{\eta}}[f_{N}]$ and $\cD^{l,r}_{\tGamma_{\eta}}[f_{D}]$. 
For any fixed $\tGamma$ satisfying the slope condition~\eqref{eq:slop-cond}, it is easy to show that the data $[f_{D}, f_{N}]$ restricted to $\tGamma$ is exponentially decaying, and therefore lies in $\cC^{\alpha,\beta}_{\tGamma}\oplus \cC^{\alpha+\frac 12,\beta}_{\tGamma}$ for some positive $\alpha$ and $\beta.$ The value of $\beta$ depends on the slope of the curve $\tGamma$ and the waveguide frequency $\xi$. The estimate for the single layer contribution is
\begin{equation}
\begin{split}
     \left| \cS^{l,r}_{\tGamma_{\eta}}[f_N](\bx) - \cS^{l,r}_{\tGamma}[f_N](\bx)\right| 
     &\leq \int\limits_{\tGamma\setminus\tGamma_{\eta}}
     |G^{l,r}(\bx;0;y_2)f_N(y_2)| |dy_2|\\
     &\leq C\int\limits_{\tGamma\setminus\tGamma_{\eta}} e^{-\beta|\Im y_2|}|dy_2|.
     \end{split}
\end{equation}
Let $\eta_{0}$ be such that the above integral is less than $\epsilon$. The parameter $\eta$ is then given by $\eta=\min(\eta_{0}, \epsilon)$. This issue only arises if $x_1$ is allowed to tend to infinity. It has nothing to do with either the solvability of the integral equation, or the truncation error in~\eqref{eqn6.42.40}. A similar argument applies to the double layer contribution.
\end{proof}

\section{Numerical experiments}\label{sec:numerics}
In this section, we illustrate the performance of our approach through several numerical examples. We set $k=3$ throughout and consider two different configurations: a) where $q^{a}_{l,r}(x_{2})$
are the smooth potentials given by
\begin{equation}
\begin{aligned}
    q^a_{l}(x) &=  2 e^{-2x^2} (3+2\cos (5\pi x+0.7)-\cos(2x+1.6))\, ,\\
    q^a_{r}(x) &= \frac{8}{3} e^{-x^2/0.245} (1.53 - \cos (1.05)+0.53 \cos (x+0.25))\,, 
    \end{aligned}
\end{equation}
and; b) where $q^{b}_{l} = q^{a}_{l}$ and 
$q^{b}_{r} = \chi_{[-3,3]}$. 
The parameters are chosen so that $q_l := q^{a,b}_{l}$ supports 
four waveguide modes with frequencies 
$$(\xi_{l,1}, \xi_{l,2}, \xi_{l,3}, \xi_{l,4}) = (3.7401, 4.6783, 6.2958, 7.5942),$$
and 
$q^{a,b}_r$ 
support two, respectively  five waveguide modes with frequencies
\begin{equation*}
\begin{aligned}
(\xi^a_{r,1}, \xi^a_{r,2}) &= (4.1378, 5.9297), \\
(\xi^b_{r,1}, \xi^b_{r,2},\xi^b_{r,3}, \xi^b_{r,4}, \xi^b_{r,5}) &= (3.2274,3.5503,3.8083,4.0022,4.2164).
\end{aligned}
\end{equation*}
For brevity, we have reported the first $5$ significant digits of the waveguide frequencies, however, they have been
computed to $10$ significant digits in the numerical computations.
The potentials and waveguide modes are plotted in Figure \ref{fig:wg_modes}. 

\begin{figure}[h!]
    \centering
    \includegraphics[width=\linewidth]{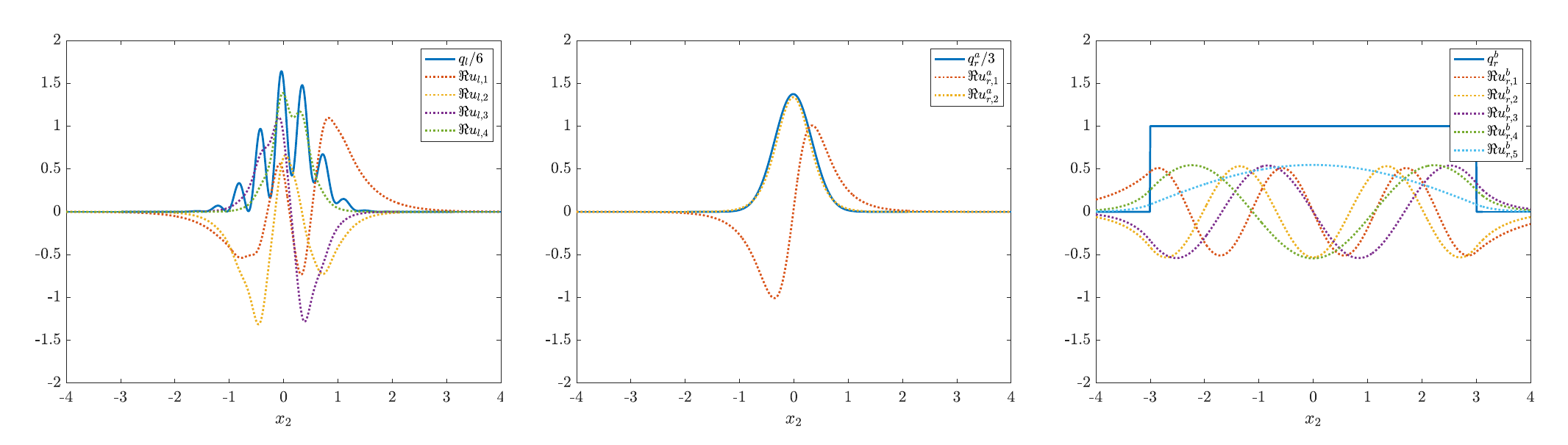}
    \caption{The potentials $q_l$ and $q_r^{a,b}$, shown in blue, and
      cross-sections of the corresponding waveguide modes.}
    \label{fig:wg_modes}
\end{figure}

The efficient evaluation of the Green's functions $G^{l,r}$ is fairly
involved. It is perhaps most natural to compute them via their partial Fourier
transform. There are three main technical issues: the slow decay of the Fourier
transform induced by the logarithmic singularity in real space for $x_{2},y_{2}
\in [-d,d]$; the presence of waveguide modes, which correspond to poles on the
real axis in the Fourier domain, and the oscillatory nature of the integrand for
large $|x_1-y_1|.$ Here we provide a brief sketch of the method used in this
paper for their computation. We refer the reader to
~\cite{cai2013computational,okhmatovski2004evaluation,wang2019fast,zhang2020exponential,paulus2000accurate,bruno2016windowed,bruno2017windowed,goodwill2024numerical},
for examples of other approaches.


As mentioned above, the Green's functions $G^{l,r}$ are evaluated by taking the Fourier transform in the $x_{1}$ variable. The Fourier transform is then evaluated at interpolation nodes on a complex contour, similar to the blue contour in Figure~\ref{fig_contours}. For each Fourier frequency $\xi$, depending on $y_{2}$, the solution for all $x_{2}$ and $x_{1}-y_{1}$ requires the solution of an ordinary differential equation (ODE) on $\bbR$ (or  more precisely $\tGamma$, though this does not require extra work). This ODE is solved using a shooting method with a Runge-Kutta discretization when $|\xi|$ is small, and using a WKB expansion of order $6$ when $|\xi|$ is large when $q$ is smooth, and a WKB expansion of order $1$ for the piecewise constant case. The discretization of the contour and the parameters of the ODE solver are chosen to guarantee $6$ relative digits of accuracy in the computed Green's function for $\bx, \by \in [-L_{0}, L_{0}] \times \GammaC$.

\begin{figure}[h!]
    \centering
    \includegraphics[width=\linewidth]{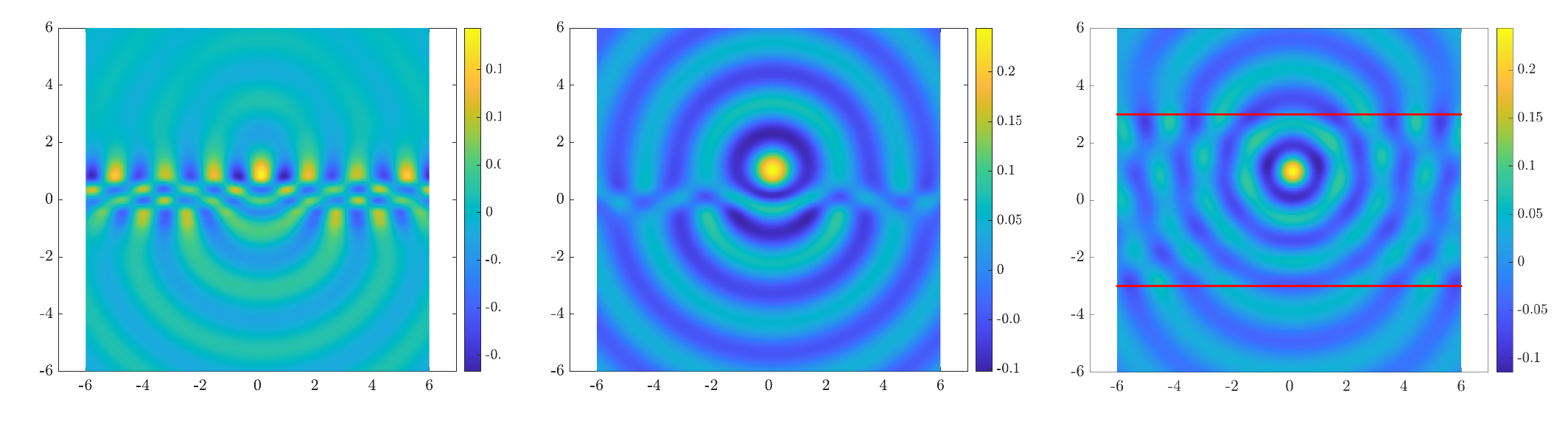}
    \caption{Imaginary part of the Green's functions $G_q (\bx, \by_0)$ with $q = q_l, q_r^a, q_r^b$ (left to right), and $\by_0 := (0.1, 1)$.}
    \label{fig:greenfun} 
\end{figure}
In Figure~\ref{fig:greenfun}, we plot the Green's functions for the three different media for $\by_{0} = (0.1,1)$. 
The accuracy of the Green's function evaluator is measured in two ways. In Figure~\ref{fig:greenfun-err} (left panel), we plot the residual $|(\Delta + k^2(1+q))G|$ which should be $0$ away from the point source $\by_{0}$. We also verify the accuracy of the Green's function by using Green's identities. Suppose $u(\bx) = G_{q}(\bx, \by_{0})$, where $G_{q}$ is the Green's function with material property $q$, then $u$ for $|x_{2}|<\lt$ satisfies
\begin{equation}\label{eq:gi}
        \int_{\tGamma_{\epsilon}} G_{q}(\bx; 0,y_{2}) \frac{\partial u}{\partial y_{1}}(0,y_{2}) dy_{2} - \int_{-\tGamma_{\epsilon}} \frac{\partial G_{q}}{\partial y_{1}}(\bx; 0, y_{2}) u(0,y_{2}) dy_{2} = \begin{cases}
        u(\bx) \quad & x_{1} < 0 \,, \\
        u(\bx)/2 \quad & x_{1} = 0 \,,\\
        0 \quad & x_{1} > 0\,.
   \end{cases}
\end{equation}
Here $\tGamma \in \cG$ is the complex contour given by
\begin{equation}
    \tGamma = t + i \psi(t) \, ,
\end{equation}
with
\begin{equation}
\psi(t) = 20 \left(\erf \left(\frac{40+t}{5}\right) - \erf \left(\frac{40-t}{5}\right)\right)\,,
\end{equation}
with the curve $\tGamma_{\epsilon}$ truncated at $|\Im(x_{2})| = 39.527$.
The error in Figure~\ref{fig:greenfun-err} (right panel) is the absolute deviation in the validity of the above identity.
\begin{figure}[h!]
    \centering
    \includegraphics[width=0.9\linewidth]{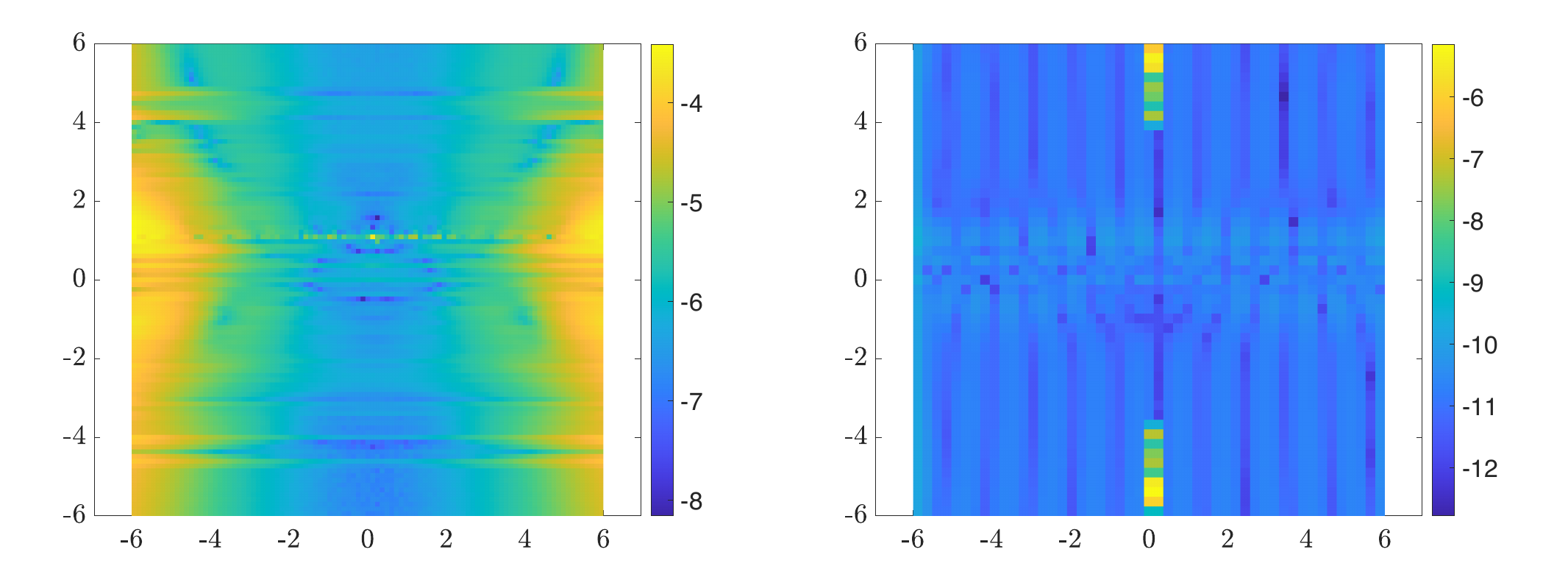}
    \caption{Left: $\log_{10}$ of the residual $(\Delta_h + k^2 (1+q_l(x_2))) G_{q_l} (\bx, \by_0)$ associated with the computed Green's function for the left material, with $\by_0 = (0.1,1)$, and 
    $\Delta_h$ the standard finite-difference approximation to the Laplacian with step-size $h = 10^{-4}$. Right: $\log_{10}$ of the computed difference between the left- and right-hand sides of \eqref{eq:gi} for $q = q_l$.}
    \label{fig:greenfun-err} 
\end{figure}

The layer potentials above and the ones appearing in the numerical solution of integral equations in the remainder of this section are discretized using the chunkIE package~\cite{chunkIE}. This package solves integral equations using a modified Nystr\"om method. It discretizes~$\tGamma_{\eps}$ by splitting it into 16th order Gauss-Legendre panels and provides utilities for the accurate evaluation of integral operators with logarithmic singularities on surface. The smooth quadrature rule is used to evaluate the integrals off-surface.

Next we turn our attention to transmission problem. Throughout, we represent our solution $u$ to the PDE \eqref{eqn:the_pde} as
\begin{align*}
    u (\bx) = 
        u_i^{l,r} (\bx)+ u^{l,r} (\bx) \, ,\quad \textrm{for } \bx \in \Omega_{l,r},
\end{align*}
where $u_i^{l,r}$ are the prescribed incoming fields, and the unknown functions $u^{l,r}$ satisfy \eqref{eqn:the_reduced_pde}. We then use the above Green's function evaluators to discretize~\eqref{eqn3.15.10}, which allows us to obtain the densities $\sigma_{\epsilon}$ and $\tau_{\epsilon}$. The functions $u^{l,r}$ are evaluated by discretizing
$$
u^{l,r}(\bx) = \cS^{l,r}_{\tGamma_{\epsilon}}[\sigma_{\epsilon}](\bx) - \cD^{l,r}_{\tGamma_{\epsilon}}[\tau_{\epsilon}](\bx) 
$$
using the smooth Gauss-Legendre quadrature rule for $x_{2} \neq 0$.

To test the accuracy, we set
\begin{equation*}
    u_i^{l,r}(\bx) = \int_{\bbR^2} G^{l,r}(\bx,\by)\,f^{l,r}(\by) \,{\rm d}\by \, ,\quad \textrm{for } \bx \in \Omega_{l,r} \,,
    \end{equation*}
where $f^{l,r}(\by) := \delta (\by - \by^{l,r})$, with $\by^l \in \Omega_r$ and $\by^r \in \Omega_l$. These fictitious point sources produce an outgoing solution $u \in \mathcal{C}^1 (\mathbb{R}^2)$ to 
$\Delta_x u^{l,r}(\bx) + k^2(1+q_{l,r}(x_2)) u^{l,r}(\bx) = 0$ in $\Omega_{l,r}$, meaning that $u \equiv 0$. 
The right panel of Figure \ref{fig:ps_24} illustrates the error via the computed absolute value of $u$ when $\by^l = (1, 0.5)$ and $\by^r = (-1.5, 1)$. The corresponding incoming fields $u_i^{l,r}$ are plotted in the left panel.

Having tested our solver, we now use it to simulate scattering with a few choices of physically relevant data
. In Figure~\ref{fig:ps_23}, we plot the solution to the PDE \eqref{eqn:the_pde} due to a point source $f (\bx) := \delta (\bx - \by_0)$ located at $\by_{0} = (-1.5,1)$. In Figure~\ref{fig:wg_21}, we plot the solution of the PDE corresponding to 
waveguide-mode incident fields, 
\begin{align*}
    (u_i^{l} (\bx), u_i^r (\bx)) = (v_{l,j} (x_2) e^{i\xi_{l,j} x_1}, 0)\qquad 
    \text{or}
    \qquad 
    (u_i^{l} (\bx), u_i^r (\bx)) = (0, v_{r,j} (x_2) e^{-i\xi_{r,j}^b x_1})
\end{align*}
for various waveguide mode frequencies $\xi_{l,j}$ and $\xi_{r,j}^b$.

\begin{figure}
    \centering
    \includegraphics[width=0.9\linewidth]{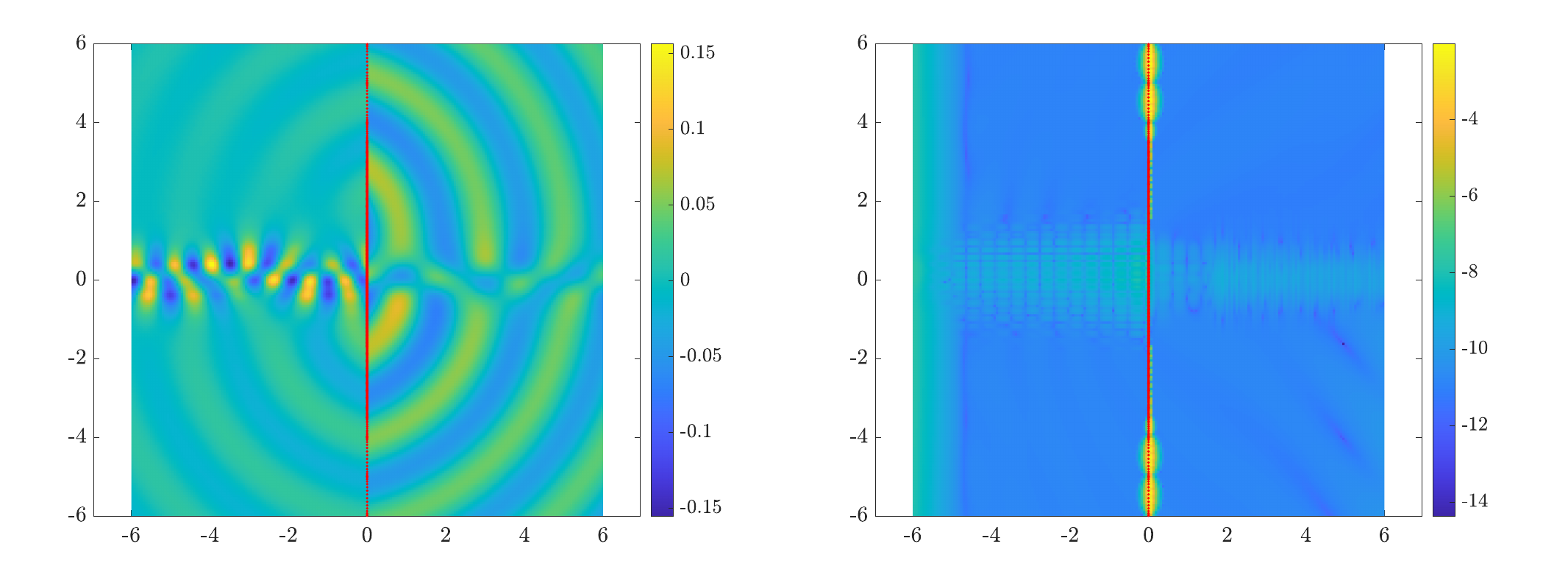}
    \caption{The left panel shows the incoming field, with $u_i^{l,r}$
      respectively corresponding to point sources at $(1,0.5)$ and $(-1.5,1)$
      for the analytic solution test. The absolute error of the solution
      measured by $\log_{10}(|u|)$ is plotted in the right panel.}
    \label{fig:ps_24}
\end{figure}

\begin{figure}
    \centering
    \includegraphics[width=0.9\linewidth]{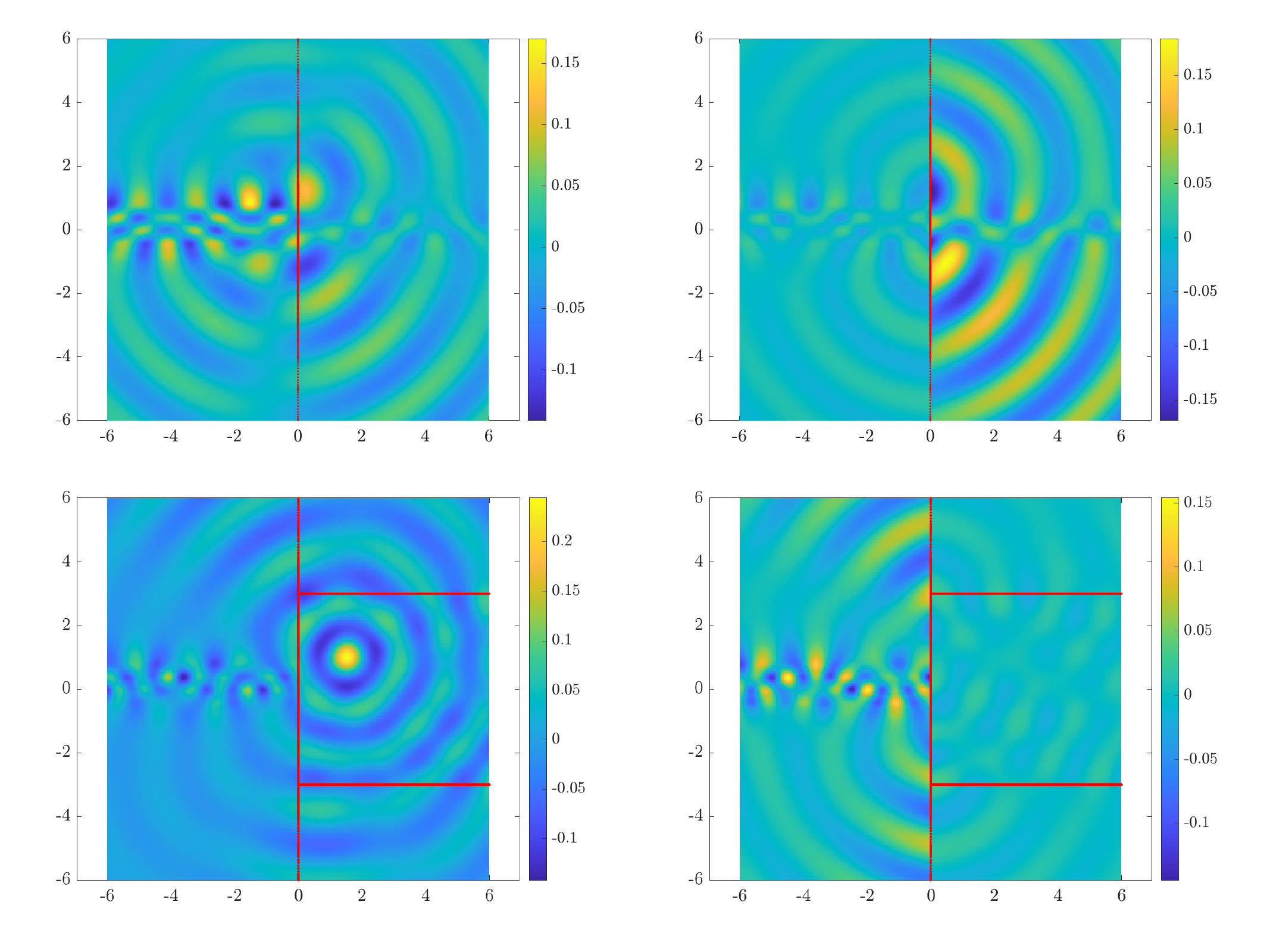}
    \caption{Top: Imaginary parts of the total and scattered fields
      corresponding to a point source at $\by_0 = (-1.5,1)$, with potential $q_r
      = q_r^a$ in the right half-plane. Bottom: Imaginary parts of the total and
      scattered fields corresponding to a point source at $\by_0 = (1.5,1)$,
      with potential $q_r = q_r^b$ in the right half-plane.}
    \label{fig:ps_23}
\end{figure}

\begin{figure}
    \centering
    \includegraphics[width=0.9\linewidth]{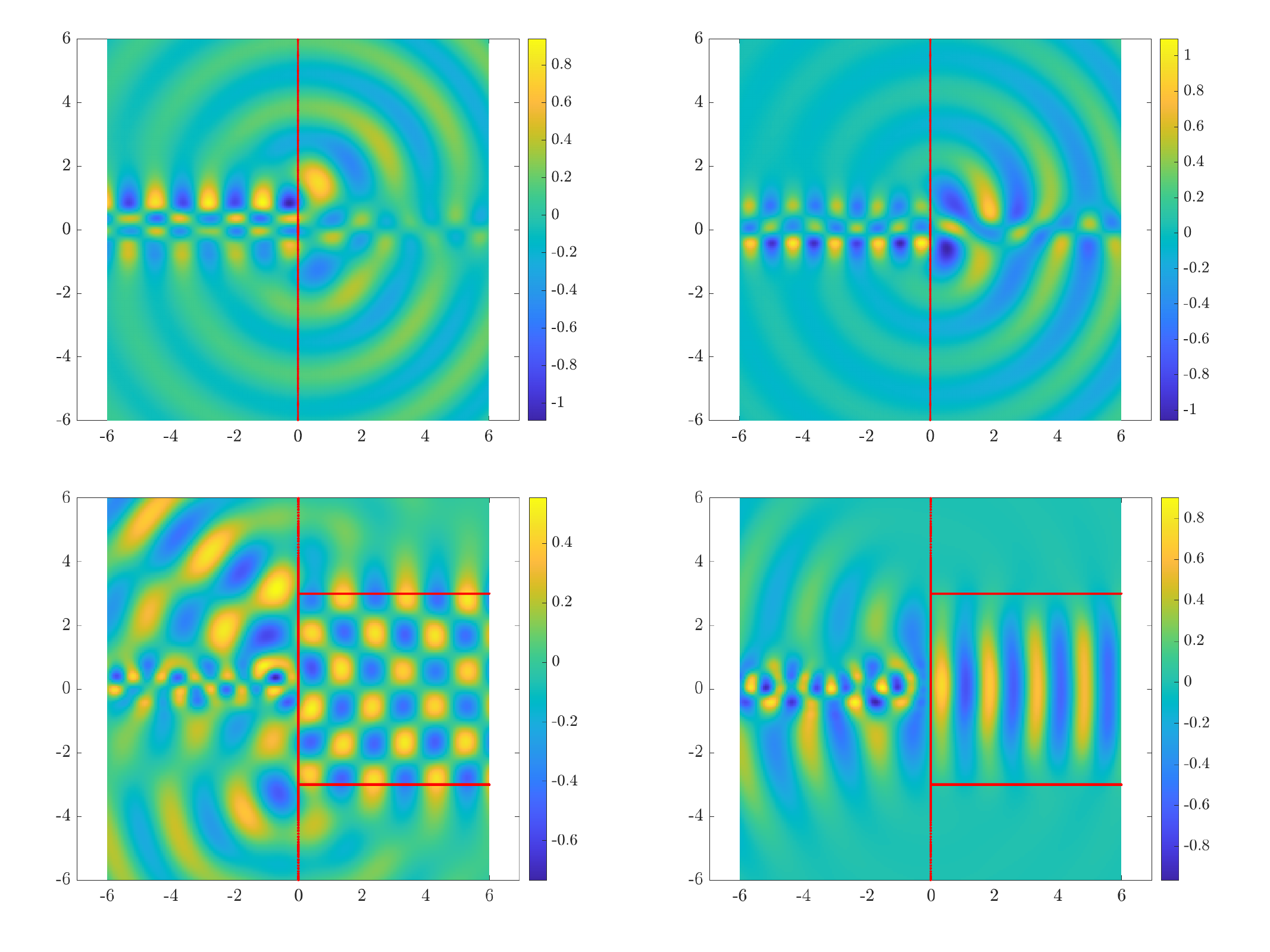}
     \caption{Imaginary parts of the solution corresponding to various incoming waveguide
       modes. Top: the potential in the right half-plane is $q_r
       = q_r^a$. The incoming waveguide mode is supported in the left
       half-plane, and has frequency $\xi_{l,1},\, \xi_{l,2}$ in the left and
       right panels, respectively.  Bottom: the potential in the right
       half-plane is $q_r = q_r^b$. The incoming waveguide mode is supported in
       the right half-plane, and has frequency $\xi_{r,1}^b,\, \xi_{r,5}^b$ in
       the left and right panels, respectively.  }
    \label{fig:wg_21}
\end{figure}

\section{Concluding remarks}
In this work, we analyze the complex scaling approach applied to boundary
integral equations for the solution of time harmonic acoustic wave scattering
from junctions of dielectric `leaky' waveguides proposed
in~\cite{epstein2023solvinga,epstein2023solvingb,epstein2024solving}. The method
in these papers involved the solution of a certain integral equation with
unknown densities supported on an infinite fictitious interface separating the
two waveguides. This led to a system of integral equations which, while Fredholm
on the interface, have slowly decaying solutions, posing significant numerical
challenges for their solution. In~\cite{goodwill2024numerical}, it was shown
through extensive numerical experiments that complex scaling is an effective
approach for solving integral equations of this type. This work provides a
rigorous justification for the use of the complex scaling approach in this
context. In particular, we showed that the outgoing nature of both the Green's
functions and the data can be used to analytically extend integral operators to
appropriately chosen totally real submanifolds of $\mathbb{C}^{2}$ on which the
the data and the solutions to the integral equations decay rapidly. Moreover, we
also showed that these contours lead to efficient discretizations of the
integral equation and allow for truncation at $O(\log{(1/\epsilon)})$
wavelengths outside the support of the waveguides to obtain an $\epsilon$
accurate solution to the PDE.

There are many natural extensions of this work, both numerically and
analytically. Firstly, in the numerical examples presented in this paper, the
dominant computational cost was the efficient evaluation of Green's functions
for bi-infinite waveguides. While there exist many methods for computing the
Green's functions for the piecewise constant case~\cite{cho2021adapting}, obtaining an
efficient evaluator for generic perturbations $q$ in the limits $x_{2} \to
y_{2}$, and $|x_{1}| \to \infty$ remains an open problem. Secondly, we believe
that the analytical framework presented here can be extended to dielectric leaky
waveguides for time-harmonic electromagnetic scattering, periodically varying
waveguides and scattering from gratings, and interface problems arising in the
study of topological insulators (see
\cite{dirac_waveguide,kleingordon_waveguide} for an alternate approach). In
principle, it should also be possible to extend it to three dimensions where
quadrature \cite{bowei} and fast algorithms are available. Finally, an
interesting extension, which the current work does not directly address, would
be the analysis of Fredholm properties of junctions that meet obliquely at the
interface $\{x_{1} = 0\}$; this would enable the analysis and the development of
numerical methods for photonic device components such as `Y-couplers'. Provided
that all the wave guides are asymptotic to parallel rays, this more realistic problem is a
relatively compact perturbation of the problems analyzed herein.

\section{Acknowledgments}
The authors would like to thank the American Institute of Mathematics and, in particular, John Fry for hosting them on
Bock Cay during the SQuaREs program, where parts of this work were completed. The Flatiron institute is a division of
the Simons foundation.

\appendix

\section{Properties of the partial Fourier transforms of $w^{l,r}$}
\label{app:a}
Recall that the kernels $w^{l,r}$ are functions of $x_{1} - y_{1}$ and thus their analytical properties can be understood by
studying their partial Fourier transform $\tilde{w}^{l,r}$, given by
\begin{equation}
w^{l,r}(\bx; \by) = \int_{\gamma} e^{i \xi (x_{1} - y_{1})} \tilde{w}^{l,r}(\xi; x_{2}, y_{2}) d\xi\,, 
\end{equation} 
for an appropriately chosen contour $\gamma$. In this section, we establish
several key analytical properties of the functions $\tilde{w}^{l,r}$. For
notational convenience, we drop the superscripts $l,r$. In particular, let $q$
be a piecewise continuous function with compact support in $[-d,d]$, let $G =
G_{0} + w$ denote the outgoing Green's function of $\Delta+k^2(1+q(x_2)),$ where
$G_{0}$ is, as before, the free-space Green's function for wave number $k^2.$
Finally let $\tilde{w}$ denote the partial Fourier transform of $w$ in the $x_1$-direction.

We denote the four open quadrants of the complex plane by $Q_1,\, Q_2,\, Q_3,\, Q_4,$ i.e., $Q_1=\{z:\: \Re z>0,\Im z>0\}$ and $Q_j=e^{\frac{i(j-1)\pi}{2}}Q_1,$ for $j=2,3,4.$ For $\theta <\pi/2,$ let $S_{\theta}$ denote the union of the sectors in $\oQ_{2} \cup \oQ_{4}$ given by
\begin{equation}
    S_{\theta} = \left\{\xi: \, \frac{\pi}{2} \leq \arg{\xi} \leq \frac{\pi}{2}+\theta \right\} \cup \left\{\xi: \, -\frac{\pi}{2}\leq \arg{\xi} \leq \theta - \frac{\pi}{2}\right\}. 
\end{equation}
The analysis of $\tilde{w}$ requires integrals involving the function $\astar(\xi):=-\sqrt{\xi^2-k^2}$ for $\xi \in \mathbb{C},$ which has branch points at $\xi = \pm k.$ The following definition fixes our choice of branch cuts.
\begin{definition}
    Let~$\sqrt{\cdot}$ be chosen to have the branch cut along the
    negative real axis, such that~$\Re \sqrt{z}\geq 0$.

    We define the functions $\astar$ by
    \begin{equation}    
        \astar(\xi) = -\sqrt{i(\xi-k)}\sqrt{-i(\xi+k)}\label{eq:w_c_formula},
    \end{equation}
    which has a branch cut pointing vertically into the lower half plane at $-k$ and a branch cut pointing vertically
    into the upper half plane at $k$.  Along the real axis where $|\xi|<k,$ $\astar(\xi)=i\sqrt{k^2-\xi^2},$ and where $|\xi|>k,$ $\astar(\xi)=-\sqrt{\xi^2-k^2}.$ $\astar(\xi)$ maps $\oQ_{2}\cup\oQ_{4}$ into $\oQ_{2}$ and in $S_{\theta}$ is equal to $i \sqrt{k^2 - \xi^2}$.
\end{definition}

Returning to $\tilde{w},$ we begin by observing that for fixed $y_{2}$ and $\xi$, $\tilde{w}$ is the unique outgoing solution to the ordinary differential equation (ODE)
\begin{equation}
L_{\xi}[\tilde{w}] := (\partial^{2}_{x_{2}} - \xi^2 + k^2 (1+q)) \tilde{w} = -\frac{1}{2\astar(\xi)} k^2 q(x_{2}) e^{\astar(\xi)|x_{2} - y_{2}|};
\end{equation}
the outgoing  condition implies that $\tilde{w}'(x_2) = {\rm sign}(x_2)
\alpha_k(\xi) \tilde{w}(x_2)$ for all $|x_2| >d.$ To analyze the behavior of
solutions to this ODE, we begin by constructing the outgoing resolvent of
$L_{\xi},$ denoted by $g(\xi; x_{2}, z_{2}),$ and defined so that the solution
of $L_{\xi}[v] = f$ is given by
\begin{equation}
\label{eq:out_res}
v(x_{2}) = \int_{-\infty}^{\infty} g(\xi; x_{2}, z_{2}) f(z_{2}) dz_{2} \, ,
\end{equation}
provided $f$ decays sufficiently rapidly at $\infty$. 
For $x_{2} > d$, $L_{\xi}$ reduces to $(\partial^{2}_{x_{2}} - \xi^2 + k^2)$ and its two linearly independent solutions are given by $e^{\pm \astar(\xi) x_{2}}$.  
Following the analysis in~\cite[Appendix A]{epstein2023solvinga}, the outgoing resolvent can be expressed explicitly as
\begin{equation}
g(\xi; x_{2}, z_{2}) := \frac{1}{W(\xi)} \left[ \tilde{u}_{+}(\xi; x_{2}) \tilde{u}_{-}(\xi; z_{2}) \chi_{z_{2} {\le} x_{2}} + \tilde{u}_{-}(\xi; x_{2}) \tilde{u}_{+}(\xi; z_{2}) \chi_{z_{2} > x_{2}}  \right] \,,
\end{equation}
where $\tilde{u}_{\pm}$ are particular solutions of $L_{\xi}$ satisfying
    \begin{align}\label{eq:basic}
    \begin{split}
        L_\xi \tilde{u}_\pm (\xi;x_2)= 0, \qquad &x_2 \in \mathbb{R}\\
        \tilde{u}_\pm (\xi;x_2) = e^{\pm\astar (\xi) x_2}, \qquad &\pm x_2 > d\,,
    \end{split}
    \end{align}
$W_{\xi}$ is their Wronskian given by
    \begin{align*}
        W(\xi) := \tilde{u}_- (\xi;x_2) \partial_{x_2} \tilde{u}_+ (\xi;x_2) - \tilde{u}_+ (\xi;x_2) \partial_{x_2} \tilde{u}_- (\xi; x_2) \,,
    \end{align*}
 and $\chi_{S}$  is the indicator function of the set $S$.
 
In the following lemma, we show that the functions $\tilde{u}_{\pm}$ are uniquely defined, and describe their region of analyticity.

 \begin{lemma}\label{lemma:basic}
    For every $\xi \in \mathbb{C}$, $\tilde{u}_{\pm}$ satisfying equation~\eqref{eq:basic} are uniquely defined. 
    Moreover, when viewed as a function of $\zeta = \astar(\xi)$, the functions $\tilde{u}_{\pm}$, and $\partial_{x_{2}} \tilde{u}_{\pm}$ are entire in $\zeta$.
\end{lemma}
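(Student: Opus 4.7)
The plan is to recast $\tilde{u}_\pm$ as solutions of linear initial value problems with holomorphic parameter dependence on $\zeta$, then invoke standard ODE theory. The key observation is that substituting $\zeta = \astar(\xi)$ (so that $\xi^2 = \zeta^2 + k^2$) transforms the ODE $L_\xi u = 0$ into
\begin{equation}
u''(x_2) + (k^2 q(x_2) - \zeta^2)u(x_2) = 0,
\end{equation}
whose coefficient is a polynomial (hence entire) in $\zeta$ for each fixed $x_2 \in \bbR$. On the region $\{x_2 > d\}$, where $q \equiv 0$, the equation reduces to $u'' - \zeta^2 u = 0$ with fundamental solutions $e^{\pm \zeta x_2}$, so the prescription $\tilde{u}_+(\xi; x_2) = e^{\zeta x_2}$ for $x_2 > d$ is equivalent to imposing the initial data $u(d) = e^{\zeta d}$, $u'(d) = \zeta e^{\zeta d}$ at $x_2 = d$. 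These data are entire in $\zeta$.

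First I would rewrite the ODE as a first-order system for $(u, u')^\top$ on $(-\infty, d]$ with these initial conditions. Since the coefficient matrix is continuous in $x_2$ (piecewise continuous with compactly supported $q$) and polynomial in $\zeta$, standard existence and uniqueness for linear systems extends the solution to all of $\bbR$ and yields uniqueness. Analyticity then comes from Picard iteration: each iterate is manifestly entire in $\zeta$ with locally bounded growth, and a Gronwall-type estimate yields uniform convergence of the Picard series on every compact set in $(\zeta, x_2) \in \bbC \times \bbR$. By Weierstrass's theorem, the limit $\tilde{u}_+(\zeta; x_2)$ and its $x_2$-derivative (obtained from the second component of the system) are entire in $\zeta$ for every fixed $x_2$. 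The argument for $\tilde{u}_-$ is symmetric, using initial data $u(-d) = e^{-\zeta d}$, $u'(-d) = -\zeta e^{-\zeta d}$ at $x_2 = -d$ and integrating forward.

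There is no real obstacle here; the content of the lemma is essentially the transfer of analytic parameter dependence through a linear ODE with entire coefficients. The only point worth emphasizing is that the branch structure of $\astar$ as a function of $\xi$, with cuts at $\pm k$, disappears entirely once we parameterize by $\zeta$, because both the ODE coefficient $\xi^2 - k^2 = \zeta^2$ and the boundary data $e^{\pm \astar(\xi) x_2} = e^{\pm \zeta x_2}$ depend on $\xi$ only through $\zeta$. Hence the map $\zeta \mapsto \tilde{u}_\pm(\zeta; x_2)$ is genuinely entire, and uniqueness of the resulting $\tilde{u}_\pm$ in the original $\xi$-variable is immediate from the uniqueness of the IVP.
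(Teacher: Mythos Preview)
Your proof is correct and follows essentially the same route as the paper: recast the boundary prescription as an initial value problem at $x_2=\pm d$ with data entire in $\zeta$, then obtain existence, uniqueness, and analyticity from the Picard/Volterra iteration with uniform bounds on compact $\zeta$-sets. The paper additionally normalizes by $e^{\zeta x_2}$ before iterating (following Deift--Trubowitz), which is convenient for later Wronskian estimates but inessential here; note also a small slip in your stated data for $\tilde u_-$, which should be $u(-d)=e^{\zeta d}$, $u'(-d)=-\zeta e^{\zeta d}$.
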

\begin{proof}
The proof is standard, see \cite{deift1979inverse} for example. We sketch the
proof for $\tilde{u}_-.$ Let
$$v (\xi; x_2) := (e^{\astar(\xi)x_2}\tilde{u}_- (\xi;
x_2), \partial_{x_2} e^{\astar(\xi)x_2} \tilde{u}_- (\xi; x_2))^T$$
and
$$
B(\zeta; x_2) := \begin{pmatrix} 0 & 1\\ -k^2 q(x_2) & 2 \zeta
    \end{pmatrix} \, .$$ 
 Then \eqref{eq:basic} is equivalent to
    $$\partial_{x_2} v (\xi; x_2) =
    B(\astar(\xi); x_2) v (\xi; x_2), \quad v (\xi; -d) = \left(1, 0\right),$$
    which in integral form is given by
    \begin{align}\label{eq:vA}
    v(\xi; x_2) = v(\xi; -d) + \int_{-d}^{x_2} B(\astar (\xi); z) v(\xi; z) {\rm d} z.
    \end{align}
    As in\cite{deift1979inverse}, we iterate the above Volterra integral equation, and use the initial data, to obtain
    \begin{align}\label{eqn:neum}
        v(\xi; x_2) = (1,0)^T + \sum_{j=1}^\infty \fB_{\astar (\xi)}^{(j)}[(1,0)^T] (x_2),
    \end{align}
    with 
    \begin{align}
       \fB_{\astar (\xi)}^{(j)} [w] (x_2):= \int_{-d}^{x_2}\int_{-d}^{z_1}\cdots \int_{-d}^{z_{j-1}} B(\astar (\xi); z_1) \cdots B(\astar (\xi); z_j) w(z_j)\,\dd z_j \cdots \dd z_1.
    \end{align}
    Noting that $\|B(\zeta,z)\| \le \sqrt{4|\zeta|^2+k^2 \|q\|_\infty + 1}$ we find that 
    $$|\fB_{\astar (\xi)}^{(j)} [w] (x_2)| \le \frac{[(x_2+d)\,\sqrt{4|\zeta|^2+k^2 \|q\|_\infty + 1}]^j}{j!}\|w\|_\infty,$$
    where the bound holds for both components. It is then straightforward to show that the sum in (\ref{eqn:neum}) converges uniformly for $\zeta$ bounded with
    $$|v(\xi;x_2)| \le \exp((x_2+d)\,\sqrt{4|\zeta|^2+k^2 \|q\|_\infty + 1}).$$
    More refined bounds can be found for the region with $x_2 >d$ by restarting the Volterra integral equation at $x_2=d.$ For $\Re \zeta \le 0, \zeta \neq 0,$ at the expense of a multiplicative constant independent of $\zeta$ and $x_2,$ we can replace $x_2$ in the above estimate by $\min (x_2,d).$ Since each term in (\ref{eqn:neum}) is clearly analytic in $\zeta,$ and the convergence of the sum is uniform on compact sets, the analyticity of $v(\xi;x_2)$ follows immediately.

    Finally, we observe that $\tilde{u}_+ (\xi; x_2) = \tilde{u}_-^\sharp (\xi; -x_2)$, where $\tilde{u}_-^\sharp$ is the solution $\tilde{u}_-$ of \eqref{eq:basic} with $q(x_2)$ replaced by $q(-x_2)$. Thus we have established the analyticity of $\tilde{u}_\pm$ and $\partial_{x_2} \tilde{u}_\pm$, and the proof is complete.
\end{proof}

Since the outgoing resolvent $g$ has $W(\xi)$ in its denominator, the zeros of the Wronskian correspond to poles of $g$. In the following lemma, we show that the Wronskian has finitely many zeros on the real axis, and provide a uniform lower bound on $W$ outside of a ball in $\oQ_{2} \cup \oQ_{4}$. We denote this region by $\xiset :=\{\oQ_2 \cup \oQ_4 : |\xi| \ge \ximin\}$,  assuming that $q$ is not identically zero, we set $\ximin := k \sqrt{1+\|q\|_\infty}$.
\begin{lemma}
\label{lemma:w-est}
The Wronskian $W(\xi)$ has a finite number of zeros $\{\pm \xi_j\}$, all of which are real and have absolute value in the interval $[k, \ximin)$. 
Moreover, there exists a constant $c>0$ such that $|W(\xi)| \ge c|\xi|$ uniformly in $\xi \in \xiset$. 
\end{lemma}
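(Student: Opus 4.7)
The plan is to prove the lemma in two stages: first locate all zeros of $W$, then establish the uniform lower bound on $\xiset$. The key observation underlying both parts is that on $[d,\infty)$ the potential $q$ vanishes, so every solution of $L_\xi u = 0$ on that half-line is a combination of $e^{\pm \astar(\xi) x_2}$. Writing $\tilde u_-(\xi; x_2) = A_+(\xi) e^{\astar(\xi) x_2} + B_+(\xi) e^{-\astar(\xi) x_2}$ on $[d,\infty)$ and using that $W$ is independent of $x_2$, a direct computation yields the factorization
\begin{equation*}
W(\xi) = 2\,\astar(\xi)\, B_+(\xi).
\end{equation*}
Since $\astar(\xi) \neq 0$ whenever $|\xi| \geq \ximin > k$, both tasks reduce to analyzing $B_+$.

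To locate the zeros, I would argue as follows. In the open quadrants $Q_2 \cup Q_4$ one has $\Re \astar(\xi) < 0$ strictly, so both $\tilde u_\pm$ decay at their respective infinities. A zero of $B_+$ there would then make $\tilde u_-$ an $L^2$ eigenfunction of the self-adjoint operator $H := -\partial^2_{x_2} - k^2 q(x_2)$ with eigenvalue $\lambda = k^2 - \xi^2$; self-adjointness forces $\lambda \in \bbR$, hence $\xi^2 \in \bbR$, which is incompatible with $\xi$ lying in the open second or fourth quadrant. Any zero of $W$ in $\overline{Q_2 \cup Q_4}$ must therefore lie on $\bbR \cup i\bbR$. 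For $\xi$ on the imaginary axis or on the real segment $(-k, k)$, the coefficients of $L_\xi$ are real, so $\overline{\tilde u_-}$ is another solution; evaluating the constant Wronskian $W(\tilde u_-, \overline{\tilde u_-})$ at $-\infty$ (one plane wave) and at $+\infty$ (two plane waves) yields the flux identity $|B_+(\xi)|^2 - |A_+(\xi)|^2 = 1$, so $|B_+| \geq 1$ and $W \neq 0$ there. The remaining real zeros must satisfy $|\xi|\geq k$, and the associated $L^2$ eigenfunction gives a non-positive eigenvalue of $H$; the Rayleigh-quotient bound $\lambda \geq -k^2\|q\|_\infty$ then gives $|\xi|^2 \leq \ximin^2$, with strict inequality since equality would force $u'' \equiv 0$, incompatible with $u\in L^2$ and $u\not\equiv 0$. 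Finiteness of the zero set is the standard finiteness of negative eigenvalues of a one-dimensional Schr\"odinger operator with compactly supported bounded potential.

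For the uniform lower bound, continuity of $B_+$ on $\overline{Q_2 \cup Q_4}$ (inherited from the analyticity in $\zeta = \astar(\xi)$ of Lemma~\ref{lemma:basic}) together with non-vanishing from Part~1 gives $|B_+| \geq c_1 > 0$ on every bounded subset of $\xiset$. Since $|\astar(\xi)| \geq c_2|\xi|$ on $\xiset$, it remains to show that $B_+(\xi)$ stays bounded away from $0$ as $|\xi|\to\infty$. Here I would pass to the rescaled unknown $w(x_2) := e^{\astar(\xi) x_2} \tilde u_-(\xi; x_2)$, which satisfies $w'' - 2\astar w' = -k^2 q\, w$ with $w(-d) = 1$, $w'(-d) = 0$, and for which $B_+(\xi) = w(d) - w'(d)/(2\astar(\xi))$. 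One integration followed by an exchange of the order of integration yields
\begin{equation*}
w(x_2) - 1 = -\frac{1}{2\astar(\xi)} \int_{-d}^{x_2} \bigl(e^{2\astar(\xi)(x_2 - z)} - 1\bigr)\, k^2 q(z)\, w(z)\, dz.
\end{equation*}
Since $\Re\astar \leq 0$ throughout $\overline{Q_2}$, the factor $(e^{2\astar(x_2 - z)} - 1)$ has absolute value at most $2$, and a standard Gr\"onwall argument first gives $|w|$ uniformly bounded on $[-d,d]$ and then $|w(x_2) - 1| = O(1/|\astar|)$ and $|w'(x_2)| = O(1)$, uniformly. Therefore $B_+(\xi) \to 1$ as $|\xi|\to\infty$ in $\xiset$, which combined with the compact-region bound gives $|B_+|\geq c > 0$ on $\xiset$ and hence $|W(\xi)| \geq c'|\xi|$.

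The main obstacle I expect is making the large-$|\xi|$ estimate uniform over all of $\xiset$, including the segments near the imaginary axis where $\Re\astar(\xi)\to 0$. The naive Volterra/Neumann expansion used in Lemma~\ref{lemma:basic} has bounds growing like $e^{C|\astar|}$ and so is useless here. The trick is the specific rescaling $w = e^{\astar x_2}\tilde u_-$ together with the one extra integration that puts $(e^{2\astar(x_2-z)} - 1)/(2\astar)$ in place of $e^{2\astar(x_2-z)}$: this manoeuvre simultaneously extracts the $1/\astar$ gain needed for convergence and uses only the weak but uniform bound $|e^{2\astar(x_2-z)}|\leq 1$ valid throughout $\overline{Q_2}$, thereby avoiding any need to distinguish oscillatory from exponentially-damped regimes.
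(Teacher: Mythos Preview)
Your proof is correct and follows essentially the same strategy as the paper's, with two small variations that are worth noting. For excluding zeros on the imaginary axis and on $(-k,k)$, the paper simply cites a resonance-free region result (Tang--Zworski), whereas your flux identity $|B_+|^2-|A_+|^2=1$ is a direct self-contained computation and arguably preferable here. For the large-$|\xi|$ behaviour of $B_+$, the paper bounds $b(\xi)=\phi_-(\xi;x_2)-a(\xi)e^{2x_2\astar(\xi)}$ by taking $x_2\to\infty$, which only works when $\Re\astar(\xi)<0$ strictly and therefore requires a separate analytic-continuation step to cover $\Re\xi=0$; your explicit formula $B_+(\xi)=w(d)-w'(d)/(2\astar(\xi))$ together with the $O(1)$ bound on $w'(d)$ handles all of $\xiset$ in one stroke. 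Otherwise the rescaling $w=e^{\astar x_2}\tilde u_-$, the integral equation, and the eigenvalue-location argument are identical to the paper's.
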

The proof of this result is classical; we include it only for completeness. In particular, the bound \eqref{eq:bd_phi} follows immediately from \cite[Lemma 1 (i)]{deift1979inverse}.
\begin{proof}
The self-adjointness of the operator $\partial^2_{x_2} +k^2 (1 + q(x_2))$ implies that all the zeros of $W$ must lie on the real or imaginary axes.
    However, the imaginary axis and real interval $(-k, k)$ cannot contain zeros of $W$, as these zeros would correspond to nontrivial outgoing solutions $u$ to $L_\xi u = 0$ (see, e.g. \cite[Proposition 1.4]{tang2007potential}).
    For $\xi\in \mathbb{R}$ satisfying $|\xi| > k$, $W(\xi) = 0$ if and only if $\tilde{u}_+ (\xi;\; \cdot)$ is an eigenfunction of $\partial^2_{x_2} + k^2 q (x_2)$ with eigenvalue $\xi^2 - k^2$.
    Since $q$ is compactly supported and piecewise continuous, 
    the operator $\partial^2_{x_2} + k^2 q(x_2)$ has a finite number of positive eigenvalues, all of which are less than $k^2 \|q\|_\infty$. Indeed, by the Courant-Fischer min-max principle \cite{RS4}, each positive eigenvalue of $\partial^2_{x_2} + k^2 q(x_2)$ is bounded above by a distinct positive eigenvalue of $\partial^2_{x_2} + k^2 \|q\|_\infty \chi_{x_2 \in \supp q}$ (see also \cite[Corollary 9.43]{Teschl}). Thus we have shown that $W$ has a finite number of zeros, all of which lie in $(-\ximin, -k] \cup [k, \ximin) \subset \mathbb{R}$. 

    It remains to verify the growth condition for $W$. 
    Observe that for any $z<-d,$ $\tilde{u}_-(\xi;x_2)$ satisfies the integral equation 
    $$\tilde{u}_-(\xi;x_2) = e^{-\astar(\xi) x_2} - \int_{z}^{x_2} \frac{1}{\astar(\xi)}\sinh(\astar(\xi)(x_2-s))k^2q(s)\tilde{u}_-(\xi;s)\,{\rm d}s,$$
    where ${\chi}_{s>0} \sinh(\astar(\xi) s)/\astar(\xi)$ is the causal Green's function. In particular, setting $$\phi_-(\xi;x_2) := e^{\astar(\xi)x_2}\tilde{u}_-(\xi,x_2),$$ it follows straightforwardly that
    \begin{align}\label{eq:straightforward}
    \phi_-(\xi;x_2) = 1 - \int_{-\infty}^{x_2}\frac{e^{2\astar(\xi)(x_2-s)}-1}{2\astar(\xi)}k^2q(s)\phi_-(\xi;s)\,{\rm d}s =: 1 + \causal [\phi_- (\xi; \;\cdot)] (x_2).
    \end{align}
    Since $\Re \astar (\xi) \le 0$ for all $\xi \in \oQ_2 \cup \oQ_4$ and $\supp (q) \subseteq [-d,d]$,
    we have
    $$|\causal [\phi_- (\xi; \;\cdot)] (x_2)| \le \frac{C}{|\astar (\xi)|} \|\phi (\xi; \; \cdot) \|_{L^\infty[-d,d]}, \qquad \xi \in \oQ_2 \cup \oQ_4, \quad x_2 \in \mathbb{R}.$$
    Hence for all $\xi \in \oQ_2 \cup \oQ_4$ sufficiently large, $I-\causal$ is boundedly invertible on $L^\infty (\mathbb{R})$ with $\| (I - \causal)^{-1} - I \| \le C/|\xi|$. It then follows from \eqref{eq:straightforward} that
    \begin{align}\label{eq:bd_phi}
        \left|\phi_- (\xi; x_2) - 1\right| \le \frac{C}{|\xi|}, \qquad \xi \in \xiset, \quad x_2 \in \mathbb{R}.
    \end{align}
    Now, when $x_2 > d$,
    \begin{align}\label{eq:tildeu_minus}
        \tilde{u}_- (\xi;x_2) = a(\xi) e^{x_2 \astar (\xi)} + b(\xi) e^{-x_2 \astar (\xi)},
    \end{align}
    and thus
    \begin{align}\label{eq:W}
        W (\xi) = b(\xi) \astar (\xi), \qquad b(\xi) = \phi_- (\xi; x_2) - a(\xi) e^{2 x_2 \astar (\xi)}.
    \end{align}
    By \eqref{eq:bd_phi},
    \begin{align}\label{eq:b_bd}
        |b(\xi) - 1| \le \inf_{x_2 \in \mathbb{R}} \left( |\phi_- (\xi; x_2) - 1| + \left| a(\xi) e^{2 x_2 \astar (\xi)}\right|\right) \le \frac{C}{|\xi|} + |a(\xi)|\inf_{x_2 \in \mathbb{R}}\left|e^{2 x_2 \astar (\xi)}\right| = \frac{C}{|\xi|}
    \end{align}
    uniformly in $\xi \in \xiset \cap \{\Re \xi \ne 0\}$.
    Indeed, for any $\xi \in \xiset$ with $\Re\xi \ne 0$, we have 
    $\Re \astar (\xi) < 0$, meaning that the last infimum in the above equation vanishes. 
    Recalling Lemma \ref{lemma:basic} and the expression for $W$ in \eqref{eq:W}, we see that $b(\xi) = W(\xi)/\astar (\xi)$ is an analytic function of $\xi$ in the strip $\{|\Re \xi| < k/2\}$, hence the bound \eqref{eq:b_bd} extends to all $\xi \in \xiset$.
    Since $W$ does not vanish in $\xiset$, we have thus established the existence of a constant $c>0$ such that $|W (\xi)| \ge c |\xi|$ for all $\xi \in \xiset$.
\end{proof}

The following corollary is a straightforward consequence of Lemmas~\ref{lemma:basic} and~\ref{lemma:w-est}.
\begin{corollary}
The outgoing resolvent $g$ when viewed as a function $\zeta = \astar(\xi)$ is analytic in $\zeta$ with the exception of $\zeta_j = \astar(\xi_{j})$.
\end{corollary}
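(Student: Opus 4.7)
The plan is to read off the claim directly from the explicit formula
\[
g(\xi; x_2, z_2) = \frac{1}{W(\xi)}\left[\tilde{u}_+(\xi; x_2)\tilde{u}_-(\xi; z_2)\chi_{z_2\leq x_2} + \tilde{u}_-(\xi; x_2)\tilde{u}_+(\xi; z_2)\chi_{z_2 > x_2}\right]
\]
by showing that each ingredient descends to an entire function of $\zeta = \astar(\xi)$ and then locating the zeros of the denominator. First I would observe that the ODE $L_\xi u = 0$ depends on $\xi$ only through $\xi^2 - k^2 = \zeta^2$; that is, it can be rewritten as $(\partial_{x_2}^2 - \zeta^2 + k^2 q(x_2))u = 0$, with boundary conditions $\tilde{u}_\pm(x_2) = e^{\pm\zeta x_2}$ for $\pm x_2 > d$. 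Hence $\tilde{u}_\pm$ and $\partial_{x_2}\tilde{u}_\pm$ are genuinely functions of $\zeta$ (and $x_2$), and by Lemma~\ref{lemma:basic} they are entire in $\zeta$ for every fixed $x_2$.

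From this the Wronskian $W$ — which is $x_2$-independent because $\tilde{u}_\pm$ satisfy the same second-order ODE — is an algebraic combination of entire functions of $\zeta$, and hence is itself entire in $\zeta$. Consequently, for each fixed $(x_2,z_2)$ the map $\zeta \mapsto g(\zeta; x_2, z_2)$ is meromorphic on all of $\bbC$, with possible poles only at the zeros of $W(\zeta)$. By Lemma~\ref{lemma:w-est}, the zeros of $W$ in $\xi$-space are a finite set of real numbers $\{\pm\xi_j\}$ with $|\xi_j|\in[k,\ximin)$; the corresponding values $\zeta_j = \astar(\xi_j) = -\sqrt{\xi_j^2-k^2}$ are therefore a finite set of non-positive real numbers. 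This gives exactly the statement of the corollary.

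The only point that requires any care is verifying that $g$ really does descend to a single-valued function of $\zeta$ despite the 2-to-1 nature of $\xi\mapsto\astar(\xi)$; but this follows from the observation above that both the ODE and the boundary data for $\tilde{u}_\pm$ depend only on $\zeta$, not on the choice of sign of $\sqrt{\xi^2-k^2}$. There is no real analytical obstacle here — once Lemma~\ref{lemma:basic} is in hand, the corollary is essentially a formal consequence of the quotient structure of $g$, and Lemma~\ref{lemma:w-est} supplies the location of the exceptional set.
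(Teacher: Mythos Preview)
Your proof is correct and takes essentially the same approach as the paper, which simply states that the corollary is a straightforward consequence of Lemmas~\ref{lemma:basic} and~\ref{lemma:w-est}. You have spelled out exactly why: the explicit formula for $g$ is a quotient whose numerator is entire in $\zeta$ by Lemma~\ref{lemma:basic}, and whose denominator $W$ is likewise entire with zeros located by Lemma~\ref{lemma:w-est}; your remark that the ODE and boundary data for $\tilde u_\pm$ depend only on $\zeta$ (not on the branch of $\sqrt{\xi^2-k^2}$) is the right justification for why $g$ is well-defined as a function of $\zeta$.
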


In the following lemma, we turn our attention to properties of solutions to $L_{\xi} v = f$. In the remainder of the section, let 
$\rho_{a,b}$, for $a<b$, denote the following piecewise linear function 
\begin{align}\label{eq:ls_def}
    \ls_{a,b}(y) := \begin{cases}
        0, & y < a\\
        y-a, & a \le y \le b\\
        b-a, & y > b
    \end{cases} \, .
\end{align}
\begin{lemma}\label{lemma:decay}
    Let $m \ge 0$ and suppose $f: \mathbb{C} \times \mathbb{R}^2 \to \mathbb{C}$ satisfies
    \begin{align}\label{eq:decay_g}
        |f(\xi; x_2, y_2)|\le \begin{cases}
            C x_2^m e^{\Re \astar (\xi) (x_2 - d + |y_2-d|)}, & x_2 > d\\
            C e^{\Re \astar (\xi) |x_2 - y_2|}, & -d \le x_2 \le d\\
            C|x_2|^m e^{\Re \astar (\xi) (|x_2 + d| + |y_2+d|)}, & x_2 < -d
        \end{cases}
    \end{align}
    uniformly in $(\xi; x_2, y_2) \in \xiset \times \mathbb{R}^2$. Then the solution $v$ to
    $
        L_\xi v (\xi;x_2,y_2) = f(\xi; x_2,y_2)
    $
    satisfies
    \begin{align}\label{eq:decay_v}
        |v(\xi; x_2, y_2)| \le
        \begin{cases}
            Cx_2^m e^{\Re \astar (\xi) (x_2 - d + |y_2-d|)} \left(\frac{1}{|\xi|^2}+\frac{\ls_{-d,d}(-y_2)+x_2-d}{|\xi|}\right), & x_2 > d\\
            Ce^{\Re \astar (\xi) |x_2-y_2|} \left(\frac{1}{|\xi|^2}+\frac{\ls_{x_2,d}(y_2)+\ls_{-x_2,d}(-y_2)}{|\xi|}\right), & -d \le x_2 \le d\\
            C|x_2^m| e^{\Re \astar (\xi) (|x_2 + d| + |y_2+d|)} \left(\frac{1}{|\xi|^2}+\frac{\ls_{-d,d}(y_2)+|x_2+d|}{|\xi|}\right), & x_2 < -d
        \end{cases}
    \end{align}
    uniformly in $(\xi; x_2, y_2) \in \xiset \times \mathbb{R}^2$. Moreover, 
    if $f(\xi; x_2, y_2) = \mathfrak{f} (\astar (\xi); x_2, y_2)$ for some function $\mathfrak{f} (\zeta; x_2, y_2)$ which is analytic in $\zeta \in \mathbb{C} \setminus \{\astar (\xi_j)\}$, then $v(\xi; x_2, y_2) = \mathfrak{v} (\astar (\xi); x_2, y_2)$ with $\mathfrak{v} (\zeta; x_2, y_2)$ analytic in $\zeta \in \mathbb{C}\setminus \{\astar (\xi_j)\}$.
\end{lemma}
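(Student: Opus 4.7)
\textbf{Proof plan for Lemma \ref{lemma:decay}.}

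The starting point is the representation of the solution via the outgoing resolvent: using equation \eqref{eq:out_res}, one has
\begin{equation*}
v(\xi; x_2, y_2) \;=\; \int_{-\infty}^{\infty} g(\xi; x_2, z_2)\, f(\xi; z_2, y_2)\, dz_2,
\end{equation*}
with $g = W^{-1}\bigl[\tilde u_+(x_2)\tilde u_-(z_2)\chi_{z_2\le x_2} + \tilde u_-(x_2)\tilde u_+(z_2)\chi_{z_2>x_2}\bigr]$. The plan is to combine three ingredients already available: the lower bound $|W(\xi)|\ge c|\xi|$ on $\xiset$ from Lemma \ref{lemma:w-est}, the refined asymptotic expansions of $\tilde u_\pm$ derived in that same proof, and the hypothesis \eqref{eq:decay_g}. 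Concretely, I will use $\tilde u_-(\xi;z_2)=a(\xi)e^{\astar z_2}+b(\xi)e^{-\astar z_2}$ for $z_2>d$ (with the symmetric formula for $\tilde u_+$ when $z_2<-d$), together with the bound $|b(\xi)-1|\le C/|\xi|$ from \eqref{eq:b_bd} and the derived bound $|a(\xi)|e^{2\astar d}\le C/|\xi|$; inside the waveguide $|z_2|\le d$, both $\tilde u_\pm$ are uniformly bounded by a constant through \eqref{eq:bd_phi}.

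I will then carry out a case split based on the position of $x_2$ (the three cases $x_2>d$, $|x_2|\le d$, $x_2<-d$), and for each of these split the $z_2$-integral into the three regions $z_2>d$, $|z_2|\le d$, $z_2<-d$. In each subregion the integrand is the product of a piece of $\tilde u_\pm(x_2)\tilde u_\mp(z_2)$ and $f(\xi;z_2,y_2)$, and the exponentials either reinforce or cancel. When they reinforce (e.g.\ $\tilde u_+(x_2)=e^{\astar x_2}$ paired with the $b(\xi)e^{-\astar z_2}$ piece of $\tilde u_-(z_2)$ and a factor $e^{\astar(z_2-d+|y_2-d|)}$ in $f$), the $z_2$-integral contributes a factor $1/|\astar|\sim 1/|\xi|$, which combines with the $1/|W|\sim 1/|\xi|$ prefactor to yield the $1/|\xi|^2$ term with the right exponential weight. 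When they cancel (e.g.\ $e^{\astar x_2}$ paired with $a(\xi)e^{\astar z_2}$, bearing in mind the sharp bound on $|a|$, or an integrand that is flat on $|z_2|\le d$), the integrand is bounded and integration produces a factor proportional to the length of the integration window; these lengths are precisely $\ls_{a,b}(\pm y_2)$ and $|x_2\mp d|$, giving the second summand in \eqref{eq:decay_v}. Finally, the hypothesized $|z_2|^m$ growth of $f$ is absorbed using $\int_{x_2}^{\infty}z_2^m e^{2\astar z_2}dz_2\le Cx_2^m e^{2\astar x_2}/|\astar|$ for $x_2>d$, producing the $x_2^m$ prefactor on the right-hand side.

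For the analyticity assertion, I will appeal to Lemma \ref{lemma:basic}, which gives that $\tilde u_\pm$ and $\partial_{x_2}\tilde u_\pm$ are entire in $\zeta=\astar(\xi)$. Consequently $W=W(\zeta)$ and the outgoing resolvent $g$ are meromorphic in $\zeta$ with singularities only at the finitely many points $\zeta_j=\astar(\xi_j)$ where $W$ vanishes, as noted in the corollary following Lemma \ref{lemma:w-est}. If $f(\xi;x_2,y_2)=\mathfrak{f}(\astar(\xi);x_2,y_2)$ is analytic in $\zeta$ away from $\{\zeta_j\}$, then the $z_2$-integral defining $v$ inherits analyticity in $\zeta$ away from $\{\zeta_j\}$ by the dominated convergence justification of differentiation under the integral sign, which is made legitimate by the uniform decay estimates established in the first part of the proof.

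The main obstacle is purely bookkeeping: with three regions for $x_2$ and three for $y_2$, there are up to nine exterior cases, each requiring the $z_2$-integral to be partitioned into three subregions; tracking how the various $1/|\xi|$ and length-of-interval factors combine to reproduce the precise exponential weight and the $\ls_{a,b}$ penalty terms on the right of \eqref{eq:decay_v} (without losing the sharp $x_2^m$ prefactor) is where the technical work lies. The analytical content, by contrast, reduces to the sharp Wronskian bound and the refined $O(1/|\xi|)$ expansions of $\tilde u_\pm$ that were already extracted in Lemma \ref{lemma:w-est}.
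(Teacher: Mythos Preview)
Your plan is correct and mirrors the paper's proof: both start from the resolvent formula \eqref{eq:out_res}, invoke the Wronskian lower bound and the $O(1/|\xi|)$ asymptotics for $\tilde u_\pm$ from Lemma~\ref{lemma:w-est}, and then bound the $z_2$-integral region by region; the analyticity claim is handled identically via the corollary on $g$. One small bookkeeping remark: when $x_2>d$ (and symmetrically when $x_2<-d$) you will actually need a \emph{four}-way split of the $z_2$-integral, separating $d<z_2<x_2$ from $z_2>x_2$, since the resolvent kernel changes form at $z_2=x_2$; the paper does this explicitly, and it is what produces the $(x_2-d)$ factor in the $1/|\xi|$ term.
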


\begin{proof}
The analyticity of $v$ in $\zeta$ follows from~\eqref{eq:out_res}, and the analyticity of $f$ and the outgoing resolvent $g$. 

It remains to verify the decay estimates \eqref{eq:decay_v}.
Observe that \eqref{eq:out_res} implies
    \begin{align*}
        |v (\xi; x_2, y_2)| &\le C\frac{e^{\Re \astar (\xi) x_2}}{|W (\xi)|} \Big(
        \int_{-\infty}^{-d} |z_2|^me^{-\Re \astar (\xi) z_2}e^{\Re \astar (\xi) (|z_2 + d| + |y_2+d|)} {\rm d} z_2\\
        &\qquad + \int_{-d}^d |\tilde{u}_- (\xi; z_2)|e^{\Re \astar (\xi) |z_2-y_2|} {\rm d} z_2 + \int_d^{x_2} z_2^m|\tilde{u}_- (\xi; z_2)|e^{\Re \astar (\xi) (z_2 - d + |y_2-d|)}{\rm d} z_2
        \Big)\\
        &+ C\frac{|\tilde{u}_- (\xi; x_2)|}{|W(\xi)|} \int_{x_2}^\infty z_2^m e^{\Re \astar (\xi) z_2}e^{\Re \astar (\xi) (z_2 - d + |y_2-d|)} {\rm d} z_2
    \end{align*}
    uniformly in $(\xi;x_2, y_2) \in \xiset \times (d,\infty)\times\mathbb{R}$. 
    By 
    \eqref{eq:bd_phi}, the above factors of $\tilde{u}_- (\xi; z_2)$ are well approximated by $e^{-z_2 \astar (\xi)}$, up to terms that decay faster in $|\xi|$. Therefore,
    \begin{align*}
        |v (\xi; x_2, y_2)| &\le C\frac{e^{\Re \astar (\xi) x_2}}{|W (\xi)|} \Big(
        \int_{-\infty}^{-d}|z_2|^m e^{-\Re \astar (\xi) z_2}e^{\Re \astar (\xi) (|z_2 + d| + |y_2+d|)} {\rm d} z_2\\
        &\qquad + \int_{-d}^d e^{-\Re \astar (\xi) z_2}e^{\Re \astar (\xi) |z_2-y_2|} {\rm d} z_2 + \int_d^{x_2}z_2^m e^{-\Re \astar (\xi) z_2}e^{\Re \astar (\xi) (z_2 - d + |y_2-d|)}{\rm d} z_2
        \Big)\\
        &+ C\frac{e^{-\Re \astar (\xi) x_2}}{|W(\xi)|} \int_{x_2}^\infty z_2^m e^{\Re \astar (\xi) z_2}e^{\Re \astar (\xi) (z_2 - d + |y_2-d|)} {\rm d} z_2
    \end{align*}
    uniformly in $(\xi;x_2, y_2) \in \xiset \times (d,\infty)\times\mathbb{R}$.
    These integrals are estimated by
    \begin{align}\label{eq:integral_bounds}
        \begin{split}
        &\int_{-\infty}^{-d} |z_2|^m e^{-\Re \astar (\xi) z_2}e^{\Re \astar (\xi) (|z_2 + d| + |y_2+d|)} {\rm d} z_2 \le 
            \frac{C}{|\xi|} e^{\Re \astar (\xi) d} e^{\Re \astar (\xi) |y_2+d|},\\
            &\int_{-d}^d e^{-\Re \astar (\xi) z_2}e^{\Re \astar (\xi) |z_2-y_2|} {\rm d} z_2 \\
            &\qquad =
        -\frac{1}{2\Re \astar (\xi)} (e^{-\Re \astar (\xi)d} e^{\Re \astar (\xi)|y_2-d|} - e^{\Re \astar (\xi)d}e^{\Re \astar (\xi)|y_2+d|})
        +
        \ls_{-d,d} (-y_2) e^{-y_2 \Re \astar (\xi)},\\
        &\int_d^{x_2}z_2^m e^{-\Re \astar (\xi) z_2}e^{\Re \astar (\xi) (z_2 - d + |y_2-d|)}{\rm d} z_2 \le x_2^m (x_2 - d)e^{-\Re\astar (\xi) d}e^{\Re\astar (\xi)|y_2-d|},\\
        &\int_{x_2}^\infty z_2^m e^{\Re \astar (\xi) z_2}e^{\Re \astar (\xi) (z_2 - d + |y_2-d|)} {\rm d} z_2 \le \frac{C}{|\xi|} x_2^m e^{-\Re \astar (\xi) d} e^{2 \Re \astar (\xi) x_2} e^{\Re\astar (\xi)|y_2-d|},
        \end{split}
    \end{align}
    which implies that
    \begin{align*}
        |v (\xi; x_2, y_2)| \le \frac{C}{|W(\xi)|} e^{\Re \astar (\xi)(x_2 - d + |y_2 - d|)} \left( \frac{x_2^m}{|\xi|} + \ls_{-d,d} (-y_2) + x_2^m (x_2 - d) \right)
    \end{align*}
    uniformly in $(\xi;x_2, y_2) \in \xiset \times (d,\infty)\times\mathbb{R}$. The bound in \eqref{eq:decay_v} for $x_2 > d$ then follows from the growth condition $|W(\xi)| \ge c |\xi|$ established in Lemma \ref{lemma:w-est}.

    Applying this result to the function $v(\xi; -x_2, -y_2)$, we obtain the desired bound for $x_2 < -d$.
    It remains to establish \eqref{eq:decay_v} when $-d \le x_2 \le d$. To this end, we again use \eqref{eq:out_res} to write
    \begin{align*}
        |v (\xi; x_2, y_2)| &\le C\frac{|\tilde{u}_+ (\xi; x_2)|}{|W (\xi)|} \Big(
        \int_{-\infty}^{-d} |z_2|^me^{-\Re \astar (\xi) z_2}e^{\Re \astar (\xi) (|z_2 + d| + |y_2+d|)} {\rm d} z_2\\
        &\hspace{5cm}+ \int_{-d}^{x_2} |\tilde{u}_- (\xi; z_2)|e^{\Re \astar (\xi) |z_2-y_2|} {\rm d} z_2 \Big)\\
        &+ C\frac{|\tilde{u}_- (\xi; x_2)|}{|W (\xi)|}\Big( \int_{x_2}^{d} |\tilde{u}_+ (\xi; z_2)|e^{\Re \astar (\xi) |z_2-y_2|}{\rm d} z_2
        \\
        &\hspace{5cm} + \int_{d}^\infty z_2^m e^{\Re \astar (\xi) z_2}e^{\Re \astar (\xi) (z_2 - d + |y_2-d|)} {\rm d} z_2\Big),
    \end{align*}
    and apply 
    \eqref{eq:bd_phi} and the analogous bound for $\tilde{u}_+$ to obtain
    \begin{align}\label{eq:v_bd_mid}
        \begin{split}
        |v (\xi; x_2, y_2)| &\le C\frac{e^{\Re \astar (\xi) x_2}}{|W (\xi)|} \Big(
        \int_{-\infty}^{-d} |z_2|^me^{-\Re \astar (\xi) z_2}e^{\Re \astar (\xi) (|z_2 + d| + |y_2+d|)} {\rm d} z_2\\
        &\hspace{5cm}+ \int_{-d}^{x_2} e^{-\Re \astar (\xi) z_2}e^{\Re \astar (\xi) |z_2-y_2|} {\rm d} z_2 \Big)\\
        &+ C\frac{e^{-\Re \astar (\xi) x_2}}{|W (\xi)|}\Big( \int_{x_2}^{d} e^{\Re \astar (\xi) z_2}e^{\Re \astar (\xi) |z_2-y_2|}{\rm d} z_2
        \\
        &\hspace{5cm} + \int_{d}^\infty z_2^m e^{\Re \astar (\xi) z_2}e^{\Re \astar (\xi) (z_2 - d + |y_2-d|)} {\rm d} z_2\Big)
        \end{split}
    \end{align}
    uniformly in $(\xi;x_2, y_2) \in \xiset \times [-d,d]\times\mathbb{R}$. A direct calculation reveals that
    \begin{align*}
        &\int_{-d}^{x_2} e^{-\Re \astar (\xi) z_2}e^{\Re \astar (\xi) |z_2-y_2|} {\rm d} z_2\\
        &\hspace{0.5cm} = -\frac{1}{2 \Re \astar(\xi)} e^{\Re \astar (\xi) y_2} \left( e^{-2 \Re \astar (\xi) \min \{x_2, y_2\}} - e^{2\Re \astar (\xi) d}\right)\chi_{(-d,\infty)} (y_2)+ e^{-\Re \astar (\xi) y_2} \ls_{-x_2,d}(-y_2),
    \end{align*}
    where 
    $\chi_I$ is the indicator function of the set $I$.
    Since $\ls_{-x_2,d} (-y_2)$ vanishes for all $y_2 > x_2$, it follows that
    \begin{align*}
        &e^{\Re \astar (\xi) x_2}\int_{-d}^{x_2} e^{-\Re \astar (\xi) z_2}e^{\Re \astar (\xi) |z_2-y_2|} {\rm d} z_2 \\
        &\hspace{0.5cm} = -\frac{1}{2 \Re \astar(\xi)} \left( e^{\Re \astar (\xi) |x_2-y_2|} - e^{\Re \astar (\xi) (x_2+y_2+2d)}\right)\chi_{(-d,\infty)} (y_2)+ e^{\Re \astar (\xi) |x_2-y_2|} \ls_{-x_2,d} (-y_2).
    \end{align*}
    The first integral in \eqref{eq:v_bd_mid} was already controlled in \eqref{eq:integral_bounds}, while the third and fourth integrals in \eqref{eq:v_bd_mid} evaluated at $(x_2, y_2)$ are the same as the first and second evaluated at $(-x_2, -y_2)$. 
    Combining these bounds and again using that $W(\xi) \ge c |\xi|$, 
    we obtain
    \begin{align*}
        |v (\xi; x_2, y_2)| &\le \frac{C}{|\xi|^2} \left(e^{\Re \astar (\xi) |x_2 -y_2|} + e^{\Re \astar (\xi) (x_2+d+|y_2+d|)}+ e^{\Re \astar (\xi) (-x_2+d+|y_2-d|)}\right)\\
        &\hspace{5cm} + \frac{C}{|\xi|}e^{\Re \astar (\xi) |x_2 -y_2|} (\ls_{x_2,d}(y_2) + \ls_{-x_2,d}(-y_2))
    \end{align*}
    uniformly in $(\xi;x_2, y_2) \in \xiset \times [-d,d]\times\mathbb{R}$.
    Since $\pm x_2 + d + |y_2 \pm d| \ge |x_2 - y_2|$ for all $-d \le x_2 \le d$, we have established the remaining bound in \eqref{eq:decay_v} and the result is complete.
\end{proof}

We now turn our attention to the main result of this appendix which summarizes estimates on the partial Fourier transform of the resolvent kernel.

\begin{theorem}\label{prop:suitable}
Suppose $q: \mathbb{R} \to [0,\infty)$ is piecewise continuous with $\supp (q)
  \subset [-d,d]$.  For fixed $(x_{2}, y_{2}) \in \bbR^2$, the partial Fourier transform $\tilde{w}(\xi; x_{2}, y_{2})$ of the resolvent kernel has a finite set of real
  poles denoted by $\{\pm\xi_{j}:\:j=1,2,\ldots n_{p}\},$ all of which have
  modulus greater than $k.$ Furthermore, it takes the form
\begin{equation}
      \tilde{w}(\xi;x_2,y_2) =
            e^{\astar(\xi)(|x_2\mp d|+|y_2 \mp d|)} A_{\pm} (\xi, y_2), \qquad A_{\pm} (\xi,y_2) = \begin{cases}
                A_{\pm,+}(\xi), & y_2 > d\\
                A_{\pm,0}(\xi, y_2), & |y_2| \le d\\
                A_{\pm,-}(\xi), & y_2 < -d
            \end{cases}       \label{eq:waveguide_def}
\end{equation}
for all $\pm x_2 > d$, where the $A_{\pm}$ are of the form
\begin{align}\label{eq:B_pm}
    A_\pm (\xi, y_2) = \frac{1}{\astar (\xi)}B_{\pm} (\astar (\xi), y_2), \qquad \xi \in \mathbb{C}, \quad y_2 \in \mathbb{R},
\end{align}
with the functions $B_\pm (\zeta, y_2)$ analytic in $\zeta \in \mathbb{C} \setminus \{ \astar (\xi_j) \}$. 
For any $j \ge 0$, the functions $\partial^{j}_\zeta B_\pm (\zeta, y_2)$ are uniformly bounded over compact subsets of $(\mathbb{C} \setminus \{\astar ( \xi_j)\})\times \mathbb{R}$.
Moreover, there exists a constant $\ximin > 0$ such that,
\begin{align}\label{eq:decay_A}
    |\partial_{\xi}^jA_{\pm}(\xi,y_2)| \leq \frac{C_j}{|\xi|^3} \left( 1 + |\xi| \ls_{-d,d} (\mp y_2) \right), \qquad \xi \in \xiset, \quad y_2 \in \mathbb{R}.
\end{align}
Finally, when $-d \le x_2 \le d$ and $-d \le y_2 \le d$,
\begin{align*}
    \tilde{w}(\xi;x_2,y_2) =
            e^{\astar(\xi)|x_2 - y_2|} A_{0,0} (\xi, x_2,y_2),
\end{align*}
where $A_{0,0} (\xi, x_2, y_2) = B_{0,0} (\astar (\xi), x_2, y_2)/\astar (\xi)$ with $B_{0,0} (\zeta, x_2, y_2)$ analytic in $\zeta \in \mathbb{C} \setminus \{\astar (\xi_j)\}$ and $\partial^j_\zeta B_{0,0} (\zeta, x_2, y_2)$ uniformly bounded over compact subsets of $(\mathbb{C} \setminus \{\astar ( \xi_j)\})\times \mathbb{R}$. For any $j \in \mathbb{N}_0$,
\begin{align}\label{eq:decay_A_00}
    |\partial_{\xi}^jA_{0,0}(\xi,x_2,y_2)| \leq \frac{C_j}{|\xi|^3} \left( 1 + |\xi| (\ls_{x_2,d} (y_2) + \ls_{-x_2, d}(-y_2))\right)
\end{align}
uniformly in $\xi \in \xiset$ and $(x_2, y_2) \in [-d,d]^2$.
\end{theorem}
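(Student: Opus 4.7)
The kernel $w = G - G_{0}$ satisfies $(\Delta + k^{2}(1+q)) w = -k^{2} q(x_{2}) G_{0}$, so after partial Fourier transforming in $x_{1}$ we obtain
\begin{equation*}
L_{\xi} \tilde{w}(\xi; x_{2}, y_{2}) = f(\xi; x_{2}, y_{2}) := -\frac{k^{2} q(x_{2})}{2 \astar(\xi)} e^{\astar(\xi) |x_{2} - y_{2}|},
\end{equation*}
with $f$ supported in $x_{2} \in [-d, d]$. The outgoing solution is therefore given by the outgoing resolvent representation $\tilde{w}(\xi; x_{2}, y_{2}) = \int_{-d}^{d} g(\xi; x_{2}, z_{2}) f(\xi; z_{2}, y_{2}) \, dz_{2}$. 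The finite real pole set $\{\pm \xi_{j}\} \subset [-\ximin, -k] \cup [k, \ximin]$ is immediate from Lemma~\ref{lemma:w-est}, since the poles of $g$ are exactly the zeros of $W$.

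Next I would establish the exponential factorization. For $x_{2} > d$, substituting $\tilde{u}_{+}(\xi; x_{2}) = e^{\astar(\xi) x_{2}}$ into $g$ pulls the factor $e^{\astar(\xi) x_{2}}/W(\xi)$ outside the $z_{2}$ integral; factoring out an additional $e^{\astar(\xi) d}$ produces the claimed $e^{\astar(\xi)(x_{2} - d)}$ exponent. When $y_{2} > d$ (resp.\ $y_{2} < -d$), the exponential $e^{\astar(\xi) |z_{2} - y_{2}|}$ factors cleanly on the support of $q$ into $e^{\astar(\xi)(y_{2} - d)} e^{\astar(\xi)(d - z_{2})}$ (resp.\ its mirror image), leaving a $z_{2}$-integral that depends only on $\xi$ and defines $A_{+, \pm}(\xi)$. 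For $|y_{2}| \leq d$, splitting the integral at $z_{2} = y_{2}$ yields $A_{+, 0}(\xi, y_{2})$ with an honest $y_{2}$-dependence. The case $x_{2} < -d$ is symmetric, and in the middle regime $|x_{2}| \leq d$ both branches of $g$ contribute, producing the $A_{0,0}(\xi, x_{2}, y_{2})$ representation via the same exponential factoring.

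Analyticity in $\zeta = \astar(\xi)$ then follows from Lemma~\ref{lemma:basic}, which gives entireness of $\tilde{u}_{\pm}$ in $\zeta$, together with the refinement $W(\xi) = \astar(\xi) b(\astar(\xi))$ established in the proof of Lemma~\ref{lemma:w-est}, where $b$ is analytic and vanishes precisely at the $\astar(\xi_{j})$. Thus each $A$ takes the form $B(\zeta, y_{2})/\astar(\xi)$ with $B$ analytic on $\bbC \setminus \{\astar(\xi_{j})\}$, and the uniform bounds on $\partial_{\zeta}^{j} B$ over compact sets follow from Cauchy's integral formula applied to the analytic integrand. For the decay bound $|A_{\pm}| \leq C|\xi|^{-3}(1 + |\xi|\ls_{-d, d}(\mp y_{2}))$, I would apply Lemma~\ref{lemma:decay} with $m = 0$; the hypothesis on $f$ holds with constant $C|\astar(\xi)|^{-1} \leq C|\xi|^{-1}$ on $\xiset$, so the conclusion gains an extra factor of $|\xi|^{-1}$. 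For $x_{2} > d$ this produces a bound of the form $C|\xi|^{-3}(1 + |\xi|(\ls_{-d,d}(-y_{2}) + (x_{2} - d))) \, e^{\Re \astar(x_{2} - d + |y_{2} - d|)}$. The spurious $(x_{2} - d)$ coefficient cannot appear in $A_{+}$ by the factorization above, so I eliminate it by evaluating the inequality at $x_{2} = d^{+}$, which yields the claimed estimate. Derivative bounds on $\partial_{\xi}^{j} A_{\pm}$ follow via Cauchy's formula on a disk in $\astar(\xiset)$ of radius proportional to $|\xi|$ (chosen to avoid the $\astar(\xi_{j})$), together with the chain rule and $|\partial_{\xi} \astar(\xi)| = |\xi/\astar(\xi)| \leq C$ on $\xiset$. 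The interior regime $(x_{2}, y_{2}) \in [-d, d]^{2}$ is handled identically using the $|x_{2}| \leq d$ branch of Lemma~\ref{lemma:decay}.

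The main obstacle is obtaining the sharp $|\xi|^{-3}$ rate: Lemma~\ref{lemma:decay} delivers only $|\xi|^{-2}$ in general, and the missing power arises from the prefactor $1/\astar(\xi)$ in $f$ at the cost of an $(x_{2} - d)$-linear remainder in the bound on $\tilde{w}$. Reconciling this with the exact $x_{2}$-independence of $A_{+}$ requires combining the lemma's estimate with the explicit factorization (not just another inequality) to recognize the leftover remainder as spurious once the exponential prefactor is stripped off. All remaining ingredients, namely analyticity, pole structure, and the derivative bounds, are then routine consequences of Lemma~\ref{lemma:basic} and standard complex-analytic arguments.
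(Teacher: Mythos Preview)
Your structural argument—the resolvent representation of $\tilde w$, the exponential factorization coming from $\tilde u_\pm(\xi;x_2)=e^{\pm\astar x_2}$ outside $[-d,d]$, analyticity of $B_\pm$ in $\zeta=\astar(\xi)$ via Lemma~\ref{lemma:basic} together with $W(\xi)=\astar(\xi)\,b(\astar(\xi))$, and the $j=0$ decay via Lemma~\ref{lemma:decay} with the observation that $A_+$ is independent of $x_2$ so one may set $x_2\to d^+$—matches the paper's proof exactly.

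The place where your argument has a gap is the derivative estimate $|\partial_\xi^j A_\pm|\le C_j|\xi|^{-3}(\cdots)$ for $j\ge 1$. You invoke Cauchy's formula on a disk in $\astar(\xiset)$ of radius $\sim|\xi|$. But $\astar(\xiset)$ is, for large $|\xi|$, essentially the closed quadrant $\oQ_2$; if $\zeta_0=\astar(\xi_0)$ lies on its boundary—which happens whenever $\xi_0$ is on the real or imaginary axis, i.e.\ on $\partial\xiset$—no disk of radius comparable to $|\zeta_0|$ fits inside $\astar(\xiset)$. The $j=0$ bound is only established on $\xiset$, and the proof of Lemma~\ref{lemma:decay} relies on $\Re\astar(\xi)\le 0$, which fails on part of $Q_1\cup Q_3$. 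Schwarz reflection (reality of $q$ gives $B_\pm(\bar\zeta)=\overline{B_\pm(\zeta)}$) extends the bound across the negative real $\zeta$-axis into $\oQ_3$, but this still does not furnish a full disk around points $\zeta_0$ on the positive imaginary axis.

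The paper sidesteps this entirely by differentiating the ODE in $\xi$: since $\partial_\xi L_\xi=-2\xi$, one has
\[
L_\xi\,\tilde w^{(j)}=\partial_\xi^j\bigl(f/\astar\bigr)+2j\xi\,\tilde w^{(j-1)}+j(j-1)\,\tilde w^{(j-2)},
\]
and Lemma~\ref{lemma:decay} is applied inductively (now with $m=j-1$) to bound $\tilde w^{(j)}$ directly on $\xiset$. After stripping the exponential prefactor and using that $A_\pm(\xi,\,\cdot\,)$ is constant for $|y_2|>d$ (so the polynomial growth in $y_2$ absorbs into $C_j$), this yields~\eqref{eq:decay_A} without any excursion outside $\xiset$.
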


\begin{proof}
    Recall that the function $\tilde{w} (\xi; x_2, y_2)$ satisfies $L_\xi \tilde{w}(\xi;x_2,y_2) = f(\xi; x_2, y_2)/\astar (\xi)$, where
    \begin{align*}
        f(\xi; x_2, y_2) := -\frac{1}{2} k^2 q(x_2) e^{\astar (\xi) |x_2-y_2|}
    \end{align*}
    satisfies the assumptions of Lemma \ref{lemma:decay} with $m=0$. It follows that $\astar (\xi) \tilde{w} (\xi; x_2, y_2)$  
    is analytic in $\astar (\xi) \in \mathbb{C} \setminus \{\astar (\xi_j)\}$.
    
    Since $q$ is compactly supported in $[-d,d]$, it follows from \eqref{eq:out_res} and the definition of $f$ that $\tilde{w} (\xi;x_2,y_2) e^{-(|x_2 \mp d| + |y_2 \mp d|)\astar (\xi)}$ is independent of $\pm x_2 > d$, and constant in $y_2$ over the intervals $y_2 > d$ and $y_2 < -d$. We conclude that 
    \begin{align}\label{eq:w_A}
        \tilde{w}(\xi;x_2,y_2) =
            e^{\astar(\xi)(|x_2\mp d|+|y_2 \mp d|)}A_{\pm}(\xi, y_2), \qquad \pm x_2 > d, \quad y_2 \in \mathbb{R},
    \end{align}
    where the $A_{\pm} (\xi, y_2)$ are constant in $y_2$ over the intervals $y_2 > d$ and $y_2 < -d$, and are 
    of the form \eqref{eq:B_pm} with the $\partial^{j}_\zeta B_\pm (\zeta, y_2)$ locally bounded by continuity. Lemma \ref{lemma:decay} then implies that
    \begin{align*}
        |A_{\pm}(\xi,y_2)| \leq \frac{C}{|\xi|^3} \left( 1 + |\xi| \ls_{-d,d} (\mp y_2) \right), \qquad    \xi \in \xiset , \quad y_2 \in \mathbb{R}.
    \end{align*}
    The analogous properties of $A_{0,0}$ follow from parallel arguments, thus we have verified
    that $\astar (\xi) \tilde{w} (\xi; x_2, y_2)$ satisfies the bound in \eqref{eq:decay_v}.
    
    It remains to verify that derivatives of $A_\pm$ and $A_{0,0}$ satisfy the same decay estimate. 
    With $$\tilde{w}^{(j)}(\xi;x_2,y_2) := \partial^j_\xi \tilde{w}(\xi;x_2,y_2),$$ we have that $L_\xi \tilde{w}^{(j)}(\xi;x_2,y_2) = f_j (\xi;x_2, y_2)/\astar (\xi)$, where
    \begin{align*}
        \frac{f_j (\xi;x_2, y_2)}{\astar (\xi)} := 
        \partial^j_\xi \frac{f(\xi; x_2, y_2)}{\astar (\xi)} + 2j\xi \tilde{w}^{(j-1)} (\xi;x_2, y_2) + 2 \binom{j}{2}\tilde{w}^{(j-2)} (\xi;x_2, y_2).
    \end{align*}
    Since $\partial^j_\xi (f(\xi; \; \cdot, y_2)/\astar (\xi))$ vanishes outside $[-d,d]$ and satisfies     
    \begin{align*}
        |\partial^j_\xi (f (\xi; x_2, y_2)/\astar (\xi))| \le C (1+|y_2|)^j\frac{1}{|\xi|}e^{-\Re \astar (\xi) |x_2 - y_2|}, \qquad (\xi; x_2, y_2) \in \xiset \times [-d,d]\times \mathbb{R},
    \end{align*}
    the function $(1+|y_2|)^{-j}|\xi|\partial^j_\xi (f (\xi; x_2, y_2)/\astar (\xi))$ is bounded by the right-hand side of \eqref{eq:decay_g} with $m=0$. 
  An inductive argument shows that $f_{j}/\astar(\xi)$ satisfies \eqref{eq:decay_g} with $m=j-1$, and thus $\tilde{w}^{(j)}$ satisfies \eqref{eq:decay_v} with $m = j -1$. 
        It then follows from \eqref{eq:w_A} that
    \begin{align*}
        |\partial_{\xi}^jA_{\pm}(\xi,y_2)| \leq \frac{C_j}{|\xi|^3} \left( 1 + |\xi| \ls_{-d,d} (\mp y_2) \right)(1+|y_2|)^j, \qquad \xi \in \xiset, \quad y_2 \in \mathbb{R}.
    \end{align*}
    Since $A_\pm (\xi, \cdot)$ is constant in the intervals $(-\infty,-d)$ and $(d,\infty)$, the factor $(1+|y_2|)^j$ can be absorbed into $C_j$. Similarly, the definition of $A_{0,0}$ implies \eqref{eq:decay_A_00}
    and the proof is complete.
\end{proof}

\begin{corollary}
\label{cor:tw-analyticity}
For all $\xi \neq \astar(\xi_{j})$, the function $\tilde{w}(\xi; x_{2}, y_{2})$ admits an analytic extension to $x_{2}, y_{2} \in \GammaC \times \GammaC$.
\end{corollary}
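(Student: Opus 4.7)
The proof would assemble the piecewise expressions from Theorem~\ref{prop:suitable} into a globally defined analytic function on $\GammaC \times \GammaC$, exploiting the key fact that $\lt > d$. Fix $\xi$ as in the hypothesis of the corollary. Since $\lt > d$, any $z \in \GammaR$ satisfies $\Re(z - d) > 0$, and any $z \in \GammaL$ satisfies $\Re(z + d) < 0$; hence the real-variable absolute values $|z - d|$ and $|z + d|$ appearing in the formulas of Theorem~\ref{prop:suitable} admit unambiguous analytic extensions, namely $|z - d| \mapsto z - d$ on $\GammaR$ and $|z + d| \mapsto -(z + d)$ on $\GammaL$. These extensions agree with the real-variable formulas at the endpoints $z = \pm \lt$.

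The next step is to enumerate the nine product pieces $(x_2, y_2) \in \Gamma_\bullet \times \Gamma_\bullet$ with $\bullet \in \{L, M, R\}$, using the symmetry $\tilde{w}(\xi; x_2, y_2) = \tilde{w}(\xi; y_2, x_2)$ to cover the configuration $|x_2| \le d,\, |y_2| > d$ not stated directly in Theorem~\ref{prop:suitable}. In every configuration where at least one of $x_2, y_2$ lies in $\GammaL \cup \GammaR$, the formula takes the form
\[
\tilde{w}(\xi; x_2, y_2) = A(\xi, \cdot)\, e^{\astar(\xi)\, \phi(x_2, y_2)},
\]
with $\phi$ an affine function of $(x_2, y_2)$ (after the extension above) and $A$ either independent of the waveguide-exterior variable (the cases $A_{\pm,\pm}$) or an analytic function of the waveguide-interior variable (the cases $A_{\pm,0}$ and $A_{0,0}$). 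Analyticity of $A$ away from the prescribed poles is already recorded in Theorem~\ref{prop:suitable}, so each piece is jointly analytic in $(x_2, y_2)$ on the interior of its product region. The remaining purely-real piece $\GammaM \times \GammaM$ requires no extension.

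What remains is to verify that the piecewise definitions glue to a single function that is continuous on $\GammaC \times \GammaC$ and analytic on its interior. Matching across the boundary curves $\Re z = \pm \lt$ follows directly from the analytic extensions noted above, since both the $\GammaM$-formula (valid for $|y_2| \in (d, \lt]$) and the $\GammaL / \GammaR$-formula reduce to the same exponential there. Matching across $|z| = d$ is already packaged into Theorem~\ref{prop:suitable} through the continuity built into the definition of $A_\pm(\xi, y_2)$. Once continuity is established, joint analyticity on each open product piece combines, via the standard principle of analytic continuation (or Hartogs' theorem), into joint analyticity on $\Int(\GammaC) \times \Int(\GammaC)$.

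The main anticipated obstacle is purely the bookkeeping of the nine cases and verifying consistency at their common boundaries; no new estimates are needed, and the outgoing conditions together with Theorem~\ref{prop:suitable} already uniquely determine $\tilde{w}$ on each piece.
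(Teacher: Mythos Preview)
Your proposal is correct and follows essentially the same route as the paper, which records the result as an immediate consequence of the explicit piecewise formula~\eqref{eq:waveguide_def} in Theorem~\ref{prop:suitable}. You have simply spelled out the case-by-case bookkeeping (the analytic extension of $|x_2\mp d|$ and $|y_2\mp d|$ on $\GammaL\cup\GammaR$, the use of the symmetry $\tilde w(\xi;x_2,y_2)=\tilde w(\xi;y_2,x_2)$, and the gluing across the interfaces) that the paper leaves implicit in its one-line proof.
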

\begin{proof}
The result is an immediate consequence of equation~\eqref{eq:waveguide_def}.
\end{proof}

\section{Kernel estimates}\label{sec:kern_est}
Having established properties of the partial Fourier transforms
$\tilde{w}^{l,r}$, we now turn our attention to the analytical properties of
$w^{l,r}$. The purpose of the section is twofold: a) to establish that the
kernels $k_{C}$, and $k_{D}$ defined in terms of $w^{l,r}$ lie in the required
$\cA_{\alpha}$ spaces, and b) obtain bounds for the kernels
$w^{l,r}(x_{1},x_{2}; 0, y_{2})$ that are uniform in $x_{1} \in \mathbb{R}$,
$|x_{2}|\leq \lt$, and $y_{2} \in \GammaC$.  These estimates are essential for
the proofs of Theorems~\ref{thm:trunc_sol} and~\ref{thm:kern_decay}.

Recall that
\begin{equation}
\label{eq:IFT-appb}
w^{l,r}(\bx; \by) = \int_{\gamma_{F}} e^{i \xi (x_{1} - y_{1})} \tilde{w}^{l,r}(\xi; x_{2}, y_{2}) d\xi\,, 
\end{equation}
and that
\begin{equation}\label{eqnB.2.31}
\begin{aligned}
k_{D}(x_{2},y_{2}) &= w^{r}(0,x_2;0,y_2)-w^{l}(0,x_2;0,y_2) = \int_{\gamma_{F}} (\tilde{w}^{r}(\xi; x_{2},y_{2}) - \tilde{w}^{l}(\xi; x_{2}, y_{2})) d\xi \,, \\
k_{C}(x_{2},y_{2}) &= \frac{\partial^2}{\partial x_1\partial y_1} \left(w^{r}(0,x_2;0,y_2)-w^{l}(0,x_2;0,y_2) \right) =  \int_{\gamma_{F}} \xi^2(\tilde{w}^{r}(\xi; x_{2},y_{2}) - \tilde{w}^{l}(\xi; x_{2}, y_{2})) d\xi \,.
\end{aligned}
\end{equation}

In the following lemma, we establish the analyticity of $w^{l,r}$.
\begin{lemma}\label{lem:kern_analytic}
    The formula~\eqref{eq:IFT-appb} defines $w^{l,r}(x_1,x_2;y_1,y_2)$ for any~$x_2,y_2\in \GammaC$ and~$x_1,y_1\in \bbR$. Further, $w^{l,r}(x_1,x_2;y_1,y_2)$ and their~$x_1$ and~$y_1$ derivatives are  analytic functions of~$x_2$ and~$y_2$ in the interiors of~$\GammaC$.
%
\end{lemma}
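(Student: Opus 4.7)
The plan is to first establish absolute convergence of~\eqref{eq:IFT-appb} uniformly on compact subsets of $\GammaC \times \GammaC$, then invoke Morera's theorem to deduce analyticity in $x_2$ and $y_2$, and finally differentiate under the integral sign to handle the $x_1$- and $y_1$-derivatives. For the convergence step I would use the decomposition from Theorem~\ref{prop:suitable}, which writes
$\tilde w^{l,r}(\xi; x_2, y_2) = e^{\astar(\xi) P(x_2, y_2)} A(\xi, x_2, y_2)$
in each of the three regimes (both arguments outside $[-d,d]$, both inside, or one of each), with $P$ non-negative for real arguments and analytically continued to $\GammaC$, and with $|A| \lesssim |\xi|^{-3}(1 + |\xi|\cdot\ls\text{-terms})$ on $\xiset$. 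On $\gamma_F = \{t - i\tanh t\}$ a direct calculation gives $\Re\astar(\xi) \sim -|t|$ with $\Im\astar(\xi)$ bounded, while $|e^{i\xi(x_1-y_1)}| = e^{(x_1-y_1)\tanh t} \leq e^{|x_1-y_1|}$ uniformly in $t$. The convention in Definition~\ref{def:subsets}, placing $\Gamma_L$ in the closed lower half plane and $\Gamma_R$ in the closed upper half plane, aligns with $\Im\astar \geq 0$ on $\gamma_F$ so that $\Re[\astar(\xi) P(x_2, y_2)] \leq 0$ for $(x_2, y_2)\in\GammaC\times\GammaC$. The integrand is then majorized, locally uniformly in $(x_2,y_2)$, by a constant times $|\xi|^{-2}$ for large $|\xi|$, giving absolute, locally uniform convergence; the real poles $\{\pm\xi_j^{l,r}\}$ are avoided since $\gamma_F$ meets $\bbR$ only at the origin.

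Armed with this majorant, Morera's theorem quickly gives the analyticity statement. For any closed curve $\gamma$ lying in the interior of $\Gamma_L$ or $\Gamma_R$, Fubini applies and yields
\[
\oint_\gamma w^{l,r}(x_1,x_2;y_1,y_2)\,dx_2 = \int_{\gamma_F} e^{i\xi(x_1 - y_1)} \left( \oint_\gamma \tilde w^{l,r}(\xi; x_2, y_2)\,dx_2 \right) d\xi,
\]
and the inner integral vanishes by the analyticity of $\tilde w^{l,r}(\xi; \cdot, y_2)$ from Corollary~\ref{cor:tw-analyticity}. Hence $w^{l,r}$ is analytic in $x_2$ on the interior of $\GammaC$, and the symmetric argument in $y_2$ gives analyticity in that variable. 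For the $x_1$- and $y_1$-derivatives, differentiation under the integral sign brings down a factor of $\pm i\xi$; multiplied against $|\tilde w^{l,r}|\lesssim|\xi|^{-3}$ this still leaves an absolutely integrable integrand (at worst of order $|\xi|^{-2}$), so the differentiated integrals converge with analytic integrands, and the same Morera-and-Fubini argument shows that they too are analytic in $x_2$ and $y_2$.

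The main technical obstacle is producing this clean uniform majorant: one must track the analytic extension of the exponent $\astar(\xi)P(x_2, y_2)$ across the three regimes of Theorem~\ref{prop:suitable} and the transitions between them in $\GammaC$, and verify that the orientation conventions of Definition~\ref{def:subsets} together with the branch choice of $\astar$ really do force $\Re[\astar(\xi) P(x_2, y_2)] \leq 0$ on $\gamma_F$. Once this sign bookkeeping is in hand, the combination of $|\xi|^{-3}$ decay on the amplitude side with the fact that $\gamma_F$ stays bounded away from the real axis outside a neighborhood of the origin makes the Fubini and Morera steps routine.
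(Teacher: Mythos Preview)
Your approach is essentially the paper's: invoke Corollary~\ref{cor:tw-analyticity} for analyticity of $\tilde w^{l,r}$ in $(x_2,y_2)$, use the decay estimates of Theorem~\ref{prop:suitable} for an integrable majorant on $\gamma_F$, and conclude via dominated convergence and Morera's theorem. The paper's proof says exactly this in one sentence; your write-up just unpacks the bookkeeping.

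One small point to tighten: the amplitude bound from Theorem~\ref{prop:suitable} is in general only $|A|\lesssim|\xi|^{-2}$ (the $|\xi|\,\rho$-term costs a power), so after multiplying by $|\xi|$ for the $x_1$- or $y_1$-derivative you would be left with $|\xi|^{-1}$, not $|\xi|^{-2}$ as you write. What actually makes the differentiated integral absolutely convergent is the exponential factor $e^{\astar(\xi)P(x_2,y_2)}$: once $x_2$ (or $y_2$) lies in the interior of $\Gamma_L\cup\Gamma_R$, one has $\Re P\geq \lt-d>0$ on compacta, and since $\Re\astar(\xi)\to-\infty$ along $\gamma_F$ this gives genuine exponential decay that absorbs any polynomial loss. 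You already have this ingredient in your first and third paragraphs, so this is a matter of rerouting the justification for the derivative step through the exponential rather than through a sharper-than-true polynomial bound.
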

\begin{proof}
    From Corollary~\ref{cor:tw-analyticity}, it is known that~$\tilde w^{l,r}(\xi;x_2,y_2)$ can be analytically extended as a function of $x_2$ and $y_2$  from the real line to~$\GammaC$.  The result then follows from a combination of dominated convergence theorem, Morera's theorem and the estimates in Theorem~\ref{prop:suitable}. 
%
\end{proof}

This lemma and~\eqref{eqnB.2.31} lead easily to the following corollary.
\begin{corollary}
\label{cor:analytic-kcd}
The kernels $k_{C}(x_2,y_2)$ and $k_{D}(x_2,y_2)$ extend analytically to $x_{2},
y_{2}$ in the interior of $\GammaC.$
\end{corollary}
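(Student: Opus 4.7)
The proof will be almost immediate from Lemma~\ref{lem:kern_analytic} combined with the definitions in equation~\eqref{eqnB.2.31}, so my plan is essentially to unwind those definitions and invoke the previous lemma.

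First, I would recall that by~\eqref{eqnB.2.31} we have the representations
\begin{equation*}
  k_D(x_2,y_2) = w^r(0,x_2;0,y_2) - w^l(0,x_2;0,y_2)
  \quad\text{and}\quad
  k_C(x_2,y_2) = \partial_{x_1}\partial_{y_1}\!\left[w^r - w^l\right]\!\big|_{x_1=y_1=0}(x_2,y_2).
\end{equation*}
Lemma~\ref{lem:kern_analytic} guarantees that for each fixed pair of real numbers $(x_1,y_1)$, the functions $w^{l,r}(x_1,x_2;y_1,y_2)$ and their $x_1$- and $y_1$-derivatives are analytic as functions of $(x_2,y_2)$ in the interior of $\GammaC\times\GammaC$. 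In particular, specializing to $x_1=y_1=0$, each of $w^{l,r}(0,x_2;0,y_2)$ and $\partial_{x_1}\partial_{y_1}w^{l,r}|_{x_1=y_1=0}(x_2,y_2)$ is jointly analytic on $\Int\GammaC\times\Int\GammaC$.

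It then follows that $k_D$ and $k_C$ are each finite linear combinations of functions analytic in $(x_2,y_2)$ on $\Int\GammaC\times\Int\GammaC$, hence themselves extend analytically to that set. Since there is no obstacle beyond invoking Lemma~\ref{lem:kern_analytic}, the only thing worth checking carefully is that the lemma indeed covers the \emph{mixed} second derivative needed for $k_C$; this is built into the statement of the lemma (which asserts analyticity of both $w^{l,r}$ and their $x_1,\,y_1$ derivatives), and can be justified by one additional differentiation under the integral sign in~\eqref{eq:IFT-appb}, using the decay estimates on $\tilde w^{l,r}$ from Theorem~\ref{prop:suitable} to ensure the resulting integrand (with an extra factor of $\xi^2$) is absolutely integrable along $\gamma_F$. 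No step presents a serious obstacle; the statement is essentially a corollary in the strict sense.
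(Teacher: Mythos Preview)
Your proof is correct and follows exactly the approach the paper intends: the corollary is stated as an immediate consequence of Lemma~\ref{lem:kern_analytic} together with the representations in~\eqref{eqnB.2.31}, and you have simply spelled out that inference. Your remark about the mixed second derivative needed for $k_C$ is well taken, and your justification via differentiation under the integral sign using the decay of $\tilde w^{l,r}$ from Theorem~\ref{prop:suitable} (equivalently, the explicit integral formula for $k_C$ in~\eqref{eqnB.2.31} with the extra $\xi^2$) is precisely what underlies the paper's one-line appeal to the lemma.
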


Next we turn our attention to the analysis of the asymptotics of $k_{C}$, and
$k_{D}$. The absence of the $e^{i \xi (x_{1} - y_{1})}$ term in their
definitions simplifies this analysis in comparison to the analysis
of $w^{l,r}$.  In order to establish that $k_{D} \in \cA_{\frac{3}{2}}$, and
$k_{C} \in \cA_{\frac{1}{2}}$, all that remains to be shown are the asymptotic
estimates~\ref{eq:cc-est} and~\ref{eq:cm-est}. Both of these follow from a
steepest descent argument, which we formulate as follows.

\begin{lemma}[Steepest descent]\label{lem:steep_de}
    Let~$ A(\xi)$ have the following properties:
    \begin{enumerate}[label=\roman*)]
    \item $A$ is an even function of $\xi.$
        \item The function~$ A(\xi)$ is analytic on~$\bbC$, apart from the branch cuts of~$\astar$ and finitely many                             poles on the real axis, and~$\xi=0,\pm k$ are not poles.
        \item There exists $p \in \mathbb{N}$ and $c>0$ such that for each
          $\delta >0$ there is a constant $C_\delta$ such that $|{A}(\xi)| \le
          C_\delta \lp(1+|\xi|)^{(p+1)}+e^{c|\Im\xi|}\rp$ for all $\xi \in
          \mathbb{C} \setminus T_\delta$ where $T_\delta$ is the union of
          $\delta$-balls around each of the poles and branch points.
    \end{enumerate}
    We denote by~$\gamma_F$ the contour defined by~$\Im \xi = -\tanh \Re\xi$, see Figure~\ref{fig_contours}.
    For~$z$ in the first quadrant  let $I(z)$ be the function defined by the  contour integral
    \begin{equation}
        I(z) = \int_{\gamma_F} e^{\astar(\xi) z}A(\xi) d\xi.\label{eq:steep_des}
    \end{equation}
    Then, there exist constants~$\{C_j:\: j=0,1,\dots\}$  depending on~$A$  such that, for any~$n\in\bbN$,  there is a constant $K_{n}$ so that
    \begin{equation}
        \left|I(z) - \frac{e^{ik z}}{\sqrt{z}}\sum_{j=0}^n\frac{C_j }{z^{j}}  \right|\leq e^{-k \Im z}\frac{K_{n}}{|z|^{n+\frac 32}}.\label{eq:de_asym}
    \end{equation}
  as $z$ tends to infinity in the first quadrant with~$|z|>4c.$
\end{lemma}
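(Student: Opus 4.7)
The approach is the method of steepest descent adapted to a contour-integral version of Watson's lemma. The phase $\astar(\xi)$ has a unique critical point at $\xi=0$, with $\astar(0)=ik$ and $\astar''(0)=-i/k$, which accounts for the prefactor $e^{ikz}/\sqrt{z}$. The key structural observation is that the substitution $\astar(\xi)=ik-\tau^{2}$ admits a globally analytic inverse: squaring and using $\astar^{2}(\xi)=\xi^{2}-k^{2}$ yields $\xi(\tau)=\tau\sqrt{\tau^{2}-2ik}$, which is analytic in a neighborhood of $\mathbb{R}$ (the radicand avoids $0$ there), odd in $\tau$, and satisfies $\xi(\tau)=\tau^{2}-ik+O(\tau^{-1})$ as $\tau\to+\infty$. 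As $\tau$ ranges over $\mathbb{R}$, $\xi(\tau)$ thus traces a smooth contour $\Gamma_{SD}$ passing through the origin and asymptotic to the horizontal lines $\Im\xi=\mp k$ for $\tau\to\pm\infty$, lying in the (closed) second and fourth quadrants.

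First I would deform $\gamma_{F}$ to $\Gamma_{SD}$ by Cauchy's theorem. The region between the two contours lies entirely in the open second and fourth quadrants and thus avoids both the real-axis poles of $A$ and the vertical branch cuts of $\astar$ (which emanate into the upper, respectively lower, half-plane from $\pm k$); only two checks are then needed, namely the polynomial-plus-exponential growth bound~(iii) on $A$ and the exponential decay of $e^{\astar(\xi)z}$ as $|\Re\xi|\to\infty$ (which holds on either contour since $\Re z>0$ and $\astar(\xi)\sim -|\Re\xi|$). After the change of variables $\xi=\xi(\tau)$ the integral reads
\begin{equation*}
   I(z) = e^{ikz}\int_{-\infty}^{\infty} e^{-\tau^{2} z}\, A(\xi(\tau))\,\xi'(\tau)\,d\tau = 2 e^{ikz}\int_{0}^{\infty} e^{-\tau^{2} z}\, F(\tau)\,d\tau,
\end{equation*}
where $F(\tau):=A(\xi(\tau))\,\xi'(\tau)$; the second equality uses that $A$ is even, $\xi$ is odd, and $\xi'$ is even, making $F$ even in $\tau$. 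Since $A$ is analytic at $\xi=0$, $F$ is analytic at $\tau=0$ and its Taylor series contains only even powers, $F(\tau)=\sum_{j\geq 0} b_{j}\tau^{2j}$.

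Finally, I would apply Watson's lemma to the resulting Laplace-type integral. Integrating term-by-term via $2\int_{0}^{\infty} e^{-\tau^{2} z}\tau^{2j}\,d\tau = \Gamma(j+\tfrac{1}{2})\,z^{-j-1/2}$ produces the expansion~\eqref{eq:de_asym} with $C_{j}=b_{j}\,\Gamma(j+\tfrac{1}{2})$. For the remainder, I would split the $\tau$-integral at a fixed $\tau_{0}>0$ inside the disk of convergence of the Taylor series for $F$: on $[0,\tau_{0}]$ the Taylor-remainder contribution is $O(|z|^{-n-3/2})$ by a standard incomplete-gamma estimate, and on $[\tau_{0},\infty)$ the tail is controlled by~(iii) together with the fact that $|\Im\xi(\tau)|$ stays bounded on $\Gamma_{SD}$, so that the Gaussian factor $e^{-\tau^{2}\Re z}$ dominates the exponential growth of $A$ whenever $|z|>4c$, yielding a superalgebraically small contribution. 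The factor $e^{-k\Im z}$ in~\eqref{eq:de_asym} is simply $|e^{ikz}|$. The principal technical obstacle is maintaining uniformity of the error bound as $\arg z\to \pi/2$, where $\Re z\to 0$ and the Gaussian no longer dominates; this is handled by rotating the $\tau$-contour to $e^{-i\arg(z)/2}[0,\infty)$, which is permissible by the analyticity of $F$ in a sector about the positive real $\tau$-axis and restores a genuinely decaying integrand.
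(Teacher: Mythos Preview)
Your approach is essentially the same as the paper's: steepest descent via the substitution that turns $\astar(\xi)$ into $ik$ minus a quadratic, followed by a Taylor expansion and Gaussian moment integrals. The only real difference is how the rotation is packaged. You take the ``pure'' substitution $\astar=ik-\tau^{2}$, obtaining the kernel $e^{-\tau^{2}z}$, and then repair the loss of uniformity as $\arg z\to\pi/2$ with a $z$-dependent rotation of the $\tau$-contour. The paper instead builds a fixed rotation into the parametrization from the start: it exhibits an explicit contour $w(t)$ contained in the sector $\overline S_{\pi/4}$ on which $\astar(w(t))=ik-kt^{2}e^{-i\pi/4}$. The extra factor $e^{-i\pi/4}$ gives $\Re\big(kt^{2}ze^{-i\pi/4}\big)\ge kt^{2}|z|/2$ uniformly for all $z$ in the closed first quadrant, so one contour and one tail estimate cover every $z$ simultaneously.

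One point in your sketch needs tightening. You write that on $[\tau_0,\infty)$ the tail is controlled because $|\Im\xi(\tau)|$ stays bounded on $\Gamma_{SD}$; that is true on the \emph{unrotated} real $\tau$-axis (where $\Im\xi(\tau)\to -k$), and there $A$ is only polynomially large, so the Gaussian wins for any $\Re z>0$. After you rotate to $\tau=e^{-i\arg(z)/2}s$, however, $\xi(\tau)\sim e^{-i\arg z}s^{2}$, so $|\Im\xi(\tau)|\sim s^{2}\sin(\arg z)$ is \emph{unbounded}; this is precisely where the exponential growth in hypothesis~(iii) bites and where the restriction $|z|>4c$ is genuinely used (the Gaussian $e^{-|z|s^{2}}$ must beat $e^{c|\Im\xi|}$). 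The paper's fixed-rotation device sidesteps this bookkeeping: on $w(t)$ one has $|\Im w(t)|\le k(1+t^{2})$, giving $|b(t)|\le C_{0}e^{2ck(1+t^{2})}$, and the comparison with $e^{-kt^{2}|z|/2}$ yields the condition $|z|>4c$ directly.
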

\begin{figure}[h]
    \centering
    \includegraphics[width = 0.6\textwidth]{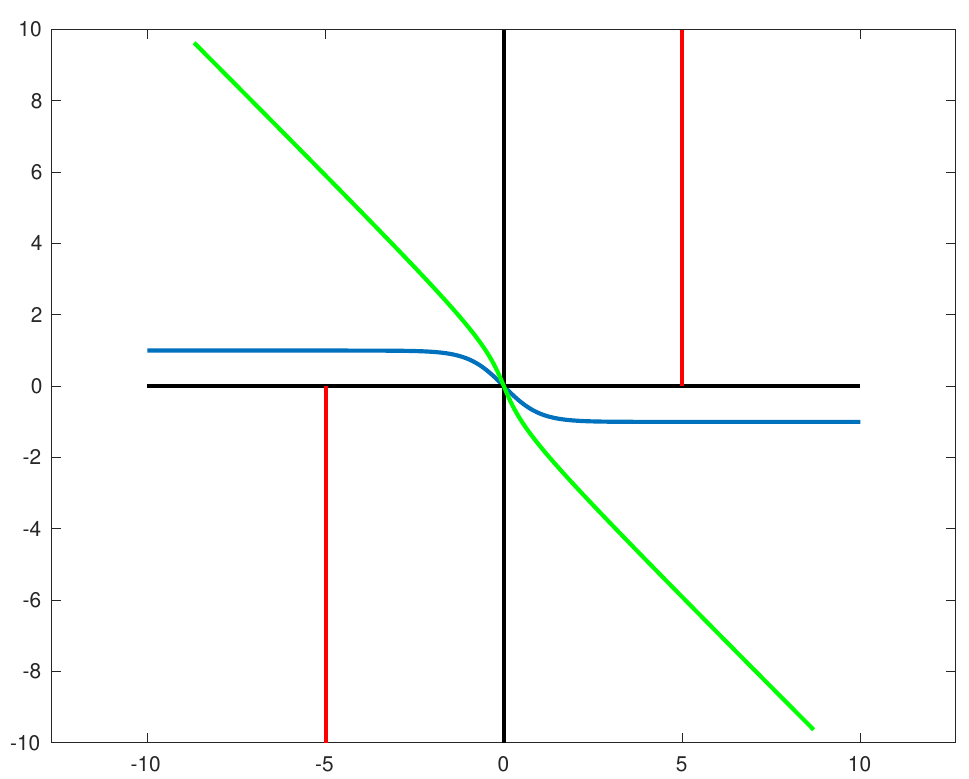}
    \caption{Plot in the $\xi$-plane showing $\gamma_F$ in blue and
      $w(t)$ in green. The cuts used in the definition of
      $\astar(\xi)$ are shown in red, for $k=5.$}\label{fig_contours}
\end{figure}
\begin{proof}
    To prove this expansion for $z$ in the first quadrant, recall that $\astar(\xi)$ maps $\oQ_{2} \cup \oQ_{4}$ into $\oQ_{2}$,  and therefore
    $\Re(\astar(\xi) z)\le 0$ for $\xi\in \oQ_2\cup \oQ_4$ and $z\in \oQ_1.$ 
    Suppose that $w$ is the contour given by
    \begin{equation}
    w(t) := kt \sqrt{2} e^{-3i\pi/8} \sqrt{1+\frac{t^2}{2} e^{i\pi/4} }  \,,
    \end{equation}
    for $t\in \bbR$. This contour is contained in the sector $\overline{S}_{\pi/4}$, is an odd function on the real axis, is such that both $w(t)$ and $A(w(t))$ are analytic functions of $t$ in a strip containing the real axis, and there exists a $\delta>0$ such that $w(t)$ and $A(w(t))$ have convergent Taylor expansions centered at $t=0$ in $|t|<\delta$.
    The purpose of this contour is three fold: to avoid the singularities of the integrand in $I(z)$, to ensure that the curve asymptotes at $\infty$ at an angle between $(\pi/2, \pi)$, and most importantly, so that the exponent $\astar(w(t))$ simplifies to
    \begin{equation}
    \begin{aligned}
    \astar(w(t)) &= i \sqrt{k^2 - w(t)^2} \,, \\
    &= ik - kt^2e^{-i\pi/4} \, .
    \end{aligned}
    \end{equation}
    It is not difficult to show, using the estimates above and Cauchy's theorem, that
    \begin{equation}
    \begin{aligned}
          I(z) &= \int_{w} e^{\astar z}A(\xi) d\xi\,\\
          &= e^{ikz}\int_{-\infty}^{\infty}
 e^{-kt^2 ze^{-i\pi/4} }  A(w(t)) w'(t) \, dt    \,.    
 \end{aligned}
    \end{equation}
    
    Letting $b(t) = A(w(t))w'(t)$ and using the properties of $A$, and $w$, we note that $b$ is an analytic
    function in a strip around the real axis, is an even function of $t$ and 
    there exist a constant $C_{0}$ such that
    \begin{equation}
      \begin{aligned}
       |w(t)| &\leq k(1+t^2)\,,\\
       |b(t)| &= |A(w(t)) w'(t)| \leq C_{\delta} \left( (1+|w|)^{(p+1)} + e^{c|\Im(w)|} \right) |w'(t)| \leq C_{0} e^{2ck(1+t^2)} \, ,
       \end{aligned}
    \end{equation}
    for $t \in \bbR$. Moreover, $b$ has a Taylor expansion centered at $t=0$
    valid in $|t_{0}|<\delta$, i.e. there exist coefficients $b_{j}$, $j=1,2,\ldots \infty$, and constants $R_{j}$ such that
    \begin{equation}
      \left| b(t) - \sum_{j=0}^{N} b_{j} t^{2j} \right| \leq R_{N} |t|^{2N+2} \, ,
    \end{equation} 
    for all $|t| < \delta$. To establish the asymptotic expansions
    in~\eqref{eq:de_asym}, we split the integral into two parts $|t|<\delta$ and
    the remainder.  For the infinite piece, we have
    \begin{equation}
    \begin{aligned}
   \left| \int_{\bbR \setminus [-\delta, \delta]} e^{-kt^2 ze^{-i\pi/4}} b(t) dt \right| &\leq \int_{\bbR \setminus [-\delta, \delta]} e^{-kt^2 |z|/2} |b(t)| dt \,, \\
   &\leq C_{0} e^{2ck} \int_{\bbR \setminus [-\delta, \delta]} e^{-kt^2 |z|/2 + 2ck t^2} dt \,,\\
   & \leq C_{0} e^{2ck} \frac{e^{-k\left(\frac{|z|}{2} - 2c \right)\delta^2}}{\left(\frac{|z|}{2} - 2c \right)\delta} \,, 
   \end{aligned}
   \label{eq:inf-steepest-est}
    \end{equation}
 where the last inequality follows from the fact that $|z|> 4c$
 and~\cite[\href{https://dlmf.nist.gov/7.8.E3}{(7.8.3)}]{NIST:DLMF}.
 
 For the integral on $[-\delta, \delta]$, we have
 \begin{equation}	
\int_{-\delta}^{\delta} e^{-kt^2 ze^{-i\pi/4}} b(t) dt = \int_{-\delta}^{\delta} e^{-kt^2 ze^{-i\pi/4}} \sum_{j=0}^{N} b_{j} t^{2j}  dt  + \int_{-\delta}^{\delta}  e^{-kt^2 ze^{-i\pi/4}} r_{N}(t) dt \, ,
 \end{equation}
 where $|r_{N}(t)| \leq R_{N} |t|^{2N+2}$. Using the identity
 \begin{equation}
 \int_{-\infty}^{\infty} e^{-kt^2 ze^{-i\pi/4}} t^{2j} dt = \frac{\Gamma(j+1/2)}{(kze^{-i\pi/4})^{j+1/2}} \, ,
 \end{equation}
 and an estimate similar to equation~\eqref{eq:inf-steepest-est}, we get
 \begin{equation}
 \int_{-\delta}^{\delta} e^{-kt^2 ze^{-i\pi/4}} \sum_{j=0}^{N} b_{j} t^{2j}  dt = \sum_{j=0}^{N} b_{j} \frac{\Gamma(j+1/2))}{(kze^{-i\pi/4})^{j+1/2}} + O\left( e^{-\frac{|z|}{2} k \delta^2}\right) \,.
 \label{eq:bounded-steepest-est}
 \end{equation}
The remainder term $r_{n}(t)$ can be bounded in a similar manner so that
\begin{equation}
\label{eq:bounded-steepest-est-rem}
\left| \int_{-\delta}^{\delta} e^{-kt^2 ze^{-i\pi/4}} r_N(t) dt \right| \leq \frac{C_{N}}{|z|^{N+3/2}} \,, 
\end{equation}
for some $C_{N}>0$. The proof then follows by combining
equations~\eqref{eq:inf-steepest-est},~\eqref{eq:bounded-steepest-est},
and~\eqref{eq:bounded-steepest-est-rem}.
\end{proof}
In the following lemma, we prove asymptotic estimates for the kernels $k_{C}$, and $k_{D}$.

\begin{lemma}
\label{lem:asymp-est-kcd}
Suppose $q^{l,r}$ satisfy the hypothesis in Theorem~\ref{prop:suitable}. Then the corresponding kernels $k_{C}$ and $k_{D}$ satisfy the estimates~\eqref{eq:cc-est} and~\eqref{eq:cm-est} with $\alpha = 3/2$ and $1/2$ respectively.
\end{lemma}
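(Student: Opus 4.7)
The plan is to reduce both estimates to direct applications of the steepest descent Lemma~\ref{lem:steep_de}. The key observation is that Theorem~\ref{prop:suitable} cleanly factors $\tilde w^{l,r}(\xi;x_2,y_2)$ as a single complex exponential $e^{\astar(\xi)z}$ times an amplitude $A(\xi;x_2,y_2)$ with $\cO(|\xi|^{-3})$ decay together with analyticity through $\astar(\xi)$. Substituting this factored form into~\eqref{eqnB.2.31} puts $k_D$ and $k_C$ into exactly the shape to which the steepest descent lemma applies.

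For the estimate~\eqref{eq:cc-est}, fix $x_2, y_2 \in \GammaL \cup \GammaR$, set $s_x = \sign(\Re x_2)$, $s_y = \sign(\Re y_2)$, and $z = s_x x_2 + s_y y_2 - 2d$. Since $|\Re x_2|, |\Re y_2| > \lt > d$, the functions $A^{l,r}_\pm(\xi, y_2)$ are constant in $y_2$ on each of $(\pm d, \pm\infty)$, so by analytic continuation
\begin{equation*}
k_D(x_2,y_2) = \int_{\gamma_F} e^{\astar(\xi)z}\,\widehat{A}_{s_x,s_y}(\xi)\,d\xi,\qquad \widehat{A}_{s_x,s_y}(\xi) := A^r_{s_x,s_y}(\xi) - A^l_{s_x,s_y}(\xi),
\end{equation*}
with an analogous formula for $k_C$ carrying an extra factor of $\xi^2$ in the integrand. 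I would then verify that $\widehat{A}_{s_x,s_y}$ (and $\xi^2\widehat{A}_{s_x,s_y}$) satisfies the three hypotheses of Lemma~\ref{lem:steep_de}: evenness in $\xi$ follows from the representation $A_\pm = B_\pm(\astar(\cdot), y_2)/\astar(\cdot)$ of Theorem~\ref{prop:suitable}, since $\astar$ is even; analyticity off the branch cuts of $\astar$ and the finitely many real poles $\pm\xi_j$ is provided by Lemmas~\ref{lemma:basic} and~\ref{lemma:w-est}; the polynomial growth bound follows from the $|\xi|^{-3}$ decay in Theorem~\ref{prop:suitable} (the extra $\xi^2$ for $k_C$ still leaves amplitude $\cO(|\xi|^{-1})$). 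Lemma~\ref{lem:steep_de} then produces an expansion $k_D(x_2,y_2) = C_0^D e^{ikz}/\sqrt{z} + \cO(e^{-k\Im z}/|z|^{3/2})$. A crucial point for $k_C$ is that its effective amplitude $\xi^2 \widehat{A}_{s_x,s_y}(\xi)$ vanishes at the saddle point $\xi = 0$, killing the leading $C_0^C$ coefficient, so the $e^{ikz}/z^{3/2}$ term becomes the leading order and yields $\alpha = 3/2$. Setting $C_{s_x,s_y} = C_0^D e^{-2ikd}$ (respectively $C_1^C e^{-2ikd}$ for $k_C$), and using the elementary estimate $|z^{-\alpha} - (z+2d)^{-\alpha}| = \cO(|z|^{-\alpha-1})$ to convert the $z$-based expansion into the $(s_x x_2 + s_y y_2)$-based form, completes the proof of~\eqref{eq:cc-est} in the asymptotic regime. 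Note that $\Im z = |\Im x_2| + |\Im y_2|$, which matches the exponential weight in~\eqref{eq:cc-est}. For bounded $|z|$ the estimate is immediate from the continuity and exponential decay of $k_{C,D}$ off the diagonal, which follow from Lemma~\ref{lem:kern_analytic}.

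The estimate~\eqref{eq:cm-est} is handled by the same machinery with $y_2 \in \GammaM$ treated as a bounded parameter. For $x_2 \in \GammaL \cup \GammaR$ and $y_2 \in \GammaM$,
\begin{equation*}
\tilde w(\xi;x_2,y_2) = e^{\astar(\xi)(s_x x_2 - d + |y_2 \mp d|)}\, A_{s_x}(\xi, y_2),
\end{equation*}
where $A_{s_x}(\xi,y_2)$ is piecewise given by $A_{s_x,\pm}(\xi)$ or $A_{s_x,0}(\xi,y_2)$ according to the three sub-cases $y_2 > d$, $|y_2|\le d$, $y_2 < -d$ of Theorem~\ref{prop:suitable}. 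Running the same steepest-descent argument on the difference $A^r_{s_x}(\cdot,y_2) - A^l_{s_x}(\cdot,y_2)$ produces a leading asymptotic of the form $f_{s_x}(y_2)\, e^{iks_x x_2}/(s_x x_2)^\alpha$, where $f_{s_x}$ is a bounded continuous function of $y_2$ by the $j=0$ derivative bound of Theorem~\ref{prop:suitable}. The symmetric case $x_2 \in \GammaM$, $y_2 \in \GammaL \cup \GammaR$ follows from the symmetry $k_{C,D}(x_2,y_2) = k_{C,D}(y_2,x_2)$ inherited from self-adjointness of the waveguide operators.

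The main technical obstacle I anticipate is verifying hypothesis (iii) of Lemma~\ref{lem:steep_de} — polynomial-plus-exponential growth of $\widehat{A}$ throughout $\bbC$ minus small disks around the singular set. The decay bound of Theorem~\ref{prop:suitable} is stated only on the sector $\xiset \subset \oQ_2 \cup \oQ_4$, whereas the contour $\gamma_F$ touches the other two quadrants and passes near the real axis. Extending to a uniform estimate of the form $(1+|\xi|)^{p+1}+e^{c|\Im\xi|}$ away from small neighborhoods of $\pm k$ and $\pm\xi_j$ requires combining the Volterra-series representation of $\tilde u_\pm$ from Lemma~\ref{lemma:basic} (which produces exponential growth in $|\Im \astar(\xi)|$) with the lower bound $|W(\xi)| \ge c|\xi|$ from Lemma~\ref{lemma:w-est}, and a separate local boundedness argument near the branch points and poles. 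Once this uniform control is in place, the remaining matching of constants is routine.
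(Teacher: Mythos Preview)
Your proposal is correct and follows essentially the same route as the paper: factor $\tilde w^{l,r}$ via Theorem~\ref{prop:suitable}, apply the steepest descent Lemma~\ref{lem:steep_de} to the difference amplitude, and use the vanishing of $\xi^2\widehat A$ at $\xi=0$ to kill the $j=0$ term for $k_C$. Your treatment is in fact more explicit than the paper's (which handles only the $\Gamma_R\times\Gamma_R$ case and waves at the rest), and you correctly flag the one point the paper glosses over---that the growth hypothesis~(iii) of Lemma~\ref{lem:steep_de} is stated on all of $\bbC\setminus T_\delta$ while Theorem~\ref{prop:suitable} only gives the $|\xi|^{-3}$ decay on $\xiset\subset\oQ_2\cup\oQ_4$; since both $\gamma_F$ and the deformed contour $w$ lie in $\oQ_2\cup\oQ_4$, the local boundedness of $B_\pm$ together with the $\xiset$ decay is in fact all that is needed for the contour deformation.
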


\begin{proof}
As an example, we  prove the estimate for~$k_D$ when~$x_2,y_2\in \Gamma_R$. The other cases  follow by  nearly identical proofs.
From Theorem~\ref{prop:suitable}, it follows that there exists functions $A_{++,r}(\xi)$ and $A_{++,l}(\xi)$ such that
$\tilde{w}^{l,r}(\xi; x_{2}, y_{2}) = A_{++,l,r}(\xi) e^{\astar(\xi)(x_{2} + y_{2} - 2d)}$. This implies that
\begin{equation}
k_{D}(x_{2}, y_{2}) = \int_{\gamma_{F}} (A_{++,r}(\xi) - A_{++,l}(\xi)) e^{\astar(\xi)(x_{2} + y_{2} - 2d)} d\xi \, .
\end{equation}
Furthermore, Theorem~\ref{prop:suitable} also implies that $A_{++,r}(\xi) - A_{++,l}(\xi)$ satisfy the conditions in
Lemma~\ref{lem:steep_de}. The result then follows from Lemma~\ref{lem:steep_de} since $x_{2} + y_{2} - 2d \in \overline{Q}_{1}$.
   
Similarly, the kernel~$k_C$ has the form 
        \begin{equation}
        k_C(x_2,y_2)=e^{ik (x_2+y_2-2d)}\int_{\gamma_{F}} \xi^2 \lp A_{++,r}(\xi)-A_{++,l}(\xi)\rp e^{-(\astar+ik)(x_2+y_2-2d)}d\xi,
    \end{equation}
    which is of the form required by Lemma~\ref{lem:steep_de} with
    \begin{equation}
        A(\xi)=\xi^2 \lp A_{++,r}(\xi)-A_{++,l}(\xi)\rp.
    \end{equation}
    Since this has a second order zero at~$\xi=0$, the~$j=0$ term in~\eqref{eq:de_asym} vanishes. Repeating this argument for the other regions gives the desired result.
\end{proof}

Next we turn our attention to the asymptotic behavior of $w^{l,r}$. In the
remainder, for notational convenience, we will assume that $y_{1} = 0$, since
the kernels are only a function of $x_{1} - y_{1}$. Referring to
equation~\eqref{eq:IFT-appb}, we note that for any finite $x_{1}, y_{1}$, the
functions $w^{l,r}$ agree with the construction
in~\cite{epstein2023solvinga}. However, the contour $\gamma_{F}$ is not suitable
for analyzing the asymptotics as $|x_1|\to\infty$ as it involves values of $\xi$
with $\Im \xi$ of both signs.  For these calculations one needs to deform to the
contours $\Gamma^{\pm}_{\nu}$ for $\pm x_{1}>0$, introduced
in~\cite{epstein2023solvinga}, see Figure~\ref{fig:contour-gamma-nu}. These
deformations require crossing poles of $w^{l,r}(\xi;x_2,y_2),$ which leads to a
residue contribution composed of waveguide modes: for $\pm x_1>0$ we have with
$l\leftrightarrow -,\, r\leftrightarrow +,$
\begin{equation}\label{eqn5.21.103}
     w^{l,r}(x_1,x_2;0,y_2) =\int_{\Gamma^{\pm}_{\nu}} e^{i\xi x_1}\tilde w^{l,r}(\xi;x_2,y_2)d\xi+\sum_{j=1}^{N_{l,r}}v^{\pm}_j(x_2)v^{\pm}_j(y_2)e^{\pm i\xi^{\pm}_jx_1}.
\end{equation}
In this representation the contributions to the integral from non-real $\xi$
decay exponentially as $\pm x_1\to\infty.$ Where $\pm x_2\gg d$ the functions
$v^{\pm}_j(x_2)=c_{j,\pm}^{\pm}e^{\mp x_2\sqrt{(\xi^{\pm}_{j})^2-k^2}},$ and so
obviously have analytic continuations to $\GammaC,$ which decay exponentially as
$|\Re x_2|\to\infty,$ but do not decay as $|\Im x_2|\to \infty.$ This leads to
the restriction that the data belongs to a space
$\cC^{\alpha,\beta}\oplus\cC^{\alpha+\frac 12,\beta},$ with
{$\beta\ge 0$}, and the restriction that $y_{2}$ lie on a contour,
$\tGamma,$ that satisfies the slope condition in~\eqref{eq:slop-cond}.  In the
following lemma, we establish the asymptotic properties of $w^{l,r}$.

\begin{figure}[h]
    \centering
    \includegraphics[width = 0.6\textwidth]{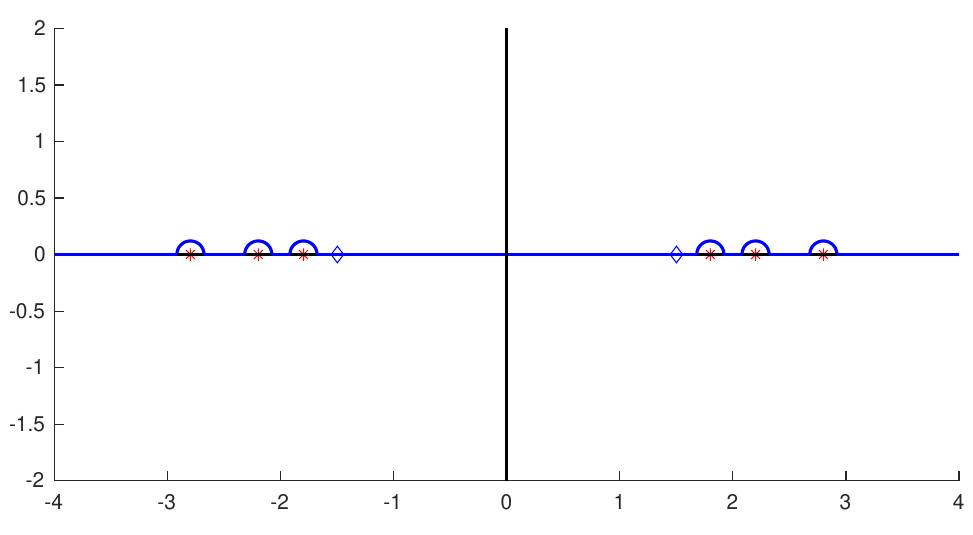}
    \caption{Plot in the $\xi$-plane showing $\Gamma_\nu^+$ in blue. The zeros of the Wronskian are marked by red stars, and the branch points are marked by diamonds. }\label{fig:contour-gamma-nu}
\end{figure}

\begin{lemma}\label{lemma:w_decay}
Suppose that $y_{2} \in \tGamma \subset \cG$ where $\tGamma$ satisfies the slope condition in~\eqref{eq:slop-cond}.
Then for any $n \in \mathbb{N}_{0}$,
    \begin{align}\label{eq:w_bd}
        |w^{l,r}(x_1,x_2;0,y_2)| + |\partial_{x_1}w^{l,r}(x_1,x_2;0,y_2)| \leq C_n\frac{1+|x_1|^n + |y_2|^n}{1+|y_2|^{2n+1}} 
    \end{align}
    uniformly in $\bx \in \mathbb{R}^2 \cap\{|x_2| \le \lt\}$ and $y_2 \in \tilde{\Gamma}$.
\end{lemma}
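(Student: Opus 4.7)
The starting point is the representation~\eqref{eqn5.21.103} of $w^{l,r}(x_1,x_2;0,y_2)$ as an integral along the deformed contour $\Gamma^{\pm}_\nu$ (for $\pm x_1 > 0$) plus a finite sum of waveguide-mode residues $v^{\pm}_j(x_2) v^{\pm}_j(y_2) e^{\pm i\xi_j^\pm x_1}$. I would handle the two pieces separately. The waveguide-mode part is easy: each $\xi_j^\pm > k$ so $v_j^\pm(y_2) = c_{j,\pm}^\pm e^{\mp \sqrt{(\xi_j^\pm)^2-k^2}\, y_2}$ for $\pm\Re y_2 > d$, giving exponential decay in $|\Re y_2|$. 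The slope condition~\eqref{eq:slop-cond} forces $|\Re y_2| \geq c|y_2|$ for large $|y_2|$, so each waveguide mode decays exponentially in $|y_2|$; combined with the uniform bound on $|v_j^\pm(x_2)|$ for $|x_2|\leq \lt$ and the fact that $|e^{\pm i\xi_j^\pm x_1}|=1$, the residue sum dominates any algebraic bound of the form stated.

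The main work is the contour integral $\int_{\Gamma^{\pm}_\nu} e^{i\xi x_1}\tilde w^{l,r}(\xi;x_2,y_2)\,d\xi$. Using Theorem~\ref{prop:suitable}, I would substitute the explicit form $\tilde w^{l,r}(\xi;x_2,y_2) = A_\pm(\xi,y_2) e^{\astar(\xi)(|x_2 \mp d|+|y_2\mp d|)}$ when $|x_2|,|y_2|>d$ (and the analogous expression involving $A_{0,0}$ otherwise), and then exploit the derivative estimates~\eqref{eq:decay_A} and~\eqref{eq:decay_A_00}, which give $|\partial_\xi^j A| \leq C_j/|\xi|^3$ once $|y_2|>d$. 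The decay in $|y_2|$ is generated by repeated integration by parts in $\xi$ using the identity $\partial_\xi e^{i\xi x_1 + \astar(\xi)(|y_2|-d)} = \bigl(ix_1 + \astar'(\xi)(|y_2|-d)\bigr) e^{i\xi x_1 + \astar(\xi)(|y_2|-d)}$. Inverting this operator $m$ times introduces the symbol $\bigl(ix_1 + \astar'(\xi)(|y_2|-d)\bigr)^{-m}$, which on $\Gamma_\nu^\pm$ away from the unique real saddle (present only when $|x_1|\lesssim |y_2|$) is bounded below by a constant times $(|x_1|+|y_2|)^{-m}$. Combining this with a localized stationary-phase contribution of size $|y_2|^{-1/2}$ near the saddle yields the mixed bound: the $1/|y_2|^{n+1}$ piece comes from stationary phase plus $n$ integrations by parts away from the saddle, while the $|x_1|^n/|y_2|^{2n+1}$ piece comes from $2n+1$ integrations by parts in the non-stationary regime $|x_1|\gtrsim |y_2|$, where each step yields a factor $1/|y_2|$ but also generates powers of $x_1$ through the symbol. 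The derivative $\partial_{x_1}w^{l,r}$ is treated identically with an extra factor of $i\xi$ in the integrand, which only shifts the decay of $A$ by one power and does not affect the exponents in the final bound.

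The main obstacle is the rigorous execution of the integration-by-parts / stationary phase split uniformly in $(x_1,x_2,y_2)$. One must: (i) keep the contour $\Gamma_\nu^\pm$ a safe distance from the branch points $\xi=\pm k$ of $\astar$, where $\astar'$ is singular, and from the poles $\xi=\pm\xi_j^\pm$ already accounted for by residues; (ii) control boundary terms at infinity along $\Gamma_\nu^\pm$, which vanish because $\Im \xi$ has a sign matching $\pm x_1$ and $\Re\astar(\xi)\to-\infty$; and (iii) separately handle the regions where $x_2$ or $y_2$ lies in $[-d,d]$, using the $A_{0,0}$ representation, where the additional factor $\rho_{x_2,d}(y_2)+\rho_{-x_2,d}(-y_2)$ in~\eqref{eq:decay_A_00} is bounded by $2d$ and therefore harmless. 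Uniformity in $x_2\in[-\lt,\lt]$ is automatic because $|x_2|-d$ is bounded, and uniformity in $y_2\in\tGamma$ uses the slope condition to convert estimates involving $\Re y_2$ or $|y_2-d|$ into estimates involving $|y_2|$.
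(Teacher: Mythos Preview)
Your decomposition into waveguide-mode residues plus the $\Gamma_\nu^\pm$ contour integral matches the paper, and your treatment of the residues is fine. The problem is the proposed integration-by-parts/stationary-phase machinery for the contour integral.

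The symbol you propose to invert, $ix_1 + \astar'(\xi)(|y_2|-d)$, is not usable on $\Gamma_\nu^\pm$ because $\astar'(\xi) = \xi/\astar(\xi)$ is \emph{singular} at the branch points $\xi = \pm k$, and the contour passes through them. You list ``keeping the contour a safe distance from the branch points'' as an obstacle, but this cannot be done: away from $\pm k$ one has $\Re\astar(\xi)\le -\eta<0$ on $\Gamma_\nu^\pm$, so that part of the integral decays exponentially in $|y_2|$ and contributes nothing to the algebraic asymptotics. The entire polynomial-order contribution comes from neighborhoods of $\pm k$, precisely where your symbol breaks down. Relatedly, there is no genuine stationary point here: the claim that a localized stationary-phase contribution gives $|y_2|^{-1/2}$ is inconsistent with the $n=0$ bound $O(|y_2|^{-1})$ you are trying to prove.

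The paper's argument is structurally different. It localizes near $\pm k$ with a cutoff $\psi$; the complementary piece $w_{0,0}$ decays exponentially in $|y_2|$ as just described. For the localized piece $w_{0,1}$, the paper changes variables $\zeta = \astar(\xi)$, which turns the square-root branch point into a regular endpoint and produces integrals of the form $\int_0^\eta e^{-\zeta y_2}(\cdots)\,d\zeta$ and $\int_0^\eta e^{i\zeta y_2}(\cdots)\,d\zeta$; these are bounded by $C/\Re y_2$ and $C/\Im y_2$ directly, and the slope condition converts both to $C/|y_2|$. For $n\ge 1$ the paper does integrate by parts, but with a different operator: writing $\varphi(\xi,\zeta,x_1,y_2)=i\xi x_1 - \zeta y_2$ and using $\partial_\xi \varphi(\xi,\astar(\xi),x_1,y_2) = \varphi(\astar(\xi),\xi,x_1,y_2)/\astar(\xi)$, one integrates by parts against $\frac{\astar(\xi)}{\varphi(\astar(\xi),\xi,x_1,y_2)}\partial_\xi$. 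The denominator $i\astar(\xi)x_1 - \xi y_2$ is $\approx -k y_2$ near $\xi=\pm k$ and never vanishes for $y_2\in Q_1$ and $\xi\in\operatorname{supp}\psi$; the $\astar(\xi)$ in the numerator cancels the $1/\astar(\xi)$ in the amplitude. After $n$ such steps one changes variables to $\zeta=\astar(\xi)$ again and reads off the mixed $(1+|x_1|^n+|y_2|^n)/|y_2|^{2n+1}$ bound from the resulting Laplace-type integrals.
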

\begin{proof}
Note that various decay estimates for $w^{l,r} (x_1, x_2;0, y_2)$ with $x_1, x_2$ and $y_2$ real were already established in  \cite{epstein2023solvingb}. All that remains to be shown is that the estimates continue to hold when $y_{2} \in \tGamma$.
    Assume $x_1 > 0$ for concreteness. We recall \eqref{eqn5.21.103}, which states that
    \begin{align*}
        w(x_1,x_2;0,y_2) =
        w_0 (x_1, x_2; y_2) + w_{{\rm wg}} (x_1, x_2; y_2)
    \end{align*}
    where
    \begin{align}\label{eq:w2}
        w_0 (x_1, x_2; y_2) = \int_{\Gamma^{+}_{\nu}} e^{i\xi x_1}\tilde w(\xi;x_2,y_2){\rm d}\xi, \qquad w_{{\rm wg}} (x_1, x_2; y_2) = \sum_{j=1}^{N_{{\rm wg}}}v_j(x_2)v_j(y_2)e^{ i\xi_jx_1}
    \end{align}
    and we have dropped the superscripts $l,r$ for convenience. The waveguide contribution $w_{{\rm wg}}$ decays exponentially in $|\Re y_2|$ and thus clearly satisfies \eqref{eq:w_bd}.

    We next consider $w_0$.
    By the decay of $\tilde{w} (\cdot \; ; x_2, y_2)$ established in Theorem~\ref{prop:suitable}, $$\int_{\Gamma^{+}_{\nu}} |\tilde w(\xi;x_2,y_2)|{\rm d}\xi + \int_{\Gamma^{+}_{\nu}} |\xi \tilde w(\xi;x_2,y_2)|{\rm d}\xi$$ is a continuous function of $(x_2, y_2)$. Since $\Im \xi \ge 0$ for all $\xi \in \Gamma_\nu^+$, this means that $w_0 (x_1, x_2; y_2)$ and $\partial_{x_1} w_0 (x_1, x_2; y_2)$ are bounded uniformly in $x_1 \in \mathbb{R}$ over $(x_2, y_2)$ in compact sets. It remains to bound $w_0 (x_1, x_2; y_2)$ and $\partial_{x_1} w_0 (x_1, x_2; y_2)$ for $y_2 \in \tGamma \cap \{ |\Re y_2| > d\} \cap \{c|\Re y_2| \le |\Im y_2| \le C |\Re y_2|\}$.

    Using the shorthand $z := |x_2 - d| + \sqrt{(y_2 - d)^2}$, we invoke Theorem~\ref{prop:suitable} to write
    \begin{align*}
        \tilde{w} (\xi; x_2, y_2) = e^{-\astar (\xi) z} A(\xi, x_2), \qquad \xi \in \Gamma_\nu^+, \quad x_2 \in \mathbb{R}, \quad y_2 \in \tGamma \cap \{\Re y_2 > d\},
    \end{align*}
    where $A(\xi, x_2) = B(\astar (\xi), x_2)/\astar (\xi)$ with $B (\zeta, x_2)$ analytic in $\zeta \in \mathbb{C} \setminus \{\astar (\xi_j)\}$.
    We estimate the integral in \eqref{eq:w2} by separately handling the region where $\astar (\xi)$ is small. To this end, fix $0 < \delta < \min \{k, \xi_1 - \nu - k\}$ and let $\psi \in \mathcal{C}^\infty_c ((-k-\delta, -k+\delta) \cup (k-\delta, k+\delta))$ such that $\psi \equiv 1$ in $(-k-\delta/2, -k+\delta/2) \cup (k-\delta/2, k+\delta/2)$. This definition ensures that $\Gamma_\nu^+$ is real on the support of $\psi$, and we extend $\psi$ to be $0$ on the parts of $\Gamma_\nu^+$ that are not real. We write $w_0 = w_{0,0} + w_{0,1}$, where
    \begin{align*}
        w_{0,1} (x_1, x_2; y_2) := \int_{\Gamma^{+}_{\nu}} e^{i\xi x_1}\tilde w(\xi;x_2,y_2) \psi(\xi){\rm d}\xi.
    \end{align*}
    Looking first at $w_{0,0},$ since $\Re \astar (\xi) \ge \sqrt{\delta k - \delta^2/4} =: \eta$ and $0 \le \Im \astar (\xi) \le 2 \nu (\xi_p + \nu)$ for all $\xi \in \Gamma_\nu^+ \cap \supp (1 - \psi)$,
    \begin{align}\label{eq:w_02}
        |w_{0,0} (x_1, x_2; y_2)| \le \int_{\Gamma^{+}_{\nu}} e^{-\Re \astar (\xi) \Re y_2 + \Im \astar (\xi) \Im y_2} |A (\xi, x_2) (1-\psi(\xi))| {\rm d} \xi
        \le C e^{2\nu (\xi_p + \nu) \Im y_2-\eta\Re y_2}
    \end{align}
    uniformly in $\bx \in \mathbb{R}^2\cap \{x_1 > 0\} \cap\{|x_2| \le N\}$ and $y_2 \in \tGamma \cap \{\Re y_2 > d\}$.

    The remaining contribution is given by
    \begin{align*}
        w_{0,1} (x_1, x_2; y_2) = \int_{\Gamma_\nu^+}e^{i \xi x_1 - \astar (\xi) y_2}\frac{B(\astar (\xi), x_2)}{\astar (\xi)} \psi (\xi) {\rm d}\xi.
    \end{align*}
    After a change of variables in the integral, this becomes
    \begin{align*}
        w_{0,1} (x_1, x_2; y_2) &= 2\int_{0}^{\eta_+} \cos (\sqrt{\zeta^2 + k^2}x_1)e^{- \zeta y_2}\frac{B(\zeta, x_2)}{\sqrt{\zeta^2 + k^2}} \psi (\sqrt{\zeta^2 + k^2}) {\rm d}\zeta\\
        &\qquad +2i\int_{0}^{\eta_-} \cos (\sqrt{k^2 - \zeta^2}x_1)e^{i \zeta y_2}\frac{B(-i\zeta, x_2)}{\sqrt{k^2 - \zeta^2}} \psi (\sqrt{k^2 - \zeta^2}) {\rm d}\zeta,
    \end{align*}
    where we have assumed without loss of generality that $\psi$ is even, and have defined $\eta_+ := \sqrt{(k+\delta)^2- k^2}$ and $\eta_- := \sqrt{k^2 - (k-\delta)^2}$. It is clear that the first term on the right-hand side is bounded by $C/\Re y_2$, with the second bounded by $C/\Im y_2$.   
We note that almost identical arguments apply to the case for which $y_2 \in \tGamma \cap \{ \Re y_2 < -d\}$ and $x_1 < 0.$ Enforcing the comparability constraint $c |\Re y_2| \le |\Im y_2| \le C |\Re y_2|$, and choosing $\nu > 0$ sufficiently small, \eqref{eq:w_02} together with the above bounds imply that
    \begin{align*}
        |w_0 (x_1, x_2; y_2)| \le \frac{C}{1 + |y_2|}, \qquad \bx \in \mathbb{R}^2 \cap \{|x_2| \le \lt\}, \quad y_2 \in \tGamma.
    \end{align*}
    It is clear that the above arguments extend to $\partial_{x_1} w_0$, thus we have verified \eqref{eq:w_bd} when $n=0$.

    To extend this result to arbitrary $n$, we define $\varphi (\xi, \zeta, x_1, y_2) := i \xi x_1 - \zeta y_2$ and use that
    $$\partial_\xi \varphi (\xi, \astar (\xi), x_1, y_2) = ix_1 - \astar' (\xi) y_2 = \varphi (\astar (\xi), \xi, x_1, y_2)/\astar (\xi)$$
    to write
    \begin{align*}
        w_{0,1} (x_1, x_2; y_2) =
        \int_{\Gamma_\nu^+} \frac{B(\astar (\xi), x_2)\psi (\xi)}{\astar (\xi)}\left(\frac{\astar (\xi)}{\varphi (\astar (\xi), \xi, x_1, y_2)}\partial_\xi \right)^n e^{\varphi (\xi, \astar (\xi), x_1, y_2)}{\rm d}\xi.
    \end{align*}
    Now, observe that since $\astar (\mathbb{R}) \subset \mathbb{R} \cup i \mathbb{R}$, the function $\varphi (\astar (\xi), \xi, x_1, y_2)$ is nonzero for all $(\xi, x_1, y_2) \in \supp (\psi) \times \mathbb{R} \times Q_1$, and $\partial_\xi \varphi (\astar (\xi), \xi, x_1, y_2) = \varphi (\xi, \astar (\xi),x_1, y_2)/\astar (\xi)$ is locally integrable. We stress here that the comparability constraint implies that $\arg y_2$ is bounded away from zero in the above integral. Therefore, we can integrate by parts to obtain
    \begin{align*}
        w_{0,1} (x_1&, x_2; y_2) = (-1)^n \int_{\Gamma_\nu^+} e^{\varphi (\xi,\astar (\xi), x_1, y_2)}\left(\partial_\xi \frac{\astar (\xi)}{\varphi (\astar (\xi), \xi, x_1, y_2)}\right)^n \frac{B(\astar (\xi), x_2)\psi (\xi)}{\astar (\xi)} {\rm d} \xi,
    \end{align*}
    which by the local boundedness of the $\partial^n_\zeta B(\zeta, x_2)$ can be written as
    \begin{align*}
        w_{0,1} (x_1&, x_2; y_2) = \int_{\Gamma_\nu^+}e^{\varphi (\xi,\astar (\xi), x_1, y_2)}\frac{1}{\astar (\xi)} \sum_{j=0}^n \frac{\phi_j (\xi, x_1, y_2)}{\varphi^{n+j}(\astar (\xi), \xi, x_1, y_2)} {\rm d} \xi,
    \end{align*}
    where the $\phi_j$ are compactly supported in $\xi$ and satisfy $|\phi_j (\xi, x_1, y_2)| \le C (1 + |x_1|^j + |y_2|^j)$ uniformly in all variables. As above, we change variables in the integral to obtain
    \begin{align*}
        w_{0,1} (x_1, x_2; y_2) &= \sum_{\eps \in \{-1, 1\}} \Bigg\{ \int_0^{\eta_+}e^{\eps i\sqrt{\zeta^2 + k^2}x_1}e^{- \zeta y_2}\frac{1}{\sqrt{\zeta^2 + k^2}}\sum_{j=0}^n \frac{\phi_j (\eps\sqrt{\zeta^2 + k^2}, x_1, y_2)}{\varphi^{n+j} (\zeta, \eps \sqrt{\zeta^2 + k^2}, x_1, y_2)} {\rm d}\zeta\\
        &\qquad +i\int_{0}^{\eta_-} e^{\eps i\sqrt{k^2 - \zeta^2}x_1}e^{i \zeta y_2}\frac{1}{\sqrt{k^2 - \zeta^2}} \sum_{j=0}^n \frac{\phi_j (\eps \sqrt{k^2 - \zeta^2}, x_1, y_2)}{\varphi^{n+j} (-i \zeta, \eps\sqrt{k^2 - \zeta^2}, x_1, y_2)}{\rm d}\zeta \Bigg\}.
    \end{align*}
    Since $|1/\varphi (\zeta, \eps \sqrt{\zeta^2 + k^2}, x_1, y_2)| + |1/\varphi (-i\zeta, \eps \sqrt{k^2 - \zeta^2}, x_1, y_2)| \le C/|y_2|$ uniformly in $0 < \zeta < \eta_+$,
    \begin{align*}
        |w_{0,1} (x_1, x_2; y_2)| \le C \sum_{j=0}^n \frac{1+|x_1|^j+|y_2|^j}{|y_2|^{n+j}} \left( \int_0^{\eta_+} e^{-\zeta \Re y_2} {\rm d} \zeta +\int_0^{\eta_-} e^{-\zeta \Im y_2} {\rm d} \zeta \right) \le C\frac{1+|x_1|^n+|y_2|^n}{|y_2|^{2n+1}},
    \end{align*}
    where the last inequality follows from the comparability assumption $c |\Re y_2| \le |\Im y_2| \le C |\Re y_2|$.
    Given the exponential decay of $w_{0,0}$ in \eqref{eq:w_02} with $\nu > 0$ sufficiently small, we have now established that
    \begin{align*}
        |w_{0} (x_1, x_2; y_2)|\le C_n\frac{1+|x_1|^n+|y_2|^n}{1 + |y_2|^{2n+1}}, \qquad \bx \in \mathbb{R}^2 \cap \{|x_2| \le \lt\}, \quad y_2 \in \tGamma.
    \end{align*}
    It is again clear that the same estimates apply to $\partial_{x_1} w_0$, and the proof is complete.
\end{proof}

\bibliography{references}
\end{document}